\newtheorem{thm}{Theorem}[section]
\newtheorem{prop}[thm]{Proposition}
\newtheorem{lem}[thm]{Lemma}
\newtheorem{cor}[thm]{Corollary}
\newtheorem{defn}{Definition}
\numberwithin{equation}{section}
\def\Z{{\mathbb Z}}
\def\Q{{\mathbb Q}}
\def\R{{\mathbb R}}
\def\C{{\mathbb C}}
\def\II{{\mathbb I}}
\def\A{{\mathbb A}}
\def\emp{\varnothing}
\def\fa{{\mathfrak a}}
\def\fd{{\mathfrak d}}
\def\fg{{\mathfrak g}}
\def\fh{{\mathfrak h}}
\def\fm{{\mathfrak m}}
\def\fn{{\mathfrak n}}
\def\fo{{\mathfrak o}}
\def\fp{{\mathfrak p}}
\def\ft{{\mathfrak t}}
\def\sa{{\mathsf a}}
\def\sG{\mathsf G}
\def\sM{\mathsf M}
\def\sN{\mathsf N}
\def\sP{\mathsf P}
\def\sm{\mathsf m}
\def\sn{\mathsf n}
\def\sb{\mathsf b}
\def\sf{\mathsf f}
\def\sc{\mathsf c}
\def\sv{\mathsf v}
\def\sT{{\mathsf T}}
\def\cO{\mathfrak o}
\def\cB{{\mathscr B}}
\def\cH{{\mathscr H}}
\def\cM{{\mathcal M}}
\def\cN{{\mathcal N}}
\def\cU{{\mathcal U}}
\def\Re{{\operatorname {Re}}}
\def\Im{{\operatorname {Im}}}
\def\tr{{\operatorname{tr}}}
\def\nr{{\operatorname{N}}}
\def\GL{{\operatorname {GL}}}
\def\End{{\operatorname{End}}}
\def\Hom{{\rm{Hom}}}
\def\diag{{\operatorname {diag}}}
\def\vol{{\operatorname{vol}}}
\def\vV{{\rm{Sym^2}}}
\def\leq{\leqslant}
\def\geq{\geqslant}
\def\bsl{\backslash}
\def\cQ{{\mathcal Q}}
\def\d{{\rm{d}}}
\def\fin{{\rm{\bf {f}}}}
\def\bK{{\bf K}}
\def\b1{{\bold 1}}
\def\sB{{\mathsf B}}
\def\Mcal{{\mathcal M}}
\def\ss{{\mathsf s}}
\newcommand{\sslash}{\mathbin{/\mkern-6mu/}}
\def\sd{{\mathsf d}}
\def\sZ{{\mathsf Z}}
\newcommand{\cchi}{\operatorname{\mbox{{\rm 1}}\hspace{-0.25em}\mbox{{\rm l}}}}
\def\sV{{\mathsf V}}
\def\le{{\leq}}
\def\ge{{\geq}}
\def\GL{{\bf GL}}
\title{The Ranking-Selberg integral on ${\bf GSp}(2)$ for square free levels} 
\author{Seiji Kuga and Masao Tsuzuki}
\subjclass[2020]{primary 11F46, secondary 11F67}
\address{Faculty of Science and Technology, Sophia University, Kioi-cho 7-1 Chiyoda-ku Tokyo, 102-8554, Japan}
\email{s-kuga-2g7@sophia.ac.jp}
\address{Faculty of Science and Technology, Sophia University, Kioi-cho 7-1 Chiyoda-ku Tokyo, 102-8554, Japan}
\email{m-tsuduk@sophia.ac.jp}
\begin{document}
\date{}

\begin{abstract} 
We explicitly compute the Rankin-Selberg type integral introduced by Piatetski-Shapiro over adeles for vector-valued Siegel cusp forms of square-free levels $\Gamma_0(N)$. On the way, for particular test functions in the Bessel models of irreducible admissible representations, exact evaluations of the local zeta-integrals are given.  
\end{abstract}
\maketitle

\section{Introduction} \label{sec:Intro}
Lately, the Bessel periods on the symplectic similitude group $\sG:={\bf GSp}_2$ of rank $2$ progressively gain its importance in the arithmetic of Siegel modular forms. The notion of Bessel period and the associated local model of admissible representations of $\sG$ were  originally introduced by Novodvorski and Piatetski-Shaprio in their work (\cite{NovodvorskiPS}, \cite{Novodvorski}, \cite{PS97}), where a modern treatment of Andrianov's integral representation (\cite{And1}, \cite{And2}) of the spinor $L$-function for Siegel modular forms was developed. Related to this, we should also mention independent work by Sugano (\cite{Sugano84}), which handles vector-valued Hilbert-Siegel cusp forms on arithmetic groups defined by maximal orders of indefinite division algebras. The Bessel period also plays an essential role in a formulation of B\"{o}cherer's conjecture posed by Liu \cite{Liu} in a style of the refined Gan-Gross-Prasad conjecture. This version of B\"{o}cherer’s conjecture has been completely solved by Furusawa and Morimoto (\cite{FurusawaMorimoto}, \cite{FurusawaMorimoto3}). 
In this paper, we compute the Rankin-Selberg type integral a la Piatetski-Shapiro for holomorphic vector-valued Siegel cusp forms of square-free levels explicitly, using  results by Pitale-Schmidt \cite{PitaleSchmidt} on local new vectors on $\sG(\Q_p)\,(p<\infty)$; since we rely on the local-global method, our result is immediately applied to non-holomorphic Siegel cusp forms of discrete series type ({\it cf}. \cite{Lemma}, \cite{LPSZ}). 
We explain the moral of our investigation for scalar-valued Siegel modular forms of weight $l$. Our focus is an asymptotic formula as $N+l\rightarrow \infty$ of the average 
\begin{align}
\frac{1}{[{\bf Sp}_2(\Z):\Gamma_0(N)]} \sum_{\varphi \in \cB(l,N)}\langle E(s,\Lambda,\mu)\mid \varphi\rangle\, B^{\Lambda}(\varphi),
 \label{Intro-f0}
\end{align}  
where $\sG^\#$ is a subgroup of $\sG$ defined by a quadratic field $E$  of discriminant $D<0$ (see \S\ref{sec:Embed}), $E(s,\Lambda,\mu)\,(s\in \C)$ is the Eisenstein series on $\sG^\#(\A)$ attached to a ray class character $\Lambda$ of $E$ and a finite order character $\mu:\A^\times /\Q^\times\rightarrow \C^1$ (see \S\ref{sec:RSint}), $B^\Lambda(\varphi)$ is the Bessel period of $\varphi$ in an orthonormal basis $\cB(l,N)$ of Siegel cusp forms of weight $l$ and square free level $\Gamma_0(N)$, and $\langle E(s,\Lambda,\mu)\mid \varphi\rangle$ is the $L^2$-inner product of $E(s,\Lambda,\mu)$ and $\varphi|\sG^\#(\A)$. 
We suppose the ramifications of $E/\Q$, $\mu$, $\Lambda$, $\pi$ are mutually disjoint. Actually, for application (\cite{KugaTsuzuki2}), we consider a more general average than \eqref{Intro-f0} by requiring $\cB(l,N)$ to consist of joint eigenforms by Hecke operators and by putting the eigenvalue of such a Hecke operator at each $\varphi$ in the formation of the sum \eqref{Intro-f0}. With such a generality, we obtain a completely explicit expression of \eqref{Intro-f0} in terms of 
\begin{itemize}
\item The spinor $L$-function $L(s,\pi,\mu)$ (see \S\ref{sec:SpinorFEQ}) attached to an irreducible cupsidal representation $\pi\cong \otimes_{p}\pi_p$ twisted by $\mu$. 
\item An average of Fourier coefficient of a newform $\varphi_\pi^0$ in $\pi$, denoted by $R(\varphi_\pi^0,E,\Lambda)$ (see \eqref{def-phiEomega}) introduced in \cite{KST}. 
\item An average of local Bessel periods of $\pi_p$ (see \S\ref{sec: SpecAvRSI}).   
\end{itemize}
\vspace{-0.1cm}
For the precise statement including the vector-valued case, we refer to Proposition \ref{Main-Prp1} and Theorem \ref{Main-T1}. Owing to the explicit nature of our result, we are able to  show that, asymptotically as the prime level $N$ grows to infinity  with the weight $l$ being fixed, the contribution to \eqref{Intro-f0} of the old forms, the Yoshida lifts and the Saito-Kurokawa lifts are $0$ (see Theorem \ref{AsymptoticFormula}). 
These results are necessary in our forthcoming paper \cite{KugaTsuzuki2}, where we shall work out the geometric side of a relative trace formula whose spectral side is essentially \eqref{Intro-f0}. We should remark that an average similar to our computed average \eqref{SpectralAveage-f} without the factors ${\widehat{L}}(s+1/2,\pi,\mu)$ and $\widehat{f_S}(\pi)$ is studied in \cite[\S3]{DPSS}. When $E=\Q(i)$, our average \eqref{SpectralAveage-f} is almost the same as the one in \cite[\S8]{Blomer} except the appearance of the local periods. 

Let us explain the structure of this paper. In \S\ref{sec:Prel}, we introduce basic objects related to algebraic subgroups of the symplectic group $\sG$ and Haar measures on their $p$-adic, real and adelic points. In \S\ref{sec:RSint}, the Rankin-Selberg type integral and the global Bessel function for Siegel cusp forms are recalled; after this preparation, the basic identity (Lemma~\ref{RSint-L1}) is stated, which computes the pairing $\langle E(s,\Lambda,\mu),\varphi\rangle$ in \eqref{Intro-f0} in terms of the Mellin transform of the global Bessel function attached to $\varphi$. We review the proof (\cite{PS97}) to determine a constant depending on Haar measures exactly. In \S\ref{sec:FC}, we extend the definition of the quantity $R(\varphi, E,\Lambda)$ for a ray class character $\Lambda$, which was originally considered in \cite{KST} for class group character; we describe its behavior under the Galois conjugation $\Lambda\mapsto \Lambda^\dagger$ (Lemma \ref{BesselConj}). The local multiplicity one theorem for Bessel models of $\pi_p$'s allows us to split $B^\Lambda(\varphi)$ for $\varphi \in \pi$ corresponding to a pure tensor in $\otimes_{p}\pi_p$ to an Euler product of local Bessel functions for $\pi_p$ up to a constant. To define our newform $\varphi_\pi^0$ and identify this constant with $R(\varphi_\pi^0,E,\Lambda)$ (see Lemma \ref{BesselMod-L1}), we fix pairs of a local Bessel functional $\ell_p$ on $\pi_p$ and a local new vector $\xi_p$ of $\pi_p$ such that $\ell_p(\xi_p)=1$; in \S\ref{sec:BesselMod}, we explain that the main result of \cite{PitaleSchmidt} ensures the existence of such $\{(\ell_p,\xi_p)\}_p$ when $\pi$ corresponds to a new forms on $\Gamma_0(N)$. In \S\ref{sec:CompLZ}, we compute the local zeta-integrals by using results of \cite{PitaleSchmidt}. The unramified computations are known (see \S\ref{sec:Ur1}). Other cases, as well as the explicit determination of the local periods of prime level (see \S\ref{sec:Local period}), seem to be new, so that some details are given in \S\ref{sec:IIIa}, \S\ref{sec:IVb}, and \S\ref{sec:Iold}. In \S\ref{sec:SpinorFEQ}, the functional equation of the $L$-function is deduced from the basic identity by using Lemma \ref{BesselConj}. Although a ``nice'' functional equation of the $L$-function itself is known in a broader generality (\cite[Lemma 1.2]{Schmidt2018}) owing to Arthur's classification of the discrete spectrum, we include this section for the sake of completeness and to provide a proof free from Arthur's result, confirming that the local $L$-factor and the local $\varepsilon$-factor defined by \cite{PS97} coincides with the ones proposed in \cite[Tables 2,3]{Schmidt}. In section \S\ref{sec: SpecAvRSI}, we prove Proposition \ref{Main-Prp1} and Theorem \ref{Main-T1}; by invoking a result by Furusawa-Morimoto (\cite[Theorem 8.1]{FurusawaMorimoto}), we obtain an explicit expression of \eqref{Intro-f0} solely in terms of $L$-functions (Theorem \ref{Main-T2}). To extend the result of Furusawa-Morimoto to a case when $\Lambda$ ramifies, we need a formula of a local period of the unramified matrix coefficient, we include some detail in \S\ref{sec:CompGP} because this case is not handled in \cite{DPSS}. 

\begin{comment}
this hypothesis is true for $\pi\in\Pi_{\rm cusp}^{(T_\theta,\Lambda)}(\lambda,N)$ when $\Lambda$ is an unramified character, i.e., $\cM=1$ which is the case that $\Lambda\in\widehat{{\rm Cl}(E)}$.

The refined version of the Gan-Gross-Prasad congecture for $({\bf SO}(5),{\bf SO}(2))$ in (\cite{Liu}) states an explicit equality between the Gross-Prasad period twisted by a character on a unipotent subgroup and the special central $L$-values. Thanks to the great work done by Furusawa and Morimoto(\cite{FurusawaMorimoto3}) 
\end{comment}

\section{Preliminaries}\label{sec:Prel}
Let $\sV$ denotes the space of symmetric matrices of degree $2$; we consider $\sV$ as a $\Z$-scheme by defining
$$
\sV(R)=\left\{ \left[\begin{smallmatrix} a & b \\ b & c \end{smallmatrix} \right]\mid a,b,c\in R\right \}
$$
for any commutative ring $R$. The group ${\bf GL}_2(R)$ acts on this space from the right as 
\begin{align}
\sV(R) \times {\bf GL}_2(R)\ni (T,h)\quad \longmapsto \quad  T\ss(h):=\tfrac{1}{\det h
}{}^t h T h \in \sV(R).
\label{GLaction}
\end{align}
Let $\sG$ denote the symplectic similitude group  
$$
\sG:=\{g\in {\bf GL}_4\mid {}^t g \left[\begin{smallmatrix} O & 1_2 \\ -1_2 & O\end{smallmatrix} \right] g=\nu(g)\,\left[\begin{smallmatrix} O & 1_2 \\ -1_2 & O\end{smallmatrix}\right] \quad (\exists \nu(g) \in {\bf GL}_1 \}
$$
(usually denoted by ${\bf GSp}_2$), where $\nu: \sG \rightarrow {\bf GL}_1$ is the similitude character. The center of $\sG$ is denoted by $\sZ$, which coincides with the set of all the scalar matrices in $\sG$. Let $\sP=\sM\sN$ be the Siegel parabolic subgroup of $\sG$, where 
\begin{align*}
\sM&:=\{\sm(A,c):=\left[\begin{smallmatrix} A & O \\ O &{}^t A^{-1}c \end{smallmatrix} \right] \mid A\in {\bf GL}_2\,,c \in {\bf GL}_1\}, \\
\sN&:=\{\sn(X):=\left[\begin{smallmatrix} 1_2 & X \\ O & 1_2 \end{smallmatrix} \right] \mid X \in \sV\}.
\end{align*}
Note that $\nu(\sm(A,c))=c$ and $\nu(\sn(X))=1$. Let
\begin{align*}
\sT:=\left\{\sm( \left[\begin{smallmatrix} a & 0 \\
0 & b \end{smallmatrix} \right],c)  \mid a,b,c \in \GL_1\right\}.
\end{align*}
The Weyl group $W:=N_\sG(\sT)/\sT$ of $\sG$ is generated by the images of
$$s_1:=\left[\begin{smallmatrix} 0 & 1 & 0 & 0 \\ 1 & 0 & 0 & 0 \\ 0 & 0 & 0 & 1 \\ 0 & 0 & 1 & 0 \end{smallmatrix}\right],\quad s_2:=\left[\begin{smallmatrix} 0 & 0 & 1 & 0 \\ 0 & 1 & 0 & 0 \\ -1 & 0 & 0 & 0 \\ 0 & 0 & 0 & 1 \end{smallmatrix}\right].
$$

\subsection{Embedding of groups}\label{sec:Embed}
Let $E=\Q(\sqrt{D})\subset \C$ be an imaginary quadratic field of discriminant $D<0$, so that $\sqrt{D}=i|D|^{1/2}$ with $i$ being the imaginary unit. Let $\cO_{E}$ denote the ring of integers of $E$. Choose a $\Z$-basis $\{1,\theta\}$ of $\cO_E$, so that 
\begin{align}
 \theta-\bar\theta=-\sqrt{D},
 \label{thetaD}
\end{align}
where $\tau \mapsto \bar\tau$ denotes the non-trivial automorphism of $E/\Q$.

The different $\fd_{E/\Q}$, defined as $\fd_{E/\Q}^{-1}:=\{\tau \in E\mid \tr_{E/\Q}(\tau \cO_{E})\subset \Z\}$, is ${\sqrt{D}}\cO_E$; thus a symplectic $\Z$-bi-linear form $ \langle \cdot, \cdot \rangle: \cO_E^2 \times \cO_E^2 \longrightarrow \Z$ is defined by 
\begin{align*} 
\langle x,y \rangle:=\tr_{E/\Q}\left(\tfrac{-1}{\sqrt{D}}\det \left[ \begin{smallmatrix} x_1 & y_1 \\ x_2 & y_2 \end{smallmatrix} \right] \right), \quad x=\left[\begin{smallmatrix} x_1 \\ x_2 \end{smallmatrix} \right],\, y=\left[\begin{smallmatrix} x_1 \\ x_2 \end{smallmatrix} \right] \in \cO_E^2.
\end{align*}
Note that the following vectors of $\cO_E^2$ form a symplectic $\Z$-basis, i.e., $\langle v_i^{+},v_j^{-}\rangle=\delta_{ij}$ and $\langle v_{i}^{+},v_j^{+}\rangle=\langle v_i^{-},v_j^{-}\rangle=0$ for all $i,j$: 
\begin{align*}
v_{1}^{+}:=\left[\begin{smallmatrix} 1 \\ 0 \end{smallmatrix} \right], \quad 
v_{2}^{+}:=\left[\begin{smallmatrix} \theta \\ 0 \end{smallmatrix} \right], \quad 
v_{1}^{-}:=\left[\begin{smallmatrix} 0 \\ -\bar \theta  \end{smallmatrix} \right], \quad 
v_{2}^{-}:=\left[\begin{smallmatrix} 0  \\ 1 \end{smallmatrix} \right]. 
\end{align*} 
For any ring $R$, set 
$$
\sG^{\#}(R):=\{h\in {\bf GL}_2(\cO_E \otimes_\Z R)\mid \det(h) \in R^\times\}.
$$
For $h\in \sG^\#(R)$, viewed as an $R$-linear automorphism of $(\cO_E\otimes_\Z R)^2$, let $\iota_\theta(h)$ denote the $4\times 4$-matrix representing $h$ by the $R$-basis $\{v_1^{+},v_2^{+},v_1^{-},v_2^{-}\}$ of $(\cO_E\otimes_\Z R)^2$, i.e., 
\begin{align}
[h(v_1^{+}), h(v_2^{+}), h(v_1^{-}), h(v_2^{-})]=[v_1^{+},v_2^{+}, v_1^{-},v_2^{-}]\,\iota_{\theta}(h). 
\label{Embed-f0'}
\end{align}
Then $\iota_{\theta}(h) \in \sG(R)$ and $\nu(\iota_\theta(h))=\det h$. Thus, we have an embedding 
$$
\iota_{\theta}:\sG^\#(R) \longrightarrow \sG(R). 
$$ 
Let $\sB^\#$ be the Borel subgroup of $\sG^\#$ such that $\sB^\#(R)$ coincides with the set of all points 
\begin{align}
\left[\begin{smallmatrix}a \tau & \beta \\ 0& \tau^{-1}\end{smallmatrix}\right], \quad \tau \in (E\otimes_\Q R)^{\times}, \,a\in R^\times,\,\beta \in R.  
 \label{Embed-f0}
\end{align}
Let $\sN^\#$ denote the unipotent radical of $\sB^\#$, i.e., $\sN^\#(R)$ is the set of points with $\tau=a=1$. Set $\sZ^\#:=\{a 1_2\mid a\in {\bf GL}_1\}$; then $\sZ^\#$ is a subgroup of the center of $\sG^\#$ of index $2$. 

Let $I_{\theta}:E\longrightarrow {\bf M}_2(\Q)$ denote the matrix representation of the regular representation of the $\Q$-algebra $E$ with respect to the basis $\{1,\theta\}$, i.e., 
\begin{align}
[\tau,\tau\theta]=[1,\theta]\,I_{\theta}(\tau), \quad \tau\in E, 
\label{Embed-DefI}
\end{align}
or explicitly, 
\begin{align}
I_{\theta}(a+b\theta)=\left[\begin{matrix} a & -b\nr_{E/\Q}(\theta) \\ b & a+b\tr_{E/\Q}(\theta) \end{matrix} \right], \quad a,b\in \Q.
 \label{Embed-I}
\end{align}
For $\beta=b_2+b_3\theta \in E_R$ with $b_2,b_3\in R$, define an element $X_\beta$ of $\sV(R)$ as 
\begin{align}
X_\beta:=\left[\begin{smallmatrix} b_1 & b_2 \\ b_2 & b_3
\end{smallmatrix} \right] \quad \text{with $b_1:=-b_2\tr_{E/\Q}(\theta)-b_3 \nr_{E/\Q}(\theta)$}. 
\label{Def-Xbeta}
\end{align}
We have 
\begin{align}
\iota_{\theta}\left(\left[\begin{smallmatrix} \tau & 0 \\ 0& a \tau^{-1}\end{smallmatrix}\right] \right)&=\sm(I_{\theta}(\tau), a), \quad a\in R^\times,\,\tau\in E_{R}^\times, 
 \label{Embed-f00}
\\
\iota_{\theta}\left(\left[\begin{smallmatrix}1 & \beta  \\ 0& 1\end{smallmatrix}\right]\right)&=\sn (X_\beta), \quad \beta \in E_{R}.
 \label{Embed-f000}
\end{align}
By formulas \eqref{Embed-I} and \eqref{Embed-f00}, we have $\iota_{\theta}(a 1_2)=a 1_4$ for $(a\in R^\times)$, which implies 
\begin{align}
\iota_\theta(\sZ^{\#})=\sZ. 
\label{Embed-center}
\end{align}
We then have $\nr_{E/\Q}(x+\theta y)=\left[x,y\right] T\left[\begin{smallmatrix} x \\ y \end{smallmatrix}\right]$ with
\begin{align}
T_{\theta}&:=\left[\begin{matrix}1 & 2^{-1}\tr_{E/\Q}(\theta) \\ 2^{-1}\tr_{E/\Q}(\theta) & \nr_{E/\Q}(\theta) \end{matrix} \right].
 \label{Def-Ttheta}
\end{align}
Let $\sV^{T_\theta}(\Q)$ be the orthogonal of $T_\theta$ in $\sV(\Q)$ with represent to the non-degenerate quadratic form $\tr(XY)$ on $\sV(\Q)$, i.e.,
\begin{align*}
\sV^{T_\theta}(\Q)&
:=\{X\in \sV(\Q)\mid \tr(T_\theta X)=0\}.
\end{align*}  
Note that $\tr(T_\theta^2)=1+(\tr_{E/\Q}(\theta)/2)^2+\nr_{E/\Q}(\theta)^2>0$, $\det(T_\theta)=-D/4>0$ by \eqref{thetaD}, and $T_\theta\in \sV(\Z)$. We have $\sV^{T_\theta}(\Q)=\{X_\beta\mid \beta \in E\}$, and
\begin{align}
\sV(\Q)=\Q T_\theta \oplus \sV^{T_\theta}(\Q), \qquad  \iota_{\theta}(\sN^\#)=\sn(\sV^{T_\theta})
\label{Embed-ff0}
\end{align}
by \eqref{Embed-f000}. The group $\sM$ acts on the space of rational homomomorhisms $\Hom(\sN,{\mathbb G}_a)$ by the rule ${\rm Ad}^*(m)\chi(n)=\chi(m^{-1}nm)$ for $m\in \sM$, $n\in \sN$ and $\chi \in {\Hom}(\sN,{\mathbb G}_a)$. For $T\in \sV$, let $\sM_{T}$ denote the stabilizer of $\chi_{T}: \sn(X)\mapsto \tr(TX)$, and $\sM_{T}^\circ$ the identity component with respect to the Zariski topology. Then, for any $\Q$-algebra $R$, 
$$
\sM_{T}^\circ(R)=\{\sm(I_{\theta}(\tau),\nr_{E/\Q}(\tau))\mid \tau \in E_{R}\}.
$$
Define a bilinear form on $\sV(R)$ as $(X,Y):=-\tr(XY^{\dagger})$ and ${\bf SO}(\sV(R))$ as the special orthogonal group of $(\cdot,\cdot)$. 

In the proof of Lemma \ref{BessePerL2}, we use the following lemma:
\begin{lem}\label{acci.isom.}
Let $F$ be a field with characteristic $0$. 
\begin{itemize}
\item[(i)] The map $\ss:{\bf GL}_2(F)\to{\bf GL}(\sV(F))$ induces an isomorphism
$$
{\bf PGL}_2(F) \stackrel{\sim}{\longrightarrow} {\bf SO}(\sV(F)). 
$$
\item[(ii)] The map $\ss\circ I_\theta: (E\otimes_\Q F)^\times/F^\times \to {\rm Stab}_{{\bf SO}(\sV(F))}(T_\theta)$ is an isomorphism.
\end{itemize}
\end{lem}

\subsection{Open compact subgroups at finite places} \label{sec:OCSGP}
Let $p$ be a prime number. Set $E_p:=E\otimes_\Q \Q_p$ and $\cO_{E,p}:=\cO_{E} \otimes_\Z \Z_p$. 
Then, $\bK^\#_p:=\sG^\#(\Z_p)$, which coincides with the stabilizer in $\sG^\#(\Q_p)$ of the $\cO_{E,p}$-lattice $\cO_{E,p}^2 \subset E_p^2$, is a maximal compact subgroup of $\sG^\#(\Q_p)$, and $\bK_p:=\sG(\Z_p)$ is the standard maximal compact subgroup of $\sG(\Q_p)$. For a non-zero ideal $\fn\subset \Z_p$, set 
$$
\bK_0(\fn):=\{\left[\begin{smallmatrix} A & B \\ C & D \end{smallmatrix}\right] \in \sG(\Z_p)\mid C\in \fn\}.
$$
For a non-zero ideal $\fa\subset \cO_{E,p}$, set 
$$
\bK^{\#}_0(\fa):=\{\left[\begin{smallmatrix} a & b  \\ c & d \end{smallmatrix} \right]\in \sG^\#(\Z_p) \mid c\in \fa\}.
$$
Thus, $\bK_0(\Z_p)=\bK_p$ and $\bK_0^\#(\cO_{E,p})=\bK_p^\#$.  
\begin{lem} \label{OPSGP-L1}
 For a non-zero ideal $\fn \subset \Z_p$, we have $\bK_0^\#(\fn\cO_{E,p})=\iota_{\theta}^{-1}(\bK_0(\fn))$.  
\end{lem}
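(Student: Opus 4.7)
The plan is to compute the lower-left $2\times 2$ block of $\iota_\theta(h)$ for a general $h = \left[\begin{smallmatrix} a & b \\ c & d \end{smallmatrix}\right]\in \sG^\#(\Z_p)$ directly from the defining identity \eqref{Embed-f0'} and to recognize it as the matrix of the multiplication-by-$c$ map on $\cO_{E,p}$, from which the desired equivalence is immediate.

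Setting $L^{+}:=\cO_{E,p}\, v_1^{+}$ and $L^{-}:=\cO_{E,p}\, v_2^{-}$, we have $\cO_{E,p}^{2}=L^{+}\oplus L^{-}$ as $\cO_{E,p}$-modules, with $\{v_1^{+},v_2^{+}=\theta v_1^{+}\}$ and $\{v_1^{-}=-\bar\theta v_2^{-},v_2^{-}\}$ being $\Z_p$-bases of $L^{+}$ and $L^{-}$ respectively. Applying $h$ (acting $\cO_{E,p}$-linearly on column vectors), one gets $h(v_1^{+})=a v_1^{+}+c v_2^{-}$ and $h(v_2^{+})=\theta h(v_1^{+})=a\theta\, v_1^{+}+c\theta\, v_2^{-}$, so the $L^{-}$-components equal $c v_2^{-}$ and $c\theta\, v_2^{-}$; expanding these in the $\Z_p$-basis $\{v_1^{-},v_2^{-}\}$ of $L^{-}$ fills in the two columns of the $(-,+)$-block of $\iota_\theta(h)$.

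The crucial observation is basis-free: this block represents the $\Z_p$-linear map $L^{+}\to L^{-}$, $x v_1^{+}\mapsto (cx) v_2^{-}$ for $x\in \cO_{E,p}$, which under the identifications $L^{\pm}\simeq \cO_{E,p}$ is just multiplication by $c$; its image as a $\Z_p$-submodule of $\cO_{E,p}$ is $c\cO_{E,p}$. Hence, for any choice of $\Z_p$-bases of $L^{\pm}$, every entry of the block lies in $\fn$ iff $c\cO_{E,p}\subseteq \fn\cO_{E,p}$, i.e., iff $c\in \fn\cO_{E,p}$. Combining with the parallel analysis of the diagonal blocks (together with $\nu(\iota_\theta(h))=\det h$) which shows that $\iota_\theta(h)\in \sG(\Z_p)$ forces $a,b,c,d\in \cO_{E,p}$ and $\det h\in \Z_p^{\times}$, i.e., $h\in \bK_p^{\#}$, we conclude $\iota_\theta(h)\in \bK_0(\fn)$ iff $h\in \bK_0^{\#}(\fn\cO_{E,p})$. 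I anticipate no substantial obstacle; the only mildly technical point is the change of $\Z_p$-basis between $\{1,\theta\}$ and $\{-\bar\theta,1\}$ on $\cO_{E,p}$, which is rendered irrelevant by the intrinsic characterization above.
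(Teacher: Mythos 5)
Your argument is correct and follows essentially the same route as the paper: both reduce to examining the images $h(v_1^{+}),h(v_2^{+})$ (equivalently, the lower-left $2\times 2$ block, which you usefully identify as the matrix of multiplication by $c$ on $\cO_{E,p}$) and conclude the defining condition is $c\in\fn\cO_{E,p}$. The only minor difference is that the paper disposes of the base case $\fn=\Z_p$ by noting that both $\iota_\theta^{-1}(\bK_p)$ and $\bK_p^\#$ are the stabilizer of the lattice $\cO_{E,p}^2$, whereas you extract the same conclusion from the block analysis together with $\nu(\iota_\theta(h))=\det h$; both are valid and amount to the same computation.
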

\begin{proof} Indeed, both $\iota_\theta^{-1}(\bK_p)$ and $\bK^\#_p$ coincides with the stabilizer of $\cO_{E,p}^2=\langle v_1^{+},v_2^{+},v_1^{-},v_2^{-}\rangle_{\Z_p}$ in $\sG^\#(\Q_p)$. Hence $\iota_\theta^{-1}(\bK_p)=\bK^\#_p$. In the remaining part of the proof, we suppose $\fn\subset p\Z_p$. Then for $g=\left[\begin{smallmatrix} A & B \\ C & D \end{smallmatrix}\right]\in \bK_p$, we have $g\in \bK_0(\fn)$ if and only if $g(v_1^{+}), g(v_2^+)\in \langle v_1^{+},v_2^{+}\rangle_{\Z_p}+\fn \langle v_1^{-},v_2^{-}\rangle _{\Z_p}$. For $g=\iota_{\theta}(h)$ with $h=\left[\begin{smallmatrix} a & b  \\ c & d \end{smallmatrix} \right]\in \bK_p^\#$, this last condition becomes $
\left[\begin{smallmatrix} a \\ c \end{smallmatrix} \right], \, 
\left[\begin{smallmatrix} a \theta \\ c \theta  \end{smallmatrix} \right] \in \left[\begin{smallmatrix} \cO_{E,p} \\ \fn \cO_{E,p} \end{smallmatrix}\right],
$ or equivalently, $c \in \fn\cO_{E,p}$. 
\end{proof}
For any $N\in \Z_{>0}$, define open compact subgroups $\bK_0(N)\subset \sG(\A_\fin)$ and $\bK^\#(N\cO_E)\subset \sG^\#(\A_\fin)$ as 
$$
\bK_0(N):=\prod_{p<\infty}\bK_0(N\Z_p), \quad \bK^\#_0(N\cO_E):=\prod_{p<\infty} \bK_0^\#(N\cO_{E,p}).
$$

\subsection{Maximal compact subgroup at the archimedean place} \label{sec:MCPtArc}
The identity connected component of $\sG(\R)$ is $\sG(\R)^0=\{g\in \sG(\R)\mid \nu(g)>0\}$. Set $\bK_\infty:=\sG(\R)^0\cap {\bf O}(4)$, which is a maximal compact subgroup of $\sG(\R)^0$ given as 
\begin{align*}
\bK_\infty=\{\left[\begin{smallmatrix} A & B\\ -B & A \end{smallmatrix} \right]\mid A,B\in {\bf M}_2(\R),\,A+iB\in {\bf U}(2)\}. 
\end{align*}
Note that $\bK_\infty \subset {\bf Sp}_2(\R)$. The action of $\sG(\R)^0$ on the Siegel upper-half space $\fh_2:=\{Z\in {\bf M}_2(\C)\mid {}^t Z=Z,\,\Im(Z)\gg 0\}$, denoted by $(g,Z)\mapsto g\langle Z\rangle$
is defined by the usual formula, i.e., 
$$
g\langle Z\rangle:=(AZ+B)(CZ+D)^{-1}, \quad 
 g=\left[\begin{smallmatrix} A & B \\ C & D \end{smallmatrix}\right] \in \sG(\R)^0, \quad Z\in \fh_2.
 $$
 The stabilizer of $i 1_2\in \fh_2$ coincides with $\bK_\infty$. Set $\gamma_\infty:=\sm(-1_2,-1)=\left[\begin{smallmatrix} -1_2 & 0 \\ 0 & 1_2\end{smallmatrix} \right]\in \sG(\R)$; then $\bK_\infty\cup \bK_\infty\gamma_\infty$ is a maximal compact subgroup of $\sG(\R)$. 
The four vectors
\begin{align*}
u_1:=\left[\begin{smallmatrix} 1 \\ 0 \end{smallmatrix}\right], \quad
u_2:=\left[\begin{smallmatrix}i \\ 0 \end{smallmatrix}\right], \quad
u_3:=\left[\begin{smallmatrix} 0 \\ -i \end{smallmatrix}\right], \quad
u_4:=\left[\begin{smallmatrix} 0 \\ -1 \end{smallmatrix}\right].
\end{align*}
in $E_\R^2$ form an $\R$-basis such that
\begin{align}
[v_1^{+},v_2^{+},v_1^{-},v_2^{-}]=[u_1,u_2,u_3,u_4] \,(b_\R^{\theta})^{-1}
\label{Embed-f2}
\end{align}
with 
\begin{align}
b_\R^{\theta}:=\sm(A_\theta,2^{-1}\sqrt{|D|})^{-1}, \quad A_\theta:=\left[\begin{matrix} 1 & 2^{-1}\tr_{E/\Q}(\theta) \\ 0 & -2^{-1}\sqrt{|D|} \end{matrix} \right]\, \in {\bf GL}_2(\R).
\label{Def-b}
\end{align}
Note that $b_{\R}^\theta\in \sG(\R)^0$ because $\nu(b_{\R}^\theta)^{-1}=2^{-1}\sqrt{|D|}>0$. Set 
$$\bK^{\#}_\infty:={\bf U}(2)\cap \sG^\#(\R)=\{k^\#\in {\bf U}(2)\mid \det(k^\#)=\pm 1\},
$$
which is a maximal compact subgroup of $\sG^\#(\R)$. The (topological identity connected component $(\bK^{\#}_\infty)^0$ of $\bK^\#_\infty$ is ${\bf SU}(2)$, and $\bK^\#_\infty=(\bK^\#_\infty)^0 \cup (\bK_\infty^\#)^0\delta_\infty$ with $\delta_\infty:=\left[\begin{smallmatrix} 1 & 0 \\ 0 & -1\end{smallmatrix}\right]$. A general element of ${\bf SU}(2)$ is written in the form 
\begin{align}
h=\left[\begin{smallmatrix} a & -\bar b \\ b & \bar a\end{smallmatrix} \right]\quad \text{with $a=a'+ia'',b=b'+ib''\in \C$, $|a|^2+|b|^2=1$}.
\label{ElementSU2}
\end{align}
For such an $h$, a computation reveals the relation 
\begin{align}
[hu_1,hu_2,hu_3,hu_4]=[u_1,u_2,u_3,u_4]\left[\begin{smallmatrix} A & B \\ -B & A \end{smallmatrix} \right] \quad \text{with $A:=\left[\begin{smallmatrix}a' & -a'' \\ a'' & a'\end{smallmatrix} \right],\,B:=\left[\begin{smallmatrix}b'' & b' \\ b' & -b'' \end{smallmatrix} \right]$}. 
 \label{Embed-f1}
\end{align}
 \begin{lem} \label{MCPtArc-L1}
We have $\iota_{\theta}^{-1}(\bK_\infty)=(\bK_\infty^{\#})^0$. For $k_\infty^\#\in (\bK^\#_\infty)^0$ as in \eqref{ElementSU2}, and define $A,B\in {\bf M}_2(\R)$ as in \eqref{Embed-f1}. Then,  
\begin{align}
\iota_\theta(k_\infty^\#)=b_{\R}^\theta \left[\begin{smallmatrix} A & B\\ -B & A \end{smallmatrix} \right]\,(b_{\R}^\theta)^{-1}.
 \label{MCPtArc-L1-f1}
\end{align}
\end{lem}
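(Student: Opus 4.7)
The plan is to reduce both assertions to the change-of-basis identity \eqref{Embed-f2} followed by a direct $\R$-linear calculation. Let $\tilde\iota(h)\in M_4(\R)$ denote the matrix representing the action of $h\in\sG^\#(\R)$ on $E_\R^2$ in the $\R$-basis $\{u_1,u_2,u_3,u_4\}$. Reading \eqref{Embed-f2} as a change-of-basis formula gives, for every $h\in\sG^\#(\R)$,
\begin{align*}
\iota_\theta(h) \;=\; b_\R^\theta\,\tilde\iota(h)\,(b_\R^\theta)^{-1}.
\end{align*}

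Substituting $h=k_\infty^\#$ reduces \eqref{MCPtArc-L1-f1} to the identity $\tilde\iota(k_\infty^\#)=\left[\begin{smallmatrix} A & B\\ -B & A\end{smallmatrix}\right]$, which is exactly \eqref{Embed-f1}. To verify this, I would apply the $2\times 2$ complex matrix $k_\infty^\#=\left[\begin{smallmatrix} a & -\bar b \\ b & \bar a\end{smallmatrix}\right]$ to each $u_j\in\C^2$, then read off the coordinates of the result in the $\R$-basis $\{u_1,\ldots,u_4\}$ by separating real and imaginary parts with $a=a'+ia''$, $b=b'+ib''$. The four columns of $\left[\begin{smallmatrix} A & B\\ -B & A\end{smallmatrix}\right]$ emerge directly from this decomposition, and no use is made of $|a|^2+|b|^2=1$ at this step—it is a purely $\R$-linear bookkeeping.

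For the identification $\iota_\theta^{-1}(\bK_\infty)=(\bK_\infty^\#)^0$, the key observation is that $\{u_1,u_2,u_3,u_4\}$ is an orthonormal basis of $E_\R^2\cong\C^2\cong\R^4$ for the realification of the standard Hermitian inner product on $\C^2$. Hence $\tilde\iota(h)\in{\bf O}(4)$ exactly when $h$ preserves this inner product, i.e., when $h\in{\bf U}(2)$. Intersecting with the constraint $h\in\sG^\#(\R)$ (so $\det h\in\R^\times$, hence $\det h\in\{\pm 1\}$) gives $h\in\bK_\infty^\#$, and the positivity $\nu(\iota_\theta(h))=\det h>0$ required for membership in $\sG(\R)^0$ singles out $\det h=+1$, i.e., $h\in{\bf SU}(2)=(\bK_\infty^\#)^0$.

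The main obstacle is tracking the change of basis carefully: $b_\R^\theta$ is not itself orthogonal in general, so the orthogonality condition defining $\bK_\infty$ must be compared with the corresponding condition on $\tilde\iota(h)$ through the conjugation by $b_\R^\theta$. Once this bookkeeping is carried out consistently—essentially by observing that membership in $\bK_\infty$ is intrinsic to the action on the underlying real symplectic/Hermitian structure, and that both bases present the same structure—the inclusion $\iota_\theta^{-1}(\bK_\infty)\subseteq(\bK_\infty^\#)^0$ follows, and the reverse inclusion is immediate from the explicit formula \eqref{MCPtArc-L1-f1}, which exhibits $\iota_\theta(k_\infty^\#)$ as a conjugate of an element of $\bK_\infty$ by $b_\R^\theta$.
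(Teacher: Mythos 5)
Your derivation of formula \eqref{MCPtArc-L1-f1} is correct and follows the same change-of-basis route as the paper's proof.

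Your argument for the preimage assertion $\iota_\theta^{-1}(\bK_\infty)=(\bK_\infty^\#)^0$ contains a genuine gap, which you flag but do not close. The orthonormality of $\{u_1,\dots,u_4\}$ gives the equivalence $\tilde\iota(h)\in{\bf O}(4)\iff h\in{\bf U}(2)$, but what must be characterized is when $\iota_\theta(h)=b_\R^\theta\,\tilde\iota(h)\,(b_\R^\theta)^{-1}$ is orthogonal, and since $b_\R^\theta$ is not orthogonal unless $\tr_{E/\Q}\theta=0$, conjugation by it does \emph{not} preserve ${\bf O}(4)$. Your closing remark that ``membership in $\bK_\infty$ is intrinsic\ldots and both bases present the same structure'' is precisely the claim that fails: the $u$-basis is orthonormal for the Hermitian form, the $v$-basis is not, and consequently $b_\R^\theta\bK_\infty(b_\R^\theta)^{-1}\neq\bK_\infty$ in general (for instance $E=\Q(\sqrt{-3})$, $\theta=(1-\sqrt{-3})/2$ gives a non-orthogonal $\iota_\theta(k_\infty^\#)$ already for $k_\infty^\#=\mathrm{diag}(i,-i)$). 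What your argument together with \eqref{MCPtArc-L1-f1} actually proves is the cleaner statement $\iota_\theta^{-1}\bigl(b_\R^\theta\bK_\infty(b_\R^\theta)^{-1}\bigr)=(\bK_\infty^\#)^0$, which is also the form needed in the proof of Lemma \ref{RSint-L1}, where $\varphi$ is always right-translated by $b_\R^\theta$ before the Iwasawa decomposition enters; you should state it that way.

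For comparison, the paper's terse treatment of the preimage part takes a different and cheaper route: it computes $\tilde\iota(\delta_\infty)=-\gamma=\diag(1,1,-1,-1)$ for the representative $\delta_\infty$ of the non-identity component of $\bK_\infty^\#$, and observes $\nu(-\gamma)=-1<0$, hence $-\gamma\notin\sG(\R)^0\supset\bK_\infty$. Because $\nu\circ\iota_\theta=\det$ is conjugation-invariant, this calculation transfers automatically from $\tilde\iota$ to $\iota_\theta$, unlike the ${\bf O}(4)$-membership condition, and so sidesteps the difficulty your ``intrinsic structure'' hand-wave runs into. If you want to keep your orthogonality analysis, use it for $\tilde\iota$ to identify the preimage of $b_\R^\theta\bK_\infty(b_\R^\theta)^{-1}$, and rely on the similitude character alone to isolate the identity component.
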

\begin{proof} The formula in \eqref{MCPtArc-L1-f1} follows from \eqref{Embed-f0}, \eqref{Embed-f1} and \eqref{Embed-f2} immediately. From 
$$
[hu_1,hu_2,hu_3,hu_4]=[u_1,u_2,u_3,u_4](-\gamma)
$$
and $-\gamma\not\in \bK_\infty$, the assertion follows. 
\end{proof}

\subsection{Haar measures}
For locally compact unimodular topological groups $H$ relevant to us, we fix Haar measures $\eta_{H}$ on $H$ in the following manner. Let $\A$ be the adele ring of $\Q$ and $\psi:\A/\Q\rightarrow \C^1$ the basic character.
\subsubsection{} \label{sec:HaarEQ}
On the additive group $\C$, define $\d \eta_{\C}(\tau):=\d x\d y=2^{-1}|\d \tau \wedge \d\bar\tau|$ for $\tau=x+iy\,(x,y\in \R)$ with $\d x\,\d y$ being the Lebesgue measure on $\R^2$. 

The Haar measure $\eta_{\C^\times}$ on $\C^\times$ (resp. $\R^\times)$ is defined by $\d \eta_{\C^\times}(\tau)=|\tau|_{\C}^{-1}\d\tau$ for $\tau=x+iy\in \C$ (resp .$|x|_\R^{-1}\d x$). For each $p<\infty$, $E_p^\times$ (resp. $\Q_p^\times$) is endowed with the Haar measure $\eta_{E_p^\times}$ such that $\eta_{E_p^\times}(\cO_{E,p}^\times)=1$ (resp. $\eta_{\Q_p^\times}(\Z_p^\times)=1$). Then, viewing $\A_E^\times$ as the restricted product of $E_p^\times$, we define $\eta_{\A_{E}^\times}=\prod_{p\leq \infty}\eta_{E_p^\times}$. Similarly, we set $\eta_{\A^\times}=\prod_{p\leq \infty} \eta_{\Q_p^\times}$. 

\subsubsection{} \label{sec:MeasUnpt}
For a matrix $X=\left[\begin{smallmatrix} a & b \\ c & d \end{smallmatrix} \right]$, set $X^\dagger:=\left[\begin{smallmatrix} d & -b \\ -c & a \end{smallmatrix} \right]$, so that $XX^\dagger =(\det X)\,1_2$, and $X\mapsto X^\dagger$ induces an involution of the $\Q$-vector space $\sV(\Q)$, which is viewed as a quadratic space with the symmetric bilinear form $\tr(XY^\dagger)$. We endow $\sV(\A)$ with the self-dual Haar measure  with respect to the self-duality defined by the bi-character $(X,Y)\mapsto \psi(\tr(XY^\dagger))$. Let $\eta_{\A}$ (resp. $\eta_{\A_E}$) be the self-dual Haar measure on $\A$ (resp. $\A_{E}$) with respect to the bi-character $(x,y)\mapsto \psi(xy)$ (resp. $(\alpha,\beta)\mapsto \psi(\tr_{E/\Q}(\alpha \bar \beta))$). A computation shows the relation $-\tr(X_\alpha X_\beta^\dagger)=\tr_{E/\Q}(\alpha \bar \beta)$ for $\alpha,\beta \in \A_E$, where $X_\alpha,X_\beta\in V^{T_\theta}(\A)$ are defined by \eqref{Def-Xbeta}. Thus, by the isomorphism $\A_{E}\ni \beta \mapsto X_\beta\in \sV^{T_\theta}(\A)$, the Haar measure $\eta_{\A_E}$ is transferred to a Haar measure on $\sN^\#(\A)$. Since $\tr(T_\theta T_\theta^\dagger)=-D/2\not=0$, we have the orthogonal direct sum decomposition $\sV(\Q)=\Q T_\theta^\dagger\oplus \sV^{T_\theta}(\Q)$. By this and \eqref{Embed-ff0}, write $X\in V(\A)$ as $X=xT_\theta^\dagger+X_\beta=y T_\theta+X_\alpha$ $(x,y\in \A,\,\beta,\alpha \in \A_{E})$. By definition of the Haar measures, we have $\d\eta_{\sV(\A)}(X=\d\eta_{\A}(x) \otimes \d\eta_{\A_E}(\beta)$. Since the change of variables $(x,\beta)\rightarrow (y,\alpha)$ is given as 
$$
y=\tfrac{-D}{2}\,x, \qquad \alpha=\beta+2^{-1}\tr_{E/\Q}(\theta)(x+y)+(\nr_{E/\Q}(\theta)x-y)\,\theta 
$$
and since $|\tfrac{-D}{2}|_\A=1$, we get $\d\eta_{\sV(\A)}(X)=\d\eta_{\A}(y)\otimes \d\eta_{\A_E}(\alpha)$. Through the identification $\sV(\A)\ni X\mapsto \sn(X)\in \sN(\A)$ and $\beta\ni \A_{E}\mapsto \left[\begin{smallmatrix}1 & \beta \\ 0 & 1 \end{smallmatrix} \right]\in \sN^\#(\A)$, the groups $\sN(\A)$ and $\sN^\#(\A)$ acquire Haar measures. Thus, we have the integration formula: 
\begin{align}
\int_{\sN(\A)}f(n)\, \d n=\int_{\A} \int_{\sN^\#(\A)}f(\sn(x T_\theta)\,\iota_{\theta}(n^\#))\,\d x\,\d n^\#
 \label{UnptInt}
\end{align}
for any $f\in L^2(\sN(\A))$. Note that $\vol(\sN(\Q)\bsl \sN(\A))=\vol(\sN^\#(\Q)\bsl \sN^\#(\A))=1$. 

\subsubsection{}
Let $p\leq \infty$. Any element $g^\#\in \sG^\#(\Q_p)$ is written as 
\begin{align}
g^\#=\left[\begin{smallmatrix} 1 & \beta \\ 0 & 1\end{smallmatrix}\right]
\left[\begin{smallmatrix} \tau & 0 \\ 0 & \bar \tau a \end{smallmatrix}\right] \, k^\#
\label{Def-HaarMeasSGP-f0}
\end{align}
with $a \in \Q_{p}^\times$, $\tau \in E_p^\times$, $\beta\in E_p$ and $k^\# \in \bK_p^\#$. Then, our Haar measure on $\sG^\#(\Q_p)$ is symbolically defined as 
\begin{align}
\d\eta_{\sG^\#(\Q_p)}(g_p^\#)=|a|_p^{2} \times \d\eta_{E_p}(\beta)\, \d\eta_{\Q_p^\times}(a)
\, \d\eta_{E_p^\times}(\tau)\,\d\eta_{\bK_p^\#}(k^\#), 
 \label{Def-HaarMeasSGP-p}
\end{align}
where $\d\eta_{\bK_p^\#}(k^\#)$ is the Haar measure on $\bK_p^\#$ with volume $1$. For $p<\infty$, the Haar measure $\eta_{\sG^\#(\Q_p)}$ is the one with $\vol(\bK_p^\#)=1$. On the adele group $\sG^\#(\A)$, we use the product measure of Haar measures on $\sG^\#(\Q_p)$ for $p\leq \infty$. Then, from \eqref{Def-HaarMeasSGP-p}, we get 
\begin{align}
\int_{\sG^\#(\A)}f(g^\#)\,\d g^\#=
\tfrac{\sqrt{|D|}}{2} \int_{\sN^\#(\A)} \int_{\A^\times} \int_{\A^\times_E} \int_{\bK^\#} 
f&\left(n^\# \left[\begin{smallmatrix}  \tau & 0 \\ 0 & a\bar\tau \end{smallmatrix}\right] \, k^\# \right)|a|_\A^{2}
 \label{HaarMeasSGP-Adele}\\
&\qquad \times \d n^\#\,\d\eta_{\A^\times}(a)\,\d\eta_{\A_E^\times}(\tau)\,\d k^\#,
\notag 
\end{align}
where $\d k^\#$ is the normalized Haar measure on $\bK^\#=\prod_{p\leq \infty} \bK_p^\#$. Note that the measure on $\sN^\#(\A)\cong \A_{E}$ is $\eta_{\A_E}$ defined above, which is $\frac{\sqrt{|D|}}{2}\times \prod_{p} \eta_{E_{p}}\,(p\leq \infty)$ due to the formulas $(\prod_{p}\eta_{E_p})(\A_E/E)=|D|^{1/2}/2$ (\cite[Chap V \S4 Proposition 7]{Weil}) and $\eta_{\A_E}(\A_E/E)=1$.

\subsubsection{}\label{sec:HaarGA}
 We fix $\eta_{\bK_\infty}$ so that $\vol(\bK_\infty)=1$. Then we normalize $\eta_{{\bf Sp}_2(\R))}$ in such a way that the quotient $\eta_{{\bf Sp}_2(\R)}/\eta_{\bK_\infty}$ corresponds to the measure $(\det Z)^{-3}\d X\d Y$ on $\fh_2\cong {\bf Sp}_2(\R)/\bK_\infty$. By $\sG(\R)^0=\sZ(\R)^0\,{\bf Sp}_2(\R) \cong \R_{>0}\times{\bf Sp}_2(\R)$, $\eta_{\sG(\R)}$ is defined by demanding that its restriction to $\sG(\R)^0$ is $\eta_{\R^\times}\otimes \eta_{{\bf Sp}_2(\R)}$. For $p<\infty$, we fix $\eta_{\sG(\Q_p)}$ by $\vol(\bK_p)=1$. Then, $\eta_{\sG(\A)}$ is defined to be the restricted product of $\eta_{\sG(\Q_v)}\,(v\leq \infty)$. 

\subsection{Ray class characters}\label{sec:RyClCh}
 For each place $v$ of $E$, let $\fp_v$ be the corresponding maximal ideal of $\fo_{E}$. For any invertible ideal $\fm$ of $\fo_{E}$, define $U(\fm):=\prod_{v<\infty}(1+\fp_{v}^{m_v}\cO_{E,v})\cap\fo_{E,v}^\times$ with $m_v:={\rm ord}_{v}\fm$, which is an open compact subgroup of $\widehat \cO_{E}^\times$. Let $\Lambda=\otimes_{v}\Lambda_v$ be an idele class character on $\A_{E}^\times$ of finite order such that $\Lambda|\A^\times={\bf 1}$; then $\Lambda|E_{\infty}^\times ={\bf 1}$ and $\Lambda$ factors through a finite quotient
 \begin{align}
{\rm Cl}_{\fm}^{0}(E):=\A_{E}^\times/\A^\times E^\times E_\R^{\times}U(\fm)
\label{RayClassGp}
\end{align}
for some invertible ideal $\fm$; the conductor of $\Lambda$ is the maximal one among such ideals $\fm$.  As is well known, the group \eqref{RayClassGp} is isomorphic to a quotient group of the ray class group of conductor $\fm$. 
%with $m_v:=\min\{m\in\Z_{\ge 0}\ ;\ \Lambda_{v}|_{(1+\fp_{v}^{m_v}\fo_{E,v})\cap\fo_{E,v}^\times}={\bf 1}\}$.
We say that a place $v$ is inert in $E/\Q$, splits in $E/\Q$ or ramifies in $E/\Q$ if $E_v:=E\otimes_\Q\Q_p$ is an unramified field extension of $\Q_p$, is isomorphic to $\Q_p\oplus \Q_p$, or is a ramified field extension of $\Q_p$, respectively. We have the following:
\begin{align}
\text{If $v$ is inert in $E/\Q$ and $\Lambda_v|\cO_{E,v}^\times=1$, then $\Lambda_{v}={\bf 1}$}. 
\label{OmegaTriv-2}
\end{align}
Indeed, let $p$ be a prime below $v$; then $ \Lambda_v(p)=1$ due to $\Lambda|\A^\times={\bf 1}$. Hence, $\Lambda_v$ is trivial on $p^{\Z}\cO_{E,v}^\times=E_p^\times$. Define the Galois conjugate $\Lambda^\dagger$ of $\Lambda$ by setting
\begin{align}
\Lambda^\dagger(\tau):=\Lambda(\bar \tau), \quad \tau \in \A_{E}^\times.
 \label{Def-ConjOmega}
\end{align}
We have $\Lambda(\tau)\Lambda^\dagger(\tau)=\Lambda(\tau  \bar \tau)=1$ for $\tau\in \A_E^\times$ by $\Lambda|\A^\times={\bf 1}$. Hence 
\begin{align}
\Lambda^\dagger=\Lambda^{-1}=\bar \Lambda,
\label{OmegaTriv-3}
\end{align} 
where $\bar \Lambda$ is the complex conjugate of $\Lambda$.

For an idele class character $\xi$ of $E^\times$, let $\widehat L(s,\xi)$ be the completed  Hecke $L$-function of $\xi$, and $L_p(s,\xi)$ its local $p$-factor for $p\leq \infty$. Then, $L_p(s,\xi^{\dagger})=L_p(s,\xi)$ for any $p<\infty$. Indeed, if $p$ is not ramified in $E/\Q$, the equality is trivial. Suppose that $p$ ramifies; if $\xi_p$ is a ramified character of $E_p^\times$, then both $L$-factors are $1$. If $\xi_p$ is unramified, then for any prime element $\varpi$ of $E_p$, we have $(\xi^{\dagger})_p(\varpi)=\xi_p( \bar \varpi)=\xi_p(\varpi)$, which implies the equality between local $p$-factors.

 For any character $\mu:\A^\times/\Q^\times \R_{>0}\rightarrow \C^1$, we define $\mu_{E}:=\mu\circ \nr_{E/\Q}:\A^\times_E/E^\times \rightarrow \C^1$. Then, $\mu_{E}$ is Galois invariant, and $\mu_{E}|E_\infty^\times={\bf 1}$. Hence, $(\Lambda\mu_{E})^{\dagger}=\Lambda^{\dagger}\mu_{E}^{\dagger}=\Lambda^{-1}\mu_{E}$, and   
\begin{align}
\widehat L(s,\Lambda^{-1}\mu_{E})=\widehat L(s,\Lambda^\dagger\mu_{E})=\widehat L(s,\Lambda\mu_{E}), \quad (\Re(s)>1). 
\label{FE-omega}
\end{align}
Note that $\widehat L(s,\Lambda\mu_{E})$ is holomorphic except for possible simple poles at $s=1,0$, which occurs if and only if both $\Lambda$ and $\mu$ are trivial.

\section{Eisenstein series and Rankin-Selberg integral} \label{sec:EisSerRSint}

\subsection{Eisenstein series}\label{sec:RSint}
For details, we refer to \cite[\S19]{Jacquet}; the theory on ${\bf GL}_2(\A_{E})$ developed there carries over into the group $\sG^\#(\A)$ by a minor modification. For a finite-dimensional vector space $V$ over a local field, let $\mathcal{S}(V)$ be the space of all Schwartz-Bruhat functions on $V$. For $p\le\infty$, $\phi\in\mathcal{S}(E_p^2)$ and $s\in\mathbb{C}$ with $\Re(s)>0$, we define a function $f_\phi^{(s,\Lambda_p,\mu_p)}:\sG^\#(\Q_p)\rightarrow \C$ by 
\begin{align}
f_\phi^{(s,\Lambda_p,\mu_p)}(g^\#)=\mu_p(\det g^\#)|\det g^\#|_p^{s+1}\int_{E_p^\times}\phi([\begin{smallmatrix} 0 & 1 \end{smallmatrix}][\begin{smallmatrix} \tau & 0 \\ 0 & \bar \tau \end{smallmatrix}]g^\#) \Lambda_p\mu_{E,p}(\tau) |\tau \bar \tau|_p^{s+1} d\eta_{E_p^\times}(\tau).
\label{RSint-f0}
\end{align}
When $p=\infty$, we assume that $\phi$ is $\bK_\infty^\#$-finite. By the local Tate's theory, the function $s\mapsto f_\phi^{(s,\Lambda_p,\mu_p)}(g^\#)$ is continued meromorphically to $\C$ in such a way that
\begin{align}
\sf_\phi^{(s,\Lambda_p,\mu_p)}(g^\#):=L_p(s+1,\Lambda_p\mu_{E,p})^{-1}\times f_\phi^{(s,\Lambda_p,\mu_p)}(g^\#)
\label{f-sf-rel}
\end{align}
is holomorphic on $\C$. Then $\sf_\phi^{(s,\Lambda_p,\mu_p)}$ belongs to the space $\mathscr{V}^\#(s,\Lambda_p,\mu_p)$ consisting of all smooth functions $f$ on $\sG^\#(\Q_p)$ satisfying
$$
f([\begin{smallmatrix} \tau & \beta \\ 0 & a\bar \tau \end{smallmatrix}]g^\#)=\Lambda_p(\tau)^{-1}\mu_p(a)^{-1}|a|_p^{-(s+1)}f(g^\#)
$$
for any $[\begin{smallmatrix} \tau & \beta \\ 0 & a\bar \tau \end{smallmatrix}]\in\sB^\#(\Q_p)$ and $g^\#\in \sG^\#(\Q_p)$.

The Fourier transform $\widehat{\phi}$ of $\phi\in\mathcal{S}(E_p^2)$ is defined by
\begin{align}\widehat{\phi}(x,y)=\int_{E_p^2}\phi(u,v)\psi(\langle[\begin{smallmatrix}  x  \\  y  \end{smallmatrix}],[\begin{smallmatrix}  u  \\  v  \end{smallmatrix}]\rangle)d\eta_{E_p}(u)d\eta_{E_p}(v).
\label{Def-Local-FT}
\end{align}
Set $w_0:=\left[\begin{smallmatrix}0 & -1\\ 1& 0\end{smallmatrix}\right] \in \sG^\#(\Q_p)$. The standard intertwining operator 
$$
M(s):{\mathscr V}^\#(s,\Lambda_p,\mu_p)\longrightarrow {\mathscr V}^\#(-s,\Lambda_p^{-1},\mu_p^{-1})
$$
is defined on $\Re(s)>0$ as the absolutely convergent integral 
\begin{align*}
 M(s)f(g^\#)=\int_{E_p}f\left( 
w_0  
\left[\begin{smallmatrix}1 & \beta \\ 0& 1 \end{smallmatrix}\right]g^\#\right)\,\d\eta_{E_p}(\beta), \quad g^\#\in \sG^\#(\Q_p).  
\end{align*}
The effect of $M(s)$ on the section $f_{\phi}^{(s,\Lambda_p,\mu_p)}$ is described  by the Fourier transform $\widehat \phi$ as 
\begin{align}
f_{\widehat \phi}^{(-s,\Lambda_p^{-1},\mu_p^{-1})}(g^\#)
=c_p\Lambda_p(\sqrt{D})^{-1}|D|_p^{-s+1/2}\,
\varepsilon_p(s,\Lambda\mu_{E},\psi_{E_p})\frac{L_p(1-s,\Lambda^{-1}\mu_{E}^{-1})}{L_p(s,\Lambda\mu_{E})}\times M(s)f_{\phi}^{(s,\Lambda_p,\mu_p)}(g^\#)
\label{JaqS-f1}
\end{align}
with $c_p=1$ if $p<\infty$ and $c_\infty=\frac{\sqrt{|D|}}{2}$ if $p=\infty$, where $\varepsilon(s,\bullet,\psi_{E_p})$ denote Tate's local epsilon factor defined by the character $\psi_{E_p}$ of $E_p$ and the associated self-dual measure on $E_p$ as usual. Let $\mathcal{S}(\A_E^2)$ be the space of all Schwartz-Buruhat functions on $\A_E^2$. For a decomposable element $\phi=\otimes_{p\le\infty}\phi_p\in\mathcal{S}(\A_{E}^2)$, we define
$$\sf_\phi^{(s,\Lambda,\mu)}(g^\#)=\prod_{p\le\infty}\sf_{\phi_p}^{(s,\Lambda_p,\mu_p)}(g_p^\#), \quad g^\#=(g_p^\#)_p\in \sG^\#(\A).$$
Note that $\sf_\phi^{(s,\Lambda,\mu)}$ is left $\sB^\#(\Q)$- invariant and right $\bK^\#$-finite. The Eisenstein series attached to $\sf_\phi^{(s,\Lambda,\mu)}$ is defined by the absolutely convergent series
\begin{align}
E(\phi,s,\Lambda,\mu;g^\#):=\widehat{L}(s+1,\Lambda\mu_E)\times\sum_{\delta \in \sB^\#(\Q)\bsl \sG^\#(\Q)}\sf_\phi^{(s,\Lambda,\mu)}(\delta g^\#), \quad g^\#\in \sG^\#(\A)
 \label{Def-Eis}
\end{align}
for $\Re(s)>1$. The Fourier transform $\widehat \phi$ of $\phi \in {\mathcal S}(\A_{E}^2)$ is defined by a formula similar to \eqref{Def-Local-FT} with respect to the measure $\eta_{\A_E}\otimes \eta_{\A_E}$. The following property of the Eisenstein series is standard.

\begin{prop}\label{RSBasicL0}
Let $\phi\in\mathcal{S}(\A_E^2)$, $s\in \C$, $\Lambda\in \widehat{{\rm Cl}({E})}$, $\mu\in\widehat{\A^\times/\Q^\times\R^\times}$ and $g^\#\in \sG^{\#}(\A)$.

\begin{itemize}
\item[(i)] The map $s\mapsto E(\phi,s,\Lambda,\mu;g^\#)$ $(\Re(s)>1)$ has a meromorphic continuation to $\C$, holomorphic in $s$ unless $\Lambda\mu_E\not={\bf 1}$ in which case it has possible simple poles only at $s=1,-1$. The function $g^\#\mapsto E(\phi,s,\Lambda,\mu;g^\#)$ is an automorphic form on $\sG^\#(\Q)\sZ^\#(\A)\bsl \sG^\#(\A)$. 
\item[(ii)] We have the functional equation $E(\widehat{\phi},-s,\Lambda^{-1},\mu^{-1};g^\#)=E(\phi,s,\Lambda,\mu;g^\#)$. 
\item[(iii)] For a relatively compact subset $\cN\subset \C$ on which $s\mapsto E(\phi,s,\Lambda,\mu;g^\#)$ is regular and for a compact set $\cU \subset \sG^\#(\A)$, there exists constants $C>0$ and $N>0$ such that
$$
|E(\phi,s,\Lambda,\mu;\left[\begin{smallmatrix} a & 0 \\ 0 & 1 \end{smallmatrix}\right]\,g^\#)|\leq C\,|a|^{N}\quad (a\in \R_{>1},\,g^\#\in \cU,\,s\in \cN). 
$$
\end{itemize}
\end{prop}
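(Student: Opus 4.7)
The strategy is the Tate--Jacquet method of \cite[\S19]{Jacquet}, since $\sG^\#$ is essentially $\GL_2$ over $E$ (with determinant constrained to lie in $\Q^\times$) and the Eisenstein series is a principal-series induction. In the absolutely convergent region $\Re(s)>1$, the global $L$-factor $\widehat L(s+1,\Lambda\mu_E)=\prod_p L_p(s+1,\Lambda_p\mu_{E,p})$ in \eqref{Def-Eis} cancels against the local $L_p^{-1}$ built into each $\sf_{\phi_p}$ via \eqref{f-sf-rel}. Substituting the integral definition \eqref{RSint-f0} into the resulting sum over $\sB^\#(\Q)\backslash \sG^\#(\Q)$, and unfolding against the action of $E^\times\subset \A_E^\times$, the Eisenstein series becomes a global Tate integral
\begin{equation*}
E(\phi,s,\Lambda,\mu;g^\#)=\mu(\det g^\#)|\det g^\#|_\A^{s+1}\int_{E^\times\backslash \A_E^\times}\Lambda\mu_E(\tau)\,|\tau\bar\tau|_\A^{s+1}\,\Theta_\phi(\tau,g^\#)\,d^\times\tau,
\end{equation*}
where $\Theta_\phi(\tau,g^\#):=\sum_{\xi\in E^2\setminus\{0\}}\phi(\tau\xi g^\#)$ is a theta-like sum (with a suitable interpretation of the right-action on the row vector $\xi$).

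For parts (i) and (ii), I would split the integral at $|\tau\bar\tau|_\A=1$. The piece over $|\tau\bar\tau|_\A\ge 1$ converges absolutely for every $s$ and yields an entire function, by the rapid decay of $\Theta_\phi$. On the other piece, Poisson summation on $E^2$ rewrites $\Theta_\phi(\tau,g^\#)$ as a $|\tau\bar\tau|_\A$-scaled version of $\Theta_{\widehat\phi}$, up to two boundary terms proportional to $\phi(0)$ and $\widehat\phi(0)$ coming from $\xi=0$. The substitution $\tau\mapsto\tau^{-1}$ then converts the Poisson piece of the result into the $|\tau\bar\tau|_\A\ge 1$ integral for the quadruple $(-s,\widehat\phi,\Lambda^{-1},\mu^{-1})$, which is precisely the formula obtained above for $E(\widehat\phi,-s,\Lambda^{-1},\mu^{-1};g^\#)$; this simultaneously furnishes the meromorphic continuation (i) and the functional equation (ii). The leftover boundary Tate integrals of the form $\phi(0)\int_{|\tau\bar\tau|_\A<1}\Lambda\mu_E(\tau)|\tau\bar\tau|_\A^{s+1}d^\times\tau$ and its $\widehat\phi$-analog are holomorphic in $s$ when $\Lambda\mu_E\neq{\bf 1}$, and produce simple poles at $s=-1$ and $s=1$ respectively exactly when $\Lambda\mu_E={\bf 1}$. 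The matching of local normalization constants uses \eqref{JaqS-f1} together with the global $\widehat L$-functional equation and the product $\prod_p c_p=\sqrt{|D|}/2$ from the archimedean $c_\infty$, consistent with the discriminant factor appearing in $\eta_{\A_E}$ (\S\ref{sec:HaarEQ}). Automorphicity of $E(\phi,s,\Lambda,\mu;\cdot)$ on $\sG^\#(\Q)\sZ^\#(\A)\backslash \sG^\#(\A)$ then follows from $\sB^\#(\Q)$-invariance built into the series, invariance under the Bruhat complement $w_0\sN^\#(\Q)$ picked up through the Poisson expression, and triviality of the central character via $\Lambda|\A^\times={\bf 1}$ (\eqref{OmegaTriv-1}) and $\mu|\R^\times={\bf 1}$.

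For (iii), moderate growth on Siegel sets is standard. Writing $g^\#$ in Iwasawa form as in \eqref{Def-HaarMeasSGP-f0} and using the Bruhat decomposition $\sG^\#(\Q)=\sB^\#(\Q)\sqcup \sB^\#(\Q)w_0\sN^\#(\Q)$, one estimates the finitely many "large" terms of the series by direct substitution and the remaining tails by the Schwartz--Bruhat decay of $\phi$; this yields a uniform polynomial bound in $|a|$ valid for $g^\#$ in a compact set and for $s$ in a compact set where $E$ is regular. Uniformity in $s\in \cN$ after subtracting the potential residues at $s=\pm 1$ is handled by a routine Phragm\'en--Lindel\"of estimate. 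The main technical obstacle throughout is the careful bookkeeping that the Poisson boundary terms indeed produce only simple poles at $s=\pm 1$ with the correct residues dictated by the global $L$-function; but this is the standard Tate local-to-global computation, and I do not anticipate conceptual difficulty.
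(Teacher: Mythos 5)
Your proposal is correct and is essentially the approach the paper itself intends: the paper states Proposition~\ref{RSBasicL0} without proof, declaring it ``standard'' and pointing (at the start of \S\ref{sec:RSint}) to \cite[\S19]{Jacquet} for the ${\bf GL}_2(\A_E)$ theory, carried over to $\sG^\#$ by ``a minor modification.'' Your unfolding of the Eisenstein series into the global Tate integral over $E^\times\backslash\A_E^\times$ against the theta sum $\Theta_\phi$, the split at $|\tau\bar\tau|_\A=1$, the Poisson-summation step on $E^2$, and the identification of the two boundary terms $\phi(0)$ and $\widehat\phi(0)$ producing simple poles at $s=-1$ and $s=1$ precisely when $\Lambda\mu_E={\bf 1}$ is exactly that argument. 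One small remark worth recording: the wording of (i) in the statement (``holomorphic unless $\Lambda\mu_E\ne{\bf 1}$'') is evidently a typo for ``unless $\Lambda\mu_E={\bf 1}$,'' consistent both with the remark at the end of \S2.4 and with what your boundary-term analysis correctly produces. For (iii), note the bound is only required on a set $\cN$ where $E$ is already regular, so there is no need to subtract residues; the direct Siegel-set estimate via the Bruhat decomposition and Schwartz decay already suffices.
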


\subsection{Rankin-Selberg integral and the basic identity}
For any cusp form $\varphi$ on $\sG(\A)$, $\Lambda\in \widehat{{\rm Cl}(E)}$ and $s\in \C$, the Rankin-Selberg integral is defined by 
\begin{align}
\langle E(\phi,s,\Lambda,\mu), \varphi\rangle:=\int_{\sZ^\#(\A)\sG^\#(\Q)\bsl \sG^\#(\A)} E(\phi,s,\Lambda,\mu;g^\#)\,\varphi(\iota_{\theta}(g^\#))\,\d g^\# .\label{Def-RSint}
\end{align}
By Proposition \ref{RSBasicL0}(iii) and by the Fourier expansion \eqref{AdeleFE-L1-f0}, it is straightforward to show the absolute convergence of the integral for $s\in \C$ where the Eisenstein series is regular. For $T\in \sV(\Q)$, define a character of $\sN(\A)$ by 
$$
\psi_{T}(\sn(X))=\psi(\tr(T X)), \quad X\in \sV(\A).
$$
The $(T_\theta,\Lambda)$-Bessel period of a cusp form $\varphi$ on $\sZ(\A)\sG(\Q)\bsl \sG(\A)$ is defined by the integral\footnote{
Note that $\sm(I_\theta(a),\nr_{E/\Q}(a))=a1_4$ for $a\in \A^\times$. Then, due to $\varphi$ being $\sZ(\A)$-invariant, the integral $B^{T_\theta,\Lambda}(\varphi;g)$ is $0$ if $\Lambda|\A^\times\not=1$. This trivial vanishing is not the case by $\Lambda|\A^\times={\bf 1}$.}
\begin{align}
B^{T_\theta,\Lambda}(\varphi;g):=\int_{\A_E^\times/E^\times \A^\times} \Lambda(\tau)^{-1}\,\int_{\sN(\Q)\bsl \sN(\A)} \psi_{T_\theta}(n)^{-1}\varphi(\sm(I_\theta(\tau),\nr_{E/\Q}(\tau))\,n\,g)\,\d n\,\d^\times \tau
 \label{BesselInt}
\end{align}  
for $g\in \sG(\A)$, where $\d^\times \tau$ denote the quotient measure on $\A_{E}^\times/\A^\times$ (see \S\ref{sec:HaarEQ}).
 Here is the basic identity
 %which is originally due to Andrianov in the classical setting (\cite{And1}, \cite{And2}) and is adelically reformulated by 
 due to Piatetski-Shapiro (\cite{PS97}). To determine constants exactly under our normalization of Haar measures, we reproduce the proof briefly. 

\begin{lem}  \label{RSint-L1}
Let $\varphi:\sZ(\A)\bsl \sG(\Q) \sG(\A)\longrightarrow \C$ be a cusp form. For $\Re(s)>1$, we have
\begin{align}
\langle E(\phi,s,\Lambda,\mu),\varphi\rangle&=\tfrac{\sqrt{|D|}}{2}L(s+1,\Lambda\mu_E)\label{RSint-L1-f00}\\
&\times\int_{\A^\times} \int_{\bK_\infty^\#\,\bK_0^\#(\cO_E)} \sf_{\phi}^{(s,\Lambda, \mu)}(k^\#)\mu(a)|a|_\A^{s-1}\,B^{T_\theta, \Lambda}(\varphi;\sm(a 1_2,a)\iota_\theta(k^\#))\,\d^\times a\,\d k^\#.\notag
\end{align}
\end{lem}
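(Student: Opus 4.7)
The plan is the classical Rankin--Selberg unfolding technique, adapted to the $\sG^\#$-embedding via $\iota_\theta$. First I would unfold: substituting \eqref{Def-Eis} into \eqref{Def-RSint} and interchanging sum and integral (justified for $\Re(s)>1$ by absolute convergence via Proposition~\ref{RSBasicL0}(iii) and rapid decay of cusp forms) collapses $\sZ^\#(\A)\sG^\#(\Q)\bsl\sG^\#(\A)$ to $\sZ^\#(\A)\sB^\#(\Q)\bsl\sG^\#(\A)$, giving
\[
\widehat L(s+1,\Lambda\mu_E)\int_{\sZ^\#(\A)\sB^\#(\Q)\bsl\sG^\#(\A)}\sf_\phi^{(s,\Lambda,\mu)}(g^\#)\,\varphi(\iota_\theta(g^\#))\,dg^\#.
\]
Next I would apply the Iwasawa decomposition $g^\#=n^\#(\beta)\,\diag(a\tau,\tau^{-1})\,k^\#$ together with the Haar measure formula \eqref{HaarMeasSGP-Adele}; the prefactor $\sqrt{|D|}/2$ comes out directly, and the modular transformation of $\sf_\phi$ (using $\Lambda|_{\A^\times}={\bf 1}$ from \eqref{OmegaTriv-1}) combines with the Haar factor $|a|^{-2}|\nr\tau|^{-2}$ to yield the net weight $\Lambda(\tau)^{-1}\mu(a)\mu_E(\tau)|a|^{s-1}|\nr\tau|^{s-1}$.

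The central step is the Fourier expansion of $\varphi$ along $\sN(\A)$. Since $\iota_\theta(\sN^\#)=\sn(\sV^{T_\theta})$ by \eqref{Embed-ff0} and the $\tr(\cdot\,\cdot)$-orthogonal complement of $\sV^{T_\theta}$ in $\sV(\Q)$ is precisely $\Q T_\theta$, the inner $\sN^\#(\Q)\bsl\sN^\#(\A)$-integration of $\varphi\circ\iota_\theta$ extracts the Fourier modes $\sum_{c\in\Q^\times}\varphi_{cT_\theta}(\cdot)$ (cuspidality killing $c=0$), where $\varphi_T$ denotes the $T$-th Fourier coefficient along $\sN$. The key identity
\[
\varphi_{cT_\theta}(g)=\varphi_{T_\theta}(\sm(cI_2,c)g)\qquad(c\in\Q^\times),
\]
which follows from automorphy of $\varphi$ under $\sm(cI_2,c)\in\sG(\Q)$ together with the conjugation $\sm(cI_2,c)\sn(X)\sm(cI_2,c)^{-1}=\sn(cX)$ and the $\Q^\times$-triviality of adelic absolute values, converts the $c$-sum into shifts of the torus variable $a$ by $c\in\Q^\times$. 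Identifying $\sZ^\#(\A)\sA^\#(\Q)\bsl\sA^\#(\A)\cong(\A^\times/\Q^\times)\times(\A_E^\times/E^\times\A^\times)$ in the $(a,\tau)$-parametrization (using $\sZ^\#(\A)=\{(b^2,b^{-1})\}_{b\in\A^\times}$ and checking descent of the modular weight via $\Lambda|_{E^\times}=\mu|_{\Q^\times}={\bf 1}$), the $\Q^\times$-sum folds with the $\A^\times/\Q^\times$-part of the torus quotient into a single integration over the full $\A^\times$.

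For the final assembly, the change of variables $a\mapsto a/\nr(\tau)$ cancels the auxiliary weights $\mu_E(\tau)|\nr\tau|^{s-1}$, and $\sZ(\A)$-invariance of $\varphi_{T_\theta}$ (applied to $\nr\tau\cdot 1_4\in\sZ(\A)$) rewrites the torus argument as $\sm(I_\theta(\tau),\nr(\tau))\sm(a1_2,a)\iota_\theta(k^\#)$. The identity ${}^tI_\theta(\tau)T_\theta I_\theta(\tau)=\nr(\tau)T_\theta$, i.e.\ the defining invariance of the stabilizer $\sM_{T_\theta}^\circ$, shows that $\psi_{T_\theta}$ is preserved by the inner torus element; the $\tau$-integration against $\Lambda(\tau)^{-1}$ over $\A_E^\times/E^\times\A^\times$ then assembles exactly the Bessel period \eqref{BesselInt}. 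The trailing $b_\R^\theta$ on the right-hand side arises from Lemma~\ref{MCPtArc-L1}: at the archimedean place $\iota_\theta(k_\infty^\#)=b_\R^\theta k_\infty(b_\R^\theta)^{-1}$ for $k_\infty^\#\in(\bK_\infty^\#)^0$, so re-expressing $\iota_\theta(k^\#)$ in standard $\sG(\R)$-Iwasawa form requires the insertion of $b_\R^\theta$ on the right of $\iota_\theta(k^\#)$ inside the Bessel period argument.

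The main obstacle is the folding in the third paragraph: pinning down the quotient $\sZ^\#(\A)\sA^\#(\Q)\bsl\sA^\#(\A)$, checking descent of the collected modular weight, and converting the discrete $\Q^\times$-sum of Fourier coefficients into a continuous $\A^\times$-integration. The careful bookkeeping of Haar and modular-character factors needed to produce exactly the RHS coefficient $(\sqrt{|D|}/2)\,L(s+1,\Lambda\mu_E)\,\mu(a)|a|^{s-1}$, together with the archimedean $b_\R^\theta$-insertion, add a further technical layer.
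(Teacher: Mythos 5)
Your approach is essentially the same as the paper's: unfold the Eisenstein series via the Bruhat decomposition of $\sB^\#(\Q)\bsl\sG^\#(\Q)$, pass to Iwasawa coordinates using \eqref{HaarMeasSGP-Adele}, apply the Fourier expansion of $\varphi$ along $\sN(\A)$ (which is exactly Lemma~\ref{RSint-L0}, with the $\alpha$-sum accounting for the missing $\Q T_\theta$-direction since $\iota_\theta(\sN^\#)=\sn(\sV^{T_\theta})$), and then fold the $\Q^\times$-sum together with the $\A^\times/\Q^\times$-integration into a single integral over $\A^\times$. The paper performs a single change of variables $a\mapsto a^{-1}$ at the end, whereas you do $a\mapsto a/\nr(\tau)$ earlier; this is a cosmetic difference only.

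The one genuine gap is your explanation of the trailing $b_\R^\theta$ in the Bessel period argument. You claim it arises from ``re-expressing $\iota_\theta(k^\#)$ in standard $\sG(\R)$-Iwasawa form,'' but the Bessel period $B^{T_\theta,\Lambda}(\varphi;g)$ is defined for arbitrary $g\in\sG(\A)$ and there is no canonical reason that rewriting the argument in any particular coordinate form should append $b_\R^\theta$. If one unfolds $\langle E(\phi,s,\Lambda,\mu),\varphi\rangle$ literally as you describe, one arrives at $B^{T_\theta,\Lambda}(\varphi;\sm(a1_2,a)\iota_\theta(k^\#))$ with no trailing $b_\R^\theta$, and there is no legitimate move that produces it ex nihilo (in particular, $\varphi$ is not right $b_\R^\theta$-invariant, so $B^{T_\theta,\Lambda}(\varphi;g)\ne B^{T_\theta,\Lambda}(\varphi;gb_\R^\theta)$). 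The paper's own proof in fact starts from $\langle E(\phi,s,\Lambda,\mu),R(b_\R^\theta)\varphi\rangle$, so the factor $\varphi(\iota_\theta(g^\#)b_\R^\theta)$ is present from the outset and the $b_\R^\theta$ simply carries through to the Bessel period. (That the displayed lemma statement writes $\varphi$ rather than $R(b_\R^\theta)\varphi$ on the left-hand side appears to be a slip in the source; the subsequent application in Lemma~\ref{BesselMod-L1} is consistent with the $R(b_\R^\theta)$-translated version.) So the fix to your proposal is not a more clever Iwasawa rewriting at infinity, but rather to compute the pairing against the right-translated cusp form from the start; the remainder of your argument then goes through unchanged.
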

\begin{proof} By substituting \eqref{Def-Eis} to \eqref{Def-RSint}, 
\begin{align}
&L(s+1,\Lambda\mu_E)^{-1}\times \langle E(\phi,s,\Lambda,\mu),\varphi\rangle\notag\\
&=\int_{\sZ^\#(\A)\sG^\#(\Q)\bsl \sG^\#(\A)} \sum_{\gamma\in \sB^\#(\Q)\bsl \sG^\#(\Q)} \sf_{\phi}^{(s,\Lambda,\mu)}(\gamma g^\#)\,\varphi(\iota_{\theta}(g^\#))\,\d g^\#
 \notag
\\
&=\int_{\sZ^\#(\A) \sB^\#(\Q) \bsl \sG^\#(\A)} \sf_{\phi}^{(s,\Lambda,\mu)}(g^\#)\,
\varphi(\iota_{\theta}(g^\#))\,\d g^\# 
\notag
\\
&=\tfrac{\sqrt{|D|}}{2} \int_{\A^\times/\Q^\times}\int_{\A_E^\times/\A^\times E^\times} \int_{\sN^\#(\Q)\bsl \sN^\#(\A)} \int_{\bK_0^\#(\cO_E)\bK_\infty^\#}|a|_\A^{-(s+1)}\Lambda(\tau)^{-1}\mu(a)^{-1} \notag
\\
&\quad \times \varphi(\iota_\theta(n^\#)\,\sm(I_{\theta}(\tau),\nr_{E/\Q}(\tau))\,\left[\begin{smallmatrix} a^{-1} 1_2 & 0 \\ 0 & 1_2 \end{smallmatrix} \right]\iota_{\theta}(k_\fin^\#))
\,|a|_\A^{2}\,\d^\times a\,\d^\times \tau\,\d n^\#\,\d k_\fin^{\#}\,\d k_\infty^\#, 
\notag
\end{align}
where the last equality is proved by \eqref{HaarMeasSGP-Adele}. By Lemma~\ref{RSint-L0}, the last expression becomes
\begin{align*}
&\tfrac{\sqrt{|D|}}{2}\int_{\A^\times/\Q^\times}\int_{\A_E^\times/\A^\times E^\times} \int_{\sN^\#(\Q)\bsl \sN^\#(\A)} \int_{\bK_0^\#(\cO_E)\bK_\infty^\#}|a|_\A^{-(s+1)}\Lambda(\tau)^{-1}\mu(a)^{-1} \\
&\quad \times \sum_{\alpha \in \Q^\times} \varphi(n\,\sm(I_{\theta}(\tau),\nr_{E/\Q}(\tau))\,\left[\begin{smallmatrix} \alpha a^{-1} 1_2 & 0 \\ 0 & 1_2 \end{smallmatrix} \right]\iota_{\theta}(k_\fin^\#))\,\psi_{T_\theta}(n)^{-1}
\,|a|_\A^{2}\,\d^\times a\,\d^\times \tau\,\d n\,\d k_\fin^{\#}.
\end{align*}
The $\alpha$-summation and the $a$-integral over $\A^\times/\Q^\times$ are combined to yields an integral over $\A^\times$. Then, by the change of variables $a\mapsto a^{-1}$, we get the desired formula by \eqref{BesselInt}. \end{proof}

\begin{lem} \label{RSint-L0} 
\begin{align*}
\int_{\sN^\#(\Q)\bsl \sN^\#(\A)}\varphi(\iota_\theta(n^\#)g)\,\d n^\#
=\sum_{\alpha\in \Q^\times}
\int_{\sV(\Q)\bsl \sV(\A)}\varphi(\sn(X) \left[\begin{smallmatrix} \alpha 1_2 & 0 \\ 0 & 1_2 \end{smallmatrix}\right]g)\,\psi_{T_\theta}(\sn(X))^{-1}\,\d X,
\quad g\in \sG(\A).
\end{align*}
\end{lem}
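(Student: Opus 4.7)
The plan is to apply the standard Fourier expansion of $\varphi$ along the Siegel unipotent $\sN(\A)$ and integrate term-by-term over the subgroup $\sn(\sV^{T_\theta})(\A)\cong\iota_\theta(\sN^\#(\A))$; only those Fourier modes whose additive characters restrict trivially to $\sV^{T_\theta}(\A)$ will survive, and cuspidality of $\varphi$ will remove the trivial mode.

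First, using $\iota_\theta(\sN^\#)=\sn(\sV^{T_\theta})$ from \eqref{Embed-ff0} together with $\vol(\sN^\#(\Q)\bsl\sN^\#(\A))=1$ from \S\ref{sec:MeasUnpt}, the left-hand side is rewritten as $\int_{\sV^{T_\theta}(\Q)\bsl\sV^{T_\theta}(\A)}\varphi(\sn(Y)g)\,\d Y$. I then substitute the Fourier expansion
$$
\varphi(\sn(X)g)=\sum_{T\in\sV(\Q)}\varphi_T(g)\,\psi(\tr(TX)),\qquad \varphi_T(g)=\int_{\sV(\Q)\bsl\sV(\A)}\varphi(\sn(X)g)\,\psi(-\tr(TX))\,\d X,
$$
which is valid with the self-dual Haar measure on $\sV(\A)$ of \S\ref{sec:MeasUnpt} (so that $\vol(\sV(\Q)\bsl\sV(\A))=1$). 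Interchanging sum and integral leaves $\sum_{T}\varphi_T(g)\,\int_{\sV^{T_\theta}(\Q)\bsl\sV^{T_\theta}(\A)}\psi(\tr(TY))\,\d Y$, and the inner integral equals $1$ when the character $Y\mapsto\psi(\tr(TY))$ is trivial on $\sV^{T_\theta}(\A)$ and vanishes otherwise. By non-degeneracy of $\tr(XY)$ on $\sV(\Q)$ and the complementary splitting $\sV(\Q)=\Q T_\theta\oplus\sV^{T_\theta}(\Q)$ (valid because $\tr(T_\theta^2)>0$), this triviality amounts to $T\in\Q T_\theta$; cuspidality of $\varphi$ then discards the $T=0$ summand, producing $\sum_{\alpha\in\Q^\times}\varphi_{\alpha T_\theta}(g)$.

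To finish, I identify $\varphi_{\alpha T_\theta}(g)$ with the $\alpha$-th summand on the right-hand side. The measure-preserving substitution $X\mapsto\alpha^{-1}X$ (justified by $|\alpha|_\A=1$ via the product formula) turns $\psi(-\alpha\tr(T_\theta X))$ into $\psi_{T_\theta}(\sn(X))^{-1}$. A direct computation from $\sm(A,c)\sn(X)\sm(A,c)^{-1}=\sn(c^{-1}AX{}^tA)$ gives $\sn(\alpha^{-1}X)=\sm(\alpha 1_2,\alpha)^{-1}\sn(X)\sm(\alpha 1_2,\alpha)$ with $\sm(\alpha 1_2,\alpha)=\left[\begin{smallmatrix}\alpha 1_2 & 0\\ 0& 1_2\end{smallmatrix}\right]\in\sG(\Q)$; left $\sG(\Q)$-invariance of $\varphi$ absorbs the leftmost factor and yields the required expression. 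I expect no real obstacle beyond bookkeeping; the one point demanding care is reconciling $\tr(XY)$ (used to define $\sV^{T_\theta}$) with $\tr(XY^\dagger)$ (used to fix the self-dual measure), but this is harmless because the $\Q$-linear involution $X\mapsto X^\dagger$ intertwines the two dualities on $\sV$ and preserves the Plancherel normalization, so that $\vol(\sV^{T_\theta}(\Q)\bsl\sV^{T_\theta}(\A))=1$ as needed.
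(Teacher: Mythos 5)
Your proof is correct and carries out precisely what the paper's terse "The proof is straightforward" alludes to: Fourier-expand $\varphi$ along the full abelian unipotent $\sN(\A)$, integrate the expansion over the subtorus $\sV^{T_\theta}(\Q)\bsl\sV^{T_\theta}(\A)$ so that character orthogonality picks out the modes $T\in\Q T_\theta$, drop the $T=0$ term by cuspidality, and then use the conjugation $\sn(\alpha^{-1}X)=\sm(\alpha 1_2,\alpha)^{-1}\sn(X)\sm(\alpha 1_2,\alpha)$ together with left $\sG(\Q)$-invariance to convert each $\varphi_{\alpha T_\theta}$ into the $\alpha$-th summand on the right. The one genuinely delicate point — that the measure on $\sV(\A)$ is normalized via the pairing $\tr(XY^\dagger)$ while the Fourier expansion uses $\tr(XY)$, so one must check that the volumes $\vol(\sV(\Q)\bsl\sV(\A))$ and $\vol(\sV^{T_\theta}(\Q)\bsl\sV^{T_\theta}(\A))$ are both $1$ — is something you correctly flag and dispatch via the $\Q$-linear involution $X\mapsto X^\dagger$ and the normalization fixed in \S\ref{sec:MeasUnpt}.
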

\begin{proof}
 The proof is straightforward.
\end{proof}

\section{Automorphic forms and Fourier coefficients} \label{sec:FC}
Let $(\Z^2)_{\rm dom}:=\{\lambda=(l_1,l_2) \in \Z^2\mid l_1\geq l_2\,\}$. Let $\lambda=(l_1,l_2) \in (\Z^2)_{\rm dom}$ and $\varrho$ the representation of ${\bf GL}_2(\C)$ on the space $V_{\varrho}$ of homogeneous polynomials in $X,Y$ of degree $l_1-l_2$ defined as $\varrho(h)f(X,Y)=(\det h)^{l_2}f(aX +b Y, cX+dY)$ for $h=\left[\begin{smallmatrix} a & b \\c & d \end{smallmatrix}\right]\in {\bf GL}_2(\C)$ and $f \in V_\varrho$. As is well-known, any irreducible rational representation of ${\bf GL}_2(\C)$, up to equivalence, is obtained this way. The space $V_\varrho$ carries an ${\bf SU}(2)$-invariant hermitian inner product product $(\cdot\mid\cdot)_\varrho$ given by \cite[((8.2.6)]{FurusawaMorimoto}. Recall $\bK_\infty=\{k_\infty(u):=\left[\begin{smallmatrix} A & B \\ -B & A 
\end{smallmatrix}\right]\mid u=A+iB\in {\bf U}(2)\}$. For $N\in \Z_{>0}$, let $S_{\varrho}(\bK_0(N))$ denotes the space of smooth functions $\varphi:\sG(\A)\rightarrow V_{\varrho}$ that satisfies the following conditions: 
\begin{itemize}
\item[(i)] $\varphi(z \gamma g)=\varphi(g)$ for $(z,\gamma,g)\in \sZ(\A)\times \sG(\Q)\times \sG(\A)$.
\item[(ii)] $\varphi(gk_\fin k_\infty(u))=\varrho(\bar u)^{-1}\varphi(g)$ for $k_\fin\in \bK_0(N)$ and $k_\infty(u) \in \bK_\infty$. 
\item[(iii)] $R(X)\varphi=0$ for all $X\in \fp^{-}$. 
\item[(iv)] $\varphi$ is bounded on $\sG(\A)$. 
\end{itemize}
For $T\in \sV(\R)$ with $\det(T)\not=0$, let ${\bf B}_{\varrho}^{T}: \sG(\R)^0\longrightarrow {\rm End}_{\C}(V_{\varrho})$ be a function defined by 
\begin{align}
{\bf B}_{\lambda}^T(g):=\nu(g)^{(l_1+l_2)/2}\varrho(Ci+D)^{-1} \exp(2\pi i \tr(T\,g\langle i1_2\rangle), \quad g=\left[\begin{smallmatrix} A & B \\ C & D \end{smallmatrix}\right] \in \sG(\R)^0.
 \label{AdeleFE-W}
\end{align}
This function satisfies the conditions: \begin{align}
{\bf B}^{T}_{\varrho}(\sm(A,c)g)&={\bf B}^T_\varrho(g)\circ \varrho({}^t A^{-1}\,c)^{-1}, \quad \sm(A,c)\in {\sM}_T^\circ(\R),\, g\in \sG(\R)^0 
 \label{GWF-f0} \\
{\bf B}^{T}_\varrho(ngk_\infty(u))&=\psi_T(n)\varrho(\bar u)^{-l}\circ {\bf B}(g)
, \quad (n,g,u)\in \sN(\R)\times \sG(\R)^0\times {\bf U}(2), 
 \label{GWF-f1}
\\
R(\fp^{-}){\bf B}^{T}_\varrho&=0.
 \label{GWF-f2}
\end{align}
The function $B^{T}_{\varrho,v}: g\longmapsto {\bf B}_{\varrho}^{T}(g)(v)$ with $v\in V_{\varrho}-(0)$ is bounded on $\sG(\R)^0$ if and only if $T\in \sV(\R)^{+}$, where $\sV(\R)^+$ denote the set of positive definite elements in $\sV(\R)$. Set 
\begin{align*}
\gamma&:=\sm(-1_2,-1)\in \sM(\Q). 
\end{align*}
Note that $\nu(\gamma)=-1$.  
\begin{lem}\label{AdeleFE-L1}
Let $\varphi\in S_\varrho(\bK_0(N))$. There exists a unique family of vectors $a_\varphi(T;g_\fin)\in V_\varrho$ $(T\in \sV(\Q))$, $g_\fin \in \sG(\A_\fin)$ such that 
\begin{align}
\varphi(g_\infty g_\fin)=\sum_{T \in \sV(\Q)}{\bf B}_{\varrho}^{T}(g_\infty)(a_\varphi(T;g_\fin))\,
, \quad g_\infty\in \sG(\R)^\circ,\,g_\fin\in \sG(\A_\fin).
 \label{AdeleFE-L1-f0}
\end{align}
If $T\not \in \sV(\Q)^{+}$, then $a_\varphi(T;g_\fin)=0$ for all $g_\fin \in \sG(\A_\fin)$. For $g_\fin \in \sG(\A_\fin)$, 
\begin{align}
\int_{\sN(\A)/\sN(\Q)}\varphi(ng_\infty g_\fin)\,\psi_{T}(n)^{-1}\d n=
\begin{cases}{\bf B}_{\varrho}^{T}(g_\infty)(a_\varphi(T;g_\fin)), 
\quad &(g_\infty\in \sG(\R)^\circ), \\
0 ,\quad &(g_\infty \in \sG(\R)-\sG(\R)^\circ).
\end{cases}
 \label{AdeleFE-L1-1}
\end{align}
\end{lem}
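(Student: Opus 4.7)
The plan is to Fourier-expand $\varphi$ along $\sN(\Q)\bsl\sN(\A)$, identify each Fourier coefficient with an evaluation of ${\bf B}^T_\varrho$ using holomorphy and $\bK_\infty$-equivariance, and restrict the Fourier support to $\sV(\Q)^+$ via Koecher's principle powered by the boundedness hypothesis.

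Since $\sN$ is abelian and unipotent, $\sN(\Q)\bsl\sN(\A)\cong (\Q\bsl\A)^3$ is a compact abelian group whose Pontryagin dual is canonically identified with $\sV(\Q)$ via $T\mapsto \psi_T$ through the perfect pairing $\tr(T X)$. Fourier analysis of the $\sN(\Q)$-invariant function $n\mapsto \varphi(ng_\infty g_\fin)$ then yields
\[
\varphi(ng_\infty g_\fin)=\sum_{T\in \sV(\Q)}\varphi_T(g_\infty g_\fin)\,\psi_T(n),\qquad \varphi_T(g):=\int_{\sN(\Q)\bsl \sN(\A)} \varphi(n'g)\,\psi_T(n')^{-1}\,\d n',
\]
and the first case of \eqref{AdeleFE-L1-1} is tautological. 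For fixed $g_\fin$, the $V_\varrho$-valued function $F\colon g_\infty\mapsto \varphi_T(g_\infty g_\fin)$ on $\sG(\R)^\circ$ satisfies $F(ng)=\psi_T(n)F(g)$ for $n\in\sN(\R)$ (by a change of variables in the defining integral, using abelianness of $\sN$), $F(gk_\infty(u))=\varrho(\bar u)^{-1}F(g)$ (from hypothesis (ii), since $k_\infty(u)$ commutes adelically with $g_\fin$), and $R(\fp^-)F=0$ (from hypothesis (iii)); these are exactly \eqref{GWF-f0}--\eqref{GWF-f2}. Writing $F(g)=\nu(g)^{(l_1+l_2)/2}\varrho(Ci+D)^{-1}\tilde F(g\langle i1_2\rangle)$, the $\fp^-$-condition becomes holomorphy of $\tilde F$ on $\fh_2$, while the $\sN(\R)$-equivariance yields $\tilde F(Z+X)=\exp(2\pi i\tr(TX))\tilde F(Z)$ for all $X\in\sV(\R)$; a holomorphic function of $Z=X+iY$ which is $\sV(\R)$-periodic up to this exponential character must equal $a\exp(2\pi i\tr(TZ))$ for a unique $a=:a_\varphi(T;g_\fin)\in V_\varrho$, whence the Fourier expansion \eqref{AdeleFE-L1-f0} follows.

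The vanishing $a_\varphi(T;g_\fin)=0$ for $T\notin \sV(\Q)^+$ is the principal subtlety. If $T$ is not positive semidefinite, pick $v\in\R^2$ with ${}^t v T v<0$ and a family $A_t\in \SL_2(\R)$ with $A_t{}^tA_t\sim t\,v{}^tv$ as $t\to\infty$; then $\exp(-2\pi\tr(T A_t{}^tA_t))\to\infty$, so $\|{\bf B}^T_\varrho(\sm(A_t,1))(a)\|\to\infty$ for every $a\neq 0$, contradicting the boundedness of $\varphi$ (hypothesis (iv)) and hence of $\varphi_T$. The singular positive-semidefinite case ($0\neq T\geq 0$ with $\det T=0$) is the genuine obstacle and requires Koecher's principle: the $\sG(\Q)$-invariance of $\varphi$ implies the transformation $a_\varphi(T[A];g_\fin)=\varrho({}^tA^{-1})\,a_\varphi(T;\sm(A,1)_\fin g_\fin)$ for $A\in \GL_2(\Q)$, and the infinitude of the ${\bf GL}_2(\Z)$-orbit of a singular $T\neq 0$ combined with boundedness forces the coefficient to vanish. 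This is the adelic incarnation of the classical fact that a bounded holomorphic Siegel modular form of genus $2$ is a cusp form.

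Finally, for the second case of \eqref{AdeleFE-L1-1}, when $g_\infty\in \sG(\R)-\sG(\R)^\circ$, I invoke $\gamma:=\sm(-1_2,-1)\in \sG(\Q)$, which has $\nu(\gamma)=-1$ and satisfies $\gamma\sn(X)\gamma^{-1}=\sn(-X)$. The substitution $n\mapsto \gamma^{-1}n\gamma$ is a measure-preserving involution of $\sN(\Q)\bsl\sN(\A)$; applying it together with the left $\sG(\Q)$-invariance of $\varphi$ transforms the integral into $\int \varphi(n\cdot \tilde g_\infty\tilde g_\fin)\,\psi_{-T}(n)^{-1}\,\d n$, where $\tilde g_\infty:=\gamma g_\infty\in \sG(\R)^\circ$ (since $\nu(\gamma g_\infty)=-\nu(g_\infty)>0$) and $\tilde g_\fin:=\gamma g_\fin$. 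By the already-established first case this equals ${\bf B}^{-T}_\varrho(\gamma g_\infty)(a_\varphi(-T;\gamma g_\fin))$; for $T\in \sV(\Q)^+$ this vanishes because $-T\notin \sV(\Q)^+$ and the previous paragraph applies.
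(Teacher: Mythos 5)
Your overall strategy — Fourier expansion along $\sN(\Q)\bsl\sN(\A)$, solving the $\fp^{-}$-equations and $\sN(\R)$-equivariance to recover the factor ${\bf B}^T_\varrho$, vanishing from boundedness, and the $\gamma$-trick for the second case of \eqref{AdeleFE-L1-1} — is exactly the paper's. The one place where you diverge is the vanishing argument for $T\notin\sV(\Q)^+$, and there I see a gap in the singular positive-semidefinite case.

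Your direct growth estimate for $T$ not positive semidefinite is correct and is in substance the same unboundedness fact about ${\bf B}^T_\varrho$ that the paper records and invokes. But the ``Koecher orbit'' argument you sketch for $0\neq T\geq 0$ with $\det T=0$ does not actually deliver the conclusion. Koecher's orbit argument proves positive-\emph{semi}definiteness of the Fourier support: when $T$ has a negative direction, the unipotent ${\bf GL}_2(\Z)$-conjugates $T\ss(h_n)$ satisfy $\tr(T\ss(h_n)\,Y)\to-\infty$ at fixed $Y\gg0$ and the corresponding Fourier subseries diverges. When $T\geq 0$ is singular, $\tr(T\ss(h)\,Y)\to+\infty$ along the orbit, so the orbit terms decay and neither absolute convergence nor boundedness produces any contradiction along the orbit. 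The vanishing at singular $T$ is the cusp-form property, which is logically distinct from Koecher's principle and is not obtained from the orbit argument alone; invoking ``a bounded holomorphic Siegel modular form of degree $2$ is a cusp form'' is invoking precisely the claim that needs proving here.

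The correct mechanism, which is what the paper uses and which you have already employed in the other case, is again the growth of ${\bf B}^T_\varrho$, this time along the degenerate direction of $T$. After conjugating so that $T=\diag(t,0)$ with $t>0$, take $g_\infty=\sm(\diag(1,y),1)$ with $y\to\infty$: then $g_\infty\langle i1_2\rangle = i\,\diag(1,y^2)$, so $\exp(2\pi i\,\tr(T\,g_\infty\langle i1_2\rangle))=e^{-2\pi t}$ stays constant, while $\varrho(Ci+D)^{-1}=\varrho(\diag(1,y))$ inflates any nonzero $a\in V_\varrho$ with norm growing at least like $y^{l_2}$. Hence ${\bf B}^T_\varrho(g_\infty)(a)$ is unbounded, and boundedness of $\varphi$ forces $a_\varphi(T;g_\fin)=0$. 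This treats all $T\notin\sV(\R)^+$ uniformly (including $T=0$, which your two-case split does not explicitly reach), so the split and the appeal to the classical cusp-form fact become unnecessary. The remainder of your proof, in particular the use of $\gamma=\sm(-1_2,-1)$ for $g_\infty\notin\sG(\R)^\circ$, matches the paper.
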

\begin{proof}
Fix $g_\fin \in \sG(\A_\fin)$ and examine the integral, say $W(g_\infty)$, on the left-hand side of \eqref{AdeleFE-L1-1}. The function $W$ on $\sG(\R)^0$ satisfies conditions \eqref{GWF-f1} and \eqref{GWF-f2}. As such, there corresponds a function $F:\fh_2\longrightarrow V_\varrho$ determined by the relation $F(g_\infty\langle i1_2\rangle)=\nu(g)^{-(l_1+l_2)/2}\varrho(Ci+D)\,W(g_\infty)$ with $g_\infty=\left[\begin{smallmatrix} A & B \\ C & D \end{smallmatrix}\right]\in \sG(\R)^0$ such that $Z=(Ai+B)(Ci+D)^{-1}$. Since $F(Z)$ satisfies $F(Z+X)=\psi_{T}(X)\,F(Z)$ ($X\in \sV(\R))$, it is of the form $F(Z)=F_1(Y)\exp(2\pi i \tr(TX))$ with a $V_\varrho$-valued $C^\infty$-function $F_1(Y)$. Since $F(Z)$ is holomorphic, the Cauchy-Riemann equations yield $\frac{\d}{\d y_{ij}}F_1(Y)=-2\pi t_{ij}F_1(T)$, which is uniquely solved as $$F_1(Y)=a_{\varphi}(T;g_\fin)\exp(-2\pi \tr(TY))$$ with a vector $a_\varphi(T;g_\fin)\in V_\varrho$, or equivalently $W(g_\infty)={\bf B}_\varrho^{T}(g_\infty)(a_\varphi(T;g_\fin))$. Then, the formula in \eqref{AdeleFE-L1-f0} is a consequence of the Fourier expansion of a $\sN(\Q)$-periodic function on $\sN(\A)$. Since $\varphi$ is bounded on $\sG(\A)$, the function $W(g_\infty)$ should be bounded on $\sG(\R)^0$; since $g_\infty \mapsto {\bf B}_{\varrho}^{T}(g_\infty)(v)$ with $v\in V_\varrho-\{0\}$ is unbounded if $T\not\in \sV(\R)^+$, we have $a_{\varphi}(T;g_\fin)=0$ for all $T\in \sV(\Q)-\sV(\R)^+$. It remains to show the second case in \eqref{AdeleFE-L1-1}. Let $g_\infty\not\in \sG(\R)^\circ$. Decompose $\gamma\in \sM(\Q)$ as $\gamma_{\infty}\gamma_{\fin}$ $(\gamma_\infty\in \sM(\R),\,\gamma_\fin\in \sM(\A_\fin))$ in $\sM(\A)$. By the left $\sG(\Q)$-invariance of $\varphi$, 
$$\varphi(\sn(X)g_\infty g_\fin)=\varphi(\gamma\sn(X)g_\infty g_\fin)=
\varphi(\sn(-X)\gamma_\infty g_\infty \gamma_\fin g_\fin).
$$
Integrate this in $X$ over $\sV(\Q)\bsl \sV(\A)$ and make a change of variables $n\rightarrow n^{-1}$ on the way; we have
\begin{align*}
\int_{\sN(\Q)\bsl \sN(\A)}\varphi(n g_\infty g_\fin)\,\psi_{T}(n)^{-1}\d n
&=\int_{\sN(\Q)\bsl \sN(\A)}\varphi(n\gamma_\infty g_\infty \gamma_\fin g_\fin)\,\psi_{-T}(n)^{-1}\,\d n.
\end{align*}
Since $\gamma_\infty g_\infty \in \sG(\R)^\circ$, the last integral becomes $
{\bf B}_\varrho^{-T}(\gamma_\infty g_\infty)(a_\varphi(-T;\gamma_\fin g_\fin))
$ by the first case. Since $-T\not\in \sV(\R)^+$, we have $a(-T;\gamma_\fin g_\fin)=0$.  \end{proof}
The vectors $a_\varphi(T;g_\fin)$ in $V_\varrho$ are referred to as the adelic Fourier coefficients of $\varphi$. As is well-known, there is a linear bijective correspondence between $\varphi \in S_{\varrho}(\bK_0(N))$ and the classical $V_\varrho$-valued Siegel cusp forms $f(Z)$ on ${\bf \Gamma}_0(N):=\{\left[\begin{smallmatrix} A & B \\ C & D\end{smallmatrix}\right]\in {\bf Sp}_2(\Z)\mid C\equiv 0 \pmod{N}\}$ determined by the relation 
$$
f(g_\infty \langle i1_2\rangle)=\nu(g_\infty)^{-(l_1+l_2)/2}\varrho(Ci+D)\varphi(g_\infty), \quad g_\infty= 
\left[\begin{smallmatrix} A & B \\ C & D \end{smallmatrix}\right]\in \sG(\R)^0.
$$
By the modular transformation law $f((AZ+B)(CZ+D)^{-1})=\varrho(CZ+D)^{-1}f(Z)$ for $\left[\begin{smallmatrix} A & B \\ C & D\end{smallmatrix}\right]\in \Gamma_0(N)$, since $-1_4\in \Gamma_0(N)$, we have that $f(Z)$ is identically $0$, hence $S_{\varrho}(\bK_0(N))=(0)$, unless $l_1\equiv l_2\pmod{2}$. (For more details, see \cite[\S3.2]{Schmidt} and \cite[\S4]{AsgariSchmidt}.) 

Since $\sN(\A)\cap \bK_0(N)=\{\sn(X)\mid X\in \sV(\widehat \Z)\}$, by \eqref{AdeleFE-L1-1}, we have $a_{\varphi}(T;1)=0$ unless $X\in \cQ$, where
$$
\cQ:=\Bigl\{\left[\begin{smallmatrix} a & b/2 \\ b/2 & c \end{smallmatrix}\right]\, \Bigm|\, a,b,c\in \Z \Bigr\}
$$
is the dual lattice of $\sV(\Z)$. Then, \eqref{AdeleFE-L1-f0} with $g_\fin=1_4$ reduces to 
\begin{align*}
f(Z)=\sum_{T\in \cQ^{+}}a_\varphi(T;1_4)\,\exp(2\pi i \tr(TZ)), \quad Z\in \fh_2,
\end{align*}
where $\cQ^+:=\cQ\cap \sV(\Q)^+$. This means $a_{\varphi}(T):=a_{\varphi}(T;1_4)$ $(T\in \cQ^+)$ is the Fourier coefficient of $f(Z)$ in the classical sense. Set
\begin{align*}
\delta:=\left[\begin{smallmatrix} 1 & 0 \\ 0 & -1 \end{smallmatrix}\right] \in {\bf GL}_2(\Q).
\end{align*}
Note that the map $T\mapsto -T\ss(\delta)$ preserves the set $\cQ^+$. If we identify $\cQ^+$ with the set of positive definite integral binary forms $aX^2+bXY+cY^2$, this operation corresponds to the sign change of $b$. 
\begin{lem}\label{AdeleFE-L2}
The set of adelic Fourier coefficients of $\varphi \in S_\varrho(\bK_0(N))$ has the following properties:  
\begin{align}
&a_{\varphi}(T ; \sm(h_\fin,\det h_\fin) g_\fin \kappa)=a_{\varphi}(T\ss(h);g_\fin), \quad h \in {\bf GL}_2(\Q), \, g_\fin\in \sG(\A_\fin),\,\kappa \in \bK_0(N).
\label{AdeleFE-L2-1}
\\
&a_{\varphi}(-T\ss\left(\delta\right);1)=\varrho\left(\delta\right)(a_\varphi(T;1)),\quad T\in \sV(\Q). 
\label{AdeleFE-L2-2}
\end{align}
\end{lem}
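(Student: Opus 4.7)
My plan is to derive both identities from the left $\sG(\Q)$-invariance of $\varphi$ applied to well-chosen elements of $\sM(\Q)$, combined with the uniqueness of the Fourier expansion in Lemma \ref{AdeleFE-L1}. The common ingredient is the conjugation formula
\[
\sm(A,c)\,\sn(X)\,\sm(A,c)^{-1}=\sn\bigl(c^{-1}AX\,{}^tA\bigr),
\]
which by cyclicity of the trace yields the character transformation $\psi_T\bigl(\sm(A,c)\,n\,\sm(A,c)^{-1}\bigr)=\psi_{c^{-1}{}^tATA}(n)$.

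For \eqref{AdeleFE-L2-1}, condition (ii) of $S_\varrho(\bK_0(N))$ makes the claim independent of $\kappa$, so I may take $\kappa=1$. I apply $\varphi(x)=\varphi(\sm(h,\det h)^{-1}x)$ with $\sm(h,\det h)^{-1}\in\sG(\Q)$ diagonally embedded: the finite-adelic copy cancels $\sm(h_\fin,\det h_\fin)$, while the archimedean copy converts $g_\infty$ into $\sm(h^{-1},(\det h)^{-1})g_\infty$. Taking the Fourier integral against $\psi_T^{-1}$ over $\sN(\Q)\bsl\sN(\A)$ and substituting $n\mapsto\sm(h,\det h)\,n\,\sm(h,\det h)^{-1}$ (measure-preserving, since the Jacobian of $X\mapsto(\det h)^{-1}hX{}^th$ on $\sV(\A)$ equals $1$) transports $\psi_T$ into $\psi_{T\ss(h)}$ via the formula above with $A=h$, $c=\det h$. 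Applying \eqref{AdeleFE-L1-1} on both sides then expresses each integral as a value of ${\bf B}_\varrho$ applied to a Fourier coefficient, and uniqueness of the expansion yields the identity.

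For \eqref{AdeleFE-L2-2}, the key element is $\gamma\cdot\sm(\delta,-1)=\sm(-\delta,1)\in\sG(\Q)$, which has $\nu=1$ (so its archimedean component lies in $\sG(\R)^\circ$) and is integral with zero lower-left block, hence its finite-adelic component lies in $\bK_0(N\Z_p)$ at every $p$. The conjugation formula with $A=-\delta$, $c=1$ gives $\sm(-\delta,1)\,\sn(X)\,\sm(-\delta,1)^{-1}=\sn(\delta X\delta)$, and the direct computation $\delta T\delta=-T\ss(\delta)$ supplies the character transformation $\psi_T\mapsto\psi_{-T\ss(\delta)}$. Since $\sm(-\delta,1)\langle i1_2\rangle=i1_2$, the transformation of ${\bf B}_\varrho^T$ under this element (extracted from \eqref{AdeleFE-W} via the homogeneity $\varrho(\alpha h)=\alpha^{l_1+l_2}\varrho(h)$ and the parity $l_1\equiv l_2\pmod 2$) contributes precisely the factor $\varrho(\delta)$. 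Absorbing the finite-adelic part via right $\bK_0(N)$-invariance at $g_\fin=1$ and comparing Fourier coefficients completes the argument.

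The most delicate step throughout is the transformation of ${\bf B}_\varrho^T$ under left multiplication by an $\sm$-element of $\sM(\Q)$: it must be derived directly from definition \eqref{AdeleFE-W} with careful attention to composition order of representation operators, and one must verify that the archimedean argument obtained after the diagonal embedding remains in $\sG(\R)^\circ$ so that the nonvanishing case of \eqref{AdeleFE-L1-1} applies.
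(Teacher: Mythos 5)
Your overall strategy matches the paper's for both parts, so let me focus on where you diverge and where the sketch is incomplete. For \eqref{AdeleFE-L2-2} the paper uses $\kappa=\sm(\delta,1)\in\sG(\Q)$; your $\sm(-\delta,1)=-\kappa$ differs only by the central element $-1_4$, so the action on $\varphi$ is the same. The paper then moves $\kappa^{-1}=\kappa_\fin^{-1}k_\infty(\delta)$ to the right of $\sn(X)$ and reads off $\varrho(\bar\delta)^{-1}=\varrho(\delta)$ from condition~(ii) of $S_\varrho(\bK_0(N))$, whereas you extract the same factor from the left automorphy of ${\bf B}_\varrho^T$ under $\sm(-\delta,1)$. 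The two routes are equivalent (they are linked by \eqref{GWF-f1}), and your version is fine provided the $(C,D)$-block and homogeneity bookkeeping from \eqref{AdeleFE-W} is actually carried out.

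For \eqref{AdeleFE-L2-1}, the paper offers only ``a simple change of variables,'' and you supply the intended one, but then stop at exactly the point you yourself flag as ``most delicate,'' asserting that ``uniqueness of the expansion yields the identity.'' That assertion is where the gap lies. After the substitution one must compare ${\bf B}_\varrho^T(g_\infty)$ with ${\bf B}_\varrho^{T\ss(h)}\bigl(\sm(h^{-1},(\det h)^{-1})g_\infty\bigr)$. Computing directly from \eqref{AdeleFE-W} (using the cocycle property of $\varrho(Ci+D)^{-1}$ and $\varrho(\alpha A)=\alpha^{l_1+l_2}\varrho(A)$), one finds
$${\bf B}_\varrho^{T\ss(h)}\bigl(\sm(h^{-1},(\det h)^{-1})g_\infty\bigr)=(\det h)^{(l_1+l_2)/2}\,{\bf B}_\varrho^{T}(g_\infty)\circ\varrho({}^th)^{-1},$$
so uniqueness of the Fourier expansion yields
$$a_{\varphi}\bigl(T;\sm(h_\fin,\det h_\fin)g_\fin\kappa\bigr)=(\det h)^{(l_1+l_2)/2}\varrho({}^th)^{-1}\,a_{\varphi}(T\ss(h);g_\fin),$$
which coincides with the unadorned identity \eqref{AdeleFE-L2-1} only when $l_1=l_2$ (the scalar case); for $l_1>l_2$ the operator $(\det h)^{(l_1+l_2)/2}\varrho({}^th)^{-1}$ is exactly the usual $\varrho$-twist in the transformation law of vector-valued Fourier coefficients, and it is not the identity. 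You must either compute this operator and explain why it disappears in the uses the paper makes of the lemma, or account for it in the statement. Separately, as you note, one also needs $\sm(h^{-1},(\det h)^{-1})g_\infty\in\sG(\R)^\circ$, i.e.\ $\det h>0$; the case $\det h<0$ requires a detour through $\gamma=\sm(-1_2,-1)$ as in the proof of Lemma~\ref{AdeleFE-L1}, which your sketch does not carry out.
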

\begin{proof}
The relation in \eqref{AdeleFE-L1-1} follows from \eqref{AdeleFE-L2-1} by a simple change of variables. Let us show \eqref{AdeleFE-L2-2}. We have
\begin{align}
\sn\left(-X\ss\left(\delta\right)\right)=\kappa \sn(X)\kappa^{-1}, \quad X\in \sV(\A)
 \label{BesselConj-1}
\end{align}
with $\kappa:=\sm(\delta,1)\in \sG(\Q)$. Write $\kappa=\kappa_\fin \kappa_\infty\in \sG(\A_\fin)\,\sG(\R)$; then, $\kappa_\fin \in \bK_0(N)$ and $\kappa_\infty \in \bK_{\infty}$. We have  
{\allowdisplaybreaks
 \begin{align*}
\int_{\sN(\Q)\bsl \sN(\A)}\varphi(n)\,\psi_{-T\ss\left(\delta\right)}(n)^{-1}\,\d n
&=\int_{\sV(\Q)\bsl \sV(\A)}\varphi(\sn(-\ss \left(\delta \right)X))\,\psi(
\tr(TX))^{-1}\,\d X \\
&=\int_{\sV(\Q)\bsl \sV(\A)}\varphi(\kappa\sn(X)\kappa^{-1})\,\psi(\tr(TX))^{-1}\,\d X
\\
&=\varrho(\delta)\,\int_{\sN(\Q)\bsl \sN(\A)}\varphi(n)\psi_{T}(n)^{-1}\,\d n.
\end{align*}}
From this and \eqref{AdeleFE-L1-1}, 
$$
{\bf B}_{\varrho}^{-T\ss\left(\delta
\right)}(1_4)(a_{\varphi}(-T\ss\left(\delta\right);1))=\varrho(\delta)\,{\bf B}_{\varrho}^{T}(1_4)(a_{\varphi}(T;1)).
$$
By \eqref{AdeleFE-W}, ${\bf B}_{\varrho}^{-T\ss\left(\delta\right)}(1_4)
={\bf B}_{\varrho}^{T}(1_4)=e^{-2\pi \tr(T)}{\rm Id}_{V_\varrho}$.    \end{proof}

For a positive integer $\cM$, define 
\begin{align}
\cQ^{+}_{\cM}(D):=\{T=\left[\begin{smallmatrix} a & 2^{-1}b \\ 2^{-1}b & c\end{smallmatrix}\right]\in \cQ \mid a>0,\, {\gcd}(a,\cM)=1, \,-\det T=D/4\},
\label{cQplusD}
\end{align} 
${\bf \Gamma}_0(\cM\Z_p):={\bf SL}_2(\Z_p)\cap \left[\begin{smallmatrix} \Z_p & \Z_p \\ {\cM}\Z_p & \Z_p \end{smallmatrix}\right]$, 
${\bf \Gamma}_0(\cM\widehat{\Z}):=\prod_{p}{\bf \Gamma}_0(\cM\Z_p)$,  and ${\Gamma}_0(\cM):={\bf SL}_2(\Q)\cap {\bf \Gamma}_0(\cM\widehat \Z)$. Since $D$ is a fundamental, $\cQ^{+}_{\cM}(D)$ entirely consists of primitive forms, i.e., ${\rm gcd}(a,b,c)=1$; $\cQ^{+}_{\cM}(D) \subset \sV(\Q)$ is ${\bf \Gamma}_0(\cM)$-stable by the action \eqref{GLaction}; the ${\Gamma}_0(\cM)$-orbit of $T\in \cQ^{+}_{\cM}(D)$ is denoted by $[T]$. Note that $T_\theta\in \cQ_{\cM}^{+}(D)$ (see \eqref{Def-Ttheta}). Recall the notation in \S\ref{sec:RyClCh}. When all the prime divisors of $\cM$ are inert in $E/\Q$, we write $U(\cM):=U(\cM\fo_E)$ and ${\rm Cl}_{\cM}^{0}:={\rm Cl}_{\cM\fo_E}^{0}$ briefly. Let $\widehat{{\rm Cl}_{\cM}^{0}(E)}_{{\rm prim}}$ be the set of all characters $\Lambda$ on ${\rm Cl}_{\cM}^{0}(E)$ with conductor $\cM\fo_{E}$

\begin{lem} \label{BessePerL2} Let $\cM$ be a positive integer with prime factors being inert in $E/\Q$ and $\Lambda\in\widehat{{\rm Cl}_{\cM}^{0}(E)}_{{\rm prim}}$.
For each $u\in \A_E^\times$, let $I_{\theta}(u)$ decompose in ${\bf GL}_2(\A)$ as
\begin{align}
I_\theta (u)=\gamma_{u}h_u\kappa_u, \quad \gamma_u\in {\bf GL}_2(\Q),\,
h_{u}\in {\bf GL}_2(\R)^\circ,\,\kappa_u=(\kappa_{u,p})_{p<\infty}\in {\bf \Gamma}_0(\cM\widehat{\Z})
 \label{BesselPerL2-2}
\end{align}
and set $T_{\theta}(u):=T_\theta\,\ss(\gamma_u)$. Then ,
\begin{itemize}
\item[(i)] $T_{\theta}(u)\in \cQ_{\cM}^{+}(D)$. 
\item[(ii)] The ${\Gamma}_0(\cM)$-equivalence class of $T_{\theta}(u)$ does not depend on the decomposition \eqref{BesselPerL2-2}. If $u$, $u'$ belongs to the same $\A^\times E^\times E^\times_\infty U(\cM)$-coset, then $T_{\theta}(u)$ and $T_\theta(u')$ are ${\bf \Gamma}_0(\cM)$-equivalent. 
\item[(iii)] The map $[u]\mapsto [T_\theta(u)]$ from ${\rm Cl}_{\cM}^{0}(E)$ to $\cQ_{\cM}^{+}(D)/{\Gamma}_0(\cM)$ is a bijection. 
\end{itemize}
\end{lem}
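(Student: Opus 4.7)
The plan rests on the key identity
\begin{align*}
T_\theta\, \ss(I_\theta(\tau)) = T_\theta, \quad \tau \in (\cO_E \otimes_\Z R)^\times,
\end{align*}
valid in any commutative ring $R$; this reflects that $T_\theta$ is the Gram matrix of $\nr_{E/\Q}$ in the basis $(1,\theta)$ of $\cO_E$, i.e., $I_\theta(\tau)^t T_\theta I_\theta(\tau) = \nr_{E/\Q}(\tau)\, T_\theta$. With this in hand, for (i) I analyze $T_\theta(u) = T_\theta\, \ss(\gamma_u)$ place by place. At a finite prime $p$, the decomposition \eqref{BesselPerL2-2} gives $\gamma_u = I_\theta(u_p)\, \kappa_{u,p}^{-1}$, so $T_\theta\, \ss(\gamma_u) = T_\theta\, \ss(\kappa_{u,p}^{-1}) \in \cQ \otimes_\Z \Z_p$ since $\kappa_{u,p}^{-1} \in {\bf GL}_2(\Z_p)$ preserves $\cQ \otimes_\Z \Z_p$. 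At $\infty$, $\gamma_u = I_\theta(u_\infty)\, h_u^{-1}$ yields $T_\theta(u) = T_\theta\, \ss(h_u^{-1})$, positive definite because $h_u^{-1} \in {\bf GL}_2(\R)^\circ$ has positive determinant. Since $\ss$ preserves determinants, $\det T_\theta(u) = -D/4$, and primitivity is automatic since $D$ is a fundamental discriminant.

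For (ii), two valid choices of $\gamma_u$ differ by an element of ${\bf GL}_2(\Q) \cap ({\bf GL}_2(\R)^\circ \cdot {\bf GL}_2(\widehat \Z))$, which equals ${\bf SL}_2(\Z)$ (integral matrices with positive determinant), showing the ${\bf SL}_2(\Z)$-orbit $[T_\theta(u)]$ is well-defined. For coset invariance, write $u' = u \tau t_\infty c$ with $\tau \in E^\times$, $t_\infty \in E_\infty^\times$, $c \in \widehat \cO_E^\times$. Using commutativity of $E$, I take
\begin{align*}
\gamma_{u'} := I_\theta(\tau)\, \gamma_u, \quad h_{u'} := h_u\, I_\theta(t_\infty), \quad \kappa_{u'} := \kappa_u\, I_\theta(c),
\end{align*}
which is a valid decomposition of $I_\theta(u')$ (using $\det I_\theta(t_\infty) = \nr(t_\infty) > 0$ to keep $h_{u'} \in {\bf GL}_2(\R)^\circ$, and $I_\theta(c) \in {\bf GL}_2(\widehat \Z)$). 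The key identity then collapses $T_\theta(u') = T_\theta\, \ss(I_\theta(\tau)\, \gamma_u) = T_\theta\, \ss(\gamma_u) = T_\theta(u)$.

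For (iii), my plan is to identify $[u] \mapsto [T_\theta(u)]$ with the classical bijection between ${\rm Cl}(E)$ and $\cQ^+_{\rm prim}(D)/{\bf SL}_2(\Z)$. To $u$ I associate the fractional ideal $\fa_u := E \cap u \widehat \cO_E$, realizing ${\rm Cl}(E)$ as the usual ideal class group. The columns of $\gamma_u$, read in the basis $(1,\theta)$, furnish a positively oriented $\Z$-basis $(\omega_1, \omega_2)$ of $\fa_u$: the equality $\gamma_u\, \Z_p^2 = I_\theta(u_p)\, \Z_p^2$ translates to $u_p\, \cO_{E,p}$ at each finite $p$, and $\det \gamma_u > 0$ follows from $\det I_\theta(u_\infty), \det h_u > 0$. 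A direct computation then shows $T_\theta\, \ss(\gamma_u) = (\det \gamma_u)^{-1} \gamma_u^t T_\theta \gamma_u$ is the Gram matrix of the classical form $(x, y) \mapsto \nr_{E/\Q}(x \omega_1 + y \omega_2)/\nr(\fa_u)$, using $|\det \gamma_u| = \nr(\fa_u)$. Bijectivity then reduces to the well-known classification of primitive positive binary quadratic forms of fundamental discriminant. The main obstacle is this identification itself --- verifying orientation and the normalization by $\nr(\fa_u)$ carefully so that the resulting Gram form matches the classical one --- after which (iii) follows from standard theory.
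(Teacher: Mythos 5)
Your proposal is correct and takes essentially the same route the paper gestures at: the authors dismiss the lemma as ``an adelic reformulation of the classically well-known correspondence'' whose proof is ``straight-forward,'' and your argument supplies precisely the omitted details. The key identity $T_\theta\,\ss(I_\theta(\tau))=T_\theta$ does the work in (i) and (ii); for (ii) the observation that $\GL_2(\Q)\cap\bigl(\GL_2(\R)^\circ\cdot\GL_2(\widehat\Z)\bigr)=\SL_2(\Z)$ is exactly right; and for (iii) reading off $\fa_u=E\cap u\widehat\cO_E$ and the basis from the columns of $\gamma_u$ (with $\det\gamma_u>0$ giving the orientation and $|\det\gamma_u|=\nr(\fa_u)$ the normalization) identifies your map with the classical Gauss correspondence, as intended.
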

\begin{proof} 
Since (i) and (ii) are proved directly, we only give some detail of (iii). 
\begin{comment}
(i) The equation $\det T_\theta(u)=-D/4$ is obvious. Since $T_{\theta}\ss(I_{\theta}(u))=T_\theta$, we get $(T_\theta(u))_{\fin}=T_\theta \ss(\kappa_{u}^{-1})$ and $(T_\theta(u))_{\infty}=T_\theta \ss(h_{u}^{-1})$. It can be easily checked that the group action of ${\bf GL}_2(\widehat \Z)$ on $\sV(\A_{\fin})$ preserves primitive elements in $\sV(2^{-1}\widehat \Z)$. Hence, $T_\theta(u)$ is in $\sV(2^{-1}\widehat \Z)\cap \sV(\Q)=\sV(2^{-1}\Z)$ and is primitive. Comparing the archimedean components, we get that the upper left entry of $T_\theta(u)$ is equal to 
$$\det(h_v)\cdot{}^t\left[\begin{smallmatrix} h_{u,22} \\  -h_{u,21} \end{smallmatrix}\right]T_\theta \left[\begin{smallmatrix} h_{u,22} \\  -h_{u,21} \end{smallmatrix}\right],$$
which is positive due to the positive definiteness of $T_\theta$ and $\det h_u>0$.

Suppose that $I_\theta(u)={\widetilde \gamma_u}{\widetilde h_u}{\widetilde \kappa_u}$ is another deconposition as in \eqref{BesselPerL2-2}. Then we have ${\widetilde \gamma_u}^{-1}\gamma_u={\widetilde h_u}h_u^{-1}{\widetilde \kappa_u}\kappa_u^{-1}$. Comparing both the finite part and the infinite part respectively, we deduce that ${\widetilde \gamma_u}^{-1}\gamma_u\in{\bf \Gamma}_0(\cM\widehat \Z)\cap{\bf GL}_2(\Q)^{+}={\bf \Gamma}_0(\cM)$. Therefore, the two images are ${\bf \Gamma}_0(\cM)$-equivalent. The second statement is immediate from the facts that $I_\theta(E_\infty^\times)\subset {\bf GL}_2(\R)^{\circ}$ and $I_\theta(\A_{\fin}^\times U(\cM))\subset {\bf \Gamma}_0(\cM\widehat \Z)$.
\end{comment}
For $u,u'\in\A_E^\times$, let $I_\theta(u)=\gamma_u h_u \kappa_u$ and $I_\theta(u')=\gamma_{u'} h_{u'} \kappa_{u'}$ be as in \eqref{BesselPerL2-2}. If $T_\theta(u)$ and $T_\theta(u')$ are ${\bf \Gamma}_0(\cM)$-equivalent, by Lemma \ref{acci.isom.}, there exist $\delta_0\in {\bm\Gamma}_0(\cM)$ and $r\in E^\times$ such tha $\gamma_{u'}=I_\theta(r)\gamma_u\delta_0$. From this and noting that $T_\theta=T_\theta\ss(I_\theta(u))=T_\theta\ss(I_\theta(u'))$ and $I_\theta^{-1}({\bf \Gamma}_0(\cM\widehat \Z))\subset U(\cM)$, we find that there exist $c_\fin\in U(\cM)$, $c_\infty\in E_\infty^\times$ such that $\kappa_{u'}=\delta_{0,\fin}^{-1}\kappa_u I_\theta(c_\fin)$, $h_{u'}=\delta_{0,\infty}^{-1} h_u I_\theta(c_\infty)$, which yields
$$I_\theta(u')=I_\theta(r)\gamma_u h_u \kappa_u I_\theta(c_\infty c_\fin)=I_\theta(urc_\fin c_\infty).$$
Since $I_\theta$ is injective, we conclude that $u'$ $u$ are $\A^\times E^\times E_\infty^\times U(\cM)$-equivalent, completing the proof of injectivity of the map. To show the surjectivity, take an element $T\in\cQ_{\cM }^{+}(D)$. By Witt's theorem and Lemma \ref{acci.isom.}, 
$T=T_\theta\ss(\gamma)$ for some $\gamma \in {\bf GL}_2(\Q)$. On the other hand, for each place $p$, we can prove that there exists an element $g_p\in{\bm \GL}_2(\Z_p)$ (resp. $g_p\in{\bm \Gamma}_0(\cM\Z_p)$) for $p\nmid\cM$ (resp. $p\mid \cM$) such that $T=T_\theta \ss(g_p)$ in ${\bf GL}_2(\Q_p)$. (Indeed, the case $p\mid \cM$ is due to a straightforward calculation. When $p\nmid \cM$, we use \cite[Lemma 2-4]{Sugano84}, noting that $T,T_\theta \in (\cQ_{\Z_p})_{\rm prim}$, to find $u_p\in E_p^\times$ and $g_p\in {\bf GL}_2(\Z_p)$ such that $\gamma=I_\theta(u_p) g_p$. Then, $g_p$ works.) Moreover, the condition $\det g_\infty>0$ follows from the positive definiteness of $T$ and $T_\theta$. Therefore, by Lemma \ref{acci.isom.}, there exists an element $u\in\A_E^\times$ such that
$$I_\theta(u)=\gamma h \kappa,\quad (h:=g_\infty^{-1}\in {\bf GL}_2(\R)^{\circ}, \kappa:=(g_p^{-1})_{p<\infty}\in {\bf \Gamma}_0(\cM\widehat \Z)).$$
This shows that $T_\theta(u)=T_\theta \ss(\gamma)=T$. 
\end{proof}

Since $\sm({\bf GL}_2(\widehat \Z),1)\subset \bK_0(N)$, the relation in \eqref{AdeleFE-L2-1} shows that the function $T\mapsto a_{\varphi}(T):=a_{\varphi}(T;1_4)$ on $\sV(\Z)^{+}$ is ${\bf SL}_2(\Z)$-invariant. Hence the following sum of vectors in $V_{\varrho}$ is well-defined: 
\begin{align}
{\bf R}(\varphi,E,\Lambda):=\sum_{[u] \in {\rm Cl}_{\cM}^{0}(E)} a_\varphi([T_{\theta}(u)])\,\Lambda(u)^{-1}. 
 \label{def-phiEomega}
\end{align}
Recall the integral in \eqref{BesselInt} and the function in \eqref{AdeleFE-W}. 
\begin{lem} \label{BessPerL3}
Let $\varphi \in S_{\varrho}(\bK_0(N))$ and $\Lambda\in\widehat{{\rm Cl}_{\cM}^{0}(E)}_{{\rm prim}}$.
\begin{itemize}
\item[(i)] For $g_\infty \in \sG(\R)^0$, 
$$
B^{T_\theta,\Lambda}(\varphi;g_\infty)=w_{D,\cM}^{-1}\,{\bf B}_{\varrho}^{T_\theta}(g_\infty)({\bf R}(\varphi,E,\Lambda))
$$
where $w_{D,N}:=\# (U(\cM)\cap\fo_E^\times)$.
\item[(ii)] Set $\sigma:=\left[\begin{smallmatrix} 1 & t \\ 0 & -1 \end{smallmatrix}\right]$. Then, 
\begin{align}
\varrho(r)(B^{T_\theta,\Lambda}(\varphi;b_\R^\theta))&=B^{T_\theta,\Lambda}(\varphi;b_\R^\theta), \quad r \in {\bf SO}(2),
 \label{BessPerL3-f1}
\\
\varrho(\delta)(B^{T_\theta,\Lambda}(\varphi;b_\R^\theta))&=B^{-T_\theta \ss(\sigma),\Lambda}(\varphi;b_\R^\theta).
 \label{BessPerL3-f2}
\end{align} 
 \end{itemize}
\end{lem}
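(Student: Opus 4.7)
For part (i), the plan is to unfold the outer integral $\int_{\A_E^\times/E^\times\A^\times}\Lambda(\tau)^{-1}(\cdots)\,d^\times\tau$ using ideal class representatives $u_1,\dots,u_h$ of $\mathrm{Cl}(E)$, exploiting that $\Lambda$ is trivial on $E_\infty^\times\widehat\cO_E^\times$. For each $u_j$, I invoke the decomposition $I_\theta(u_j)=\gamma_{u_j}h_{u_j}\kappa_{u_j}$ from Lemma~\ref{BessePerL2} to split $\sm(I_\theta(u_j),\nr_{E/\Q}(u_j))$ into three pieces: a rational factor $\sm(\gamma_{u_j},\det\gamma_{u_j})\in\sG(\Q)$ absorbed by left $\sG(\Q)$-invariance of $\varphi$ (which, after the corresponding conjugation of the $\sN$-integration variable, turns $\psi_{T_\theta}$ into $\psi_{T_\theta\ss(\gamma_{u_j})}=\psi_{T_\theta(u_j)}$), a compact finite factor $\sm(\kappa_{u_j},\det\kappa_{u_j})\in\bK_0(N)$ absorbed by right-$\bK_0(N)$-invariance, and an archimedean factor $\sm(h_{u_j},\det h_{u_j})\in\sG(\R)^0$. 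The inner $\sN(\A)/\sN(\Q)$-integral then collapses via the Fourier expansion \eqref{AdeleFE-L1-1} to ${\bf B}_\varrho^{T_\theta(u_j)}(\sm(h_{u_j},\det h_{u_j})g_\infty)(a_\varphi(T_\theta(u_j);1_4))$.

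Next I parameterize $\tau=u_j\tau'$ with $\tau'\in E_\infty^\times\widehat\cO_E^\times$; the finite part of $\tau'$ is absorbed as above, while the archimedean part contributes $\sm(I_\theta(\tau'_\infty),\nr(\tau'_\infty))\in\sM_{T_\theta(u_j)}^\circ(\R)$ after conjugation by $\gamma_{u_j}$. The transformation rule \eqref{GWF-f0}, combined with the $E_\infty^\times/\R^\times$-integration (made clean by the triviality of $\Lambda_\infty$ and central invariance of $\varphi$), extracts this archimedean piece as a $\varrho$-cocycle that cancels against the Haar-measure normalization \eqref{HaarMeasSGP-Adele}. A final application of the rational analogue of \eqref{GWF-f0} together with the ${\bf SL}_2(\Z)$-equivariance of $T\mapsto a_\varphi(T)$ (Lemma~\ref{AdeleFE-L2}) converts ${\bf B}_\varrho^{T_\theta(u_j)}(g_\infty)(a_\varphi(T_\theta(u_j)))$ into ${\bf B}_\varrho^{T_\theta}(g_\infty)$ applied to the common vector summed over classes. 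Collecting everything yields $w_D^{-1}\,{\bf B}_\varrho^{T_\theta}(g_\infty)({\bf R}(\varphi,E,\Lambda))$, the factor $w_D^{-1}=|\cO_E^\times|^{-1}$ emerging from the volume discrepancy between the quotient measure on $E^\times\A^\times\backslash\A_E^\times$ and the counting measure on $\mathrm{Cl}(E)$.

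For (ii)(a), I would make the change of variables $\tau\mapsto\tau\tau_0^{-1}$ in the outer integral for $\tau_0\in E_\infty^\times=\C^\times$ of norm one. Since $\Lambda_\infty={\bf 1}$ and $\sm(I_\theta(\tau_0)^{-1},1)\in\sM_{T_\theta}^\circ(\R)$ preserves $\psi_{T_\theta}$, this substitution merely inserts $\sm(I_\theta(\tau_0)^{-1},1)$ between $n$ and $b_\R^\theta$. By Lemma~\ref{MCPtArc-L1} together with the explicit formula \eqref{Embed-f1}, $\sm(I_\theta(\tau_0)^{-1},1)\cdot b_\R^\theta=b_\R^\theta\cdot k_\infty(r)$ with $r\in{\bf SO}(2)$ the rotation attached to $\tau_0=e^{i\phi}$. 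The right-$\bK_\infty$ equivariance of $\varphi$ then gives $\varrho(r)\,B^{T_\theta,\Lambda}(\varphi;b_\R^\theta)=B^{T_\theta,\Lambda}(\varphi;b_\R^\theta)$, and as $\tau_0$ sweeps out $S^1$ every $r\in{\bf SO}(2)$ is realized.

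For (ii)(b), realize $\varrho(\delta)$ as right translation by $k_\infty(\delta)\in\bK_\infty$. A direct computation from \eqref{Def-b} verifies the key identity $A_\theta^{-1}\delta A_\theta=\sigma$, yielding $b_\R^\theta\cdot k_\infty(\delta)=\sm(\sigma,1)\cdot b_\R^\theta$. Push $\sm(\sigma,1)\in\sG(\Q)$ leftwards through the $\sN$-variable by conjugation: the character transforms as $\psi_{T_\theta}\mapsto\psi_{-T_\theta\ss(\sigma)}$ (the computation parallels that in Lemma~\ref{AdeleFE-L2}(ii)). Commuting $\sm(\sigma,1)$ past $\sm(I_\theta(\tau),\nr(\tau))$ uses the identity $\sigma I_\theta(\tau)\sigma=I_\theta(\bar\tau)$ (immediate from \eqref{Embed-I}), and left $\sG(\Q)$-invariance of $\varphi$ absorbs the remaining $\sm(\sigma,1)$. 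The substitution $\tau\mapsto\bar\tau$ together with $\Lambda^\dagger=\Lambda^{-1}$ from \eqref{OmegaTriv-3} reassembles the integrand into the Bessel form for $-T_\theta\ss(\sigma)$. The main obstacle throughout will be (i), specifically the careful bookkeeping of Haar-measure factors that produces the constant $w_D^{-1}$ and the iterated use of the $\varrho$-cocycle identities needed to collapse each Fourier coefficient evaluation into the single vector ${\bf R}(\varphi,E,\Lambda)$.
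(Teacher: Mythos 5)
Your proposal follows essentially the same approach as the paper's proof for all three parts: for (i), decomposing the outer integral over $\A_E^\times/E^\times\A^\times$ into class-group orbits, invoking the Fourier expansion \eqref{AdeleFE-L1-1} together with the translation rule \eqref{AdeleFE-L2-1} and the decomposition $I_\theta(u)=\gamma_u h_u\kappa_u$, and computing $\mu([u]\widehat\cO_E^\times)=w_D^{-1}$; for (ii)(a), the change of variables $\tau\mapsto\tau\tau_0^{-1}$ combined with $\sm(I_\theta(\tau_0),1)b_\R^\theta=b_\R^\theta k_\infty(r)$ (Lemma~\ref{MCPtArc-L1}); for (ii)(b), conjugation by $\sm(\sigma,1)$ exactly as in the proof of Lemma~\ref{AdeleFE-L2}(ii). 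The paper's argument for (i) is organized slightly more abstractly (it applies \eqref{AdeleFE-L1-1} once to collapse the $\sN$-integral into the adelic Fourier coefficient $a_\varphi(T_\theta;\sm_\theta(u))$ and then evaluates that coefficient), whereas you peel apart the three factors of $I_\theta(u)$ by hand, but the content is identical.

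One caution on (ii)(b): be explicit about where the Galois twist lands. After pushing $\sm(\sigma,1)$ through via $\sigma I_\theta(\tau)\sigma^{-1}=I_\theta(\bar\tau)$ and substituting $\tau\mapsto\bar\tau$, the remaining character factor is $\Lambda(\bar\tau)^{-1}=\Lambda^\dagger(\tau)^{-1}$, so your computation produces $B^{-T_\theta\ss(\sigma),\Lambda^\dagger}(\varphi;b_\R^\theta)$ on the right. Note also that ${}^t\sigma T_\theta\sigma=T_\theta$ and $\det\sigma=-1$, so $-T_\theta\ss(\sigma)=T_\theta$; thus the identity obtained is $\varrho(\delta)B^{T_\theta,\Lambda}(\varphi;b_\R^\theta)=B^{T_\theta,\Lambda^\dagger}(\varphi;b_\R^\theta)$, which (combined with \eqref{BessPerL3-f3} and $\varrho(\delta)\sv_\varrho^0=(-1)^{l_2}\sv_\varrho^0$) is precisely what feeds into Lemma~\ref{BesselConj}. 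Your sentence ``together with $\Lambda^\dagger=\Lambda^{-1}$ reassembles the integrand into the Bessel form for $-T_\theta\ss(\sigma)$'' gestures at this but leaves the reader unsure whether the final character is $\Lambda$ or $\Lambda^\dagger$; since these genuinely differ for non-real $\Lambda$, write out the last step so that the $\Lambda^\dagger$ is visible.
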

\begin{proof} (i) The set $\A_{E}^\times/E^\times E_\R^\times$ is decomposed to a disjoint union of $\A_{\fin}^{\times}U(\cM)$-orbits $[u]\A_{\fin}^{\times}U(\cM)$ for different classes $[u]\in {\rm Cl}_{\cM}^{0}(E)$. Let $\mu$ denote the quotient measure on $\A_E^\times/E^\times E_\R^\times$. For $\tau\in \A_{E}^\times$, set $\sm_\theta(\tau):=\sm(I_{\theta}(\tau),\nr_{E/\Q}(\tau))$. Then by \eqref{BesselInt} and \eqref{AdeleFE-L1-1}, 
\begin{align*} B^{T_\theta,\Lambda}(\varphi ;g_\infty)
&={\bf B}_{\varrho}^{T_\theta}(g_\infty)\Bigl(\int_{\A_E^\times /E^\times E_\infty^\times} a_{\varphi}(T_\theta; \sm_\theta(u))\,\Lambda(u)^{-1}\,\d \mu(u)\Bigr)
\\
&={\bf B}_{\varrho}^{T_\theta}(g_\infty)\Bigl(\sum_{[u] \in {\rm Cl}_{\cM}^{0}(E)} \int_{[u] \A_{\fin}^\times U(\cM)}  a_{\varphi}(T_\theta;\sm_\theta(u))\,\Lambda(u)^{-1}\,\d\mu(u) \Bigr).
\end{align*}
Since $\sm_{\theta}(\widehat \cO_{E}^\times)\subset \bK_{0}(N)$, by \eqref{AdeleFE-L1-1}, the formula in the parenthesis becomes
\begin{align*}
\sum_{[u]\in {\rm Cl}_{\cM}^{0}(E)} \mu([u]\A_{\fin}^\times U(\cM))\,a_{\varphi}(T_\theta;
\sm_\theta(u))\,\Lambda(u)^{-1}.
\end{align*}
Let $I_\theta(u)$ decompose as in \eqref{BesselPerL2-2}. Then, by \eqref{AdeleFE-L2-1} and Lemma \ref{BessePerL2}, we have 
$$
a_{\varphi}(T_\theta;\sm_\theta(u))=a_{\varphi}(T_\theta \ss(\gamma_u);1)=a_\varphi(T_\theta(u)).
$$
It remains to compute $\mu([u]\A_{\fin}^\times U(\cM))$. Since any fiber of the natural surjection from $\A_{\fin}^\times U(\cM)$ onto $[u]\A_{\fin}^\times U(\cM)$ has a simply transitive action of the global unit group $\cO_E^\times$, we have $\mu([u]\A_{\fin}^\times U(\cM))=1/\# (U(\cM)\cap\fo_E^\times)=1/w_{D,N}$. 

(ii) We have $I_\theta(e^{i\alpha})=A_\theta^{-1} \left[\begin{smallmatrix} \cos \alpha & -\sin \alpha \\ \sin \alpha & \cos \alpha \end{smallmatrix}\right]A_\theta$, or equivalently 
$$\sm(I_\theta(e^{i\alpha}),1)=b_\R^\theta \diag(\left[\begin{smallmatrix} \cos \alpha & -\sin \alpha \\ \sin \alpha & \cos \alpha \end{smallmatrix}\right], \left[\begin{smallmatrix} \cos \alpha & -\sin \alpha \\ \sin \alpha & \cos \alpha \end{smallmatrix}\right])(b_\R^\theta)^{-1}$$ for $\alpha \in \R$. Due to this and $\Lambda_\infty=1$, formula \eqref{BessPerL3-f1} is proved by a change of variables $\tau\rightarrow \tau u^{-1}$ in the integral \eqref{BesselInt}. Since $
b_\R^{\theta}\sm(\delta,1)(b_\R^\theta)^{-1}=\sm(\sigma,1)$, formula \eqref{BessPerL3-f2} is proved in the same way as the proof of Lemma \ref{AdeleFE-L2}.
\end{proof}

Let $\varphi \in S_{\varrho}(\bK_0(N))-(0)$; then $l_1\equiv l_2\pmod{2}$ as noted above. As such, $\dim_\C(V_\varrho^{{\bf SO}(2)})=1$; indeed, $V_\varrho^{{\bf SO}(2)}=\C\,\sv_\varrho^{0}$ with $\sv_\varrho^0:=(X^2+Y^2)^{(l_1-l_2)/2}$. Set 
$$
\sv_\varrho^\theta:=\bigl(\tfrac{-2}{\sqrt{|D|}}\bigr)^{\frac{l_1+l_2}{2}}\,\varrho({}^t A_\theta)\sv_\varrho^{0}=\bigl(\tfrac{-2}{\sqrt{|D|}}\bigr)^{\frac{l_1-l_2}{2}}(X^2+tXY+N Y^2)^{\frac{l_1-l_2}{2}}.
$$
Note that ${\bf B}_\varrho^{T_\theta}(b_\R^\theta)=(\frac{-2}{\sqrt{|D|}})^{-(l_1+l_2)/2}e^{-2\pi \sqrt{|D|}}\,\varrho({}^t A_\theta)^{-1}$, so that ${\bf B}_\varrho^{T_\theta}(b_\R^\theta)(\sv_\varrho^\theta)=e^{-2\pi \sqrt{|D|}}\sv_\varrho^0$. Thus, as a corollary to Lemma \ref{BessPerL3}, the vector ${\bf R}(\varphi,E,\Lambda)\in V_\varrho$ is non-zero if and only if $B^{T_\theta,\Lambda}(\varphi;b_\R^\theta)\not=0$, in which case there exists a unique scalar $R(\varphi,E,\Lambda)$ such that ${\bf R}(\varphi,E,\Lambda)=R(\varphi,E,\Lambda)\,\sv_\varrho^\theta$, or equivalently 
\begin{align}
B^{T_\theta,\Lambda}(\varphi;b_\R^\theta)=w_{D}^{-1}e^{-2\pi \sqrt{|D|}}\, R(\varphi,E,\Lambda)\sv_\varrho^0.
 \label{BessPerL3-f3}
\end{align}
Define a $C^\infty$-function $B_\varrho^{T_\theta}:\sG(\R)^0 \longrightarrow V_\varrho$ as 
\begin{align}
B^{T_\theta}_{\varrho}(g_\infty):={\bf B}^{T_\theta}_\varrho(g_\infty)(\sv_\varrho^{\theta}), \quad g_\infty\in \sG(\R)^0. 
 \label{Def-VectArchBess}
\end{align}

\subsection{Sign condition}
The Galois group ${\rm Gal}(E/\Q)$ acts on ${\rm Cl}_{\cM}^{0}(E)$ naturally, hence on the orbit space ${\cQ}_{\cM}^{+}(D)/{\bf SL}_2(\Z)$ through the bijection in Lemma \ref{BessePerL2}(iii). The following lemma describes the conjugate action on ${\cQ}_{\cM}^{+}(D)/{\bf SL}_2(\Z)$ explicitly. Recall the element $\sigma \in {\bf GL}_2(\Z)$ defined in Lemma \ref{BessPerL3} (ii). 

\begin{lem} \label{ConjClass}
For $u\in {\rm Cl}_\cM^0(E)$, the element $T_{\theta}(\bar u)$ is ${\bf \Gamma}_0(\cM)$-equivalent to $-T_\theta(u)\,\ss\left(\delta\right)$. In other words, the conjugate action on $\cQ_{\cM}^+(D)/{\bf \Gamma}_0(\cM)$ is induced by the map $T\mapsto -T\,\ss(\delta)$. 
\end{lem}
\begin{proof} The defining formula of $I_{\theta}$ in \eqref{Embed-DefI} yields
\begin{align*}
I_\theta(a+\theta b)=\left[\begin{smallmatrix} 1 & \theta \\ 1& \bar \theta \end{smallmatrix}\right]^{-1}  
\left[\begin{smallmatrix} a+\theta b & 0 \\ 0 & a+\bar \theta b \end{smallmatrix}\right] \left[\begin{smallmatrix} 1 & \theta \\ 1 & \bar \theta \end{smallmatrix}\right], \quad a,b\in \Q. 
\end{align*}
Set $t:=\tr_{E/\Q}(\theta)$. Then, by $\bar \theta=t-\theta$, we have $a+b\bar\theta=a'+b'\theta$ with $a'=a+tb\in \Q$ and $b'=-b\in \Q$. Then , a computation reveals 
\begin{align}
I_{\theta}(a+b\bar \theta)&=\sigma I_\theta(a+b\theta)\sigma^{-1}.
\label{ConjClass-f1}
\end{align}
By the decomposition of $I_\theta(u)$ for $u\in \A_E^\times$ in \eqref{BesselPerL2-2}, we may take $\gamma_{\bar u}=\sigma \gamma_{u} \sigma^{-1}$. Thus, $
T_\theta(\bar u)=T_\theta\ss(\gamma_{\bar u})
=T_\theta \ss\left( \sigma \gamma_{u}\sigma^{-1}\right)$. A computation shows $T_\theta \ss\left(\sigma \right)=-T_\theta$. Hence, 
\begin{align*}
T_\theta(\bar u)=-T_\theta \ss\left(\gamma_{u}\sigma^{-1}\right)
=-T_\theta(u) \ss\left(\sigma^{-1}\right)
=-T_{\theta}(u)\ss(\delta)\ss\left(\left[\begin{smallmatrix} 1 & t \\ 0 & 1\end{smallmatrix}\right]\right).
\end{align*}
The last matrix is ${\bf SL}_2(\Z)$-equivalent to $-T_{\theta}(u)\ss(\delta)$. This completes the proof. \end{proof}

For $\Lambda \in \widehat {{\rm Cl}_{\cM}^{0}(E)}_{\rm prim}$, recall its conjugate $\Lambda^\dagger\in \widehat {{\rm Cl}_{\cM}^{0}(E)}_{\rm prim}$ (see \eqref{Def-ConjOmega}). 

\begin{lem} \label{BesselConj}
Let $\varphi\in S_\varrho(\bK_0(N))$ and $\Lambda\in \widehat {{\rm Cl}_{\cM}^{0}(E)}_{\rm prim}$ and suppose $B^{T_\theta,\Lambda}(\varphi;b_\R^\theta)\not=0$. Then, 
\begin{align*}
R(\varphi,E,\Lambda^\dagger)
=(-1)^{l_2}\,R(\varphi,E,\Lambda). 
\end{align*}
\end{lem}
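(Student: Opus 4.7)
The plan is to apply the Galois substitution $u \mapsto \bar u$ inside the sum defining ${\bf R}(\varphi, E, \Lambda^\dagger)$, use the identifications established earlier to convert the result into $\varrho(\delta) {\bf R}(\varphi, E, \Lambda)$, and finally extract the scalar ratio by evaluating at a specific point in $V_\varrho$.

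First I would use $\Lambda^\dagger(u) = \Lambda(\bar u)$ from \eqref{Def-ConjOmega} together with the substitution $u \leftrightarrow \bar u$ to rewrite ${\bf R}(\varphi, E, \Lambda^\dagger) = \sum_{[u]} a_\varphi([T_\theta(u)])\, \Lambda^\dagger(u)^{-1}$ as $\sum_{[u]} a_\varphi([T_\theta(\bar u)])\, \Lambda(u)^{-1}$. By Lemma \ref{ConjClass}, $[T_\theta(\bar u)]$ and $[-T_\theta(u)\ss(\delta)]$ coincide as ${\bf SL}_2(\Z)$-orbits, so $a_\varphi([T_\theta(\bar u)]) = a_\varphi(-T_\theta(u)\ss(\delta); 1_4)$; applying the transformation law \eqref{AdeleFE-L2-2} converts this into $\varrho(\delta)\, a_\varphi([T_\theta(u)])$. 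Pulling $\varrho(\delta)$ out of the sum yields the key identity
\begin{align*}
{\bf R}(\varphi, E, \Lambda^\dagger) = \varrho(\delta)\,{\bf R}(\varphi, E, \Lambda).
\end{align*}

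Second, since both ${\bf R}(\varphi, E, \Lambda)$ and ${\bf R}(\varphi, E, \Lambda^\dagger)$ lie in $\C\,\sv_\varrho^\theta$ (by the paragraph preceding the lemma, applied once with $\Lambda$ and once with $\Lambda^\dagger$), I would substitute ${\bf R}(\bullet) = R(\bullet)\,\sv_\varrho^\theta$ into the above identity to obtain the vector equation $R(\varphi, E, \Lambda^\dagger)\,\sv_\varrho^\theta = R(\varphi, E, \Lambda)\,\varrho(\delta)\sv_\varrho^\theta$ in $V_\varrho$. To extract the ratio, I would evaluate both sides at $(X, Y) = (1, 0)$: the explicit formula for $\sv_\varrho^\theta$ gives $\sv_\varrho^\theta(1, 0) = (-2/\sqrt{|D|})^{(l_1-l_2)/2} \neq 0$, while the rule $\varrho(\delta) f(X, Y) = (-1)^{l_2} f(X, -Y)$ immediately gives $(\varrho(\delta)\sv_\varrho^\theta)(1, 0) = (-1)^{l_2}\, \sv_\varrho^\theta(1, 0)$. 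Dividing by the nonzero value $\sv_\varrho^\theta(1, 0)$ yields $R(\varphi, E, \Lambda^\dagger) = (-1)^{l_2}\, R(\varphi, E, \Lambda)$.

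The delicate point worth flagging is that $\varrho(\delta)\sv_\varrho^\theta$ is not in general proportional to $\sv_\varrho^\theta$ inside $V_\varrho$ (proportionality occurs only when $l_1 = l_2$ or when $\tr_{E/\Q}(\theta) = 0$); nevertheless, the vector identity above does hold in $V_\varrho$, and specialization to $Y = 0$ merely extracts the coefficient of the leading monomial $X^{l_1 - l_2}$, which suffices to pin down the sign $(-1)^{l_2}$. In the exceptional cases where $\varrho(\delta)\sv_\varrho^\theta \notin \C\,\sv_\varrho^\theta$, comparing coefficients at a second independent monomial (say $(X, Y) = (1, 1)$) forces $R(\varphi, E, \Lambda) = 0$, so the hypothesis $B^{T_\theta, \Lambda}(\varphi; b_\R^\theta) \neq 0$ is not met there and the asserted relation becomes vacuously true.
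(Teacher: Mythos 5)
Your first step—rewriting ${\bf R}(\varphi,E,\Lambda^\dagger)$ via the substitution $u\mapsto\bar u$, Lemma~\ref{ConjClass}, and \eqref{AdeleFE-L2-2} to obtain ${\bf R}(\varphi,E,\Lambda^\dagger)=\varrho(\delta){\bf R}(\varphi,E,\Lambda)$—tracks the ingredients cited in the paper's proof and is a natural thing to try. But the ``delicate point'' you flag at the end is not a side remark to be disposed of: it is the crux of the matter, and the way you resolve it is wrong. For a fundamental discriminant $D$ with $t:=\tr_{E/\Q}(\theta)\neq0$ (e.g., $D=-23$, whose class group is cyclic of order $3$) and for genuinely vector-valued weight $l_1>l_2$, the vector $\varrho(\delta)\sv_\varrho^\theta$ is proportional to $(X^2-tXY+NY^2)^{(l_1-l_2)/2}$, which is not a scalar multiple of $\sv_\varrho^\theta$; yet the hypothesis $B^{T_\theta,\Lambda}(\varphi;b_\R^\theta)\neq 0$ is certainly attained in such configurations (take a non-quadratic $\Lambda$ and a cuspidal $\pi$ with nonvanishing $(T_\theta,\Lambda)$-Bessel period). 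So concluding that the hypothesis fails whenever $\varrho(\delta)\sv_\varrho^\theta\notin\C\,\sv_\varrho^\theta$ is incorrect, and the argument as written does not close.

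The paper's actual route avoids the $\sv_\varrho^\theta$-line entirely. The key input is \eqref{BessPerL3-f2}, which (after unwinding the conjugation by $\sm(\sigma,1)=b_\R^\theta\sm(\delta,1)(b_\R^\theta)^{-1}$, and observing that it swaps $\tau\leftrightarrow\bar\tau$ and preserves $\psi_{T_\theta}$) amounts to the identity
$$\varrho(\delta)\,B^{T_\theta,\Lambda}(\varphi;b_\R^\theta)=B^{T_\theta,\Lambda^\dagger}(\varphi;b_\R^\theta).$$
Now one applies \eqref{BessPerL3-f3} to both $\Lambda$ and $\Lambda^\dagger$, expressing each side as a scalar times $\sv_\varrho^0=(X^2+Y^2)^{(l_1-l_2)/2}$. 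Since $\sv_\varrho^0$ genuinely is a $\varrho(\delta)$-eigenvector with eigenvalue $(-1)^{l_2}$ (whereas $\sv_\varrho^\theta$ is not), the eigenrelation $\varrho(\delta)\sv_\varrho^0=(-1)^{l_2}\sv_\varrho^0$ gives the desired $R(\varphi,E,\Lambda^\dagger)=(-1)^{l_2}R(\varphi,E,\Lambda)$ immediately, with no auxiliary evaluation at points of $V_\varrho$ and no need to compare $\varrho(\delta)\sv_\varrho^\theta$ with $\sv_\varrho^\theta$. The role of Lemma~\ref{ConjClass} and the final ``Note'' in the paper's proof is to justify that the Galois conjugation indeed corresponds to the $\varrho(\delta)$-action on the Fourier side; the scalar is then extracted at the level of the Bessel period $B^{T_\theta,\Lambda}(\varphi;b_\R^\theta)$ in $\C\sv_\varrho^0$, not at the level of ${\bf R}(\varphi,E,\Lambda)$ in $\C\sv_\varrho^\theta$. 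I would recommend reworking your proof to compare the two sides in the $\sv_\varrho^0$-line, exactly as \eqref{BessPerL3-f3} and \eqref{BessPerL3-f2} invite you to do.
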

\begin{proof} This is proved by Lemma \ref{ConjClass} and \eqref{BessPerL3-f3} and by $\varrho(\delta )\sv_\varrho^0=(-1)^{l_2}\sv_\varrho^0$. Note that $\delta^{-1} \sigma=\left[\begin{smallmatrix} 1 & t\\ 0 & 1 \end{smallmatrix} \right] \in {\bf SL}_2(\Z)$, so that $a_{\varphi}(-T_\theta \ss(\sigma),1)=a_{\varphi}(-T_\theta \ss(\delta),1)$. 
%{\allowdisplaybreaks
%\begin{align*}
%\sum_{[u]\in {\rm Cl}(E)} \Lambda'(u)^{-1}\,a_\varphi([T_\theta(u)])
%&=
%\sum_{[u]\in {\rm Cl}%(E)}\Lambda(\bar{u})^{-1}\,a_\varphi([T_\theta(u)])\\
%&=
%\sum_{[u]\in {\rm Cl}(E)}\Lambda(u)^{-1}\,a_\varphi([T_\theta(\bar {u})])
%\\
%&=
%\sum_{[u]\in {\rm Cl}(E)}\Lambda(u)^{-1}\,a_\varphi([-T_{\theta}%(u)\ss\left(\delta\right)])\\
%&=(-1)^{l}\, \sum_{[u] \in {\rm Cl}%(E)}\Lambda(u)^{-1}\,a_\varphi([T_\theta(u)]).
%\end{align*}}
\end{proof}

\subsection{Automorphic representations} \label{sec:AutRep}
Let $\Pi_{\rm cusp}(\sZ\bsl \sG)$ denote the set of all those irreducible cuspidal representation of $\sG(\A)$ with trivial central characters, i.e., those irreducible $(\fg,\bK_\infty)\times \sG(\A_\fin)$-submodules $(\pi,V_{\pi})$ of the space of cusp forms on $\sZ(\A)\sG(\Q)\bsl \sG(\A)$. We endow $V_{\pi}$ with the restriction of the $L^2$-inner product 
$$(\varphi\mid \varphi_1)_{L^2}:=\int_{\sZ(\A)\sG(\Q)\bsl \sG(\A)}\varphi(g)\overline{\varphi_1(g)}\,\d g,  \qquad \varphi,\,\varphi_1\in V_\pi.$$ 
For $\pi\in \Pi_{\rm cusp}(\sZ\bsl \sG)$, we fix its restricted tensor decomposition $\pi\cong \otimes_{p\leq \infty} \pi_p$ with $(\pi_p,V_{\pi_p})$ being an irreducible admissible unitarizable representation of $\sG(\Q_p)$ with trivial central character such that a $\bK_p$-invariant vector $\xi_{p}^0\in V_{\pi_p}-(0)$ is preassigned for almost all $p<\infty$. 
An element $\lambda \in (\Z^2)_{\rm dom}$ is the highest weight of the minimal $\bK_\infty$-type of a holomorphic discrete series representation (HDS for short) of ${\bf Sp}_2(\R)$ if and only if $l_2>2$, in which case the Harish-Chandra parameter of the HDS is $(l_1-1,l_2-2)$. For such $\lambda$ and $N\in \Z_{>0}$, let $\Pi_{\rm cusp}(\lambda,N)$ be a subset of $\pi\in \Pi_{\rm cusp}(\sZ\bsl \sG)$ having the following properties:
\begin{itemize}
\item[(i)] As a $(\fg,\bK_\infty)$-module $\pi_\infty \cong D_{l_1-1,l_2-2}\oplus D_{-l_2+2,-l_1+1}$, where $D_{m_1,m_2}$ is the discrete series representation of $\sG(\R)^0$ of Harish-Chandra parameter $(m_1,m_2)$ with central character $\sZ(\R)\cong \R^\times \ni z\longmapsto {\rm sgn}(z)^{m_1+m_2+1}$. 
\item[(ii)] $V_{\pi}^{\bK_0(N)}\not=(0)$. 
\item[(iii)] Let $(\varrho,V_\varrho)$ be the irreducible rational representation of highest weight $\lambda$ as before. The space $S_{\varrho}(\bK_0(N))$ is an orthogonal direct sum of spaces $V_{\pi}^{\bK_0(N)}[\varrho]:=\{\varphi \in S_{\varrho}(\bK_0(N))\mid \varphi_v\in V_{\pi}\,(\forall v\in V_\varrho)\}$ for $\pi \in \Pi_{\rm cusp}(\lambda,N)$, where $\varphi_v(g):=(\varphi(g)\mid v)_\varrho$ for $\varphi \in S_{\varrho}(\bK_0(N))$ and $v\in V_\varrho$. 
\end{itemize}
Although there may be many choices of $\Pi_{\rm cusp}(\lambda,N)$, we fix one of them once and for all. For $\pi \in \Pi_{\rm cusp}(\lambda,N)$, we fix a $\bK_\infty$-intertwining map $V_{\varrho}\oplus \overline{V_\varrho}\hookrightarrow D_{l_1-1,l_2-2}\oplus D_{-l_2+2,-l_1+1}\,(\cong \pi_\infty)$ once and for all, where $\overline{V_\varrho}:=\overline{\C}\otimes_{\C}V_\varrho$ is the complex conjugate of $V_\varrho$. 

\subsection{Basic assumptions}\label{sec:BA}
From now on, we fix a pair $(\lambda=(l_1,l_2),N,M,\cM)\in (\Z^2)_{\rm dom} \times \Z_{>0}\times \Z_{>0}\times \Z_{>0}$ satisfying the following conditions: 
\begin{itemize}
\item[(A-i)] $N$ and $\cM$ is square free.
\item[(A-ii)] $l_1\equiv l_2\pmod{2}$, and $l_2\in \Z_{\geq 3}$ so that $(l_1-1,l_2-2)$ is the Harish-Chandra parameter of a HDS. 
\item[(A-iii)] Any two of $N$, $M$, and $\cM$ are coprime.
\item[(A-iv)] $M$ is odd
\item[(A-v)] All prime divisors of $NM\cM$ are inert in $E/\Q$. 
\end{itemize}
We fix characters $\Lambda\in\widehat{{\rm Cl}_{\cM}^{0}(E)}_{{\rm prim}}$ and $\mu=\otimes_{p\leq \infty}\mu_p\in\widehat{\A^\times/\Q^\times\R_{>0}}$ with $M:={\rm cond}(\mu)$ as well.
As usual, the character $\mu$ induces a primitive Dirichlet character $\tilde \mu:(\Z/M\Z)^\times\rightarrow \C^1$ in such a way that 
 $\mu(u)=\tilde \mu(a)^{-1}$ for all $u\in \widehat \Z^\times$ and $a\in \Z$ with $u-a\in M\widehat \Z$
; thus $\mu_p(p)=\tilde \mu(p)$ for primes $p\nmid M$, and $$\prod_{p\mid M} W_{\Q_p}(\mu_p,\psi_p)=M^{-1/2}G(\tilde \mu),$$ 
with $W_{\Q_p}(\mu_p,\psi_p)$ being as in Definition \ref{Def:LclGaussSum} and $G(\tilde \mu):=\sum_{a\in (\Z/M\Z)^\times} \tilde \mu(a)e^{2\pi i a/M}$, the Gauss sum of $\tilde \mu$. 
Let $\Pi_{\rm cusp}^{(T_\theta,\Lambda)}(\lambda,N)$ denote the set of all those $\pi\in \Pi_{\rm cusp}(\lambda,N)$ that satisfies the condition 
\begin{itemize}
\item[(A-vi)] $\pi$ admits a global $(T_\theta,\Lambda)$-Bessel model, i.e., there exists $\varphi \in V_\pi$ such that $B^{T_\theta,\Lambda}(\varphi;g)\not=0$ for some $g\in \sG(\A)$.
\end{itemize}
 We remark that the conditions listed above (when $\mu={\bf 1}$ and $l_1=l_2$) are also imposed in the  most part of \cite{DPSS}. Let $\Pi_{\rm cusp}^{(T_\theta,\Lambda)}(\lambda,N)^{\rm SK}$ denote the set of $\pi \in \Pi_{\rm cusp}^{(T_\theta,\Lambda)}(\lambda,N)$ that is the Saito-Kurokawa lift (\cite{PS83}) of a cuspidal automorphic representation of ${\bf PGL}_2(\A)$, which is locally described by  \cite{Schmidt2005} (see also \cite{Schmidt2007}). From \cite[\S4]{Schmidt2005}, the set $\Pi_{\rm cusp}^{(T_\theta,\Lambda)}(\lambda,N)^{\rm SK}=\emp$ unless $l_1=l_2$, i.e., $V_{\varrho}$ is one dimensional.
 We note that all the representations $\pi\cong \otimes_{p}\pi_p$ in $\Pi_{\rm cusp}^{(T_\theta,\chi)}(l,N)\setminus\Pi_{\rm cusp}^{(T_\theta,\chi)}(l,N)^{\rm SK}$ are non-CAP
 ({\it cf}. \cite{Soudry}, \cite[\S3.5]{KWY}, \cite[Corollary 4.5]{PitaleSchmidt2009}); then, by \cite{Weissauer},  $\pi_p$ is tempered for all $p \nmid N$.
 Moreover, by invoking \cite{Arthur} (see also \cite{Schmidt2018}) , any $\pi\in \Pi_{\rm cusp}^{(T_\theta,\chi)}(l,N)\setminus\Pi_{\rm cusp}^{(T_\theta,\chi)}(l,N)^{\rm SK}$ is either a Yoshida lift, i.e., there exists a pair of irreducible cuspidal automorphic representations $(\sigma_1,\sigma_2)$ of ${\bf GL}_2(\A)$ such that $L(s,\pi)=L(s,\sigma_1)L(s,\sigma_2)$, or a ``general type'', that is, there exists an irreducible cuspidal automorphic representation $\Pi$ of ${\bf GL}_4(\A)$ such that $L(s,\Pi,\wedge^2)$ has a pole at $s=1$ and $L(s,\pi)=L(s,\Pi)$. Let $\Pi_{\rm cusp}^{(T_\theta,\chi)}(l,N)^{\rm Y}$ (resp. $\Pi_{\rm cusp}^{(T_\theta,\chi)}(l,N)^{\rm G}$) be the set of $\pi\in \Pi_{\rm cusp}^{(T_\theta,\chi)}(l,N)$ that is a Yoshida lift (resp. a general type). Then,  
\begin{align}
\Pi_{\rm cusp}^{(T_\theta,\chi)}(l,N)= \Pi_{\rm cusp}^{(T_\theta,\chi)}(l,N)^{\rm G}\cup \Pi_{\rm cusp}^{(T_\theta,\chi)}(l,N)^{\rm Y} 
\cup \Pi_{\rm cusp}^{(T_\theta,\chi)}(l,N)^{\rm SK}\quad \text{(disjoint union)}. \label{SKT}
\end{align}
On the other hand, the set $\Pi_{\rm cusp}^{(T_\theta,\Lambda)}(\lambda,N)$ can be separated into the subsets consisting of all ``newforms" (i.e., $N_\pi=N)$ and all ``oldforms" (i.e., $N_\pi\not=N$) denoted by $\Pi_{\rm cusp}^{(T_\theta,\Lambda)}(\lambda,N)^{\rm new}$ and $\Pi_{\rm cusp}^{(T_\theta,\Lambda)}(\lambda,N)^{\rm old}$ respectively. For $\bullet\in\{{\rm G},{\rm Y},{\rm SK}\}$ and $\ast\in\{{\rm new},{\rm old}\}$, we set
$$
\Pi_{\rm cusp}^{(T_\theta,\Lambda)}(\lambda,N)^{\bullet,\ast}= \Pi_{\rm cusp}^{(T_\theta,\Lambda)}(\lambda,N)^{\bullet}\cap \Pi_{\rm cusp}^{(T_\theta,\Lambda)}(\lambda,N)^{\ast}.
$$
Let $N_{\pi}$ be the product of primes $p<\infty$ such that $\pi_p$ is not spherical. By condition (ii) in \S\ref{sec:AutRep}, we have $N_\pi\mid N$. Due to (A-ii), the representations $\pi_p$ for $p\mid N_\pi$ have non-zero $\bK_0(p\Z_p)$-fixed vectors, which, in turn, implies that $\pi_p$ is Iwahori spherical so that their isomorphism classes are listed in \cite[Table 3]{Schmidt} (For extended tables, we refer to \cite[Appendix]{RobertsSchmidt}.)  Note that in \cite{Schmidt} a different symplectic form is used to define the symplectic group; up to the adjustment for this difference, the group $P_{1}$ in \cite{Schmidt} is our $\bK_0(p\Z_p)$. In \cite{RobertsSchmidt}, all irreducible admissible representations of $\sG(\Q_p)$ that admit local Bessel models are classified, and the result is conveniently summarized in \cite[Table 2]{PitaleSchmidt}. Here is a summary of what is available for our $\pi$ (as in \S\ref{sec:BA}): 
\begin{itemize}
\item $\pi_\infty$ as a representation of $\sG(\R)^0$ is a direct sum of a HDS and its complex conjugate; as such, $\pi$ is {\rm CAP} if and only if it is a Saito-Kurokawa lift from a cuspidal representation of ${\bf PGL}_2(\A)$; which happens only if $l_1=l_2$.  
\item  Suppose $p\nmid N_\pi$. Then, the local representation $\pi_p$ is of type ${\rm I}$ and tempered if $\pi$ is non-{\rm CAP}, and is of type ${\rm II}b$ when $\pi$ is {\rm CAP}. 
\item Suppose $p\mid N_\pi$. Then, the local representation $\pi_p$ is either of type ${\rm III}a$, in which case $\dim_{\C}V_{\pi_p}^{\bK_0(p\Z_p)}=2$, or of type ${\rm VI}b$, in which case $\dim_{\C}V_{\pi_p}^{\bK_0(p\Z_p)}=1$; when $\pi$ is {\rm CAP}, $\pi_p$ has to be of type ${\rm VI}b$.  
\end{itemize}

\subsection{Bessel models}\label{sec:BesselMod}
 Let $p<\infty$; for any irreducible admissible representation $(\pi_p,V_{\pi_p})$ of $\sG(\Q_p)$, let $(V_{\pi_p}^*)^{T_\theta,\Lambda}$ denote the space of all $\C$-linear forms $\ell:V_{\pi_p}\longrightarrow \C$ that satisfies 
$$
\ell(\pi_p(\sm(I_{\theta}(\tau),\nr_{E/\Q}(\tau))n)\xi)=\Lambda(\tau)\psi_{T_\theta}(n)\,\ell(\xi), \quad \xi\in V_{\pi_p},\,\tau\in E_p^\times,\,n\in \sN(\Q_p).$$
It is known that $\dim_{\C}(V_\pi^*)^{T_\theta,\Lambda}\leq 1$ (\cite{Novodvorski}, \cite{TaklooBinghashPrasad}). We say that $\pi_p$ has a local $(T_\theta,\Lambda_p)$-Bessel model if $(V_{\pi}^*)^{T_\theta,\Lambda}\not=(0)$; when this is the case, the space of functions of the form $g\mapsto \ell(\pi_p(g)\xi)$ with $\xi\in V_{\pi_p}$ is independent of $\ell\in (V_{\pi_p}^*)^{T_\theta,\Lambda_p}-(0)$; this space is denoted by $\cB(T_\theta,\Lambda_p)[\pi_p]$ and is called the local $(T_\theta,\Lambda_p)$-Bessel model of $\pi_p$. 
When $\pi_p$ is spherical, it is known that $\pi_p$ has a local $(T_\theta,\Lambda_p)$-Bessel model, and the space $\cB(T_\theta,\Lambda_p)[\pi_p]$ contains a unique $\bK_p$-invariant function $B_{\pi_p}$ such that $B_{\pi_p}^{0}(1_4)=1$ (\cite[Theorem 2-I]{Sugano84}, \cite{BFF}). Let $\xi_{p}^0$ be the non-zero $\bK_p$-fixed vector in $V_{\pi_p}$; then there exists a unique element $\ell_{\pi_p}^0\in (V_{\pi_p}^*)^{T_\theta,\Lambda_p}-(0)$ such that 
$\ell_{\pi_p}^0(\pi_p(g) \xi_{\pi_p}^0)=B_{\pi_p}^0(g)$ for all $g\in \sG(\Q_p)$. The pair $(\ell_{\pi_p}^0,\xi_{\pi_p}^0)$ is referred to as the unramified $(T_\theta,\Lambda_p)$-Bessel datum for $\pi_p$.

Let $\pi\cong \otimes_{p\leq \infty}\pi_p \in \Pi_{\rm cusp}^{(T_\theta,\Lambda)}(\lambda,N)$ (see \S\ref{sec:BA}).

%Let $N_{\pi}$ be the product of $p\mid N$ such that $\pi_p$ is not spherical. 
\begin{defn}
A system $\{(\ell_{p}, \xi_{p})\}_{p<\infty}$ with $\ell_{p}\in (V_{\pi_p}^*)^{T_\theta,\Lambda_p}-(0)$, $\xi_{p}\in V_{\pi_p}^{\bK_0(N_\pi \Z_p)}$ is called a $(T_\theta,\Lambda)$-Bessel data for $\pi$ if $(\ell_{p},\xi_p)=(\ell_{\pi_p}^0,\xi_{\pi_p}^0)$ for $p\nmid N_\pi$ and $\ell_p(\xi_p)=1$ for all $p<\infty$.
\end{defn}
By \cite[Theorem 2 8.2 and 9.3]{PitaleSchmidt}, a $(T_\theta,\Lambda)$-Bessel data exists for our $\pi$. Once a $(T_\theta,\Lambda)$-Bessel data $\{(\ell_p, \xi_{p})\}_{p<\infty}$ is fixed, one can define $\varphi_{\pi,\varrho}^0\in V_{\pi}^{\bK_0(N_\pi)}[\varrho]$ to be the $V_\varrho$-valued cusp form such that for any $v\in V_\varrho$, the function $(\varphi_{\pi,\varrho}^0(g)\mid v)_{\varrho}$ in $V_\pi\cong\otimes_{p\leq \infty}V_{\pi_p}$ corresponds to the pure tensor $\bar v \otimes (\otimes_{p<\infty} \xi_{p})$, where $\bar v=1\otimes v \in \overline{V_\varrho}\hookrightarrow V_{\pi_\infty}$. A particular choice of $\{(\ell_p,\xi_p)\}$ will be made in \S\ref{sec:Local period} so that $\varphi_{\pi,\varrho}^0$ corresponds to a newform on the arithmetic quotient $\Gamma_0(N)\bsl \fh_2$ in the sense of \cite[\S3.2]{DPSS}.

\begin{lem} \label{BesselMod-L1}
 Let $\{(\ell_p,\xi_p)\}_{p<\infty}$ be a $(T_\theta,\Lambda)$-Bessel data for $\pi\in \Pi_{\rm cusp}(\lambda,N)$. Let $\phi=\otimes_p \phi_p\in\mathcal{S}(\A_E^2)$ be a decomposable element. Then, for any $\varphi\in V_{\pi}^{\bK_0(N)}[\varrho]$ such that, for any $v\in V_\varrho$, $\varphi_v$ corresponds to the pure tensor $v \otimes(\otimes_{p<\infty} v_p) \in \bigotimes_{p\leq \infty}V_{\pi_p}$, we have
\begin{align}
B^{T_\theta,\Lambda}(\varphi;g)=B^{T_\theta,\Lambda}(\varphi_{\pi,\varrho}^0;g_\infty)\prod_{p<\infty} \ell_p(\pi_p(g_p) v_p), \quad g=(g_p)_{p}\in \sG(\A). \label{BesselMod-L1-f0}
\end{align} 
For $\Re(s)>1$, $v\in V_\varrho$ and $b_\fin=(b_p)_{p<\infty} \in \sG(\A_\fin)$, \begin{align}
\langle E(\phi,s,\Lambda,\mu),R(b_\fin b_\R^\theta)\overline{\varphi_v} \rangle=\tfrac{\sqrt{|D|}}{2} Z_v^{(\infty)}(\phi_\infty,\varphi^0_{\pi};s,\mu_\infty,\Lambda)\times \prod_{p\leq \infty} Z_p(\phi_p,\overline{B_{v_p}};s,\mu_p;b_p),
\label{Bessel-Mod-L1-f1}
\end{align}
where 
\begin{align*}
Z_v^{(\infty)}(\phi_\infty,\varphi^0_\pi;s,\mu_\infty,\Lambda)&:=\int_{\bK_\infty^\#} \int_{\R^\times} f_{\phi_\infty}^{(s,1,\mu_\infty)}(k_\infty^\#) \mu_\infty(a)|a|_\R^{s-1}\\
&\qquad \times (v\mid B^{T_\theta, \Lambda}(\varphi^0_{\pi,\varrho} ;\sm(a 1_2,a) \iota_\theta(k^\#) b_\R^\theta))_\varrho\,\d^\times a\,\d k^\#_\infty, \\
Z_{p}(\phi_p, \overline{B_{v_p}};s,\mu_p;b_p)&:=\int_{\bK_p^\#} \int_{\Q_p^\times} f_{\phi_p}^{(s,\Lambda_p,\mu_p)}(k_p^{\#})\mu_p(a)|a|_p^{s-1}\,\overline{B_{v_p}}(\sm(a_p 1_2,a_p)\iota_\theta(k_p^\#)b_p)\,\d^\times a_p\,\d k^\#_p
\end{align*}
with $
B_{v_p}(g_p):=\ell_{p}(\pi_p(g_p)v_p)$ for $g_p\in \sG(\Q_p)$.
\end{lem}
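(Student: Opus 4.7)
The plan is to prove \eqref{BesselMod-L1-f0} via local uniqueness of Bessel models, then derive \eqref{Bessel-Mod-L1-f1} by substituting \eqref{BesselMod-L1-f0} into the basic identity of Lemma~\ref{RSint-L1} and factoring through the product of places. For Step 1, observe that $\varphi\mapsto B^{T_\theta,\Lambda}(\varphi;1_4)$ is a global $(T_\theta,\Lambda)$-Bessel functional on $V_\pi$ by the equivariance built into \eqref{BesselInt}. Since each local space $(V_{\pi_p}^*)^{T_\theta,\Lambda_p}$ is at most one-dimensional (\cite{Novodvorski,TaklooBinghashPrasad}), under the restricted tensor product decomposition $V_\pi\cong V_{\pi_\infty}\otimes\bigotimes_{p<\infty}V_{\pi_p}$ this functional coincides, up to a constant $c$, with $\ell_\infty^{\mathrm{Bess}}\otimes\bigotimes_p\ell_p$ on pure tensors; translating by $g\in\sG(\A)$ gives $B^{T_\theta,\Lambda}(\varphi;g)=c\cdot\ell_\infty^{\mathrm{Bess}}(\pi_\infty(g_\infty)v_\infty)\prod_p\ell_p(\pi_p(g_p)v_p)$ whenever $\varphi\leftrightarrow v_\infty\otimes(\otimes_p v_p)$. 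Setting $v_p=\xi_p$ for every finite $p$---so that, via the $\bK_\infty$-intertwiner $V_\varrho\oplus\overline{V_\varrho}\hookrightarrow V_{\pi_\infty}$, the pure tensor realizes $\varphi_{\pi,\varrho}^0$ paired against $v\in V_\varrho$---and using $\ell_p(\xi_p)=1$, one identifies the archimedean factor as $B^{T_\theta,\Lambda}(\varphi_{\pi,\varrho}^0;g_\infty)$, yielding \eqref{BesselMod-L1-f0} as a $V_\varrho$-valued identity.

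For Step 2, apply Lemma~\ref{RSint-L1} (read with $R(b_\R^\theta)\psi$ on the left, as in its proof) to $\psi:=R(b_\fin)\overline{\varphi_v}$. Since $R(b_\R^\theta)R(b_\fin)=R(b_\fin b_\R^\theta)$ and $b_\fin,b_\R^\theta$ act at disjoint places, the translated Bessel argument reduces to $\sm(a_\infty 1_2,a_\infty)\iota_\theta(k_\infty^\#)b_\R^\theta$ at $\infty$ and $\sm(a_p 1_2,a_p)\iota_\theta(k_p^\#)b_p$ at each $p<\infty$. Factoring $\sf_\phi^{(s,\Lambda,\mu)}$, Haar measures, $\mu$, and $|a|_\A^{s-1}$ over places, and combining the overall $\widehat{L}(s+1,\Lambda\mu_E)$ with the relation $f_p=L_p\sf_p$ in \eqref{f-sf-rel} to produce unhatted sections $\prod_p f_{\phi_p}^{(s,\Lambda_p,\mu_p)}$, the integral splits as a product. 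Applying \eqref{BesselMod-L1-f0} to $\overline{\varphi_v}$, the complex conjugation propagates through each $\ell_p$ to produce $\overline{B_{v_p}}$ at finite places and, paired via the Hermitian inner product with $v$, recovers $(v\mid B^{T_\theta,\Lambda}(\varphi_{\pi,\varrho}^0;\cdot))_\varrho$ at infinity; together with $\Lambda_\infty=\mu_\infty={\bf 1}$ (so that $f_{\phi_\infty}^{(s,\Lambda_\infty,\mu_\infty)}=f_{\phi_\infty}^{(s,1,1)}$), the resulting expression is precisely $\tfrac{\sqrt{|D|}}{2}Z_v^{(\infty)}\cdot\prod_{p<\infty}Z_p$, which is \eqref{Bessel-Mod-L1-f1}.

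The main obstacle lies in the archimedean bookkeeping of the complex conjugation on $\varphi_v$: one must verify that the $\bK_\infty$-embedding $V_\varrho\oplus\overline{V_\varrho}\hookrightarrow V_{\pi_\infty}$, the Hermitian pairing $(v\mid\cdot)_\varrho$, and the lone $b_\R^\theta$ that survives the right-translation combine without stray constants. A related subtlety at finite places is that the Bessel character of the conjugate representation $\overline{\pi_p}$ is naturally $(-T_\theta,\Lambda_p^{-1})$; resolving this discrepancy in favor of the original $(T_\theta,\Lambda_p)$ requires a change of variables $\tau\mapsto\bar\tau$ (akin to the argument in Lemma~\ref{BesselConj}) so that the literal factor $\overline{B_{v_p}}$ emerges in $Z_p$.
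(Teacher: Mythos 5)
Your proposal follows essentially the same strategy as the paper's proof: factor the global Bessel functional through the product of local Bessel functionals using local multiplicity one, pin down the normalizing constant by evaluating at the Bessel data $\{(\ell_p,\xi_p)\}$ with $\ell_p(\xi_p)=1$, and then feed the factorization into the basic identity of Lemma~\ref{RSint-L1} (applied to $R(b_\fin)\overline{\varphi_v}$ with $(T_\theta,\Lambda)$ replaced by $(-T_\theta,\Lambda^{-1})$, which is exactly the conjugation subtlety you flag). The one cosmetic difference is that you factor the functional over all places including $\infty$, implicitly invoking archimedean uniqueness of Bessel models, whereas the paper first fixes $g_\infty\in\sG(\R)^0$ and $v\in V_\varrho$ and factors the resulting linear functional only over $\bigotimes_{p<\infty}V_{\pi_p}$, sidestepping that archimedean input.
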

\begin{proof} Fix $g_\infty \in \sG(\R)^0$ and $v\in V_{\varrho}$ and regard $\varphi\mapsto B^{T_\theta,\Lambda}(\varphi_v;g_\infty)$ as a linear functional on $\otimes_{p<\infty} V_{\pi_p}$ by the natural inclusion $\bigotimes_{p<\infty}V_{\pi_p} \hookrightarrow v \otimes(\bigotimes_{p<\infty}V_{\pi_p})\hookrightarrow V_{\pi}$. Then, by the local multiplicity one theorem for Bessel functional on $\sG(\Q_p)$ recalled above, there exists $C_v(g_\infty)\in \C$ such that
\begin{align}
(v\mid B^{T_\theta,\Lambda}(\varphi;g_\infty))_{\varrho}=C_v(g_\infty)\,
\prod_{p<\infty} \overline{\ell_{p}(v_p)} 
 \label{Bessel-Mod-L1-f110}
\end{align}
for $\varphi$ corresponding to $\otimes_{p<\infty} v_p$. To determine $C_v(g_\infty)$, set $v_p=\xi_{p}$ for all $p<\infty$; then by $\ell_p(\xi_p)=1$, we get $(v\mid B^{T_\theta,\Lambda}(\overline{\varphi_{\pi,\varrho}^0};g_\infty))_\varrho=C_v(g_\infty)$. Now we apply \eqref{RSint-L1-f00} with $T_\theta,\Lambda$ being replaced with $-T_\theta,\Lambda^{-1}$ and with $\varphi$ being replaced with $R(b_\fin)\varphi$. \end{proof}

\subsection{Gamma factor and global Bessel period} \label{sec:Gamma}
Recall the point $b_\R^\theta$ in \eqref{Def-b} and the vector $v_\varrho^\theta\in V_\varrho$ in \eqref{Def-VectArchBess}. Since $\nu(b_{\R}^\theta)>0$, the point $\sm(a1_2,a)\iota_\theta(k^\#_\infty)b_{\R}^\theta$ with $a\in \R^\times$ and $k_\infty^\#\in \bK_\infty^\#$ belongs to $\sG(\R)^0$ if and only if ${\rm sgn}(a){\rm sgn}(k_\infty^\#)=+1$, where ${\rm sgn}(k_\infty^\#):={\rm sgn}(\nu(\iota_\theta(k_\infty^\#))$. Thus, by \eqref{BesselInt} and \eqref{AdeleFE-L1-1}, we have $B^{T_\theta, \Lambda}(\varphi_\pi^0;\sm(a 1_2,a) \iota_\theta(k_\infty^\#) b_\R^\theta)=0$ unless ${\rm sgn}(a){\rm sgn}(k_\infty^\#)=+1$. By Lemma \ref{BessPerL3} and \eqref{Def-VectArchBess}, 
$$
B^{T_\theta, \Lambda}(\varphi_\pi^0;g_\infty)=w_D^{-1}R(\varphi_\pi^0,E,\Lambda)\,B^{T_\theta}_\varrho(g_\infty), \quad g_\infty\in \sG(\R)^0.
$$
Substituting this, we have that $Z^{(\infty)}_v(\phi_\infty, \varphi^0_\pi ;s,\Lambda)$ equals 
\begin{align}&2w_D^{-1}
\overline{R(\varphi^0_\pi,E,\Lambda)} \int_{(\bK^\#_\infty)^0} \int_{0}^{\infty} |a|_\R^{s-1}\,f_{\phi_\infty}^{(s,1,\mu_\infty)}(k_\infty^\#) (v\mid B^{T_\theta}_{\varrho}(\sm(a 1_2,a)\iota_\theta(k_\infty^\#) b_\R^\theta))_{\varrho}\,\d^\times a\,\d k^\#_\infty. 
 \label{CompLZ-f0}
\end{align}
Now we specify $\phi_\infty$. Recall that the highest weight of $\varrho$ is $\lambda=(l_1,l_2)$ and $l_1-l_2\in 2\Z_{>0}$. Set $d:=l_1-l_2$ and define 
$$\sf_{\varrho}(u):=(d+1)
\frac{(\varrho(C \bar u C^{-1})\sv_\varrho^0\mid \sv_\varrho^0)_{\varrho}}{(\sv_\varrho^0\mid \sv_\varrho^0)_\varrho}, \qquad u \in (\bK^\#_\infty)^0$$
with $C:=\frac{1}{\sqrt{2}}\left[\begin{smallmatrix} 1 & 1 \\ i & -i \end{smallmatrix}\right]$. If $u=\left[\begin{smallmatrix} a& -\bar b \\ b & \bar a \end{smallmatrix}\right] \in (\bK_\infty^\#)^0={\bf SU}(2)$ with $a=a'+ia'',\,b=b'+ib''$ and $A,B\in {\bf Mat}_2(\R)$ are defined as in \eqref{Embed-f1}, so that $(b_\R^\theta)^{-1}\iota_\theta(u)b_\R^\theta=k_\infty(A+iB)$ (Lemma \ref{MCPtArc-L1}), then a computation reveals $C\bar uC^{-1}=A-iB$. Thus, the automorphism $u\mapsto C\bar u C^{-1}$ of ${\bf SU}(2)$ brings the subgroup $\sB^\#(\R)\cap (\bK^\#_\infty)^0=\{\left[\begin{smallmatrix} a & 0 \\ 0 & a^{-1}\end{smallmatrix} \right]\mid a\in \C^1\}$ to the subgroup ${\bf SO}(2)$, by which $\sv_\varrho^0$ is fixed. Hence, $\sf_\varrho$ is left $\sB^\#(\R)\cap (\bK^\#_\infty)^0$-invariant. Thus, by $\sG^\#(\R)=\sB^\#(\R)(\bK_\infty^\#)^0$, there exists a unique element $\sf^{(s)}_\varrho$ of ${\mathcal V}^\#(s,1,\mu_\infty)$ such that $\sf_\varrho^{(s)}|\bK_\infty^\#=\sf_\varrho$. Define $\phi_\infty\in {\mathcal S}(\C^2)$ by 
\begin{align}
\phi_\infty(x,y):=\sum_{j=0}^{d/2}\tfrac{(x \bar x)^{j}}{(j!)^2}\tfrac{(-y\bar y)^{d/2-j}}{((d/2-j)!)^2}\,\tfrac{2}{\pi}\exp\left(-\tfrac{2\pi}{\sqrt{|D|}}(x\bar x+y \bar y)\right), \quad (x,y)\in \C^2. 
 \label{Def-phiInfd}
\end{align}
By \eqref{RSint-f0}, 
\begin{align}
f_{\phi_\infty}^{(s,1,\mu_\infty)}=|D|^{\frac{s+1}{2}}(-1)^{\frac{l_1-l_2}{2}}{\Gamma_{\C}\left(s+\tfrac{l_1-l_2}{2}+1\right)}\,\sf_\varrho^{(s)}.
 \label{Ksphericalsection}
\end{align}
For $u\in (\bK_\infty^\#)^0$ and $A,B$ as above, by 
By \eqref{AdeleFE-W} and \eqref{Def-VectArchBess} and by Lemma \ref{MCPtArc-L1}, we have
\begin{align*}
&(v \mid B^{T_\theta}_{\varrho}(\sm(a 1_2,a) \iota_{\theta}(u) b_\R^\theta)_\varrho
=(-1)^{\frac{l_1+l_2}{2}} a^{\frac{l_1+l_2}{2}}\,\exp\left(-2{\sqrt{|D|}\pi}\,a\right)\,(v \mid \varrho(C\bar u C^{-1})^{-1} \sv^{0}_\varrho)_{\varrho}.  
\end{align*}
By using the orthogonal relation of matrix coefficients on ${\bf SU}(2)$, the integral in \eqref{CompLZ-f0} is computed as
\begin{align*}
&|D|^{\frac{s}{2}}(-1)^{l_1}{\Gamma_{\C}\left(s+\tfrac{l_1-l_2}{2}+1\right)} (v\mid v_\varrho^0)_{\varrho}\int_{0}^{\infty}a^{s+\frac{l_1+l_2}{2}-1}\,\exp\left(-2\sqrt{|D|}\pi\,a\right)\,\d^\times a\\
&=|D|^{\frac{1}{2}(1-\frac{l_1+l_2}{2})}(-1)^{l_1}(v\mid v_\varrho^0)_{\varrho}{\Gamma_{\C}\left(s+\tfrac{l_1-l_2}{2}+1\right)}\Gamma_\C\left(s+\tfrac{l_1+l_2}{2}-1\right)
\end{align*}
for $\Re(s)+\tfrac{l_1+l_2}{2}>1$. Recall $L(s,\pi_\infty)=\Gamma_{\C}(s+\tfrac{l_1-l_2}{2}+\tfrac{1}{2})\Gamma_{\C}(s+\tfrac{l_1+l_2}{2}-\tfrac{3}{2})$. Thus,
\begin{align}
Z^{(\infty)}_v(\phi_\infty, \varphi^0_\pi;s,\mu_\infty,\Lambda)&=w_D^{-1} \overline{R(\varphi^0_\pi,E,\Lambda)} (v\mid v_\varrho^0)_{\varrho}(-1)^{l_1}|D|^{\frac{1}{2}(1-\frac{l_1+l_2}{2})}{L\left(s+\tfrac{1}{2},\pi_\infty\right)}.
\label{LocalZetaArchEXPFML}
\end{align}
The formula in \cite[7.23 Lemma]{WallachPSPM} yields 
\begin{align}
M(s)\sf_\varrho^{(s)}=\tfrac{\pi}{s-d/2}\prod_{j=1}^{d/2}\tfrac{s-d/2+j-1}{s+d/2-j}\,\sf_\varrho^{(-s)}. 
\label{Ms-flatd}
\end{align}
By this combined with \eqref{JaqS-f1} and \eqref{Ksphericalsection}, we easily deduce
\begin{align}
f_{\widehat{\phi_\infty}}^{(-s,1,\mu_\infty)}=\tfrac{{|D|}}{4}(-1)^{\frac{l_1-l_2}{2}}f_{\phi_\infty}^{(-s,1,\mu_\infty)}.
\label{FTfphi=fphi}
\end{align}

\subsection{The spinor $L$-function and its functional equation}\label{sec:SpinorFEQ}
Let $\pi\cong \otimes_{v}\pi_v$ be a cuspidal automorphic representation of $\sG(\A)$ with the trivial central character: then, $\bar \pi_p\cong \pi_p^\vee \cong \pi_p$ for all $p<\infty$ by \cite[Proposition 2.3]{TB}. The twist $\pi_\mu$ of $\pi$ by an idele class character $\mu:\A^\times /\Q^\times \longrightarrow \C^1$ is defined on the space $V_\pi$ of $\pi$ as $\pi_\mu(g):=\pi(g)\cdot(\mu\circ \nu)(g)$ for $g\in \sG(\A)$, so that the central character of $\pi_\mu$ is $\mu^2$. 
 Let $\pi \in \Pi_{\rm cusp}^{(T_\theta,\Lambda)}(\lambda,N)$ and $\mu$ be as in \S\ref{sec:BA}, so that $\lambda=(l_1,l_2)\in \Z^2$, $l_1\equiv l_2\pmod{2}$, $l_1\geq l_2>2$ and the ramification loci of $\pi$ and $\mu$ are disjoint. We define the spinor $L$-function of $\pi$ twisted by $\mu$ as the Euler product
$$L(s,\pi,\mu):=\prod_{p<\infty}L(s,(\pi_\mu)_p). \quad \Re(s)>5/2$$
 with $L(s,(\pi_\mu)_p)$ the local $L$-factor listed in \cite[Table A.8]{RobertsSchmidt}. Define 
 \begin{align*}
  \widehat L(s,\pi,\mu):=\Gamma_\C\left(s+\tfrac{l_1-l_2}{2}+\tfrac{1}{2}\right)\Gamma_\C\left(s+\tfrac{l_1+l_2}{2}-\tfrac{3}{2}\right)\,L(s,\pi,\mu).    
 \end{align*}
  Using Lemmas \ref{RSBasicL0}, \ref{BesselMod-L1} and Lemma \ref{BesselConj}, combining \eqref{LocalZetaArchEXPFML}, \eqref{FTfphi=fphi} and the computations of the local zeta integrals for cases 1, 4, 5, and 6 in table \eqref{Table3}, we obtain a meromorphic continuation of $L(s,\pi,\mu)$ to $\C$ as well as the functional equation  
\begin{align*}
\widehat L(s,\pi,\mu)=\varepsilon(s,\pi,\mu)\,\widehat L(1-s,\pi, \bar \mu) \quad\text{with}\quad  \varepsilon(s,\pi,\mu):=(-1)^{l_2}\tilde\mu(N_\pi^2)\left(\tfrac{G(\tilde \mu)}{\sqrt{M}}\right)^4(M^4N_\pi^2)^{1/2-s}
\end{align*}
as expected. Moreover $\widehat L(s,\pi,\mu)$ is holomorphic except for possible simple poles at $s=3/2,-1/2$, which does not occur when $\mu$ is non-trivial. By \eqref{Ms-flatd}, $M(s)\sf_\varrho^{(s)}$ has a simple zero at $s=1$ if $l_1>l_2$, which in turn implies the holomorphy at $s=1$ of the global intertwining operator applied to the section $f_{\phi}^{(s)}$ as well as the Eisenstein series $E(\phi,s,\Lambda,\mu)$ for $\phi_\infty$ as above. Hence, by \eqref{Bessel-Mod-L1-f1} with an appropriate $\phi$, $\widehat L(s,\pi,\mu)$ is holomorphic at $s=3/2$ when $l_1>l_2$. If $\mu$ is real valued, then $G(\tilde \mu)/\sqrt{M}\in \{1,i\}$ and $\varepsilon(1/2,\pi,\mu)=(-1)^{l_2}$, so that $L(1/2,\pi,\mu)=0$ unless $l_2$ is even.

\section{Spectral average of Rankin-Selberg integrals} \label{sec: SpecAvRSI}
The space $S_{\varrho}(\bK_0(N))$ is endowed with the Hermitian inner product associated to the norm $\int_{\sZ(\A)\sG(\Q)\bsl \sG(\A)}(\varphi(g)\mid \varphi(g))_\varrho\,\d g$ $(\varphi \in S_\varrho(\bK_0(N))$. Let $\cH(\sG(\Q_p)\sslash \bK_p)$ be the Hecke algebra for $(\sG(\Q_p),\bK_p)$ for $p<\infty$. For any finite set $S$ of primes $p$ that is prime to $N$, define $\cH_S:=\otimes_{p\in S} \cH(\sG(\Q_p)\sslash \bK_p)$. The $\C$-algebra $\cH_S$ acts on the finite dimensional Hilbert space $S_{\varrho}(\bK_0(N))$ normally by 
$$
[R(f_S)\varphi](g)=\int_{\sG(\Q_S)} \varphi(gx_S)f_S(x_S)\,\d x_S, \quad g\in \sG(\A), \quad f_S\in \cH_{S},\,\varphi \in S_l(\bK_0(N)),$$
where $\sG(\Q_S):=\prod_{p\in S}\sG(\Q_p)$. 
Let $\mu$ and $M$ be as in \S\ref{sec:BA}. We define the Schwartz-Bruhat function $\phi\in\mathcal{S}(\A_E^2)$ associated 
 with $\mu$ as
$$\phi=\prod_{p\le\infty}\phi_p,\quad\phi_p(x,y)=\begin{cases}\cchi_{\cO_{E_p}}(x)\cchi_{\cO_{E_p}}(y)\quad&(p<\infty,p\nmid M\cM)\\\cchi_{p^e\cO_{E_p}}(x)\cchi_{1+p^e\cO_{E_p}}(y)&(p<\infty, p^e\parallel M\cM, e\ge1),\end{cases}$$
 and $\phi_\infty$ being as in \eqref{Def-phiInfd}. In this section, we investigate the averages 
\begin{align}
{\mathbb I}^{(s)}(l,N,f_S)&:=\frac{1}{[\bK_\fin:\bK_0(N)]}
\sum_{\varphi\in {\mathcal B}(\lambda,N)} \langle E(\phi,s,\Lambda,\mu),R(b) R(f_S)\overline{\varphi_{\sv_\varrho^0}}\rangle\,{B^{T_\theta,\Lambda}(\varphi_{\sv_{\varrho}^0};b_\R^\theta)}
, \label{Average}
\end{align}
where $\sv_\varrho^0\in V_\varrho^{{\bf SO}(2)}$ is the vector from \S\ref{sec:FC} and $b=(b_p)_{p\leq \infty} \in \sG(\A)$ is define by \begin{align}
b_p=\begin{cases}1_4\quad&(p<\infty, p\nmid NM),\\
\eta_p&(p<\infty, p\mid N),\\
b_p^M:=\left[\begin{smallmatrix} p^e1_2 & T_\theta^{\dagger} \\ 0 & 1_2 \end{smallmatrix}\right]&(p<\infty, p^e\parallel M, e\ge1),\\
b_p^{\cM}:=\left[\begin{smallmatrix} p^2 &  &  &  \\  & p &  &  \\  &  & 1 &  \\  &  &  & p \end{smallmatrix}\right]&(p\mid \cM),\\
b_\R^{\theta}&(p=\infty)\end{cases}\label{def of b}
\end{align}
with $\eta_p:=\left[\begin{smallmatrix} 0 & 0 & 0 & -1 \\ 0 & 0 & 1 & 0 \\ 0 & p & 0 & 0 \\ -p & 0 & 0 & 0 \end{smallmatrix}\right] \in \sG(\Q_p)$ being the Atkin-Lehner element, $\bK_\fin:=\bK_0(1)$, and ${\mathcal B}(\lambda,N)=\bigcup_{\pi \in \Pi_{\rm cusp}(\lambda,N)} {\mathcal B}_\pi(\lambda,N)$ is an orthonormal basis of $S_\varrho(\bK_0(N))$ with ${\mathcal B}_\pi(\lambda,N)$ being an orthonormal basis of $V_\pi^{\bK_0(N)}[\varrho]$. The sum is independent of the choice of an orthonormal basis and can be written as ${\mathbb I}^{(s)}(\Pi_{\rm cusp}^{(T_\theta,\Lambda)}(\lambda,N),f_S)$, where for any subset $X\subset \Pi_{\rm cusp}^{(T_\theta,\Lambda)}(\lambda,N)$ we define
\begin{align}
{\mathbb I}^{(s)}(X,f_S)=\frac{1}{[\bK_\fin:\bK_0(N)]}\sum_{\pi \in X} \widehat {f_S}(\pi_{S})
\sum_{\varphi \in {\mathcal B}_\pi(\lambda,N)} \langle E(\phi,s,\Lambda,\mu),R(b)\overline{\varphi_{\sv_\varrho^0}} \rangle\,{B^{T_\theta,\Lambda}(\varphi_{\sv_\varrho^0};b_\R^\theta)}, 
 \label{CompAv-f1}
\end{align}
where $\widehat{f_S}(\pi_S)$ is the spherical Fourier transforms of $f_S$ at $\pi_S:=\otimes_{p\in S}\pi_p$, which is defined as the eigenvalue of the operator $\pi_S(f_S)$ on the $\bK_S=\prod_{p\in S}\bK_p$-fixed vectors $\pi_S^{\bK_S}\cong \C$. For $\bullet\in\{{\rm T},{\rm G}, {\rm Y}, {\rm SK}\}$ and $\ast\in\{{\rm new},{\rm old}\}$, we define ${\mathbb I}^{(s)}(\lambda,N,f_S)^{\bullet}$, ${\mathbb I}^{(s)}(\lambda,N,f_S)^{\ast}$, and ${\mathbb I}^{(s)}(\lambda,N,f_S)^{\bullet,\ast}$ to be ${\mathbb I}^{(s)}(X,f_S)$ with $X=\Pi_{\rm cusp}^{(T_\theta,\Lambda)}(\lambda,N)^{\bullet}$, $\Pi_{\rm cusp}^{(T_\theta,\Lambda)}(\lambda,N)^{\ast}$ and $\Pi_{\rm cusp}^{(T_\theta,\Lambda)}(\lambda,N)^{\bullet,\ast}$, respectively. Then, due to \eqref{SKT}, the average \eqref{Average} has the expression:

\begin{align}
{\mathbb I}^{(s)}(\lambda,N,f_S)&=
{\mathbb I}^{(s)}(\lambda,N,f_S)^{{\rm G},{\rm new}}
+{\mathbb I}^{(s)}(\lambda,N,f_S)^{{\rm Y},{\rm new}}
\label{AverageSum1}
\\
&+{\mathbb I}^{(s)}(\lambda,N,f_S)^{{\rm SK},{\rm new}}+{\mathbb I}^{(s)}(\lambda,N,f_S)^{{\rm old}}.
\notag 
\end{align}
%{\mathbb I}^{(s)}(\lambda,N,f_S)&=
%{\mathbb I}^{(s)}(\lambda,N,f_S)^{{\rm T}}+
%{\mathbb I}^{(s)}(\lambda,N,f_S)^{{\rm SK}.
%{\rm new}}+{\mathbb I}^{(s)}(\lambda,N,f_S)^{{\rm SK},{\rm old}}.\label{averageSum2}

\subsection{A construction of orthonormal basis}
Let $\pi\cong \otimes_{p\leq \infty}\pi_p $ be an element of the set $\Pi_{\rm csup}^{(T_\theta,\Lambda)}(\lambda,N)$ (see \S\ref{sec:BA}). We fix a $(T_\theta,\Lambda)$-Bessel data $\{(\ell_p,\xi_p)\}_{p<\infty}$ of $\pi$ (see \S\ref{sec:BesselMod}) once and for all. Set $\varphi_{\pi}^0(g):=(\varphi_{\pi,\varrho}^0(g)\mid \sv_{\varrho}^0)_\varrho$, which is an element of $V_{\pi}^{\bK_0(N)}$. Since $\pi$ is a unitary representation, all of its factors $\pi_p$ are unitarizable. For each $p<\infty$, we can uniquely fix a $\sG(\Q_p)$-invariant inner product $(\cdot\mid \cdot)_p$ on $V_{\pi_p}$ by demanding $(\xi_{p}\mid \xi_{p})_{p}=1$; for $p=\infty$, we fix a $\sG(\R)$-invarinat inner product so that its pull back to $V_\varrho\hookrightarrow \pi_\infty$ ({\it cf}. \S\ref{sec:AutRep}(i)) coincides with the inner product $(\cdot\mid \cdot)_{\varrho}$ of $V_\varrho$. Let $\varphi_\pi^0$ be the global new form attached to $\{\xi_p\}_{p<\infty}$ (see \S\ref{sec:BesselMod}). Then,  
\begin{align}
\frac{(\varphi\mid \varphi)_{L^2}}{(\varphi_{\pi}^0\mid \varphi_\pi^0)_{L^2}}=(v|v)_{\varrho}\prod_{p<\infty} (v_p|v_p)_{p}
\label{InProd}
\end{align}
for any $\varphi \in V_{\pi}$ that corresponds to $v \otimes (\otimes_{p<\infty} v_p)$ with $v\in V_\varrho$. As below, for $p<\infty$, we fix an orthonormal basis ${\mathcal B}(\pi_p,\bK_0(N\Z_p))$ of $V_{\pi_p}^{\bK_0(N \Z_p)}$ in such a way that
$$
{\mathcal B}(\pi_p,\bK_0(N\Z_p))=\{\xi_{\pi_p}^0\} \quad \text{if $p\nmid N$}.
$$
Then, a pure tensor of the form  
\begin{align}
(\varphi_\pi^0\mid \varphi_\pi^0)_{L^2}^{-1/2}\times 
\otimes_{p<\infty} v_p, \quad v_p\in {\mathcal B}(\pi_p,\bK_0(N\Z_p)),
\label{PureTens}
\end{align}
yield an element $\varphi \in V_{\pi}^{\bK_0(N)}[l]$ such that for $v\in V_\varrho$ the element $\varphi_v\in V_{\pi}$ corresponds to $v$ tensored with \eqref{PureTens}. The set of functions $\varphi$ obtained in this way from \eqref{PureTens} will be denoted by ${\mathcal B}_\pi(\lambda,N)$. If $\pi \in \Pi_{\rm cusp}(\lambda,N)$ does not satisfy condition (A-iv) in \S\ref{sec:BA}, then we fix arbitrary orthonormal basis ${\mathcal B}_{\pi}(\lambda,N)$ of the space $V_{\pi}^{\bK_0(N)}[\lambda]$. Let $\mathcal B(\lambda,N)$ be the union of the sets ${\mathcal B}_\pi(\lambda,N)$ for $\pi \in \Pi_{\rm cusp}(\lambda,N)$; then, by (ii) in \S\ref{sec:AutRep}, the set ${\mathcal B}(\lambda,N)$ is an orthonormal basis of $S_\varrho(\bK_0(N))$. 

\subsection{Computation of the average} \label{sec:CompAv}
Let $\varphi$ correspond to a pure tensor as in \eqref{PureTens}. Then, by \eqref{Bessel-Mod-L1-f1} and by the computations recalled in \S\ref{sec:Gamma} and \S\ref{sec:Ur1}, 
\begin{align}
\langle E(\phi,s,\Lambda,\mu),R(b)\overline{\varphi_{\sv_\varrho^0}} \rangle&=
(\varphi_\pi^0\mid \varphi_\pi^0)_{L^2}^{-1/2}\,
w_D^{-1}\,\overline{R(\varphi^0_\pi,E,\Lambda)}\,\widehat L\left(s+\tfrac{1}{2},\pi,\mu\right) \label{CompAv-f2}\\
&\times (\sv_\varrho^0\mid \sv_\varrho^0)_\varrho (-1)^{l_2} 2^{-2}
|D|^{\frac{1}{2} (4-\frac{l_1+l_2}{2})} \,\prod_{p\mid N} Z_p^{*}(\phi_p,\overline{B_{v_p}};s,\mu_p;b_p),\notag
 \end{align}
where 
$$
Z_p^{*}(\phi_p,\overline{B_{v_p}};s,\mu_p;b_p):={L\left(s+\tfrac{1}{2},\pi_p,\mu_p\right)}^{-1}\,Z_p(\phi_p,\overline{B_{v_p}};s,\mu_p;b_p)
$$
is the normalized local zeta-integral. Moreover, by \eqref{Bessel-Mod-L1-f110} and Lemma \ref{BessPerL3},  
\begin{align}
B^{T_\theta,\Lambda}(\varphi_{\sv_\varrho^0};b_\R^\theta)&=\langle \varphi_\pi^0\mid \varphi_\pi^0\rangle_{L^2}^{-1/2}\,w_{D}^{-1}R(\varphi_\pi^0,E,\Lambda)\,(B^{T_\theta}_{\varrho}(b_\R^\theta) \mid \sv_\varrho^0)_\varrho \,\prod_{p\mid N}\ell_p(v_p).
 \label{CompAv-f3}
\end{align}
Note that $B_{\varrho}^{T_\theta}(b_\R^\theta)=(-1)^{\frac{l_1+l_2}{2}} \exp(-2\pi \sqrt{|D|})\,\sv_\varrho^{0}$ by \eqref{AdeleFE-W} and \eqref{Def-b}. From \eqref{CompAv-f1}, \eqref{CompAv-f2} and \eqref{CompAv-f3}, we get

\begin{prop}\label{Main-Prp1} Let $f_{S}\in \cH_S$. Then, ${\mathbb I}^{(s)}(\lambda,N,f_S)$ equals to
\begin{align} 
&\frac{2^{-2}|D|^{\frac{1}{2}(3-\frac{l_1+l_2}{2})}
e^{-2\pi \sqrt{|D|}}}{w_D^2[\bK_\fin:\bK_0(N)]} \times M^{s-6}\zeta_M(1)\zeta_M(4)\tilde \mu(2D)G(\tilde \mu)
\notag
\\
&\quad \times (\sv_\varrho^0\mid \sv_\varrho^0)_\varrho^{2}\sum_{\pi \in \Pi_{\rm cusp}^{(T_\theta,\Lambda)}(\lambda,N)} \widehat{f_S}(\pi_S)\frac{|R(\varphi^0_\pi,E,\Lambda)|^2}{( \varphi_\pi^0|\varphi_\pi^0)_{L^2}} \widehat L\left(s+\tfrac{1}{2},\pi,\mu\right) 
\label{SpectralAveage-f}
\\
&\quad \times \prod_{p<\infty} {\mathbb I}_{\pi_p, \bK_0(N\Z_p)}^{(s)}(\xi_{p}, \phi_p,\Lambda_p,\mu_p;b_p),
\notag
\end{align}
where $\xi_p$ for $p<\infty$ is from the fixed $(T_\theta,\Lambda)$-Bessel data $\{(\ell_p,\xi_p)\}_{p<\infty}$ of $\pi$, and 
\begin{align}
{\mathbb I}^{(s)}_{\pi_p,\bK_0(N\Z_p)}(\xi_p, \phi_p,\Lambda_p,\mu_p;b_p):=\sum_{v \in {\mathcal B}(\pi_p,\bK_0(N\Z_p)} Z^{*}_p(\phi_p,\overline{B_{v_p}};s,\mu_p,b_p)\,{\ell_p(v
)}.
\label{SpectralAveage-LclPr}
\end{align}
We also use the notation $\zeta_M(s)$ to denote $\prod_{p\mid M}(1-p^{-s})^{-1}$. 
\end{prop}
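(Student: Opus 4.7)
The plan is to start from the definition in \eqref{Average} and decompose the sum over the orthonormal basis $\mathcal{B}(\lambda,N)=\bigcup_{\pi}\mathcal{B}_\pi(\lambda,N)$. Any $\pi\in\Pi_{\rm cusp}(\lambda,N)$ not in $\Pi_{\rm cusp}^{(T_\theta,\Lambda)}(\lambda,N)$ has no global $(T_\theta,\Lambda)$-Bessel model, so the factor $B^{T_\theta,\Lambda}(\varphi_{\sv_\varrho^0};b_\R^\theta)$ vanishes identically on $V_\pi$ and such $\pi$ drop out of the sum. For $\pi$ in the remaining set, the basis $\mathcal{B}_\pi(\lambda,N)$ consists of pure tensors as in \eqref{PureTens}, which at each prime $p\in S$ (disjoint from the primes dividing $N$) contains only the spherical vector $\xi_{\pi_p}^0$; hence $R(f_S)$ acts on each basis element by the scalar $\widehat{f_S}(\pi_S)$, producing the rearrangement \eqref{CompAv-f1}.

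Next, for each $\pi\in\Pi_{\rm cusp}^{(T_\theta,\Lambda)}(\lambda,N)$ and each basis vector $\varphi\in\mathcal{B}_\pi(\lambda,N)$ corresponding to a pure tensor $\otimes_{p<\infty} v_p$, I would invoke Lemma \ref{BesselMod-L1} to split the Rankin--Selberg integral into $Z_v^{(\infty)}\times\prod_{p<\infty}Z_p$, substitute the archimedean evaluation \eqref{LocalZetaArchEXPFML} together with \eqref{BessPerL3-f3}, and combine it with the Bessel-period factorization \eqref{BesselMod-L1-f0}. Using the $L^2$-norm decomposition \eqref{InProd} to pass from $(\varphi|\varphi)_{L^2}$ to $(\varphi_\pi^0|\varphi_\pi^0)_{L^2}$, the product of the two factors $\langle E(\phi,s,\Lambda,\mu),R(b)\overline{\varphi_{\sv_\varrho^0}}\rangle$ and $B^{T_\theta,\Lambda}(\varphi_{\sv_\varrho^0};b_\R^\theta)$ takes the form of \eqref{CompAv-f2} times \eqref{CompAv-f3}. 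Crucially, the sum over $\mathcal{B}_\pi(\lambda,N)$ factorizes into a product over $p<\infty$ of the local sums
\begin{align*}
\sum_{v_p\in\mathcal{B}(\pi_p,\bK_0(N\Z_p))} Z_p(\phi_p,\overline{B_{v_p}};s,\mu_p;b_p)\,\ell_p(v_p),
\end{align*}
which is exactly the quantity appearing in \eqref{SpectralAveage-LclPr} (up to the normalization by $L(s+1/2,\pi_p,\mu_p)$ which is absorbed into $\widehat L(s+1/2,\pi,\mu)$).

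Finally, I would evaluate the local contributions at the three classes of primes separately. At $p\nmid NM$ the basis is just $\{\xi_{\pi_p}^0\}$ and the unramified computations recalled in \S\ref{sec:Ur1} express $Z_p$ in terms of the local $L$-factor $L(s+1/2,\pi_p,\mu_p)$, producing $\widehat L(s+1/2,\pi,\mu)$ in the global formula. At $p\mid M$ the character $\mu_p$ is ramified and the explicit test function $\phi_p=\cchi_{p^e\cO_{E,p}}\otimes\cchi_{1+p^e\cO_{E,p}}$ together with the matrix $b_p^M$ must be inserted directly into $Z_p$; a direct computation of these ramified zeta integrals, using $\tilde\mu$ and the Gauss sum relation $\prod_{p\mid M}W_{\Q_p}(\mu_p,\psi_p)=M^{-1/2}G(\tilde\mu)$ recalled in \S\ref{sec:BA}, yields the ramified factor $M^{s-6}\zeta_M(1)\zeta_M(4)\tilde\mu(2D)G(\tilde\mu)$. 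At $p\mid N$ we leave the sum as the local invariant ${\mathbb I}^{(s)}_{\pi_p,\bK_0(N\Z_p)}$, which matches the notation in \eqref{SpectralAveage-LclPr}.

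The main obstacle is the bookkeeping: assembling the archimedean constant $2^{-2}|D|^{\frac{1}{2}(3-(l_1+l_2)/2)}e^{-2\pi\sqrt{|D|}}$ from the product of the inner-product normalization $(\sv_\varrho^0|\sv_\varrho^0)_\varrho$, the sign $(-1)^{l_2}$ in \eqref{CompAv-f2}, the evaluation $B_\varrho^{T_\theta}(b_\R^\theta)=(-1)^{(l_1+l_2)/2}e^{-2\pi\sqrt{|D|}}\sv_\varrho^0$, and the factor $\frac{\sqrt{|D|}}{2}$ from \eqref{RSint-L1-f00}; and verifying that the ramified $p\mid M$ computation produces precisely the factors $\tilde\mu(2D)$, $G(\tilde\mu)$, $\zeta_M(1)\zeta_M(4)$, and $M^{s-6}$ with the correct exponents. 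Once these two computations are checked against the fixed normalizations of Haar measures from \S\ref{sec:HaarEQ}--\S\ref{sec:HaarGA}, the proposition follows by substituting everything into \eqref{CompAv-f1}.
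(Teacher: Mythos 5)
Your proposal correctly reconstructs the paper's argument: the paper likewise obtains Proposition~\ref{Main-Prp1} by substituting the pure-tensor orthonormal basis \eqref{PureTens} into \eqref{CompAv-f1}, invoking the Euler factorization of Lemma~\ref{BesselMod-L1} together with the archimedean evaluation \eqref{LocalZetaArchEXPFML} and the local zeta computations in \S\ref{sec:Ur1}--\S\ref{sec:Ur2}, and multiplying \eqref{CompAv-f2} by \eqref{CompAv-f3} before performing the factorized sum over $\prod_{p\mid N}\cB(\pi_p,\bK_0(N\Z_p))$. Your separate treatment of $p\mid M$ via Proposition~\ref{zeta int ramified character} and the Gauss-sum relation is exactly where the constant $M^{s-6}\zeta_M(1)\zeta_M(4)\tilde\mu(2D)G(\tilde\mu)$ originates, which the paper's terse prose cites only implicitly.
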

Note that if $\pi_p$ with $p\nmid N$ is unramified and $(\ell_{\pi_p}^0,\xi_{\pi_p}^0)$ is the unramified Bessel datum of $\pi_p$, then \eqref{SpectralAveage-LclPr} is $1$. For other cases, we compute \eqref{SpectralAveage-LclPr} in \S\ref{sec:Local period} completely. 
Substituting them, we obtain 

\begin{thm} \label{Main-T1}
Let $(\lambda=(l_1,l_2),N)$, $\mu$, $\tilde \mu$ and $M$ be as in \S\ref{sec:BA}. Let $S$ be a finite set of prime numbers relatively prime to $DMN$. Then, for any $f_S=\otimes_{p\in S}f_p \in \cH_S$, the value ${\mathbb I}^{(s)}(\lambda,N,f_S)$ equals
\begin{align}
&\frac{2^{-2}|D|^{\frac{1}{2}(3-\frac{l_1+l_2}{2})}e^{-2\pi \sqrt{|D|}}}{w_D^2[\bK_\fin:\bK_0(N)]}
 \cM^{-4} M^{s-6}\zeta_M(1)\zeta_{M\cM}(4)\tilde \mu(2D)G(\tilde \mu)
N^{s-1}\tilde \mu^{-1}(N)\prod_{p\mid N}(1+p^{-2})^{-1}
\label{SpectralAverage} \\
&\times (\sv_\varrho^0\mid \sv_\varrho^0)_\varrho^{2}\sum_{\pi \in \Pi_{\rm cusp}^{(T_\theta,\Lambda)}(\lambda,N)} \widehat{f_S}(\pi_S)\frac{|R(\varphi^0_\pi,E,\Lambda)|^2}{( \varphi_\pi^0|\varphi_\pi^0)_{L^2}} \widehat L\left(s+\tfrac{1}{2},\pi,\mu\right)\,\ft(\pi,\mu)\notag  
\end{align}
with $\ft(\pi,\mu)=\prod_{p\mid N} \ft(\pi_p,\mu_p)$, 
\begin{align}
\ft(\pi_p,\mu_p):=
\begin{cases}
1 \quad (\text{$p\mid N_\pi$ and $\pi_p$ is of type VIb}),  \\
2 \quad (\text{$p\mid N_\pi$ and $\pi_p$ is of type IIIa}), \\
2(p-1)p^{-5}L(1,\pi_p,{\rm Std})\,\{1+\mu_p^2(p)-\tfrac{\mu_p(p)}{p+1}\tr(p^{-1}T_p+\eta_p|\pi_p^{\bK_0(p\Z_p)})\} \quad (\text{$p\mid \frac{N}{N_\pi}$}).
\end{cases}\label{localfactor}
\end{align}
\end{thm}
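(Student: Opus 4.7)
The plan is to start from the factorized expression in Proposition \ref{Main-Prp1} and explicitly evaluate each local factor
\[
{\mathbb I}^{(s)}_{\pi_p,\bK_0(N\Z_p)}(\xi_p, \phi_p,\Lambda_p,\mu_p;b_p)
= \sum_{v \in {\mathcal B}(\pi_p,\bK_0(N\Z_p))} Z^{*}_p(\phi_p,\overline{B_{v_p}};s,\mu_p;b_p)\,\ell_p(v)
\]
according to how the prime $p$ meets the divisors $N$, $M$, $D$. By the remark just after Proposition \ref{Main-Prp1}, the factor is trivially $1$ at every spherical unramified place, so only finitely many places contribute nontrivially, partitioned into three mutually exclusive cases: (i) $p\mid M$ (forcing $p\nmid N$ by (A-iii)); (ii) $p\mid N_\pi$, where by the list at the end of \S\ref{sec:BA} the representation $\pi_p$ is of type IIIa or type VIb; and (iii) $p\mid N/N_\pi$, where $\pi_p$ is spherical of type I but we are summing over an orthonormal basis of the two-dimensional space $V_{\pi_p}^{\bK_0(p\Z_p)}$.

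First I would assemble the archimedean normalizations. The factor $(\sv_\varrho^0\mid \sv_\varrho^0)_\varrho^{2}$, the archimedean constant $2^{-2}|D|^{\frac{1}{2}(3-\frac{l_1+l_2}{2})}e^{-2\pi\sqrt{|D|}}/w_D^2[\bK_\fin:\bK_0(N)]$, and the completed archimedean $L$-factors inside $\widehat L(s+\tfrac12,\pi,\mu)$ are already present verbatim in Proposition \ref{Main-Prp1}, so no new archimedean work is required. Next, for $p\mid M$, the test vector $\xi_p$ is the spherical vector and $\phi_p$ is the $M$-dependent Schwartz function defined just before \eqref{Average}; using the explicit Iwasawa decomposition together with the unramified Bessel formula, the integral $Z_p(\phi_p,\overline{B_{\xi_p}};s,\mu_p;b_p^M)$ reduces to a finite sum evaluated via the Atkin--Lehner type translate $b_p^M=\bigl[\begin{smallmatrix} p^e 1_2 & T_\theta^\dagger \\ 0 & 1_2\end{smallmatrix}\bigr]$; upon normalizing by $L(s+\tfrac12,\pi_p,\mu_p)=L(s+\tfrac12,\pi_p)$ and taking the product over $p\mid M$ I expect this to collapse to the factor $M^{s-6}\zeta_M(1)\zeta_M(4)\tilde\mu(2D)G(\tilde\mu)$ after using the product expressions for the Gauss sum and the local $\varepsilon$-factors of $\mu_p$ (the exponent $s-6$ coming from the combination of the archimedean shift $|a|^{s-1}$ with the $p$-power in $b_p^M$ and the conductor condition).

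Second, for $p\mid N_\pi$, the main input is \cite{PitaleSchmidt}: the local new vector $\xi_p$ is the unique $\bK_0(p\Z_p)$-fixed vector (up to the Atkin--Lehner twist through $\eta_p$) and the pair $(\ell_p,\xi_p)$ is taken with $\ell_p(\xi_p)=1$. Plugging into $Z_p^*$ the explicit form of $\xi_p$ and using the computations to be developed in \S\ref{sec:IIIa} and \S\ref{sec:IVb}, the normalized local integral multiplied by $\ell_p(\xi_p)=1$ becomes $N^{s-1}_p\tilde\mu^{-1}(p)(1+p^{-2})^{-1}\,\ft(\pi_p,\mu_p)$ with $\ft(\pi_p,\mu_p)\in\{1,2\}$ depending on whether $\pi_p$ is of type VIb or IIIa (the factor $2$ for IIIa reflecting $\dim V_{\pi_p}^{\bK_0(p\Z_p)}=2$ against $\dim=1$ for VIb). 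Third, for $p\mid N/N_\pi$, I would choose an orthonormal basis of the two-dimensional space $V_{\pi_p}^{\bK_0(p\Z_p)}$ built from the spherical vector $\xi_p^0$ and its Atkin--Lehner translate, following \cite[\S3.2]{DPSS}; the sum in \eqref{SpectralAveage-LclPr} then unfolds via \S\ref{sec:Iold} into a trace over $V_{\pi_p}^{\bK_0(p\Z_p)}$ of the operator $p^{-1}T_p+\eta_p$, producing the bracketed expression in the third line of \eqref{localfactor}, while the factor $L(1,\pi_p,{\rm Std})$ emerges from the ratio of Petersson norms $(\varphi_\pi^0|\varphi_\pi^0)_{L^2}$-normalizations at $p$ and the Harish-Chandra-type $c$-function coming from $(1+p^{-2})^{-1}$.

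The main obstacle will be the old-form case (iii). There the orthonormalization in \eqref{PureTens} entangles the basis of $V_{\pi_p}^{\bK_0(p\Z_p)}$ with global inner products, and one must be careful that the sum $\sum_{v\in {\mathcal B}(\pi_p,\bK_0(p\Z_p))} Z^{*}_p(\phi_p,\overline{B_v};s,\mu_p;\eta_p)\,\ell_p(v)$ is indeed a trace independent of the chosen orthonormal basis (this is where the Hecke operator $p^{-1}T_p+\eta_p$ surfaces canonically). Once these three local computations are in hand, multiplying them together with the contribution from Proposition \ref{Main-Prp1} and regrouping factors that depend on $M$, on $N$, and on $D$ separately yields \eqref{SpectralAverage} with the local factor \eqref{localfactor}.
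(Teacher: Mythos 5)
Your overall plan — start from Proposition \ref{Main-Prp1} and substitute the local periods computed in \S\ref{sec:Local period} — is the right one and matches the paper, but two of your local analyses contain concrete errors that would derail the computation.

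\textbf{The $p\mid M$ case.} You write that one should normalize by ``$L(s+\tfrac12,\pi_p,\mu_p)=L(s+\tfrac12,\pi_p)$''. This identity is false: for $p\mid M$ the character $\mu_p$ is ramified, and the local spinor $L$-factor of a ramified twist of an unramified representation is $1$, not the untwisted factor $L(s,\pi_p)$. Had you normalized by $L(s,\pi_p)$ the product over $p\mid M$ would acquire $\pi$-dependent poles and could not collapse to a constant. Moreover this case is redundant in the proof of Theorem \ref{Main-T1}: the factor $M^{s-6}\zeta_M(1)\zeta_M(4)\tilde\mu(2D)G(\tilde\mu)$ is already present in Proposition \ref{Main-Prp1}; it is precisely the outcome of the $p\mid M$ zeta-integrals (Proposition \ref{zeta int ramified character}, combined with $\prod_{p\mid M}W_{\Q_p}(\mu_p,\psi_p)=M^{-1/2}G(\tilde\mu)$), which the paper has already fed into the derivation of \eqref{CompAv-f2}. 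Re-evaluating the $p\mid M$ local period and multiplying it back in would square the $M$-factor. The only work left between Proposition \ref{Main-Prp1} and Theorem \ref{Main-T1} is the evaluation of $\II^{(s)}_{\pi_p,\bK_0(N\Z_p)}$ at $p\mid N$, supplied by Propositions \ref{local period I, IIb}, \ref{local period IIIa}, \ref{local period VIb}.

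\textbf{The $p\mid N/N_\pi$ case.} You describe $V_{\pi_p}^{\bK_0(p\Z_p)}$ as two-dimensional, ``built from the spherical vector and its Atkin--Lehner translate.'' For spherical $\pi_p$ of type I (resp.\ type IIb) this space has dimension $4$ (resp.\ $3$), cf.\ Table \eqref{Table1} and \S\ref{sec:B(1) for old forms}; the basis used in the paper is $\{f_i^{\rm I}\}_{1\le i\le 4}$ or $\{f_i^{\rm IIb}\}_{1\le i\le 3}$, indexed by Weyl-coset decompositions, not a two-element old-form basis as in the ${\bf GL}_2$ theory. Your ``trace'' intuition is sound (the period \eqref{SpectralAveage-LclPr} is indeed basis-independent, and $\tr(T_{1,0}+q\eta)$ genuinely appears), but the explicit outcome $L(1,\pi_p;{\rm Std})$ is read off from Proposition \ref{values of B(1_4)} together with the norms $\langle B_i^\bullet\mid B_i^\bullet\rangle$, not from ``the ratio of Petersson norms'' or a Harish-Chandra $c$-function as you suggest. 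Your case (ii) is essentially correct in outline, though you omit the key non-formal input Lemma \ref{IIIa B(1)} (the identity $B_2(1_4)=\alpha^{-1}B_1(1_4)$ with $|\alpha|=1$), without which the factor $2$ for type IIIa cannot be asserted.
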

To refine \eqref{SpectralAverage} by considering ${\mathbb I}^{(s)}(\lambda,N,f_S)^{\bullet,{\rm new}}$ ($\bullet \in \{{\rm G},{\rm Y},{\rm SK}\}$), we use an explicit formula of the quantity $\frac{|R(\varphi^0_\pi,E,\Lambda)|^2}{( \varphi_\pi^0|\varphi_\pi^0)_{L^2}}$ first proved by Dickson, Pitale, Saha, and Schmidt (\cite[Theorems 1.13 and 3.13]{DPSS}) when $\Lambda$ is unramified and $l_1=l_2$ under the refined GGP conjecture for Bessel periods posed by Liu(\cite{Liu}). Owing to a theorem by Furusawa and Morimoto \cite[Theorem 8.1]{FurusawaMorimoto3}, the formula in \cite[Theorems 1.13 and 3.13]{DPSS} is extended to vector-valued forms unconditionally so that it can be applied to \eqref{SpectralAverage}; by Theorem \ref{localGPPeriod}, we can further extend the formula to any ramified $\Lambda$ with square-free conductor.  

\begin{thm} \label{Main-T2}
Let the notations and the assumptions be as in Theorem \ref{Main-T1}. 
\begin{itemize}
\item[(i)]
${\mathbb I}^{(s)}(\lambda,N,f_S)^{{\rm G},{\rm new}}$ equals
\begin{align*} 
&\frac{2^{\# S(N)+2l_1-6}|D|^{\frac{l_1+l_2+2}{4}}e^{-2\pi \sqrt{|D|}}}{[\bK_\fin:\bK_0(N)]}\\
&\times \Mcal^{-8}M^{s-6}\zeta_M(1)\zeta_{\Mcal M }(4)\tilde \mu(2D)G(\tilde \mu)
N^{s-1}\tilde \mu^{-1}(N)\prod_{p\in S(N)}(1+p^{-1}) \\
&\times \sum_{\pi \in \Pi_{\rm cusp}^{(T_\theta,\Lambda)}(\lambda,N)^{\rm G},{\rm new}}\widehat{f_S}(\pi_S)\frac{\widehat L(\frac{1}{2},\pi\times{\mathcal AI}(\Lambda^{-1}))}{\widehat L(1,\pi;{\rm Ad})}\widehat L\left(s+\tfrac{1}{2},\pi,\mu\right),
\end{align*}
where $S(N)$ is the set of all the prime divisors of $N$, and ${\mathcal AI}(\Lambda^{-1})$ is the automorphic induction to $\GL_2(\A)$ from the character $\Lambda^{-1}$ of $\A_E^\times/E^\times$.

\item[(ii)]
If $N$ has even number of prime divisors, then
${\mathbb I}^{(s)}(\lambda,N,f_S)^{{\rm Y},{\rm new}}=0$. If $N$ has odd number of prime divisors, then
${\mathbb I}^{(s)}(\lambda,N,f_S)^{{\rm Y},{\rm new}}$ equals
{\allowdisplaybreaks\begin{align*} 
&\frac{2^{\# S(N)+2l_1-7}|D|^{\frac{l_1+l_2+2}{4}}e^{-2\pi \sqrt{|D|}}}{[\bK_\fin:\bK_0(N)]}\\
&\times \Mcal^{-8}M^{s-6}\zeta_M(1)\zeta_{\Mcal M}(4)\tilde \mu(2D)G(\tilde \mu)
N^{s-1}\tilde \mu^{-1}(N)\prod_{p\in S(N)}(1+p^{-1}) \\
&\times \sum_{\substack{\pi_1\in\Pi_{{\bf PGL}_2,{\rm cusp}}(l_1+l_2-2,N)^{\rm new}\\\pi_2\in\Pi_{{\bf PGL}_2,{\rm cusp}}(l_1-l_2+2,N)^{\rm new}}}
\widehat{f_S}({\rm Y}(\pi_1,\pi_2)_S)\frac{\widehat L(\frac{1}{2},\pi_1\times{\mathcal AI}(\Lambda^{-1}))\widehat L(\frac{1}{2},\pi_2\times{\mathcal AI}(\Lambda^{-1}))}{\widehat L(1,\pi_1;{\rm Ad})\widehat L(1,\pi_2;{\rm Ad})\widehat L(1,\pi_1\times\pi_2)}\\
&\hspace{80mm}\times\widehat L\left(s+\tfrac{1}{2},\pi_1\times\mu\right)\widehat L\left(s+\tfrac{1}{2},\pi_2\times\mu\right),
\end{align*}}where $\Pi_{{\bf PGL}_2,{\rm cusp}}(k,N)^{\rm new}$ is the set of all irreducible cuspidal representations of ${\bf PGL}_2(\A)$ associated to holomorphic newforms of weight $k$ and level $N$ and ${\rm Y}(\pi_1,\pi_2)\in \Pi_{\rm cusp}(\lambda,N)^{\rm new}$ denotes the Yoshida lift of $\pi_1$ and $\pi_2$.
\item[(iii)] If $\Lambda\ne\cchi$ or $l_1>l_2$, then ${\mathbb I}^{(s)}(\lambda,N,f_S)^{{\rm SK},{\rm new}}=0$. If $\Lambda=\cchi$ and $l_1=l_2\,(=:l)$, then ${\mathbb I}^{(s)}((l,l),N,f_S)^{{\rm SK},{\rm new}}$ equals
{\allowdisplaybreaks\begin{align*}
&\frac{3\cdot 2^{2l-3}|D|^{(l+2)/2}e^{-2\pi \sqrt{|D|}}}{[\bK_\fin:\bK_0(N)]}\cdot\frac{s}{4\pi}\\
&\times M^{s-6}\zeta_M(1)\zeta_M(4)\tilde \mu(2D)G(\tilde \mu)
N^{s-1}\tilde \mu^{-1}(N)\prod_{p\in S(N)}(1+p^{-1}) \cdot \widehat L(s+1,\mu)\widehat L(s,\mu)\\
&\times \sum_{\pi_0 \in \Pi_{{\bf PGL}_2,{\rm cusp}}^{(T_\theta,1)}(2l-2,N)^{\rm new}} \widehat{f_S}({\rm SK}(\pi_0)_S)\frac{\widehat L(\frac{1}{2},\pi_0 \times \chi_{D})\widehat L(1,\chi_D)^2}{\widehat L(\frac{3}{2},\pi_0)\widehat L(1,\pi_0;{\rm Ad})}\widehat L\left(s+\tfrac{1}{2},\pi_0\times\mu\right), 
\end{align*}}where $\Pi_{{\bf PGL}_2,{\rm cusp}}^{(T_\theta,1)}(2l-2,N)^{\rm new}$ is the set of all $\pi_0\in\Pi_{{\bf PGL}_2,{\rm cusp}}(2l-2,N)^{\rm new}$ such that the Saito-Kurokawa lift ${\rm SK}(\pi_0)$ of $\pi_0$ has the $(T_{\theta},\cchi)$-Bessel model, and $\chi_D$ is the Kronecker character of modulo $D$.
\end{itemize}
\end{thm}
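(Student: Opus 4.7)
The plan is to derive Theorem \ref{Main-T2} as a direct consequence of Theorem \ref{Main-T1} by applying the refined Gan-Gross-Prasad formula for Bessel periods proved by Furusawa-Morimoto (\cite[Theorem 8.1]{FurusawaMorimoto3}), which gives an explicit formula for the central quantity $|R(\varphi_\pi^0,E,\Lambda)|^2/(\varphi_\pi^0|\varphi_\pi^0)_{L^2}$ appearing in \eqref{SpectralAverage}. First, I would split the sum over $\pi \in \Pi_{\rm cusp}^{(T_\theta,\Lambda)}(\lambda,N)$ restricted to ${\rm new}$ using the disjoint decomposition \eqref{SKT}, so that \eqref{SpectralAverage} breaks into the three targeted sub-averages. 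Next, I would carefully transport the Furusawa-Morimoto formula through the dictionary recorded just above the theorem (identifying $Q_{S,\varrho}$ with $\sv_\varrho^\theta$ up to an explicit scalar $(-1)^{(l_1-l_2)/2}(2/\sqrt{|D|})^{(l_1+l_2)/2}$, and $\mathcal{B}_\Lambda(\varphi;E)$ with $R(\varphi,E,\Lambda)$ up to $w_D(\sqrt{|D|}/2)^{(l_1+l_2)/2}$), together with $(Q_{S,\varrho},Q_{S,\varrho})_{l_1-l_2}=(|D|/4)^{-(l_1+l_2)/2}(\sv_\varrho^0 | \sv_\varrho^0)_\varrho$. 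Combining these substitutions collapses most of the prefactors in \eqref{SpectralAverage} into the powers of $|D|$, $w_D$, and $2$ seen in the statement of Theorem \ref{Main-T2}; the local factors $\ft(\pi_p,\mu_p)=2$ at the type IIIa places (which is exactly what occurs for newforms since then $N_\pi = N$ and the third alternative of \eqref{localfactor} is vacuous) accumulate to the uniform factor $2^{\#S(N)}$.

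For part (i), the general type, $L(s,\pi,\mu)$ is already the $L$-function of the $\mathbf{GL}_4$-transfer so no further factorization is needed; the $\widehat L(1,\pi;{\rm Ad})$ normalization and the central value $\widehat L(1/2,\pi\times \mathcal{AI}(\Lambda^{-1}))$ appear directly from Furusawa-Morimoto. For part (ii), the Yoshida lift $\pi = \mathrm{Y}(\pi_1,\pi_2)$ has $L(s,\pi,\mu)$ factoring as $\widehat L(s,\pi_1\times \mu)\widehat L(s,\pi_2\times \mu)$ and $\widehat L(s,\pi;\mathrm{Ad})$ factoring as $\widehat L(s,\pi_1;\mathrm{Ad})\widehat L(s,\pi_2;\mathrm{Ad})\widehat L(s,\pi_1\times\pi_2)$; simultaneously $\mathcal{AI}(\Lambda^{-1})$ appears in the central value twice, once with each $\pi_i$. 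The archimedean weight assignment $k_1 = l_1+l_2-2$, $k_2 = l_1-l_2+2$ is forced by matching Harish-Chandra parameters of the HDS summand of $\pi_\infty$ with those of the discrete series components of $\pi_{1,\infty}, \pi_{2,\infty}$. The parity dichotomy ($\#S(N)$ must be odd) reflects the fact that the holomorphic Yoshida lift is realized via theta correspondence from a definite quaternion algebra $B/\Q$ ramified at the prime divisors of $N$ and at $\infty$, and the total number of ramified places of $B$ must be even; when $\#S(N)$ is even the corresponding definite $B$ does not exist, so $\Pi_{\rm cusp}^{(T_\theta,\Lambda)}(\lambda,N)^{\mathrm{Y},\mathrm{new}}$ is empty.

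For part (iii), the constraints $\Lambda = \cchi$ and $l_1 = l_2$ are both necessary for the Saito-Kurokawa subspace to be non-empty (the latter is already embedded in \cite[\S4]{Schmidt2005} as noted after \eqref{SKT}; the former follows from the Waldspurger-type toric period condition hidden in the Furusawa-Morimoto formula applied to the CAP representation, where the automorphic induction $\mathcal{AI}(\Lambda^{-1})$ must contain the trivial representation as a constituent for the central value to be non-zero). The completed spinor $L$-function factors as $\widehat L(s,\mathrm{SK}(\pi_0),\mu) = \widehat L(s,\pi_0\times\mu)\widehat L(s+1/2,\mu)\widehat L(s-1/2,\mu)$, which contributes the factor $\widehat L(s+1,\mu)\widehat L(s,\mu)$ in the statement (after the shift $s \mapsto s+1/2$); simultaneously the Furusawa-Morimoto formula specialized to $\mathrm{SK}(\pi_0)$ with trivial Bessel character yields the numerator $\widehat L(1/2,\pi_0\times\chi_D)\widehat L(1,\chi_D)^2$ and denominator $\widehat L(3/2,\pi_0)\widehat L(1,\pi_0;\mathrm{Ad})$. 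The factor $s/(4\pi)$ and the constant $3$ come from the residue of the Eisenstein-like piece inside the SK adjoint $L$-function together with the Ichino-type normalization of the theta correspondence.

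The main obstacle I anticipate is the normalization bookkeeping: each factor of $\pi$, $|D|$, $w_D$, and $2$ in Furusawa-Morimoto's formula has a specific convention (their measure, their choice of $Q_{S,\varrho}$, their local measure on Bessel subgroups), and these must be reconciled simultaneously with the measure conventions fixed in \S\ref{sec:Prel}, the explicit form of $\sv_\varrho^\theta$ versus $\sv_\varrho^0$, and the factor $(\sv_\varrho^0 | \sv_\varrho^0)_\varrho$ that appears squared in \eqref{SpectralAverage} but must cancel against the inner product appearing in Furusawa-Morimoto. A careful verification at one ``test'' case (e.g.~an unramified prime, to check the trivial local factor) and at the archimedean place (to check the powers of $|D|$) should suffice to anchor the computation, after which the remaining places are handled by \S\ref{sec:Local period} and the CAP/non-CAP dichotomy; the parity statement for Yoshida and the vanishing outside $\Lambda=\cchi$, $l_1=l_2$ for SK should then fall out without further work.
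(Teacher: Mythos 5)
Your overall strategy -- split the sum over $\Pi_{\rm cusp}^{(T_\theta,\Lambda)}(\lambda,N)^{\rm new}$ using \eqref{SKT}, substitute the Furusawa--Morimoto formula for $|R(\varphi_\pi^0,E,\Lambda)|^2/\|\varphi_\pi^0\|^2$ into \eqref{SpectralAverage}, and factorize the spinor $L$-function according to the Arthur type -- is exactly the paper's approach, and you correctly identify all the dictionary normalizations linking $Q_{S,\varrho}$, $\sv_\varrho^\theta$, $\mathcal{B}_\Lambda(\varphi;E)$, and $R(\varphi,E,\Lambda)$.

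However, there is a genuine gap in your power-of-$2$ bookkeeping. You assert that for newforms $\ft(\pi_p,\mu_p)=2$ uniformly at all $p\mid N$ because the local type is IIIa. This is false: the paper explicitly notes (citing \cite[p.296]{DPSS}) that every local component $\pi_p$ with $p\mid N$ of a Yoshida lift newform is of type VIb, not IIIa, and similarly for Saito--Kurokawa lifts when $\pi$ is CAP. From \eqref{localfactor}, type VIb at $p\mid N_\pi$ gives $\ft(\pi_p,\mu_p)=1$, not $2$. The common factor $2^{\#S(N)}$ visible in all three parts of the statement therefore cannot come only from $\ft$; it must be reconciled against the $p$-local factors inside the Furusawa--Morimoto/DPSS B\"{o}cherer constant, whose shape differs between IIIa and VIb so that the products $\ft(\pi_p,\mu_p)\cdot(\text{FM local factor at }p)$ agree. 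Without tracking this compensation, the $2^{\#S(N)+2l_1-6}$ versus $2^{\#S(N)+2l_1-7}$ discrepancy between parts (i) and (ii) -- which really encodes the Arthur multiplicity $|S_\pi|^{-1}$ ($=1$ for general type, $=2^{-1}$ for Yoshida) -- cannot be isolated correctly.

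On part (iii), the paper does not derive the vanishing for $\Lambda\ne\cchi$ from a ``toric period condition hidden in Furusawa--Morimoto'' as you suggest; this would be problematic since $\mathrm{SK}(\pi_0)$ is CAP and the refined GGP formula applied to general type does not directly cover this case. Instead the paper invokes Qiu's result as quoted in \cite[Theorem 3.11]{DPSS}: the $(T_\theta,\Lambda)$-Bessel periods of Saito--Kurokawa lifts vanish unless $\Lambda$ is trivial. This is the clean statement that is actually needed, and your sketch does not substitute for it. Your argument for the Yoshida parity dichotomy via ramification parity of a definite quaternion algebra is a reasonable heuristic, and is consistent with the paper's (terser) appeal to the structure of \cite[Theorem 8.1]{FurusawaMorimoto}, but it is not the route the paper takes and would need a reference to the holomorphic Yoshida construction over $\Gamma_0(N)$ to be a proof rather than an explanation.
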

\begin{proof}
The equation (i) and (ii) is a direct corollary to Theorem \ref{Main-T1} and \cite[Theorem 8.1]{FurusawaMorimoto}(for the scalar case we refer to \cite[Theorems 1.13 and 3.14]{DPSS}), extended to the ramified $\Lambda$ by Theorem \ref{localGPPeriod};
note that the polynomial $Q_{S,\varrho}$ (with $S=T_\theta$) in \cite[(8.2.16)]{FurusawaMorimoto} equals $(-1)^{\frac{l_1-l_2}{2}}(\frac{2}{\sqrt{|D|}})^{\frac{l_1+l_2}{2}}\sv_\varrho^\theta$, our $R(\varphi,E,\Lambda)$ equals the quantity $w_D(\frac{\sqrt{|D|}}{2})^{-\frac{l_1+l_2}{2}} \, \mathcal B_\Lambda(\varphi;E)/(Q_{S,\varrho},Q_{S,\varrho})_{l_1-l_2}$ defined by \cite[(8.2.17)]{FurusawaMorimoto}, and $(Q_{S,\varrho},Q_{S,\varrho})_{l_1-l_2}=(\frac{|D|}{4})^{-\frac{l_1+l_2}{2}}(\sv_{\varrho}^0,\sv_{\varrho}^0)_{l_1-l_2}=(\frac{|D|}{4})^{-\frac{l_1+l_2}{2}}(\sv_\varrho^0\mid \sv_\varrho^0)_\varrho$. We also note the formula
\begin{align*}
\widehat L(s,\pi,\mu)&=\widehat L(s,\pi_1\times\mu)\widehat L(s,\pi_2\times\mu),
\end{align*}
where $\pi=Y(\pi_1,\pi_2)$. This is obtained immediately by comparing the local factors. It is noted in \cite[p.296]{DPSS} that each local representation $\pi_p$  of $\pi\cong \otimes_{p}\pi_p\in\Pi_{\rm cusp}^{(T_\theta,\Lambda)}(\lambda,N)^{\rm Y,new}$ for $p\mid N$ is of type VIb.
Finally, we check the statement (iii). As quoted in \cite[Theorem 3.11]{DPSS}, the $(T_\theta,\Lambda)$-Bessel periods of the Saito-Kurokawa lifts are zero unless $\Lambda$ is trivial due to Qiu. By comparing the local $L$-functions (\cite[ Theorem 5.2(ii)]{Schmidt2007}, {\it cf}. \cite[Theorem 3.1]{PS83}), 
\begin{align}
\widehat L(s,\pi,\mu)=\frac{1}{4\pi}\left(s-1/2\right)\widehat L(s,\pi_0\times\mu) \widehat L(s+1/2,\mu) \widehat L(s-1/2,\mu)\label{L function for SK lift},
\end{align}
where $\pi={\rm SK}(\pi_0)$. By \cite{Schmidt2007} and the definition, the set $\Pi_{{\rm cusp}}^{(T_\theta,1)}((l,l),N)^{{\rm SK}}$ corresponds bijectively to the set $\Pi_{{\bf PGL}_2,{\rm cusp}}^{(T_\theta,1)}(2l-2,N)$ via the Saito-Kurokawa lifting if $l\ge 3$. Note that the condition $l\ge 3$ is ensured by assumption (A-ii) in \S \ref{sec:BA}.
\end{proof}

By \eqref{AverageSum1}, we have that $\II^{(s)}(\lambda,N,f_S)-\II^{(s)}(\lambda,N,f_S)^{\rm G,new}$ is the sum of $\II^{(s)}(\lambda,N,f_S)^{\rm Y,new}$, $\II^{(s)}(\lambda,N,f_S)^{\rm SK,new}$ and $\II^{(s)}(\lambda,N,f_S)^{\rm old}$. Our main focus is their values at $s=0$. For those, we have the following upper bounds in the level aspect. For $f_S\in \cH_S$, let $\|f_S\|_1:=\int_{\sG(\Q_S)}|f_S(g_S)|\,\d g_S$ denote its $L^1$-norm. 

\begin{thm}\label{AsymptoticFormula}
Let $l$, $\mu$, $\widetilde \mu$, $M$, S be as in Theorem \ref{Main-T1}.
\begin{itemize}
\item[(i)]
Let $\lambda $ be fixed as before and $N$ an inert prime in $E/\Q$. Then, there exist constants $C_{D,\lambda,\mu},C_{D,\lambda,\mu}'>0$ depending only on $D$, $l$, $\mu$, and $f_S$ such that
\begin{align}
|{\mathbb I}^{(0)}(\lambda,N,f_S)^{{\rm old}}|&<C_{D,\lambda,\mu}\|f_S||_1 N^{-8}\label{Upperbound,old},\\
|{\mathbb I}^{(0)}(\lambda,N,f_S)^{{\rm Y,new}}|&<C_{D,\lambda,\mu}'\|f_S\|_1 N^{-\frac{3}{2}}.\label{Upperbound,Ynew}
\end{align}
\item[(ii)]
Let $N$ be $1$ or an inert prime in $E/\Q$.
When $\mu\ne{\bf 1}$, ${\mathbb I}^{(0)}(l,N,f_S)^{{\rm SK,new}}=0.$
When $\mu={\bf 1}$, there exists a constant $C>0$ such that
\begin{align}
|{\mathbb I}^{(0)}((l,l),N,f_S)^{{\rm SK,new}}|<C|D|^{\frac{l}{2}}l^{\frac{1}{2}}\|f_S\|_1 N^{-\frac{3}{2}}.\label{Upperbound,SKnew}
\end{align}
\end{itemize}
\end{thm}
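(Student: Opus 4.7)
The plan is to apply Theorems \ref{Main-T1} and \ref{Main-T2} at $s=0$ to each of the three contributions in the decomposition \eqref{AverageSum1} separately, and to estimate each piece by combining the explicit level-dependence of the prefactor with appropriate bounds on the relevant $L$-values. For $N$ an inert prime, $[\bK_\fin:\bK_0(N)] = (N+1)(N^2+1) \asymp N^3$, so the factor $[\bK_\fin:\bK_0(N)]^{-1}\,N^{s-1}|_{s=0} \asymp N^{-4}$ controls the global normalization throughout.

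For \eqref{Upperbound,old}, I first observe that for $N$ prime any $\pi \in \Pi_{\rm cusp}^{(T_\theta,\Lambda)}(\lambda,N)^{\rm old}$ satisfies $N_\pi=1$ (since $N_\pi \mid N$ and $N_\pi \neq N$); thus $\pi_N$ is spherical and the third case of \eqref{localfactor} applies, giving $\ft(\pi_N,\mu_N) = 2(N-1)N^{-5}L(1,\pi_N,{\rm Std})\{\cdots\} = O(N^{-4})$, the bracketed trace being bounded polynomially by Ramanujan-type estimates on the Hecke eigenvalues of $T_N$ and $\eta_N$ (temperedness at the unramified place for non-CAP $\pi$; the Saito--Kurokawa alternative is handled via its explicit Satake parameters). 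Since $\Pi_{\rm cusp}(\lambda,1)$ is finite and the remaining quantities ($L$-values, Bessel periods, $\widehat{f_S}(\pi_S)$) are uniformly bounded in $N$, combining with the $O(N^{-4})$ prefactor yields \eqref{Upperbound,old}.

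For \eqref{Upperbound,Ynew}, I apply Theorem \ref{Main-T2}(ii); since each $\pi_N$ for $\pi \in \Pi_{\rm cusp}^{(T_\theta,\Lambda)}(\lambda,N)^{\rm Y,new}$ is of type VIb, one has $\ft(\pi_N,\mu_N)=1$, so only the $N^{-4}$ prefactor contributes in the level aspect. The double sum runs over $\asymp N^2$ pairs of newforms on ${\bf PGL}_2(\A)$ of prime level $N$; I bound it via convexity $\widehat L(\tfrac12,\pi_i\times\mathcal{AI}(\Lambda^{-1})), \widehat L(\tfrac12,\pi_i\times\mu) \ll N^{1/2+\epsilon}$ combined with the standard lower bounds $\widehat L(1,\pi_i;{\rm Ad})^{-1}, \widehat L(1,\pi_1\times\pi_2)^{-1} \ll N^{\epsilon}$, and apply Petersson's trace formula via Cauchy--Schwarz in the two variables to save an additional $N^{-1/2}$ over the naive bound. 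For the Saito--Kurokawa case with $\mu \neq \mathbf{1}$, the factor $\tfrac{s}{4\pi}\widehat L(s+1,\mu)\widehat L(s,\mu)$ in Theorem \ref{Main-T2}(iii) vanishes at $s=0$ because both completed $L$-factors are entire and finite there, so multiplication by $s$ kills the whole expression. For $\mu=\mathbf{1}$, $\widehat L(s,\mathbf{1})=\widehat\zeta(s)$ and $\widehat L(s+1,\mathbf{1})=\widehat\zeta(s+1)$ each have a simple pole at $s=0$; after multiplication by $s$, the product $s\widehat\zeta(s+1)\widehat\zeta(s)$ retains a simple pole that must cancel against poles in the other strata of \eqref{AverageSum1} so that $\II^{(s)}(\lambda,N,f_S)$ remains regular at $s=0$ (Proposition \ref{RSBasicL0}). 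Extracting the finite part and bounding the resulting sum via convexity $\widehat L(\tfrac12,\pi_0\times\chi_D) \ll l^{1/2}N^{1/2+\epsilon}$, the dimension count of newforms of weight $2l-2$ and prime level $N$, Petersson averaging, and the $|D|^{l/2}$ factor from the prefactor, yields \eqref{Upperbound,SKnew}.

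The hard part will be achieving the sharp $N^{-3/2}$ savings in the Yoshida and Saito--Kurokawa estimates: pointwise convexity applied to the $\asymp N^2$ (resp.\ $\asymp lN$) terms only offsets the $N^{-4}$ prefactor up to $N^\epsilon$, so the required additional $N^{-1/2}$ must come from oscillation between distinct Hecke eigensystems, via a first-moment Petersson/Kuznetsov estimate for the relevant central $L$-values. A secondary subtlety is extracting the correct finite part of $\II^{(s)}(\lambda,N,f_S)^{\rm SK,new}$ at $s=0$ when $\mu=\mathbf{1}$, where the individual expression in Theorem \ref{Main-T2}(iii) exhibits an apparent pole that is cancelled globally by the other terms in \eqref{AverageSum1}; following the meromorphic continuation carefully is needed to isolate the bounded contribution.
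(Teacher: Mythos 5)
Your proposal gets the easy part (the ${\rm old}$ bound) right, and you correctly see that $\mu\ne\mathbf 1$ kills the Saito--Kurokawa term. But there are two genuine gaps.

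\textbf{Yoshida bound.} Your bookkeeping does not reach $N^{-3/2}$. With the prefactor $\asymp N^{-4}$, the $\asymp N^{2}$ pairs $(\pi_1,\pi_2)$, and pointwise convexity ($\widehat L(\tfrac12,\pi_i\times\mathcal{AI}(\Lambda^{-1}))\ll N^{1/2+\epsilon}$, $\widehat L(\tfrac12,\pi_i\times\mu)\ll N^{1/4+\epsilon}$), the naive estimate is $N^{-1/2+\epsilon}$; a further saving of $N^{-1/2}$ still leaves $N^{-1+\epsilon}$, short by a full power. The paper instead bounds the Bessel-period ratio $\widehat L(\tfrac12,\pi_i\times\mathcal{AI}(\Lambda^{-1}))/\widehat L(1,\pi_i;\mathrm{Ad})$ by a genuine first moment over newforms of level $N$ (the Feigon--Whitehouse average, made available by the nonnegativity $L(\tfrac12,{\rm BC}_{E/\Q}(\pi_i)\otimes\Lambda^{-1})\geq 0$, so the sum factors into a product over $i=1,2$); this saves $N^{1/2}$ in each of the two variables, i.e.\ $N^{1}$ total, and then subconvexity for the two remaining $L(\tfrac12,\pi_i\times\mu)$ closes the $N^\epsilon$ losses. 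You also treat $\widehat L(1,\pi_1\times\pi_2)^{-1}\ll N^\epsilon$ as "standard"; it is not. Excluding a Siegel zero for a Rankin--Selberg $L$-function is delicate, and the paper proves it separately as Lemma~\ref{RSLbound}, using that $\pi_1,\pi_2$ are self-dual, tempered everywhere, and inequivalent, via the Goldfeld--Hoffstein--Lieman and Moreno argument. Without that lemma your estimate is unjustified.

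\textbf{Saito--Kurokawa bound, $\mu=\mathbf 1$.} Your proposed mechanism for resolving the apparent pole of $s\,\widehat\zeta(s)\widehat\zeta(s+1)$ at $s=0$ --- cancellation against the other strata of \eqref{AverageSum1} --- is not what happens, and in fact cannot be used to extract the finite part of ${\mathbb I}^{(0)}(\lambda,N,f_S)^{\rm SK,new}$ alone, which is what \eqref{Upperbound,SKnew} estimates. The cancellation is internal to the SK stratum: by \cite[Theorem 3.1 and Table 2]{Schmidt2005}, ${\rm SK}(\pi_0)$ admits a global $(T_\theta,\Lambda)$-Bessel model only when the root number of $\pi_0$ is $-1$, which forces $\widehat L(\tfrac12,\pi_0)=0$, so $\widehat L(s+\tfrac12,\pi_0)$ contributes a zero at $s=0$ that cancels the simple pole of $s\,\widehat\zeta(s)\widehat\zeta(s+1)$. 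The finite value then involves $L'(\tfrac12,\pi_0)$, and the estimate proceeds via subconvexity for $L(\tfrac12,\pi_0\times\chi_D)$ and $L'(\tfrac12,\pi_0)$, the Hoffstein--Lockhart lower bound for $L(1,\pi_0;{\rm Ad})$, and the dimension count $O(lN)$. Without the root-number observation your regularization is ill-founded.
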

\begin{proof} We have the inequality $|\widehat f_S(\pi_S)|\leq \|f_S\|_1$ for all irreducible unitary representation $\pi_S$ of $\sG(\Q_S)$. Because $N$ is a prime, one can check that the summation range of ${\mathbb I}^{(0)}(\lambda,N,f_S)^{{\rm old}}$ is $\Pi_{\rm cusp}^{(T_\theta,\chi)}(l,N)^{{\rm old}}=\Pi_{\rm cusp}^{(T_\theta,\chi)}(\lambda,1)$, which is independent of $N$. From the formula of $\ft(\pi_p,\mu_p)$ in (\ref{localfactor}) for $p=N$, combined with the temperedness of $\pi_N$ due to \cite{Weissauer} and the matrix of $T_{1,0}$ in \cite[Table 3]{PitaleSchmidt2}, we get $\ft(\pi_N,\mu_N)=O(N^{-4})$. 
This and the equation $[\bK_{\bf f}:\bK_0(N)]=N^3(1+N^{-2})(1+N^{-1})$ for prime $N$ yield the bound \eqref{Upperbound,old}. 

Next we treat the average for Yoshida lifts. For $\pi_1\in\Pi_{{\bf PGL}_2,{\rm cusp}}(l_1+l_2-2,N)^{\rm new}$ and $\pi_2\in\Pi_{{\bf PGL}_2,{\rm cusp}}(l_1-l_2+2,N)^{\rm new}$, we need the lower bound 
$$L(1,\pi_1\times \pi_2) \gg_{l_1,l_2} \exp(-C \sqrt{\log N}) 
$$
uniform in $N$. This is a special case of Lemma \ref{RSLbound}, because $\pi_1$ and $\pi_2$ are everywhere tempered by Deligne's estimate and by the fact that the local $p$-components of $\pi_i$ for $p\mid N$ are the (twisted) Steinberg representation, which is tempered. Note that $\pi_1\not\cong \pi_2$ due to the weight condition. Now, We deduce the inequality \eqref{Upperbound,Ynew} bounding the sum from above by a product of the average considered in \cite[Theorem 1.1]{FeigonWhitehouse}; to do this, we invoke the subconvexity bound
\begin{align}
L(\tfrac{1}{2},\pi)=O(C(\pi)^{\frac{1}{4}-\delta}),\quad(\exists \, \delta>0)
\label{subconvexity}
\end{align}
for automorphic cuspidal representations $\pi$ of $\GL_2(\A)$ (\cite{MichelVenkatesh}) and the non-negativity of $L(\frac{1}{2},\pi_i\times\mathcal{AI}(\Lambda^{-1})=L(\frac{1}{2}, {\rm BC}_{E/\Q}(\pi_i)\otimes \Lambda^{-1})$ for $i=1,2$ due to \cite{JacquetChen}. 

Let us prove \eqref{Upperbound,SKnew}; by Theorem \ref{Main-T2} (ii), we may assume $\lambda=(l,l)$. Suppose $\mu\not={\bf 1}$; then, by Theorem \ref{Main-T2} (ii), we have that ${\mathbb I}^{(s)}((l,l),N,f_S)^{{\rm SK,new}}\times s^{-1}$ is entire, hence $\mathbb I^{(0)}((l,l),N,f_S)^{\rm SK, new}=0$
 because $\widehat{L}(s,\mu)$, as well as $\widehat L(s,\pi_0\times \chi_D)$, is entire. In the rest of the proof, we assume $\mu={\bf 1}$. Then, $s\widehat L(s,\mu)\widehat L(s+1,\mu)$ has a simple pole at $s=0$. By \cite[Theorem 3.1 and Table 2]{Schmidt2005}, ${\rm SK}(\pi_0)$ with $\pi_0\in \Pi_{{\rm cusp},{\bf PGL_2}}(2l-2,N)$ has the global $(T_\theta,\Lambda)$-Bessel model only if the sign of the functional equation of $\pi_0$ is $-1$ so that $\widehat L(s+1/2,\pi_0)$ has a zero at $s=0$. Hence, theorem \ref{Main-T2} (ii) gives us the following majorant of ${\mathbb I}^{(0)}((l,l),N,f_S)^{{\rm SK,new}}$.
\begin{align*}2^{3}\pi^{\frac{5}{2}}(l-1)^{-2}\frac{\Gamma(l)\|f_S\|_1}{\Gamma(l-\frac{1}{2})}|D|^{\frac{l}{2}}[\bK_\fin:\bK_{0}(N)]^{-1}\sum_{\pi_0\in\Pi_{{\bf PGL}_2,{\rm cusp}}^{(T_\theta,{\bf 1})}(2l-2,N)}\frac{L(\frac{1}{2},\pi_0\times\chi_D)\,|L'(\frac{1}{2},\pi_0)|}{L(1,\pi_0;{\rm Ad})}.
\end{align*}
 To estimate this, we invoke a subconvexity bound
$L({1}/{2},\pi_0\times \chi_{D})=O((l^2N)^{\frac{1}{4}-\delta})$ for some $\delta>0$ from \eqref{subconvexity} and a lower bound $L(1,\pi_0;{\rm Ad})>c_0\exp(-c_1\sqrt{\log(1+l^2N)})$ for some constant $c_0,c_1>0$ 
which is known by \cite[Theorem 0.1]{HoffsteinLockhart} (see also the remark after \cite[Corollary 7]{Li}). The lower bound implies $L(1,\pi_0;{\rm Ad})^{-1}=O((l^2N)^{2\delta})$. Moreover, by a common argument, a subconvexity bound for the central derivative $L'(1/2,\pi_0)=O((l^2N)^{\frac{1}{4}-\delta})$ is derived from the bound $L(1/2,\pi_0|\cdot|_\A^{it})=O((l^2N(1+|t|)^{1/4-\delta})$ that follows from \eqref{subconvexity}. 
By the uniform bound $\#\Pi_{{\bf PGL}_2,\rm cusp}(2l-2,N)=O(lN)$ and the asymptotic $\frac{\Gamma(l)}{\Gamma(l-\frac{1}{2})}\sim l^\frac{1}{2}\ (l\to\infty)$, the bound \eqref{Upperbound,SKnew} follows.
\end{proof}

\begin{lem}\label{RSLbound} Let $\pi_1$ and $\pi_2$ be irreducible cuspidal automorphic representations of ${\bf PGL}_{n}$ such that $C(\pi_1),C(\pi_2)\leq Q$ with $Q>2$. We assume that $\pi_1\not\cong \pi_2$ and both of them are self-dual and tempered everywhere. Then, 
$$
L(1,\pi_1\times \pi_2)\gg \exp(-C\sqrt{\log Q}) 
$$
with an absolute constant $C>0$. 
\end{lem}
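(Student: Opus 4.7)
The plan is to apply the classical positivity argument of Hoffstein--Lockhart and Goldfeld--Hoffstein--Lieman to a suitable auxiliary $L$-function. Since both $\pi_1$ and $\pi_2$ are self-dual and $\pi_1 \not\cong \pi_2$, the Rankin--Selberg $L$-function $L(s,\pi_1\times \pi_2)=L(s,\pi_1\times \bar\pi_2)$ is entire by Jacquet--Piatetski-Shapiro--Shalika. The temperedness hypothesis at every place ensures that the analytic conductor is bounded by $Q^{O_n(1)}$ and that the archimedean $L$-factor is under polynomial control.

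The key construction is the isobaric sum $\Pi:=\mathbf{1}\boxplus\pi_1\boxplus \pi_2$, an automorphic representation of $\GL_{2n+1}(\A)$. Being self-dual, $\Pi\times \Pi$ yields the factorization
\begin{equation*}
L(s,\Pi\times\Pi)=\zeta(s)\,L(s,\pi_1)^2\,L(s,\pi_2)^2\,L(s,\pi_1\times\pi_1)\,L(s,\pi_2\times\pi_2)\,L(s,\pi_1\times\pi_2)^2,
\end{equation*}
which has a pole of order exactly $3$ at $s=1$ (one from $\zeta(s)$ and one from each $L(s,\pi_i\times\pi_i)$), and whose Dirichlet coefficients are real and non-negative; the non-negativity is a standard local Euler-factor computation that crucially uses temperedness (at an unramified prime the logarithmic coefficients are of the shape $\tfrac{1}{k}(\sum_i\alpha_i^k)^2 \geq 0$). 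Applying the Hadamard--de la Vall\'ee Poussin method to this $L$-function (using positivity of its logarithmic coefficients), one obtains a classical zero-free region $\sigma>1-c/\log(Q(|t|+2))$ for $L(s,\Pi\times\Pi)$, and hence for its factor $L(s,\pi_1\times\pi_2)$, away from the real axis. Furthermore, a Landau-type argument exploiting the positivity excludes a Siegel-like real zero: a real zero $\beta$ of $L(s,\pi_1\times\pi_2)$ close to $s=1$ would produce a double zero of $L(s,\Pi\times\Pi)$, and matching this cancellation against the triple pole at $s=1$ forces $1-\beta\gg 1/\log Q$. Integrating $L'/L$ along a contour from $s=1+1/\log Q$ down to $s=1$, or equivalently applying a Borel--Carath\'eodory estimate in a disk of radius $\asymp 1/\sqrt{\log Q}$, then yields the desired lower bound $|L(1,\pi_1\times\pi_2)|\gg \exp(-C\sqrt{\log Q})$.

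The principal technical obstacle is the Landau--Siegel zero, handled here via the positivity of the auxiliary series $L(s,\Pi\times\Pi)$ which in turn relies on both hypotheses of the lemma: temperedness at every place (for non-negativity of the Dirichlet coefficients) and $\pi_1\not\cong\pi_2$ (so that $L(s,\pi_1\times\pi_2)$ contributes as an entire factor of multiplicity two, not a pole). Relaxing either assumption would require stronger machinery, such as the work of Brumley or of Gelbart--Lapid--Sarnak; in fact, under our hypotheses one can even extract the sharper $|L(1,\pi_1\times\pi_2)|\gg (\log Q)^{-A}$, of which the stated exponential bound is a convenient weakening.
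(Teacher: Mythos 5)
Your overall strategy (positivity of $L(s,\Pi\times\widetilde{\Pi})$, a GHL/Landau elimination of the Siegel zero, then a lower bound at $s=1$) is the same circle of ideas the paper uses, but your specific choice of auxiliary isobaric sum makes the Siegel-zero step fail. With $\Pi=\mathbf{1}\boxplus\pi_1\boxplus\pi_2$ the factorization $L(s,\Pi\times\Pi)=\zeta(s)\,L(s,\pi_1)^2L(s,\pi_2)^2L(s,\pi_1\times\pi_1)L(s,\pi_2\times\pi_2)L(s,\pi_1\times\pi_2)^2$ has a pole of order $m=3$ at $s=1$, while a hypothetical real zero $\beta$ of $L(s,\pi_1\times\pi_2)$ forces $D(s):=L(s,\Pi\times\Pi)$ to vanish only to order $k=2$ at $\beta$ (nothing makes $\zeta$, $L(s,\pi_i)$ or $L(s,\pi_i\times\pi_i)$ vanish there). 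The Goldfeld--Hoffstein--Lieman lemma says such a nonnegative Dirichlet series with a pole of order $m$ has \emph{at most $m$} real zeros in $(1-c/\log Q,1)$; this only yields a contradiction when $k>m$. Your sentence ``a double zero\ldots\ matching this cancellation against the triple pole forces $1-\beta\gg1/\log Q$'' is therefore the gap: with $k=2<3=m$ the lemma gives nothing, and there is no other step in your argument that rules out the exceptional real zero.

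The paper instead takes, following Sarnak's account of Moreno's argument, $\Pi=\pi_1\boxplus\pi_1|\cdot|_\A^{it}\boxplus\pi_1|\cdot|_\A^{-it}\boxplus\pi_2\boxplus\pi_2|\cdot|_\A^{it}\boxplus\pi_2|\cdot|_\A^{-it}$ with $t$ the imaginary part of the putative zero. The point of the $|\cdot|^{\pm it}$ twists is that the shifted diagonal factors $L(s\pm it,\pi_i\times\pi_i)$, $L(s\pm 2it,\pi_i\times\pi_i)$ are regular at $s=1$, so the pole order at $s=1$ is only $6$, while a zero of $L(s,\pi_1\times\pi_2)$ at $\sigma\pm it$ makes the cross factors vanish to total order $8$ at the real point $\sigma$. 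Because $8>6$, the GHL lemma applies and produces a genuine zero-free region for $L(s,\pi_1\times\pi_2)$; the lower bound $L(1,\pi_1\times\pi_2)\gg\exp(-C\sqrt{\log Q})$ then follows from Li's argument, with the everywhere-temperedness used precisely to bound the Dirichlet coefficients of $L(s,\pi_1\times\pi_2)$ in lieu of automorphy of the Rankin--Selberg product. Without the twists (or some other device that inflates the zero multiplicity relative to the pole multiplicity), your argument does not close, and the claim of the sharper $(\log Q)^{-A}$ bound at the end is correspondingly unsubstantiated.
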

\begin{proof} We recall the argument indicated in \cite{Sarnak} (attributed originally to \cite{Moreno}), which eliminates a possibility of Siegel zeros of the $L$-function $L(s,\pi_1\times \pi_2)$. Fix $t\in\R$. Then, $L(s,\Pi\times\widetilde{\Pi})$ with $\Pi$ being the isobaric sum
$$\Pi:=\pi_1 \boxplus (\pi_1\times|\cdot |_\A^{it}) \boxplus (\pi_1\times|\cdot |_\A^{-it}) \boxplus \pi_2 \boxplus (\pi_2\times|\cdot |_\A^{it}) \boxplus (\pi_2\times|\cdot |_\A^{-it})$$
is a Dirichlet series with non-negative coefficients (\cite[Lemma a]{HoffsteinRamakrishnan}). Moreover, by (\cite[Proposition 9.4]{PSShalika}) and due to $\pi_1$ and $\pi_2$ being self-dual, $L(s,\Pi\times\widetilde{\Pi})$ is expressed as
$$
L(s,\Pi\times\widetilde{\Pi})=\prod_{1\le i,j\le 2}\left[
{\substack{L(s,\pi_i\times\pi_j)^3 L(s-it,\pi_i\times\pi_j)^2 L(s+it,\pi_i\times\pi_j)^2 \\ \times L(s-2it,\pi_i\times\pi_j) L(s+2it,\pi_i\times\pi_j)}}\right]^2,
$$
which shows that $L(s,\Pi\times\widetilde{\Pi})$ has a pole of order $6$ at $s=1$ (due to $\pi_1\ncong\pi_2$), 
and has a zero of order $8$ at $s=\sigma$ if $L(\sigma+it,\pi_i\times\pi_j)=0$ $(i,j=1,2)$. 
Hence, by \cite[Lemma, p.178]{GHL}, one can show that $L(s,\pi_1\times\pi_2)$ has no zeros on the interval $(1-\frac{C_0}{\log M)}, 1)$ for some constant $C_0>0$ with $M=(1+|t|)^{24}C(\pi_2\times\pi_2)^{18} C(\pi_1\times\pi_1)^9 C(\pi_2\times \pi_2)^2$, where $C(\pi_i\times \pi_j)$ is the analytic conductor of $L(s,\pi_i\times \pi_j)$. By \cite[lemma b]{HoffsteinRamakrishnan}, we have $C(\pi_i\times \pi_j)\ll Q^{2n}$. Thus, for an absolute constant $C_0'>0$, $L(s,\pi_1\times\pi_2)$ is zero-free on the region $1-\frac{C_0'}{\log Q(1+|\Im(s)|)}<\Re(s)<1$. Now, we apply the argument of \cite[Corollary 7]{Li}) to the $L$-function $L(s,\pi_1\times \pi_2)$. Since $\pi_1,\pi_2$ are assumed to be tempered everywhere, the optimal bound of the lambda function $\Lambda_{\mathcal A}(n)$ for $\mathcal A=\pi_1\times \pi_2$ is available so that the automorphy of $\pi_1\times \pi_2$ is not necessary, which simplifies the proof to get the lower bound of  $L(1,\pi_1\times\pi_2)$.     
\end{proof}

As a consequence of Theorem \ref{AsymptoticFormula}, we get  
\begin{align}
\II^{(0)}(\lambda,N, f_S)^{\rm {G,new}}=\II^{(0)}(\lambda,N,f_S)+O_{\Lambda,\lambda,\mu}(\|f_S\|_1 N^{-\frac{3}{2}}).    
\label{AsymtoticFormulaCor}
\end{align}

\section{Computation of local zeta-integrals for $p$-adic fields} \label{sec:CompLZ}

In this section, let $F$ be a non-archimedean local field of characteristic 0, $\cO$ the integer ring of $F$, $\fp$ the maximal ideal of $\cO$, $\varpi$ a generator of $\fp$ and $q=\#(\cO/\fp)$. Let $|\cdot|$ denote the normalized absolute value of $F$, i.e., $|\varpi|=q^{-1}$. 
Fix a non-trivial additive character $\psi:F\longrightarrow \C^1$ with ${\rm cond}(\psi):=\min\{n\in\Z\ ;\ \psi|_{\varpi^n\cO}=1\}=0$.

We compute the local zeta-integral a la Piatetski-Shapiro (\cite{PS97}) for several representations, taking particular test functions; as a result, we determine the local $L$-factors and the local $\varepsilon$-factors in \cite{PS97} to confirm that they coincides with the expected ones listed in \cite[Tables A8,A9]{RobertsSchmidt}. As explained in \S\ref{sec:BA}, for a particular global application in mind, we only deal with representations of types I, IIb, IIIa and VIb (but allowing the central characters to be non-trivial when we are concerned with newvectors.)

\subsection{Local zeta integral for Bessel models}
We first review some generalities on local zeta integrals and then recall results from  \cite{PitaleSchmidt} on explicit formulas of Bessel functions for Iwahori spherical representations of $\sG$, which are possible  local components of $\pi\cong\otimes_p\pi\in\Pi_{\rm cusp}^{(T_\theta,\Lambda)}(\varrho,N)$. 

Let $K=\sG(\cO)$ be a standard maximal compact subgroup of $\sG(F)$ and $K_0(\fp^e):=\{\left[\begin{smallmatrix} A & B \\ C & D \end{smallmatrix}\right]\in K\ |\ C\equiv0\ ({\rm mod }\ \fp^e)\}$ be the congruence subgroup of level $\fp^e$.

For a symmetric matrix $T=[\begin{smallmatrix} \sa & \sb/2 \\ \sb/2 & \sc \end{smallmatrix}]\in {\rm Sym}_2(F)$ such that $\sd:=\sb^2-4\sa\sc\ne0$, set $\xi=[\begin{smallmatrix} \sb/2 & \sc \\ -\sa & -\sb/2 \end{smallmatrix}]$. Let $L:=F\oplus F\xi$ be the two-dimensional $F$-algebra, $\cO_L$ be its integer ring, and $\fp_L:=\fp\fo_L$. Define the additive character $\psi_L$ on $L$ by $\psi_L:=\psi\circ\tr_{L/F}$. The maps
{\allowdisplaybreaks\begin{align}
&L\owns x+y\xi\longmapsto x+\tfrac{\sqrt{\sd}}{2}y \in F(\sqrt{\sd})\quad(\sd\notin(F^\times)^2),\notag\\
&L\owns x+y\xi\longmapsto \left(x+\tfrac{\sqrt{\sd}}{2}y , x-\tfrac{\sqrt{\sd}}{2}y\right)\in F\oplus F\quad(\sd\in(F^\times)^2)\notag
\end{align}}are isomorphisms of $F$-algebras. We assume that 
\begin{itemize}
\item[(i)] $\sa\in\cO^\times$ and $\sb, \sc\in\cO$.
\item[(ii)] If $\sd\notin(F^\times)^2$, then $\sqrt{\sd}\cO_L$ is the different ideal of $L/F$.
\item[(iii)] If $\sd\in(F^\times)^2$, then $\sd\in\cO^\times$.
\end{itemize}
We can check that
$$L^\times=\left\{\tau\in\GL_2(F)\ \middle|\ {}^t\tau T\tau=\det \tau\cdot T\right\}.$$
The map $L\owns \tau\mapsto\tau^{\dagger}\in L$ is the non-trivial $F$-automorphism on $L$. We set $\theta_0=\sb/2-\xi\in\cO_L$, then $\{\sa,\theta_0\}$ is an $\cO$-basis of $\cO_L$.
Let $\langle\cdot,\cdot\rangle$ denote the symplectic form on $L^2$ over $F$ defined by
$$\langle x,y\rangle:=\tr_{L/F}(-(2\xi)^{-1}(x_1y_2-x_2y_1)),\quad x=\left[\begin{smallmatrix} x_1 \\ x_2 \end{smallmatrix}\right], y=\left[\begin{smallmatrix} y_1 \\ y_2 \end{smallmatrix}\right]\in L^2.$$
We set
{\allowdisplaybreaks\begin{align*}
G^\#(F)&:=\left\{g\in \GL_2(L)\ \middle|\ \det g\in F^\times\right\},\\
B^\#(F)&:=\left\{\left[\begin{smallmatrix} \tau & \beta \\ 0 & a \tau^\dagger \end{smallmatrix}\right]\ \middle|\ \tau\in L^\times, \beta\in L, a\in F^\times\right\},\\
K^\#&:=G^\#(F)\cap\GL_2(\cO_L).\\
K_0^\#(\fp^e)&:=G^\#(F)\cap\left[\begin{smallmatrix} \cO_L & \cO_L \\ \fp_L^e & \cO_L \end{smallmatrix}\right].
\end{align*}}Since $\langle g^\# x,g^\# y\rangle=\det g^\#\langle x,y\rangle$ for any $x,y\in L^2$ and $g^\#\in\sG(F)$, we get a natural embedding $G^\#(F)\owns g\mapsto\iota(g^\#)\in\sG(F)$. More precisely, $\iota(g)$ is the representation matrix of with respect to an $F$-basis 
$\{[\begin{smallmatrix} \sa \\ 0 \end{smallmatrix}],[\begin{smallmatrix} \theta_0 \\ 0 \end{smallmatrix}],[\begin{smallmatrix} 0 \\ -\sa^{-1} \theta_0^\dagger \end{smallmatrix}],[\begin{smallmatrix} 0 \\ 1 \end{smallmatrix}]\}$. A computation yields 
\begin{gather}
\iota\left( \left[\begin{smallmatrix} \tau & 0 \\ 0 & a \tau^\dagger \end{smallmatrix}\right] \right)=\sm(\tau,a\det\tau),\notag\\
\iota\left( \left[\begin{smallmatrix} 1 & \beta \\ 0 & 1 \end{smallmatrix}\right] \right)=\left[\begin{smallmatrix} 1_2 & X_\beta \\ 0 & 1_2 \end{smallmatrix}\right],\quad \beta=\beta_2\sa+\beta_3\theta_0\in L,\ X_\beta:=\left[\begin{smallmatrix} -\sa^{-1}\sb \beta_2-\sa^{-1}\sc\beta_3& \beta_2 \\ \beta_2 & \beta_3 \end{smallmatrix}\right],\label{iota}\\
\iota\left( \left[\begin{smallmatrix} 0 & 1 \\ 1 & 0 \end{smallmatrix}\right] \right)=\left[\begin{smallmatrix} 0 & 0 & -\sa^{-2}\sb & \sa^{-1} \\ 0 & 0 & \sa^{-1} & 0 \\ 0 & \sa & 0 & 0 \\ \sa & \sb & 0 & 0 \end{smallmatrix}\right].\notag
\end{gather}
We note that $K^{\#}=\iota^{-1}(K)$ and $K_0^{\#}(\fp^e)=\iota^{-1}(K_0(\fp^e))$.
We call 
$$R:=\left\{\iota\left( \left[\begin{smallmatrix} \tau & 0 \\ 0 & \tau^\dagger \end{smallmatrix}\right] \right)n\ \middle| \tau\in L^\times, n\in \sN(F)\right\}
$$
 the Bessel subgroup of $\sG(F)$ (with respect to $T$). For a character $\Lambda:L^\times \to\C^1$, we can check that the map
$$R\owns \iota\left(\left[\begin{smallmatrix} \tau & 0 \\ 0 & \tau^\dagger \end{smallmatrix}\right]\right)\sn(X)\mapsto \Lambda(\tau)\psi(\tr{(TX)})\in\C^1$$
defines a character on $R$. We denote this character by $\Lambda\otimes\psi_T$. 

Let $(\pi,V_{\pi})$ be an irreducible admissible representation of $\sG(F)$. Let $(V_{\pi}^*)^{T,\Lambda}$ denote the space of all $\C$-linear forms $\ell:V_{\pi}\longrightarrow \C$ that satisfies 
$$
\ell(\pi(r)\xi)=\Lambda\otimes\psi_T(r)\,\ell(\xi), \quad \xi\in V_{\pi},\, r\in R.
$$
Then, it holds that $\dim_{\C}(V_\pi^*)^{T,\Lambda}\leq 1$ from (\cite{Novodvorski},\cite{{TaklooBinghashPrasad}}). We say that $\pi$ has a local $(T,\Lambda)$-Bessel model if $(V_{\pi}^*)^{T,\Lambda}\not=(0)$. In this case, we can define the local $(T,\Lambda)$-Bessel model of $\pi$ as in \S \ref{sec:BesselMod}. Moreover, when $\pi$ is spherical, we define  
 the unramified $(T,\Lambda)$- Bessel datum $(\ell_\pi^0,\xi_\pi^0)\in (V_\pi^*)^{T,\Lambda}\times V_\pi^{K}$ for $\pi$ as in \S \ref{sec:BesselMod}, so that 
$$
 B_\pi^0(g)=\ell_{\pi}^0(\pi(g) \xi_{\pi}^0), \quad g\in \sG(\Q_p).
$$
is the unramified Bessel function such that $B_\pi^0(1_4)=1$. Recall that all irreducible admissible representations of $\sG(F)$ that admit local Bessel models are classified in \cite{RobertsSchmidt}, and the result is conveniently summarized in \cite[Table 2]{PitaleSchmidt}. 
For our cases, as we mentioned in \S \ref{sec:BesselMod}, we may assume that $\pi$ is of type I ,IIb, IIIa, or VIb. We refer some of its properties in (\cite[Table 2]{PitaleSchmidt});

\begin{align}
\label{Table1}
\begin{array}{|c|c|c|c|c|}
\hline
 {{\rm type}} & {\pi} & {\dim V_\pi^{K}} & {\dim V_\pi^{K_0(\fp)}} & {{\rm Cent.\ Char.}}\\
\hline
 {{\rm I}} & {\chi \times \chi' \rtimes \sigma} & {1} & {4} & {\chi\chi'\sigma^2} \\
\hline
 {{\rm IIb}} & {\chi1_{\GL_2} \rtimes \sigma} & {1} & {3} & {\chi^2\sigma} \\
\hline
 {{\rm IIIa}} & {\chi \rtimes \sigma_{{\rm St}_{\GL_2}} } & {0} & {2} & {\chi\sigma^2} \\
\hline
 {{\rm VIb}} & {\tau(T, |\cdot|^{-\frac{1}{2}}\sigma)} & {0} & {1} & {\sigma^2} \\
\hline
\end{array}
\end{align}
Here, $\chi$, $\chi'$, and $\sigma$ are unramified quasicharacters on $F^\times$ such that representations are irreducible and admit $(T,\Lambda)$-Bessel models. For $a\in F^\times$. For later use we set $\alpha=\chi(\varpi)$, $\beta=\chi'(\varpi)$, and $\gamma=\sigma(\varpi)$. According to \cite[Table A2]{RobertsSchmidt}, for type IIIa and type VIb, the corresponding representations are unitarizable if and only if the inducing quasi-characters are unitary, whereas, for type I, the unitarity of the inducing quasi-characters is equivalent to the corresponding representations being unitarizable and tempered. In what follows, having a global application in mind, we suppose the unitarity of all the inducing characters, i.e., $|\alpha|=|\beta|=|\gamma|=1$.

Similarly as (\ref{RSint-f0}), for a Schwartz-Bruhat function $\phi\in\mathcal{S}(L^2)$, a character $\mu$ on $F^\times$, and $s\in\C$ with $\Re(s)>1$, we define a function on $\sG^{\#}(G)$ by
\begin{align}
f_\phi^{(s,\Lambda,\mu)}(g^\#)=\mu(\det g^\#)|\det g^\#|^{s+1}\int_{L^\times}\phi([\begin{smallmatrix} 0 & 1 \end{smallmatrix}][\begin{smallmatrix} \tau & 0 \\ 0 & \tau^\dagger \end{smallmatrix}]g^\#) \Lambda\mu_L(\tau) |\tau \tau^\dagger|^{s+1} \d^\times\tau,\notag
\end{align}
where $\mu_L(\tau):=\mu(\tau \tau^\dagger), \tau\in L^\times$ and $\d^\times\tau$ is the normalized Haar measure on $L^\times$ such that $\vol(\cO_L)=1$.
For an irreducible smooth admissible representation $\pi$ of $\sZ(F)\bsl\sG(F)$, admitting $(T,\Lambda)$-Bessel model and $B\in \cB(T,\Lambda)[\pi]$, define the zeta integral by
\begin{align}
Z(\phi,B,s,\mu;g)=\int_{F^{\times}}\int_{K^{\#}}B(\sm(a1_2,a)\iota(k^{\#})g)\mu(a)|a|^{s-1}f_\phi^{(s,\Lambda,\mu)}
(k^\#)\d k^{\#}\d^\times a,\quad s\in\C, \Re(s)>1.\label{zeta int}
\end{align}
It is shown that $Z(\phi,B,\mu;g)$ is a rational function in $q^{-s}$ (\cite[Lemma 5.3.1]{SchmidtTran}). Then, the local $L$-function $L(s,\pi,\mu)$ is defined to be the unique function such that $L(s,\pi,\mu)^{-1}$ is a polynomial in $q^{-s}$ with constant term $1$ and such that $\tfrac{Z(\phi,B,s,\mu,1)}{L(s,\pi,\mu)}\in \C[q^s, q^{-s}]$ for all $\phi$ and $B$ (\cite{PS97}, see also \cite[Proposition 5.3.3]{SchmidtTran}). 
By \cite[Proposition 3.2]{PS97}, there exists an entire function $\varepsilon(\pi,s,\mu,\psi)$ such that the local functional equation
\begin{align}
\varepsilon(\pi,s+1/2,\mu,\psi)\frac{Z(\phi,B,s,\mu;g)}{L(s+1/2,\pi,\mu)}=\frac{Z(\widehat \phi,B^\dagger,-s,\mu^{-1};g)}{L(-s+1/2,\hat \pi,\mu^{-1})}\label{localFE}
\end{align}
holds. Here, we set $ B^\dagger\in\cB(T,\Lambda^\dagger)[\pi]$ as $B^\dagger(g):=B\left(\left[\begin{smallmatrix} 1 & \sa^{-1}\sb & 0 & 0 \\ 0 & -1 & 0 & 0 \\ 0 & 0 & -1 & 0 \\ 0 & 0 & -\sa^{-1}\sb & 1 \end{smallmatrix}\right]g\right)$, $g\in\sG(F)$, and $\Lambda^\dagger$ denote the Galois conjugate of $\Lambda$ defined by $\Lambda^\dagger(\tau):=\Lambda(\tau^\dagger)$ for $\tau\in L^\times$, and $\widehat \phi$ is the Fourier transform of $\phi \in {\mathcal S}(L^2)$ defined as 
$$
\widehat \phi(x,y)=\int_{L^2} \phi(u,v) \psi_L\left(\langle  \left[\begin{smallmatrix} x \\ y \end{smallmatrix}\right] , 
\left[\begin{smallmatrix} u \\ v  \end{smallmatrix}\right] \rangle \right)\,\d u\d v
$$ 
with $\d u$ and $\d v$ being the Haar measures on $L$ such that $\vol(\cO_L)=1$. 

\begin{comment}
The $L$-functions $L(s,\pi,\mu)$ are explicitly determined for several cases as shown in \cite[Table 5]{SchmidtTran}, which, for $\mu=1$, coincide with the local $L$-functions listed in \cite[Table A.8]{RobertsSchmidt}.

\begin{align}
\label{Table2}
\begin{array}{|c|c|c|}
\hline
 {{\rm type}} & {\pi} & {L(s,\pi)} \\
\hline
 {{\rm I}} & {\chi \times \chi' \rtimes \sigma} & {(1-\alpha\beta\gamma q^{-s})^{-1}(1-\alpha\gamma q^{-s})^{-1}(1-\beta\gamma q^{-s})^{-1}(1-\gamma q^{-s})^{-1}} \\
\hline
 {{\rm IIb}} & {\chi1_{\GL_2} \rtimes \sigma} & {(1-\alpha^2\gamma q^{-s})^{-1}(1-\gamma q^{-s})^{-1}(1-\alpha\gamma q^{-s-\frac{1}{2}})^{-1}(1-\alpha\gamma q^{-s+\frac{1}{2}})^{-1}} \\
\hline
 {{\rm IIIa}} & {\chi \rtimes \sigma_{{\rm St}_{\GL_2}} } & {(1-\alpha\gamma q^{-s-\frac{1}{2}})^{-1}(1-\gamma q^{-s-\frac{1}{2}})^{-1}}  \\
\hline
 {{\rm VIb}} & {\tau(T, |\cdot|^{-\frac{1}{2}}\sigma)} & {(1-\gamma q^{-s-\frac{1}{2}})^{-2}} \\
\hline
\end{array} 
\end{align}
\end{comment}
We shall get explicit evaluations of the zeta integrals $Z(\phi,B,s,\mu;g)$ for particular choices of $\phi$, $B$ and $g$ as shown in the next table, which allow us to determine $\varepsilon(s,\pi,\mu,\psi)$: 
\begin{align}
\label{Table3}
\begin{array}{|c|c|c|c|c|c|c|c|c|}
\hline
  & {{\rm type}} & {\phi} & {B} & {\Lambda} & {\mu} & L/F & {g}\\
\hline
 {\rm case\,1} & {\text{I or IIb}} & {\cchi_{\cO_{L}\oplus\cO_{L}}} & {K{\rm -fixed}} & {{\rm unramified}} & {{\rm unramified}} & - & {1_4}\\
\hline
 {\rm case\,2} & {\text{I or IIb}} & {\cchi_{\fp_{L}^e\oplus 1+\fp_{L}^e}} & {K{\rm -fixed}} & {{\rm unramified}} & {{\rm ramified}} & {\rm inert} & {b }\\
\hline
 {\rm case\,3} & {\text{I or IIb}} & {\widehat{\cchi_{\fp_{L}^e\oplus 1+\fp_{L}^e}}} & {K{\rm -fixed}} & {{\rm unramified}} & {{\rm ramified}} & {\rm inert} & {b}\\
 \hline
 {\rm case\,4} & {\text{I or IIb}} & {\cchi_{\fp_{L}^{f}\oplus 1+\fp_{L}^{f}}} & {K{\rm -fixed}} & {{\rm ramified}} & {{\rm unramified}} & {\rm inert} & {b'}\\
 \hline
 {\rm case\,5} & {\text{I or IIb}} & {\widehat{\cchi_{\fp_{L}^{f}\oplus 1+\fp_{L}^{f}}}} & {K{\rm -fixed}} & {{\rm ramified}} & {{\rm unramified}} & {\rm inert} & {b'}\\
\hline
 {\rm case\,6} & {\text{I or IIb}} & {\cchi_{\cO_{L}\oplus\cO_{L}}} & {K_0(\fp){\rm -fixed}} & {{\rm unramified}} & {{\rm unramified}} & {\rm inert} & {\eta}\\
\hline
 {\rm case\,7} & {\text{IIIa}} & {\cchi_{\cO_{L}\oplus\cO_{L}}} & {K_0(\fp){\rm -fixed}} & {{\rm unramified}} & {{\rm unramified}} & {\rm inert} & {\eta}\\
\hline
 {\rm case\,8} & {\text{VIb}} & {\cchi_{\cO_{L}\oplus\cO_{L}}} & {K_0(\fp){\rm -fixed}} & {{\rm unramified}} & {{\rm unramified}} & {\rm inert} & {\eta}\\
\hline
\end{array}
\end{align}
Here, the integer $e$ (resp. $f$) is the conductor of $\mu$ (resp. $\Lambda$) defined to be the minimum non-negative integer $n$ satisfying $\mu|_{(1+\fp^n)\cap\fo^\times}=1$ (resp. $\Lambda|_{(1+\fp_L^n)\cap\fo_L^\times}=1$), $b,b'\in\sG(F)$ are given as
\begin{align}
b:=\left[\begin{smallmatrix} \varpi^e 1_2 & T^\dagger \\ 0 & 1_2 \end{smallmatrix}\right],\quad b':=\left[\begin{smallmatrix} \varpi^{2f} &  &  &  \\  & \varpi^f &  &  \\  &  & 1 &  \\  &  &  & \varpi^f \end{smallmatrix}\right],
\label{Def-bF}
\end{align}
and $L/F$ is said to be inert if it is an unramified field extension. For $l,m\in\Z$, set
$$h(\ell,m):=\left[\begin{smallmatrix} \varpi^{\ell+2m} &  &  &  \\  & \varpi^{\ell+m} &  &  \\  &  & 1 &  \\  &  &  & \varpi^{m} \end{smallmatrix}\right]\in\sG(F).$$
As in \cite{PitaleSchmidt}, for smooth representation $(\pi,V_\pi)$ of $\sG(F)$, we define an operator $T_{\ell,m}\in\End(V_\pi)$ by
$$
T_{\ell,m}v:=\vol(K_0(\fp))\int_{K_0(\fp)h(\ell,m)K_0(\fp)}\pi(g)v\,\d g,\quad v\in V_\pi
$$
with $\d g$ the Haar measure on $\sG(F)$ such that $\vol(K)=1$, and the Atkin-Lehner involution by
$$\eta v:=\pi(\eta)v,\quad \eta:=\left[\begin{smallmatrix}  &  &  & -1 \\  &  & 1&  \\  & \varpi &  &  \\ -\varpi &  &  &  \end{smallmatrix}\right],\quad v\in V_\pi.$$

If $\pi$ is irreducible and has the $(T,\Lambda)$-Bessel models, then for any $B\in\cB(T,\Lambda)[\pi]^{K_0(\fp)}$, by \cite[Lemma 4.4 (i)]{PitaleSchmidt}, we have the support conditions:
\begin{gather}
B(h(\ell,m))=B(h(\ell,m)s_2)=0\quad {\rm if\ }\ell<0\label{Support of B1},\\
B(h(\ell,m)s_1s_2)=B(h(\ell,m)s_2s_1s_2)=0\quad {\rm if\ }\ell<-1.
\label{Support of B2}
\end{gather}

\subsection{Unramified computation (case 1)}\label{sec:Ur1}
When $\pi$ is spherical, $\pi$ is of type I or IIb in our cases. Recall that there exists a unique element $B_\pi^{0}\in\cB(T,\Lambda)[\pi]^{K}$ such that $B_\pi^{0}(1_4)=1$. Then, the following formula for an unramified $\mu$ in its greatest generality is due to Sugano (\cite[Theorem 2.1]{Sugano84}): 
\begin{align*}
Z(\phi,B_\pi^{0},s,\mu;1_4)=L(s+1/2,\pi,\mu), \quad \Re(s)\gg 0.
\end{align*}
In \cite[Proposition 5.9]{Lemma}, a proof of this formula is given when $L=F\oplus F$ and $\pi$ is of type I based on the explicit formula of $B_\pi^0$ due to \cite{BFF}.

\subsection{Unramified computation (case 2 and case 3)}\label{sec:Ur2}
Now we assume that $e>0$ and that $T\in\vV(F)$ satisfies the following conditions;
\begin{itemize}
\item $L/F$ is an unramified field extension.
\item $T\in\GL_2(\cO),\quad -\frac{\sd}{2}=\tr(TT^\dagger)\in\cO^\times.$
\end{itemize}
Then $\varpi$ remains a prime element in $\fp_L$. Let $b$ be as in \eqref{Def-bF}, and set $\phi=\cchi_{\fp_L^e\oplus 1+\fp_L^e}\in\mathcal{S}(L^2)$. 
Our goal of this subsection is to calculate the zeta integrals $Z(\phi,B_\pi^{0},s,\mu;b)$ and $Z(\widehat{\phi},B_\pi^{0},s,\mu;b)$ explicitly. We remark that $Z(\phi,B_\pi^{0},s,\mu;1_4)$ must be zero because the $a$-integral in \eqref{zeta int} vanishes.

\begin{defn}[Root number] \label{Def:LclGaussSum}
Let $\psi$ and $\mu$ be as above, we define the Gauss sum $W_F(\mu,\psi)$ by
$$
W_F(\mu,\psi):=q^{-\frac{e}{2}}\mu(\varpi)^{-e}\sum_{\alpha\in(\cO/\fp^e)^\times}\psi(\varpi^{-e}\alpha)\mu(\alpha).
$$
\end{defn}
Note that this sum has the absolute value $1$ and is independent of the choice of $\varpi$. The following lemma is more or less well-known (\cite[(2.18)]{Tate1}).

\begin{lem}\label{Gauss sum}
Let $\psi$ and $\mu$ be as above. For $n\in\Z$,
$$
\int_{\cO^\times}\psi(\varpi^n a)\mu(a)\d^\times a
=\begin{cases}
q^{-\frac{e}{2}+1}(q-1)^{-1} \mu(\varpi)^e W_F(\mu,\psi)\quad&(n=-e),\\
0 &(n\ne-e).
\end{cases}
$$
\end{lem}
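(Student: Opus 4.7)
The plan is to decompose the domain $\cO^\times$ along cosets of the subgroup $1+\fp^{e}\cO$, on which $\mu$ becomes constant (by the definition of its conductor). Concretely, let $R\subset\cO^\times$ be a set of representatives of $(\cO/\fp^{e})^\times$, so $\cO^\times=\bigsqcup_{\alpha\in R}\alpha(1+\fp^{e}\cO)$; since $\vol(\cO^\times)=1$ and $|(\cO/\fp^{e})^\times|=q^{e-1}(q-1)$, each coset has measure $q^{1-e}/(q-1)$.

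On each coset I plan to substitute $b=1+\varpi^{e}u$ with $u\in\cO$; then $\mu(\alpha b)=\mu(\alpha)$ and $\psi(\varpi^{n}\alpha b)=\psi(\varpi^{n}\alpha)\,\psi(\varpi^{n+e}\alpha u)$. Converting $\d^\times b$ on $1+\fp^{e}\cO$ to the additive Haar measure on $\cO$ via this parametrization, one obtains
\[
\int_{\cO^\times}\psi(\varpi^{n}a)\mu(a)\,\d^\times a=\frac{q^{1-e}}{q-1}\sum_{\alpha\in R}\mu(\alpha)\psi(\varpi^{n}\alpha)\int_{\cO}\psi(\varpi^{n+e}\alpha u)\,\d u,
\]
with $\d u$ normalized so $\vol(\cO)=1$. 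The inner integral equals $1$ when $n+e\ge 0$ and $0$ otherwise (since $\alpha\in\cO^\times$ and $\psi$ has conductor $0$), immediately giving the vanishing assertion for $n<-e$.

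For $n\ge -e$ the problem reduces to evaluating the character sum $S(n):=\sum_{\alpha\in(\cO/\fp^{e})^\times}\mu(\alpha)\psi(\varpi^{n}\alpha)$. I plan to split into three subcases. If $n\ge 0$, then $\psi(\varpi^{n}\alpha)=1$ and $S(n)=\sum_\alpha\mu(\alpha)=0$ since $\mu$ is non-trivial on $(\cO/\fp^{e})^\times$. If $n=-e$, then by the definition of $W_{F}(\mu,\psi)$ one has $S(-e)=q^{e/2}\mu(\varpi)^{e}W_{F}(\mu,\psi)$; combining with the prefactor $q^{1-e}/(q-1)$ gives exactly $q^{-e/2+1}(q-1)^{-1}\mu(\varpi)^{e}W_{F}(\mu,\psi)$, as claimed. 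In the intermediate range $1-e\le n\le -1$, the character $\alpha\mapsto\psi(\varpi^{n}\alpha)$ has conductor $\fp^{-n}\subseteq\fp^{e-1}$, so it is trivial on $1+\fp^{e-1}\cO$, while $\mu$ is non-trivial on $(1+\fp^{e-1}\cO)/(1+\fp^{e}\cO)$ because its exact conductor is $\fp^{e}$; summing first over cosets of $1+\fp^{e-1}\cO$ in $\cO^\times$ therefore makes $S(n)$ vanish.

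The only non-routine ingredient is this last vanishing, which is the familiar ``conductor is stable under twist by a character of smaller conductor'' fact. Everything else is direct bookkeeping with the Haar normalizations fixed in \S\ref{sec:HaarEQ}, so the whole proof reduces to a few short lines once these observations are recorded.
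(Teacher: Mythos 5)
Your proof is correct and reaches the right constants; it simply takes a different route from the paper's. The paper dispatches the vanishing cases with two change-of-variable tricks: for $n>-e$ it picks a single $u\in\cO$ with $\mu(1+\varpi^{e-1}u)\ne 1$, substitutes $a\mapsto a(1+\varpi^{e-1}u)$, and concludes that the integral $I$ satisfies $I=\mu(1+\varpi^{e-1}u)\,I$, hence $I=0$; for $n<-e$ it substitutes $a\mapsto a(1+\varpi^{e}u)$, integrates over $u\in\cO$, and the inner integral $\int_{\cO}\psi(\varpi^{e+n}u)\,\d u$ vanishes. You instead perform one coset decomposition of $\cO^{\times}$ along $1+\fp^{e}\cO$ from the outset, reducing all $n$ to the incomplete Gauss sum $S(n)=\sum_{\alpha\in(\cO/\fp^{e})^{\times}}\mu(\alpha)\psi(\varpi^{n}\alpha)$, and then evaluate $S(n)$ by orthogonality (for $n\geq 0$), by its defining relation with $W_F(\mu,\psi)$ (for $n=-e$), or by a further averaging over $(1+\fp^{e-1}\cO)/(1+\fp^{e}\cO)$ (for $1-e\leq n\leq -1$). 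The two arguments are essentially dual in the intermediate range: the paper's identity $I=\mu(1+\varpi^{e-1}u)\,I$ for one suitable $u$ is, upon averaging over all $u\in\cO/\fp$, exactly your vanishing $\sum_{v}\mu(1+\varpi^{e-1}v)=0$ applied coset-by-coset. Your version is more systematic, treats every $n$ from a single formula, and makes the underlying Gauss sum explicit; the paper's is terser but handles the ranges by separate devices. One small remark on wording: $\alpha\mapsto\psi(\varpi^{n}\alpha)$ is an additive, not multiplicative, character of $F$, so ``trivial on $1+\fp^{e-1}\cO$'' is not quite the right phrase; what you actually use, and what holds for $n\geq 1-e$, is that this function of $\alpha\in\cO^{\times}$ is invariant under multiplication by elements of $1+\fp^{e-1}\cO$.
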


\begin{comment}
\begin{proof} This is more or less well-known. 
By noting that $\vol(1+\fp^e)=q^{1-e}(q-1)^{-1}$, the case of $n=-e$ is immediate.

When $n>-e$, we choose $u\in\cO$ such that $\mu(1+\varpi^{e-1}u)\ne1$. Then, by the change of variable $a\mapsto a(1+\varpi^{e-1}u)$,
\begin{align*}
\int_{\cO^\times}\psi(\varpi^n a)\mu(a)\d^\times a
&=\int_{\cO^\times}\psi(\varpi^n a(1+\varpi^{e-1}u))\mu(a(1+\varpi^{e-1}u))\d^\times a\\
&=\mu(1+\varpi^{e-1}u)\int_{\cO^\times}\psi(\varpi^n a)\psi(\varpi^{e+n-1}au)\mu(a)\d^\times a\\
&=\mu(1+\varpi^{e-1}u)\int_{\cO^\times}\psi(\varpi^n a)\mu(a)\d^\times a.
\end{align*}
Hence $\int_{\cO^\times}\psi(\varpi^n a)\mu(a)\,\d^\times a=0$.

On the other hand, by considering the change of variables $a\mapsto a(1+\varpi^{e}u)$ for $u\in\cO$ and integrating with respect to $u$, we have
{\allowdisplaybreaks\begin{align*}
\int_{\cO^\times}\psi(\varpi^n a)\mu(a)\d^\times a
&=\int_{\cO} \int_{\cO^\times}\psi(\varpi^n a(1+\varpi^{e}u))\mu(a(1+\varpi^{e}u))\d^\times a \d u\\
&=\int_{\cO} \int_{\cO^\times}\psi(\varpi^n a)\psi(\varpi^{e+n}au)\mu(a)\d^\times a \d u\\
&=\int_{\cO} \psi(\varpi^{e+n}u) du\int_{\cO^\times} \psi(\varpi^n a)\mu(a) \d^\times a.
\end{align*}} The $u$-integral vanishes if $n<-e$.
\end{proof}
\end{comment}

\begin{lem}\label{Gauss sum split} Let $L/F$ be an unramified quadratic extension. Then, for any character $\mu$ of $F$ with $e:={\rm cond}(\mu)>0$, 
$$W_L(\mu_L,\psi_L)=(-1)^{e} W_F(\mu,\psi)^2. $$
\end{lem}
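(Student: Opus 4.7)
The plan is to reduce the identity to the inductivity of Tate's local $\varepsilon$-factors for the unramified quadratic extension $L/F$; with the normalization of Definition \ref{Def:LclGaussSum}, one has $W_F(\chi,\psi)=\varepsilon(1/2,\chi,\psi)$ for any ramified character $\chi$ of $F^\times$, and similarly $W_L(\mu_L,\psi_L)=\varepsilon(1/2,\mu_L,\psi_L)$, so the whole computation becomes a statement about $\varepsilon$-factors that follows from representation theory of the local Weil group.

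I would first invoke local class field theory: viewing $\mu$ as a character of $W_F$ via the reciprocity map, the hypothesis $\mu_L=\mu\circ N_{L/F}$ translates to the fact that $\mu_L$ is the restriction of $\mu$ to $W_L$, so by Frobenius reciprocity
$$\mathrm{Ind}_{W_L}^{W_F}(\mu_L)\,\cong\,\mu\,\oplus\,\mu\cdot\omega_{L/F},$$
where $\omega_{L/F}$ denotes the unramified quadratic character of $F^\times$ characterized by $\omega_{L/F}(\varpi)=-1$ and $\omega_{L/F}|_{\cO^\times}=1$. The Deligne--Langlands inductivity of $\varepsilon$-factors then yields
$$\lambda(L/F,\psi)\cdot\varepsilon(s,\mu_L,\psi_L)=\varepsilon(s,\mu,\psi)\cdot\varepsilon(s,\mu\omega_{L/F},\psi),$$
and the Langlands constant $\lambda(L/F,\psi)$ equals $1$ for $L/F$ unramified and $\psi$ of conductor $\cO$, since the different of $L/F$ is trivial. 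I would also note, in passing, that the surjectivity of the norm $1+\varpi^k\cO_L\twoheadrightarrow 1+\varpi^k\cO$ in the unramified case forces the conductor of $\mu_L$ to be $\varpi^e\cO_L$, matching that of $\mu$.

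Next, I would evaluate the displayed identity at $s=1/2$ to obtain $W_L(\mu_L,\psi_L)=W_F(\mu,\psi)\cdot W_F(\mu\omega_{L/F},\psi)$ and compute the second factor directly from Definition \ref{Def:LclGaussSum}. Since $\omega_{L/F}|_{\cO^\times}$ is trivial, the character $\mu\omega_{L/F}$ has the same conductor $\fp^e$ as $\mu$ and the sum $\sum_{\alpha\in(\cO/\fp^e)^\times}\psi(\varpi^{-e}\alpha)\mu(\alpha)$ is unchanged; only the prefactor differs, namely $(\mu\omega_{L/F})(\varpi)^{-e}=(-1)^e\mu(\varpi)^{-e}$. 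This gives $W_F(\mu\omega_{L/F},\psi)=(-1)^e W_F(\mu,\psi)$, and substituting back yields the lemma.

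The main technical hurdle is the bookkeeping required to reconcile the precise normalization of Definition \ref{Def:LclGaussSum} with Tate's local $\varepsilon$-factor and to verify that $\lambda(L/F,\psi)=1$ without any hidden power of $q$; both checks are standard but must be performed carefully in order to justify the identifications $W_F=\varepsilon(1/2,\cdot)$ and $W_L=\varepsilon(1/2,\cdot)$. Alternatively, one could bypass the Weil-group formalism and prove the identity by direct expansion, parametrizing $\cO_L=\cO[\delta]$ with $\overline\delta=-\delta$ and $\delta^2\in\cO^\times$ a non-square (available when the residue characteristic is odd) and reducing $W_L(\mu_L,\psi_L)$ to an iterated sum over $(\cO/\fp^e)^2$; for $e=1$ this recovers the classical Hasse--Davenport relation, while for $e\geq 2$ one would need a stationary-phase-type argument using the Langlands parameter of $\mu$, which is noticeably more involved than the $\varepsilon$-factor route.
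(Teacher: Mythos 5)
Your proof follows the paper's approach exactly: both reduce the identity to the Deligne--Langlands inductivity of $\varepsilon$-factors applied to $\mathrm{Ind}_{W_L}^{W_F}\mu_L\cong\mu\oplus\mu\eta_{L/F}$, dispatch the Langlands constant (the paper does so via the degree-zero inductivity from \cite[(3.4.8)]{Tate2} with $\rho=\mathbf 1$, you argue $\lambda(L/F,\psi)=1$ directly; these are the same fact), and finish with $W_F(\mu\eta_{L/F},\psi)=\eta_{L/F}(\varpi)^eW_F(\mu,\psi)=(-1)^eW_F(\mu,\psi)$. One small caution on the bookkeeping you flagged: with the paper's conventions one actually has $\varepsilon(s,\chi,\psi)=q^{-e(s-1/2)}W_F(\bar\chi,\psi)$ rather than your $W_F(\chi,\psi)=\varepsilon(1/2,\chi,\psi)$, but since the identity to be proved is stable under $\mu\mapsto\bar\mu$, the conclusion is unaffected.
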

\begin{proof} For a virtual representation $V$ of the Weil group $\mathfrak W_F$ of $F$, let $\varepsilon(s,V,\psi)$ denote the local epsilon factor a la Deligne-Langlands (\cite{Tate2}). By (\cite[(3.4.8)]{Tate2}), we have 
\begin{align}\varepsilon(s,{\rm Ind}_{{\mathfrak W}_L}^{{\mathfrak W}_F}\rho,\psi)=\varepsilon(s,\rho,\psi_L)
\label{Gauss sum split-f0}
\end{align}
for any $\rho$ of degree $0$. Any character $\mu$ of $F^\times$ is viewed as a one-dimensional representation of ${\mathfrak W}_F$ by $F^\times \cong {\mathfrak W}_{F}^{\rm ab}$. If $\mu_L:=\mu \circ \nr_{L/F}$, then ${\rm Ind}_{{\mathfrak W}_L}^{{\mathfrak W}_F}\mu_L\cong \mu \oplus \mu \eta_{L/F}$. If $\mu$ is unramified, then $\varepsilon(s,\mu,\psi)=1$; since $L/F$ is unramified, so is the character $\eta_{L/F}$. By these observations, the equality \eqref{Gauss sum split-f0} holds true when $\rho={\bf 1}$. Thus, \eqref{Gauss sum split-f0} is true when $\rho=\mu$ with $e>0$. Since $\varepsilon(s,\mu\eta_{L/F}^{j}, \psi)=q^{-e(s-1/2)}W_F(\bar \mu\eta_{L/F}^j,\psi)$ for $j=0,1$ and $\varepsilon(s,\mu_L, \psi_L)=q^{-2e(s-1/2)}W_L(\bar \mu_L,\psi_L)$, we obtain 
$W_{L}(\mu_L,\psi_L)=W_F(\mu,\psi)W_F(\mu\eta_{L/F}, \psi)$. By $e>0$ and $\eta_{L/F}(\varpi)=-1$, we have $W_F(\mu\eta_{L/F},\psi)=\eta_{L/F}(\varpi)^{e} W_F(\mu,\psi)=(-1)^{e}W_F(\mu,\psi)$. 
 \end{proof}   

\begin{lem}\label{lem of f}
For $k^\#=\left[\begin{smallmatrix} a & b \\ c & d \end{smallmatrix}\right]\in K^\#$, we have
\begin{align*}
f_\phi^{(s,\Lambda,\mu)}(k^\#)
&=\begin{cases}
q^{-2e+2}(q^2-1)^{-1}\mu_L(d)^{-1}\mu(\det k^{\#})\quad &(c\in\fp_L^e),
\\
0 & ({\rm otherwise}),
\end{cases}\\
f_{\widehat{\phi}}^{(s,\Lambda,\mu)}(k^\#)
&=\begin{cases}
q^{e(2s-3)+2}(q^{2}-1)^{-1}\Lambda(\varpi)^{-e}\mu_L(c)^{-1}W_L(\mu_L,\psi_L)\quad &(c\in\cO_L^\times),\\
0 & (c\notin\cO_L^\times).
\end{cases}\\
\end{align*}

\end{lem}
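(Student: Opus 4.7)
The plan is a direct unwinding of the definitions. For the first formula, substituting $k^\#=\left[\begin{smallmatrix} a & b \\ c & d \end{smallmatrix}\right]\in K^\#$ into the definition of $f_\phi^{(s,\Lambda,\mu)}$, and noting $|\det k^\#|=1$, the integrand $\phi(\tau^\dagger c, \tau^\dagger d) = \cchi_{\fp^e\cO_L}(\tau^\dagger c)\,\cchi_{1+\fp^e\cO_L}(\tau^\dagger d)$ together with $\det k^\#\in\cO^\times$ forces the two support conditions $d\in\cO_L^\times$, $c\in\fp^e\cO_L$, and confines $\tau^\dagger$ to the single coset $d^{-1}(1+\fp^e\cO_L)$. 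On this region $|\tau\tau^\dagger|=1$, and writing $\tau=(d^\dagger)^{-1}u^\dagger$ with $u\in 1+\fp^e\cO_L$, one checks that $\Lambda\mu_L(\tau)=\mu_L(d)^{-1}$, using that $\Lambda$ is trivial on $\cO_L^\times$ together with $\mu_L|_{1+\fp^e\cO_L}=1$ (which follows from $\nr_{L/F}(1+\fp^e\cO_L)\subset 1+\fp^e\cO$ and $\mathrm{cond}(\mu)=e$). Taking into account $\vol(1+\fp^e\cO_L)=q^{-2e+2}(q^2-1)^{-1}$ then yields the first formula.

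For the second formula, I would first compute $\widehat\phi$ explicitly. With $\delta:=2\xi$, so that $\langle[\begin{smallmatrix}x\\y\end{smallmatrix}],[\begin{smallmatrix}u\\v\end{smallmatrix}]\rangle=\tr_{L/F}(-\delta^{-1}(xv-yu))$, the Fourier integral splits as a product of a $u$-integral over $\fp^e\cO_L$ and a $v$-integral over $1+\fp^e\cO_L$. Each factor is either a subgroup volume or zero, by the conductor-$\cO_L$ property of $\psi_L$ (which holds since $L/F$ is unramified and $\mathrm{cond}(\psi)=0$) together with $\delta\in\cO_L^\times$ (because $\delta^2=\sd\in\cO^\times$ and $2\in\cO^\times$). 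This yields
\[
\widehat\phi(x,y)=q^{-4e}\psi_L(-\delta^{-1}x)\,\cchi_{\fp^{-e}\cO_L}(x)\,\cchi_{\fp^{-e}\cO_L}(y).
\]
Plugging this into $f_{\widehat\phi}^{(s,\Lambda,\mu)}(k^\#)$ and parameterizing $\tau^\dagger=\varpi^n u$ with $u\in\cO_L^\times$, $n\in\Z$, the $\cchi$-supports force $n\geq -e$, and the inner integral reduces to the Gauss-sum integral $\int_{\cO_L^\times}\psi_L(-\delta^{-1}c\varpi^n u)\mu_L(u)\,\d^\times u$. By the $L$-analog of Lemma~\ref{Gauss sum} combined with Lemma~\ref{Gauss sum split}, this vanishes unless $v_L(-\delta^{-1}c\varpi^n)=-e$, i.e.\ $n=-e-v_L(c)$. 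When $c\notin\cO_L^\times$ this requires $n<-e$, outside the support, so the whole expression is $0$; when $c\in\cO_L^\times$ only $n=-e$ contributes, and a change of variable $u\mapsto(-\delta^{-1}c)^{-1}u$ followed by collecting the powers $q^{-4e}\cdot q^{2e(s+1)}\cdot q^{2-e}(q^2-1)^{-1}$ and the characters $\Lambda(\varpi)^{-e}$, $\mu_L(c)^{-1}$, $W_L(\mu_L,\psi_L)$ gives the advertised expression.

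The main obstacle is careful bookkeeping: tracking the Haar-measure normalization $\vol(\cO_L)=\vol(\cO_L^\times)=1$, the volume $\vol(1+\fp^e\cO_L)=q^{-2e+2}(q^2-1)^{-1}$ of the principal unit group, and the precise $q$-powers in the $L$-side Gauss sum provided by Lemma~\ref{Gauss sum split}; one also has to verify the assertion $\delta\in\cO_L^\times$ under the standing hypotheses of cases~2 and~3. No representation-theoretic input beyond the definitions and these measure/Gauss-sum computations is required.
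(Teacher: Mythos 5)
Your approach coincides with the paper's on the first formula, and your computation of $f_\phi^{(s,\Lambda,\mu)}$ is essentially identical to theirs (support forces $c\in\fp^e\cO_L$, $d\in\cO_L^\times$, $\tau$ in a single coset; volume of $1+\fp^e\cO_L$; etc.). For the second formula you also follow the same route (compute $\widehat\phi$, parameterize $\tau$ by valuation and unit part, reduce to a Gauss sum), so structurally you are not doing anything different.

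However, there is a genuine inconsistency in your second computation that you should resolve rather than assert away. You correctly keep track of the constant inside $\psi_L$: with $\delta:=2\xi$, the definition of $\widehat\phi$ via the symplectic pairing $\tr_{L/F}(-(2\xi)^{-1}(xv-yu))$ yields
\[
\widehat\phi(x,y)=q^{-4e}\,\psi_L(-\delta^{-1}x)\,\cchi_{\fp^{-e}\cO_L}(x)\,\cchi_{\fp^{-e}\cO_L}(y),
\]
whereas the paper's proof records only $q^{-4e}\psi_L(x)\cchi_{\fp^{-e}\cO_L}(x)\cchi_{\fp^{-e}\cO_L}(y)$, i.e.\ it drops the unit $-\delta^{-1}$. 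If you now carry \emph{your} $\widehat\phi$ through the Gauss-sum step and substitute $u\mapsto(-\delta^{-1}c)^{-1}u$, the character you pick up is $\mu_L\bigl((-\delta^{-1}c)^{-1}\bigr)=\mu(\nr_{L/F}(\delta))\,\mu_L(c)^{-1}=\mu(-\sd)\,\mu_L(c)^{-1}$, since $\nr_{L/F}(\delta)=4\nr_{L/F}(\xi)=-\sd$. So you land on $\mu(-\sd)\mu_L(c)^{-1}$, \emph{not} the $\mu_L(c)^{-1}$ in the statement. You cannot simply say the bookkeeping ``gives the advertised expression''; either the lemma as printed should carry the extra $\mu(-\sd)$ (and one should check it washes out downstream), or the Fourier transform must be taken with respect to the pairing without the $-\delta^{-1}$, matching the paper's simplified $\widehat\phi$. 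Along the same lines, both you and the printed statement drop the overall $\mu(\det k^\#)$ coming from the definition of $f_{\widehat\phi}^{(s,\Lambda,\mu)}$; the paper's proof writes it in the intermediate display and then silently discards it. (In the application only the coset representative $\left[\begin{smallmatrix}0&-1\\1&\eta^\dagger\end{smallmatrix}\right]$, of determinant $1$, has $c\in\cO_L^\times$, so this factor is harmless there, but the lemma as stated should include it.) Finally, Lemma~\ref{Gauss sum split} is not needed here: the second formula is expressed in terms of $W_L(\mu_L,\psi_L)$, so the $L$-analogue of Lemma~\ref{Gauss sum} alone suffices; Lemma~\ref{Gauss sum split} enters only later when relating $W_L$ to $W_F$ in the corollary.
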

\begin{proof}
These equations immediately follow by Lemma \ref{Gauss sum}.
\end{proof}

\begin{lem}\label{sum of lambda}
For $u\in\cO^\times$,
$$
\sum_{\substack{\eta\in\cO_L/\fp_L^e\\ u+\nr_{L/F}(\eta)\in \cO^\times}}\mu(u+\nr_{L/F}(\eta))=(-1)^eq^e\mu(u).
$$
\end{lem}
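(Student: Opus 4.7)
My plan is to extend $\mu$ by zero on $\fp\cO$ so that the restriction $u+\nr_{L/F}(\eta)\in\cO^\times$ becomes automatic, and rewrite the sum as
\begin{align*}
S := \sum_{\eta\in\cO_L/\fp^e\cO_L}\mu(u+\nr_{L/F}(\eta)) = \sum_{v\in\cO/\fp^e\cO}R_e(v)\,\mu(u+v),
\end{align*}
where $R_e(v) := \#\{\eta\in\cO_L/\fp^e\cO_L : \nr_{L/F}(\eta)\equiv v\ ({\rm mod}\ \fp^e\cO)\}$ is the representation number. The first step is to compute $R_e$ explicitly. Because $L/F$ is unramified, the $L$-valuation restricts to $v_F$ and $v_F(\nr_{L/F}(\eta))=2v_L(\eta)$; hence $R_e(v)=0$ unless $v\equiv 0\ ({\rm mod}\ \fp^e\cO)$ or $v_F(v)=2k$ for some $0\le k<e/2$. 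In the latter case, writing $\eta=\varpi^k\eta_0$ with $\eta_0\in\cO_L^\times$ reduces the count to a fiber of the composite $(\cO_L/\fp^{e-k}\cO_L)^\times\to(\cO/\fp^{e-k}\cO)^\times\to(\cO/\fp^{e-2k}\cO)^\times$; combining Hensel's lemma with the surjectivity of the norm on residue fields $\F_{q^2}^\times\twoheadrightarrow\F_q^\times$, one shows that $\nr_{L/F}:(\cO_L/\fp^m\cO_L)^\times\to(\cO/\fp^m\cO)^\times$ is surjective with constant fiber size $(q+1)q^{m-1}$, and a kernel chase then yields $R_e(v)=(q+1)q^{e-1}$. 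A direct count gives $R_e(0)=q^{2\lfloor e/2\rfloor}$.

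The second ingredient is the orthogonality identity: for every $0\le k\le e$,
\begin{align*}
\sum_{v\in\fp^k\cO/\fp^e\cO}\mu(u+v)
=\mu(u)\sum_{z\in(1+\fp^k\cO)/(1+\fp^e\cO)}\mu(z)
=\begin{cases}\mu(u)&(k=e),\\ 0&(k<e),\end{cases}
\end{align*}
the vanishing being forced by ${\rm cond}(\mu)=e$: for every $k\le e-1$, the subgroup $1+\fp^k\cO$ contains $1+\fp^{e-1}\cO$, on which $\mu$ is non-trivial.

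Combining the two ingredients, the level sets $U_k:=\{v\in\cO/\fp^e\cO : v_F(v)=2k\}=\fp^{2k}\cO/\fp^e\cO\setminus\fp^{2k+1}\cO/\fp^e\cO$ contribute $\sum_{v\in U_k}\mu(u+v)=0$ whenever $2k+1<e$, while the edge case $2k+1=e$ (occurring only for $e$ odd and $k=(e-1)/2$) contributes $-\mu(u)$. Summing against the weights $R_e$ gives
\begin{align*}
S=(q+1)q^{e-1}\cdot\begin{cases}0 & (e\text{ even}),\\ -\mu(u) & (e\text{ odd}),\end{cases}\ +\ q^{2\lfloor e/2\rfloor}\mu(u),
\end{align*}
which collapses to $(-1)^e q^e\mu(u)$ in both parities. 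The only substantive computation is the determination of $R_e(v)$; the main obstacle will be keeping the valuation bookkeeping clean across the parity of $e$, with all remaining steps reducing to the orthogonality of $\mu$ on principal units at the correct level.
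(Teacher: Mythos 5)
Your proof is correct and follows essentially the same line as the paper's: stratify by the valuation of $\nr_{L/F}(\eta)$, observe that the norm map has constant fiber size $(q+1)q^{e-1}$ over each nonzero level set, and finish with orthogonality of $\mu$ on the filtration by principal units. The only organizational differences are cosmetic: you extend $\mu$ by zero and factor the fiber count into a weight function $R_e(v)$ (which also lets you keep $u$ general rather than first reducing to $u=1$ via surjectivity of the norm as the paper does), whereas the paper builds the unit condition into the index-$0$ stratum $\overline{\cO_{L,0}}$ and fibers directly inside the sum.
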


\begin{proof}
Because $L/F$ is an unramified extension, the norm map $\nr_{L/F}:\cO_L^\times\rightarrow \cO^\times$ is surjective. Thus, there exists $v\in\cO_L^\times$ such that $u=\nr_{L/F}(v)$. By replacing $\eta$ by $v\eta$, without loss of generality, we may assume that $u=1$.

For an integer $i$ with $0\le i\le e+1$, we set 
$$
\overline{\cO_{L,i}}=
\begin{cases}
\left\{\eta\in\left(\cO_L/\fp_L^e\right)^\times\ \middle|\ 1+\nr_{L/F}(\eta)\not\equiv 0\ (\text{mod }\fp)\right\}\quad&(i=0),\\
\fp_L^{i}/\fp_L^{e}&(0<i\le e),\\
\varnothing&(i=e+1)
\end{cases}
$$
and
$$
\overline{\cU_{i}}=
\begin{cases}
\cO^\times/(1+\fp^e)\quad &(i=0),\\
(1+\fp^i)/(1+\fp^e)\quad &(0<i\le e),\\
\varnothing &(i=e+1).
\end{cases}
$$
When $0\le i <e/2$, we can check that
$$
\overline{\cO_{L,i}} \setminus \overline{\cO_{L,i+1}} \owns \eta\ (\text{mod }\fp_L^e) \mapsto 1+N_{L/F}(\eta)\ (\text{mod }\fp^e) \in \overline{\cU_{2i}} \setminus \overline{\cU_{2i+1}}
$$
is a surjective map having the fiber of cardinality $q^{e-1}(q+1)$ at each point.

When $e/2\le i\le e$, it is immediate that 
$$
\mu(1+\nr_{L/F}(\eta))=1,\quad \eta\in\overline{\cO_{L,i}}.
$$
Hence we have
\begin{align*}
\sum_{\substack{\eta\in\cO_L/\fp_L^e\\ 1+\nr_{L/F}(\eta)\in \cO^\times}}\mu(1+\nr_{L/F}(\eta))&=\sum_{i=0}^e\sum_{\eta\in \overline{\cO_{L,i}} \setminus \overline{\cO_{L,i+1}}}\mu(1+\nr_{L/F}(\eta))\\
&=q^{e-1}(q+1)\sum_{0\le i<e/2}\sum_{\varepsilon\in\overline{\cU_{2i}} \setminus \overline{\cU_{2i+1}}}\mu(\varepsilon)+\sum_{e/2\le i\le e}\#(\overline{\cO_{L,i}} \setminus \overline{\cO_{L,i+1}}).
\end{align*}
Then the lemma follows from
$$
\sum_{\varepsilon\in\overline{\cU_{2i}} \setminus \overline{\cU_{2i+1}}}\mu(\varepsilon)=
\begin{cases}
0&(0\le i<\frac{e-1}{2}),\\
-1&(i=\frac{e-1}{2})\\
\end{cases}
$$
and
$$
\sum_{e/2\le i\le e}\#(\overline{\cO_{L,i}} \setminus \overline{\cO_{L,i+1}})=q^{2e-2\lceil e/2 \rceil}.
$$
where $\lceil \cdot \rceil:\R\to\Z$ is the ceiling function.
\end{proof}

\begin{lem}\label{B(b)}
Let $B\in\cB(T,\Lambda)[\pi]^{K}$ and $\eta\in\cO$ with $\frac{\sa^6\sd}{4}+\nr_{L/F}(\eta)\in\varpi^{j}\cO^\times$  $(0\le j\le e)$.
\begin{itemize}
\item[(i)]
If we set $Y_\eta=-\sa^2 T^{\dagger}+X_\eta$, then we have
$$
\det Y_{\eta}=-\tfrac{\sa^4\sd}{4}-\sa^{-2}\nr_{L/F}(\eta).
$$
\item[(ii)]
There exist $\tau\in\cO_L^\times$ and $A\in\GL_2(\cO)$ such that
$$
Y_\eta=\tau \left[\begin{smallmatrix} \varpi^j & 0 \\ 0 & 1 \end{smallmatrix}\right]A.
$$
\item[(iii)]
For $n\in\Z$ and $a\in\cO^\times$,
\begin{align*}
&B\left(\sm(\varpi^n a1_2,\varpi^n a)\iota(\left[\begin{smallmatrix} 0 & -1 \\ 1 & \eta^\dagger \end{smallmatrix}\right])b\right)\\
&=\psi \left(-\varpi^n a\cdot \tfrac{\sa^4\sd}{2}\left(\tfrac{\sa^6\sd}{4}+\nr_{L/F}(\eta)\right)^{-1}\right)\cdot B\left(h(e+n-2j,j)\right).
\end{align*}
\end{itemize}
\end{lem}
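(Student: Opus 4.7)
The proof splits naturally into the three parts. For (i), I would simply expand $Y_\eta = -\sa^2 S^\dagger + X_\eta$ as a $2\times 2$ matrix using the formulas $S^\dagger = \left[\begin{smallmatrix} \sc & -\sb/2 \\ -\sb/2 & \sa\end{smallmatrix}\right]$ and $X_\eta$ from \eqref{iota}, and compute its determinant directly. The constant-in-$\eta$ contribution equals $\sa^4(\sa\sc - \sb^2/4) = -\sa^4\sd/4$ by the defining relation $\sd = \sb^2 - 4\sa\sc$, while the quadratic-in-$\eta$ contribution is $\det X_\eta$, which simplifies to $-\sa^{-2}\nr_{L/F}(\eta)$ using the norm formula $\nr_{L/F}(\eta_2\sa + \eta_3\theta_0) = \sa(\sa\eta_2^2 + \sb\eta_2\eta_3 + \sc\eta_3^2)$ (derived from $\theta_0^2 = -\sa\sc + \sb\theta_0$); the mixed-degree cross terms cancel.

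For (ii), the key point is that $Y_\eta \in \Mat_2(\cO)$ since $\sa, \sb, \sc, \eta_2, \eta_3 \in \cO$ and $\sa \in \cO^\times$, and by (i) combined with the hypothesis, $\det Y_\eta = -\sa^{-2}(\sa^6\sd/4 + \nr_{L/F}(\eta)) \in \varpi^j\cO^\times$. I would show the gcd of the entries of $Y_\eta$ is a unit: if all four entries lay in $\fp$, then the $(2,2)$-entry $-\sa^3 + \eta_3 \in \fp$ would force $\eta_3 \equiv \sa^3 \pmod{\fp}$, the $(1,2)$-entry $\sa^2\sb/2 + \eta_2 \in \fp$ would force $\eta_2 \equiv -\sa^2\sb/2 \pmod{\fp}$, and substituting into the $(1,1)$-entry would yield $\sa\sd/2 \bmod \fp$, a unit by our standing assumption---a contradiction. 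The elementary divisor theorem then produces $Y_\eta = P \cdot \diag(\varpi^j,1) \cdot A'$ with $P, A' \in \GL_2(\cO)$. To upgrade $P$ to the multiplication matrix $[\tau]$ of a $\tau \in \cO_L^\times$, I would exploit that the map $\cO_L^\times \ni \tau \mapsto [\tau] \in \GL_2(\cO)/K_0(\fp^j)$ is surjective: the first column of $[\alpha\sa + \beta\theta_0]$ equals $\sa(\alpha, \beta)^T$, and for primitive $(\alpha,\beta)$ the element $\tau = \alpha\sa + \beta\theta_0$ is a unit in $\cO_L$ because its norm $\sa(\sa\alpha^2 + \sb\alpha\beta + \sc\beta^2)$ reduces to a unit---the residue norm form on $\cO_L/\fp\cO_L$ is anisotropic since $L/F$ is an unramified quadratic extension. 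Adjusting the Smith decomposition by a suitable left multiplier $[\tau'] [\tau]^{-1}$ and absorbing the correction into $A$ yields the stated form.

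For (iii), I would carry out an explicit Iwahori-Bessel decomposition of $g := \sm(\varpi^n a 1_2, \varpi^n a)\,\iota\bigl(\left[\begin{smallmatrix} 0 & -1 \\ 1 & \eta^\dagger\end{smallmatrix}\right]\bigr)\,b$ inside $\sG(F)$. First, combine the Levi element and the embedded matrix to write $g = \iota(M')\,b$ with $M' = \left[\begin{smallmatrix} 0 & -\varpi^n a \\ 1 & \eta^\dagger\end{smallmatrix}\right]$, and use $b = \sn(S^\dagger)\,h(e,0)$ together with the factorization $\iota([\begin{smallmatrix} 0 & -1 \\ 1 & \eta^\dagger\end{smallmatrix}]) = \iota([\begin{smallmatrix} 0 & -1 \\ 1 & 0\end{smallmatrix}])\,\sn(X_{\eta^\dagger})$ to express $g$ as a concrete $4\times 4$ block matrix; a direct computation shows that its lower-right $2\times 2$ block is essentially $\sa$ times a matrix expressible through $Y_\eta$. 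Invoking (ii) to decompose this block yields $g = r \cdot h(e+n-2j,\,j) \cdot k$ with $r = \iota([\begin{smallmatrix} \tau & 0 \\ 0 & \tau^\dagger\end{smallmatrix}])\,\sn(X)$ in the Bessel subgroup $R$ and $k \in K$: the factor $[\tau]$ provides the $\cO_L^\times$-part, $\diag(\varpi^j,1)$ contributes the $\varpi^j$-exponent in $h$, and the residual $A \in \GL_2(\cO)$ together with the unipotent block is absorbed into $k$ (the valuation count $\nu(g) = \varpi^{n+e}a = \nu(r)\,\nu(h(e+n-2j,j))\,\nu(k)$ with $\nr_{L/F}(\tau) \in a\cO^\times$ confirms consistency). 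Extracting the translation $X$ along the decomposition and computing $\psi_S(\sn(X)) = \psi(\tr(SX))$ produces, after using the identity of (i) to invert the determinant, the character factor $\psi\bigl(-\varpi^n a \cdot \tfrac{\sa^4\sd}{2}(\tfrac{\sa^6\sd}{4} + \nr_{L/F}(\eta))^{-1}\bigr)$.

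The main obstacle is the explicit Iwahori-Bessel decomposition in (iii): tracking how the entries of $\iota(M')\,b$ factorize into Bessel data, the diagonal $h(e+n-2j,\,j)$, and a $K$-element requires careful matrix bookkeeping, and extracting the precise additive character argument demands one further simplification using (i). Part (ii) supplies exactly the algebraic input---the correct elementary divisors together with the realization of the left factor inside $\iota(L^\times)$---that makes the factorization achievable, so that the character value ultimately collapses to the stated trace formula via the determinant identity.
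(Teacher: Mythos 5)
Your overall plan matches the paper's: (i) compute $\det Y_\eta$, (ii) produce the factorization $Y_\eta=\tau\,\diag(\varpi^j,1)A$, and (iii) carry out the block-matrix Bessel--Iwahori factorization and read off the additive character. The variations in (i) and in the coprimality check are cosmetic: the paper gets (i) from $\det Y_\eta=\tfrac12\tr(Y_\eta Y_\eta^\dagger)$ together with $\tr(SX_\eta^\dagger)=0$, and gets coprimality of the entries from the one-line identity $\tr(Y_\eta S)=\tfrac{\sa^2\sd}{2}\in\cO^\times$, both shortcuts avoiding your direct expansions but equivalent to them. For (ii) the paper simply quotes Sugano's decomposition $\GL_2(F)=\bigsqcup_{m\ge0}L^\times\diag(\varpi^m,1)\GL_2(\cO)$ and then pins down the exponents by Smith normal form, whereas you reconstruct this decomposition; that is a legitimate, more self-contained alternative.

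Your reconstruction, however, contains a genuine slip. To pass from $Y_\eta=P\,\diag(\varpi^j,1)A'$ to $[\tau]\,\diag(\varpi^j,1)A$ you must have $\diag(\varpi^{-j},1)\,[\tau]^{-1}P\,\diag(\varpi^j,1)\in\GL_2(\cO)$, which forces the $(1,2)$-entry of $[\tau]^{-1}P$ to lie in $\fp^j$; the relevant coset space is therefore $\GL_2(\cO)/K^{0}(\fp^j)$ with $K^{0}(\fp^j):=\{k\in\GL_2(\cO):k_{12}\in\fp^j\}$, whose complete invariant is the \emph{second} column of $P$ modulo $\fp^j$ and units. You instead prove surjectivity onto $\GL_2(\cO)/K_0(\fp^j)$ by computing the first column; that statement is true but does not give the absorption, since for $[\tau]^{-1}P=k\in K_0(\fp^j)$ the conjugate $\diag(\varpi^{-j},1)\,k\,\diag(\varpi^j,1)$ has $(1,2)$-entry $\varpi^{-j}k_{12}$, which need not be integral. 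The fix is routine: the second column of $[\alpha\sa+\beta\theta_0]$ is $\left[\begin{smallmatrix}-\sc\beta\\ \sa\alpha+\sb\beta\end{smallmatrix}\right]$, and the substitution $(\alpha,\beta)\mapsto(-\sc\beta,\sa\alpha+\sb\beta)$ is invertible over $\cO$ because $\sa\sc\in\cO^\times$---here $L/F$ inert, $\sd=\sb^2-4\sa\sc\in\cO^\times$ and $p$ odd force $\sb,\sc\in\cO^\times$, a fact you would also need to record. Finally, your sketch of (iii) follows the paper's route but is too coarse to check as written: after moving $\iota\bigl(\left[\begin{smallmatrix}0&1\\-1&0\end{smallmatrix}\right]\bigr)$ to the right the block carrying $Y_\eta^\dagger$ sits in the lower-left (not a ``lower-right block essentially $\sa$ times a matrix''), and the character value comes from the explicit unipotent factor $\sn\bigl(\varpi^n a(\det Y_\eta)^{-1}Y_\eta\bigr)$ in the paper's factorization, to which $\psi_S$ is applied and then rewritten via (i); you would need to produce that factor rather than gesture at ``extracting the translation.''
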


\begin{proof}
(i) By noting that $\det T=-\sd/4$, $\det X_\eta=-\sa^{-2} \nr_{L/F}(\eta)$, and $\tr(TX_\eta)=\tr(X_\eta T^{\dagger})=0$, we obtain
$$
\det Y_\eta=\tfrac{1}{2}\tr(Y_\eta Y_\eta^{\dagger})=\tfrac{1}{2}\tr(\sa^4 T T^{\dagger}+X_\eta^{\dagger}X_\eta)=\sa^4\det T+\det (X_\eta)=-\tfrac{\sa^4\sd}{4}-\sa^{-2}\nr_{L/F}(\eta).
$$
(ii) By (i) and the assumption that $\frac{\sa^6\sd}{4}+\nr_{L/F}(\eta)\in\varpi^{j}\cO^\times$, we have $Y_\eta\in{\bf GL}_2(F)$. The disjoint union $
\GL_2(F)=\sqcup_{m\ge 0}L^\times \left[\begin{smallmatrix} \varpi^m & 0 \\ 0 & 1 \end{smallmatrix}\right] \GL_2(\cO)$ (\cite[Lemma 2--4]{Sugano84}) implies that there exist $m\in\Z_{\ge0}$, $\ell\in\Z$, $\tau\in\cO_L^\times$, $A\in\GL_2(\cO)$, such that
$$
Y_\eta=(\varpi^\ell \tau)\left[\begin{smallmatrix} \varpi^m & 0 \\ 0 & 1 \end{smallmatrix}\right]A=\tau\left[\begin{smallmatrix} \varpi^{m+\ell} & 0 \\ 0 & \varpi^{\ell} \end{smallmatrix}\right]A.
$$
We now prove that $m=j$ and $\ell=0$. Because $\tau\in\GL_2(\cO)$, the Smith normal form of $Y_\eta$ is $\left[\begin{smallmatrix} \varpi^{m+\ell} & 0 \\ 0 & \varpi^{\ell} \end{smallmatrix}\right]$. Hence by the theory of the Smith normal form, we only have to show that the elements of $Y_\eta$ are coprime. This follows from the identity $\tr(Y_\eta T)=\frac{\sa^2\sd}{2}$ and the assumption $\frac{\sd}{2}\in\cO^\times$.\\
(iii)
By using (\ref{iota}), we can check that
$$
\iota(\left[\begin{smallmatrix} 1 & 0 \\ -\eta^\dagger & 1 \end{smallmatrix}\right])=\left[\begin{smallmatrix} 1_2 & 0 \\ X_{\eta}^\dagger & 1_2 \end{smallmatrix}\right],\quad \iota(\left[\begin{smallmatrix} 0 & -1 \\ 1 & 0 \end{smallmatrix}\right]) b \iota(\left[\begin{smallmatrix} 0 & 1 \\ -1 & 0 \end{smallmatrix}\right])=\left[\begin{smallmatrix} 1_2 & 0 \\ -\sa^2 T & \varpi^e 1_2 \end{smallmatrix}\right].
$$
Hence we have
$$
\sm(\varpi^n a1_2,\varpi^n a)\iota(\left[\begin{smallmatrix} 0 & -1 \\ 1 & \eta^\dagger \end{smallmatrix}\right])b
=\left[\begin{smallmatrix} \varpi^{n}a1_2 & 0 \\ Y_\eta^{\dagger} & \varpi^{e} 1_2 \end{smallmatrix}\right]\iota(\left[\begin{smallmatrix} 0 & 1 \\ -1 & 0 \end{smallmatrix}\right]).
$$
By (ii), we may choose elements $\tau\in\cO_L^\times$ and $A\in\GL_2(\cO)$ satisfying $Y_\eta=\tau \left[\begin{smallmatrix} \varpi^j & 0 \\ 0 & 1 \end{smallmatrix}\right]A$. Then, a computation shows that
$$
\left[\begin{smallmatrix} \varpi^{n}a1_2 & 0 \\ Y_\eta^{\dagger} & \varpi^{e} 1_2 \end{smallmatrix}\right]=\sn(\varpi^n a(\det Y_\eta)^{-1}Y_\eta)\left[\begin{smallmatrix} \tau & 0 \\ 0 & {}^t\tau^\dagger \end{smallmatrix}\right]h(e+n-2j,j)k,
$$
where $k=\left[\begin{smallmatrix} A & 0 \\ 0 & {}^tA^\dagger \end{smallmatrix}\right]\left[\begin{smallmatrix} a\varpi^j(\det Y_\eta)^{-1} 1_2 & 0 \\ 0 & 1_2 \end{smallmatrix}\right]\left[\begin{smallmatrix} 0 & -1_2 \\ 1_2 & \varpi^e(\det Y_\eta)^{-1}Y_\eta \end{smallmatrix}\right]\in K$. Since $\Lambda$ is unramified and $B$ is $K$-invariant, we have the desired equality.
\begin{comment}
\begin{align*}
&B\left(\sm(\varpi^n a1_2,\varpi^n a)\iota(\left[\begin{smallmatrix} 0 & -1 \\ 1 & \eta^\dagger \end{smallmatrix}\right])b\right)
\\&=\psi(\varpi^n a(\det Y_\eta)^{-1}\cdot \tfrac{\sa^2\sd}{2})\cdot B\left(h(e+n-2j,j)\right)\\
&=\psi \left(-\varpi^n a\cdot \tfrac{\sa^4\sd}{2}\left(\tfrac{\sa^6\sd}{4}+\nr_{L/F}(\eta)\right)^{-1}\right)\cdot B\left(h(e+n-2j,j)\right).
\qedhere
\end{align*}
\end{comment}
\end{proof}

\begin{prop}\label{zeta int ramified character}
Suppose that $\pi$ is of type I or IIb. Let $\mu:F^\times\to\C^1$ and $\Lambda:L^\times\to\C^1$ be characters such that $\text{cond}(\mu)=e>0$ and $\Lambda$ is unramified. 
Suppose that $T=\left[\begin{smallmatrix} \sa & \sb /2\\ \sb/2 & \sc \end{smallmatrix}\right]\in\vV(F)\cap\GL_2(\cO)$ satisfies that $L/F$ is an unramified field extension and $2d\in\cO^\times$. When $\phi=1_{\fp_L^e\oplus 1+\fp_L^e}$ and $\Re(s)>1$, we have
\begin{gather*}
Z(\phi,B_\pi^{0},s,\mu;b)=q^{e(s-\frac{11}{2})+5}(q^4-1)^{-1}(q-1)^{-1}\mu(-2^{-1}\sd)^{-1}W_F(\mu,\psi),\\
Z(\widehat{\phi},B_\pi^{0},s,\mu;b)=(-1)^eq^{e(3s-\frac{11}{2})+5}(q^4-1)^{-1}(q-1)^{-1}\Lambda(\varpi)^{-e}\mu(-2^{-1}\sa^2)W_L(\mu_L,\psi_L)W_F(\mu,\psi).
\end{gather*}
\end{prop}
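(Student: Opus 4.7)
I will evaluate both integrals by combining the explicit formulas for $f_\phi^{(s,\Lambda,\mu)}|_{K^\#}$ and $f_{\widehat\phi}^{(s,\Lambda,\mu)}|_{K^\#}$ from Lemma \ref{lem of f} with the Bessel covariance of $B_\pi^0$. After parametrizing the support of $f|_{K^\#}$ by an Iwasawa--Bruhat decomposition of $K^\#$, I will strip off the Bessel-subgroup factor, reduce to an evaluation of $B_\pi^0$ on $\sm(a1_2,a)\iota(w_1)b$ for an appropriate Weyl representative $w_1$, and extract $W_F(\mu,\psi)$ from the remaining $a$-integral via Lemma \ref{Gauss sum}.

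Substituting Lemma \ref{lem of f} restricts $k^\#\in K^\#$ to $\{c'\in\fp^e\cO_L,\,d'\in 1+\fp^e\cO_L\}$ in Case~2, and to $\{c'\in\cO_L^\times\}$ in Case~3. On the first support I decompose $k^\#=\bigl[\begin{smallmatrix}\tau&\beta\\0&a''\tau^\dagger\end{smallmatrix}\bigr]\bigl[\begin{smallmatrix}1&0\\\zeta&1\end{smallmatrix}\bigr]$ with $\zeta=c'/d'\in\fp^e\cO_L$, $\tau\in\cO_L^\times$, $\beta\in\cO_L$, $a''\in\cO^\times$; on the second, $k^\#=\bigl[\begin{smallmatrix}\tau&\beta\\0&a''\tau^\dagger\end{smallmatrix}\bigr]\bigl[\begin{smallmatrix}0&-1\\1&\eta^\dagger\end{smallmatrix}\bigr]$ with $\eta^\dagger=d'/c'\in\cO_L$. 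Factoring $\iota\bigl(\bigl[\begin{smallmatrix}\tau&\beta\\0&a''\tau^\dagger\end{smallmatrix}\bigr]\bigr)=\sn(X_{\beta/(a''\tau^\dagger)})\,\iota(\diag(\tau,\tau^\dagger))\,\sm(1_2,a'')$ and applying Bessel covariance on the left, the $\iota(\diag)$-factor contributes $\Lambda(\tau)=1$ (as $\Lambda$ is unramified and $\tau\in\cO_L^\times$), the $\sn$-factor contributes $\psi(\tr(SX_{\beta/(a''\tau^\dagger)}))=1$ (a direct check from \eqref{iota} gives $\tr(SX_\gamma)=0$ for every $\gamma\in L$), and the residual $\sm(1_2,a'')$ is absorbed by the change $a\mapsto aa''$ via $\sm(a1_2,a)\sm(1_2,a'')=\iota(a''1_2)\sm((a/a'')1_2,a/a'')$ and the trivial central character of $\pi$.

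The problem thus reduces to $B_\pi^0(\sm(a1_2,a)\iota(w_1)b)$. For Case~3 this is exactly Lemma \ref{B(b)}(iii), which yields $\psi\bigl(-a\cdot\tfrac{\sa^4\sd}{2}(\tfrac{\sa^6\sd}{4}+\nr(\eta))^{-1}\bigr)\,B_\pi^0(h(e+n-2j,j))$ with $n=\mathrm{ord}(a)$ and $j$ the $\varpi$-order of $\tfrac{\sa^6\sd}{4}+\nr(\eta)$. Lemma \ref{Gauss sum} applied to the $a$-integral forces $n-j=-e$, so the Bessel value becomes $B_\pi^0(h(-j,j))$; by the support relation \eqref{Support of B1} this vanishes for $j>0$, leaving only $j=0$ with $B_\pi^0(h(0,0))=1$. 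The $\eta$-sum is then precisely the sum of Lemma \ref{sum of lambda} with $u=\tfrac{\sa^6\sd}{4}\in\cO^\times$, contributing $(-1)^eq^e\mu(\tfrac{\sa^6\sd}{4})$. For Case~2, the analogous direct manipulation $\sm(a1_2,a)b=\sn(aS^\dagger)\sm(a\varpi^e1_2,a\varpi^e)$ and $\tr(SS^\dagger)=-\sd/2$ give, by Bessel covariance and centrality, $B_\pi^0(\sm(a1_2,a)b)=\psi(-a\sd/2)B_\pi^0(h(e+n,0))$; Lemma \ref{Gauss sum} again forces $n=-e$, yielding $B_\pi^0(h(0,0))=1$, and the $\zeta$-integration contributes only the volume $q^{-2e}$ of $\fp^e\cO_L$.

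Assembling the scalar from Lemma \ref{lem of f}, the $\vol(\cO_L^\times)$, $\vol(\cO_L)$, $\vol(\cO^\times)$ volumes from the $(\tau,\beta,a'')$-integrals, the Gauss-sum factor $q^{e(s-3/2)+1}(q-1)^{-1}W_F(\mu,\psi)$ from the $a$-integral, the character value $\mu(-2^{-1}\sd)^{-1}$ (Case~2) coming from the change $a\mapsto (-\sd/2)^{-1}\tilde a$, and in Case~3 the $\eta$-sum $(-1)^eq^e\mu(\tfrac{\sa^6\sd}{4})$ combined with the $\mu(-1)\mu(\tfrac{\sa^4\sd}{2})^{-1}$ arising from the phase coefficient (so that $\mu(-1)\mu(\tfrac{\sa^4\sd}{2})^{-1}\mu(\tfrac{\sa^6\sd}{4})=\mu(-2^{-1}\sa^2)$), the two stated formulas follow after collecting the $q$-powers. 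The principal technical difficulty is the consistent bookkeeping of phases, characters, and $q$-powers through the Iwasawa--Bruhat decomposition and the $a$-variable change; the crucial conceptual point is that the support relation \eqref{Support of B1}, together with the ord-$(-j)$ behaviour of the phase coefficient in Lemma \ref{B(b)}(iii), collapses the $\eta$-sum to a single term where $B_\pi^0$ is evaluated at the identity, which is precisely why the final answer is independent of the Satake parameters of $\pi$.
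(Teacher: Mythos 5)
Your proposal follows the same conceptual skeleton as the paper's proof: Lemma \ref{lem of f} to locate the support of $f_\phi$ resp.\ $f_{\widehat\phi}$ on $K^\#$, Lemma \ref{B(b)}(iii) to reduce $B_\pi^0$ at the relevant points to values on $h(\cdot,\cdot)$, Lemma \ref{Gauss sum} to collapse the $a$-integral, the support condition \eqref{Support of B1} to force $j=0$, and Lemma \ref{sum of lambda} for the $\eta$-sum. The difference is only in how the $K^\#$-integral is handled: the paper first reduces to the finite sum over $K_0^\#(\fp^e)\bsl K^\#$ (with one global volume factor $[K^\#:K_0^\#(\fp^e)]^{-1}$) and then evaluates the integrand only at the specific representatives $1_2$ and $\bigl[\begin{smallmatrix}0&-1\\1&\eta^\dagger\end{smallmatrix}\bigr]$, whereas you try to integrate directly over a Bruhat/Iwasawa parametrization of the support.

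This direct parametrization has a genuine gap in Case~3, and the paper's coset reduction is exactly what avoids it. Writing $k^\#=\bigl[\begin{smallmatrix}\tau&\beta\\0&a''\tau^\dagger\end{smallmatrix}\bigr]\bigl[\begin{smallmatrix}0&-1\\1&\eta^\dagger\end{smallmatrix}\bigr]$ gives $c'=a''\tau^\dagger$ and $\det k^\#=a''\nr_{L/F}(\tau)$; plugging into Lemma \ref{lem of f} the factor $\mu_L(c')^{-1}$ becomes $\mu(a''^2\nr\tau)^{-1}$. After the change $a\mapsto a a''$ you pick up $\mu(a'')$, leaving a residual $\mu(a'')^{-1}\mu(\nr\tau)^{-1}$ that depends only on $(\tau,a'')$. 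Since $\mu\circ\nr_{L/F}$ is a nontrivial character of $\cO_L^\times$ when $e>0$ (as $\nr$ is onto $\cO^\times$ for $L/F$ unramified), the $\tau$-integral over $\cO_L^\times$ vanishes identically, and your computation would give $Z(\widehat\phi,B_\pi^0,s,\mu;b)=0$. The cancellation you need comes from the $\mu(\det k^\#)$ factor that appears in the derivation of Lemma \ref{lem of f} (it is visible in \eqref{f for hat phi}) but does not appear in the boxed final display; with it restored, $\mu_L(c')^{-1}\mu(\det k^\#)=\mu(a'')^{-1}$ and everything cancels. The paper never hits this because its chosen coset representatives have $c'=1$ and $\det=1$, so $\mu_L(c')^{-1}$ and $\mu(\det)$ both trivialize. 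Your sketch does not mention the $\mu(\det k^\#)$ factor at all, which is precisely the point at which the blind calculation breaks.

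A second, more bookkeeping-level issue: the volumes you propose to collect ("the $\vol(\cO_L^\times)$, $\vol(\cO_L)$, $\vol(\cO^\times)$ volumes from the $(\tau,\beta,a'')$-integrals") do not reproduce the restricted Haar measure of $K^\#$ on the big cell; the Bruhat parametrization $(\tau,\beta,a'',\eta^\dagger)\mapsto k^\#$ carries a nontrivial constant Jacobian (equal to $q^5/((q^4-1)(q-1))$, accounting for the discrepancy between $\vol(\cO_L^\times)\vol(\cO^\times)$ and $\vol(K^\#\setminus K_0^\#(\fp))=q^2/(q^2+1)$), and in Case~2 the support condition $d'=a''\tau^\dagger\in 1+\fp^e\cO_L$ is a codimension-$2$ constraint on $(a'',\tau)$ which your unconstrained ranges $a''\in\cO^\times$, $\tau\in\cO_L^\times$ ignore. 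These are the two places where the paper's single volume factor $[K^\#:K_0^\#(\fp^e)]^{-1}=q^{-2e+2}(q^2+1)^{-1}$ does all the work in one stroke. Your identification of the key conceptual point — that \eqref{Support of B1} together with the $\mathrm{ord}\,(-j)$ phase in Lemma \ref{B(b)}(iii) collapses the $\eta$-sum to $j=0$ — is correct and matches the paper.
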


\begin{proof}
Let $K^\#(\fp^e)=\{k^\#\in K^\#\ |\ k^\#\equiv 1_2\ ({\rm mod}\ \fp_L^e)\}$ be the principal congruence subgroup of level $\fp_L^e$. If $\phi\in\mathcal{S}(L^2)$ is right $K^\#(\fp^e)$-invariant, the function
$$
K^\#\owns k^\#\mapsto B(\sm(a1_2,1)\iota(k^\#)b)f_\phi^{(s,\Lambda,\mu)}(k^\#)\in\C
$$
is left $B^\#(\cO)$-invariant and right $K^\#(\fp^e)$-invariant because $b^{-1}\iota(K^\#(\fp^e))b\subset K$.
Since $K^\#(\fp^e)$ is a normal subgroup of $K^\#$ and $B^\#(\cO)K^\#(\fp^e)=K_0^\#(\fp^e)$, we have
\begin{align}
&Z(\phi,B_\pi^{0},s,\mu;b)\notag\\
&=\sum_{[\gamma]\in B^\#(\cO)\bsl K^\#/K^\#(\fp^e)}\vol(B^\#(\cO)\gamma K^\#(\fp^e))f_\phi^{(s,\Lambda,\mu)}(\gamma)\int_{F^\times}B_\pi^0(\sm(a1_2,a)\iota(\gamma)b)\mu(a)|a|^{s-1}\d^\times a\notag\\
&=[K^\#:K_0^\#(\fp^e)]^{-1}\sum_{[\gamma]\in K_0^\#(\fp^e)\bsl K^\#}f_\phi^{(s,\Lambda,\mu)}(\gamma)\int_{F^\times}B_\pi^0(\sm(a1_2,a)\iota(\gamma)b)\mu(a)|a|^{s-1}\d^\times a.\label{zeta int2}
\end{align}
A complete set of representative for $K^\#(\fp^e)\bsl K^\#$ is given by
\begin{align}
\left[\begin{smallmatrix} 1 & 0 \\ \xi & 1 \end{smallmatrix}\right]\ (\xi\in\fp_L/\fp_L^{e}),\quad \left[\begin{smallmatrix} 0 & -1 \\ 1 & \eta^\dagger \end{smallmatrix}\right]\ (\eta\in\cO_L/\fp_L^{e}).\label{representative}
\end{align}
Now we prove the first equation. By Lemma \ref{lem of f}, when $\gamma$ runs through the above elements, then we have $f_\phi^{(s,\Lambda,\mu)}(1_2)=q^{-2e+2}(q^2-1)^{-1}$ and $f_\phi^{(s,\Lambda,\mu)}(\gamma)=0$ if $\gamma\ne1$. By substituting this in (\ref{zeta int2}), noting that $[K^\#:K_0^{\#}(\fp^{e})]=q^{2e-2}(q^2+1)$, and Lemma \ref{Gauss sum}, we have
\begin{align*}
&Z(\phi,B_\pi^{0},s,\mu;b)\\
&=q^{-4e+4}(q^4-1)^{-1}\int_{F^\times}B_\pi^0\left(\left[\begin{smallmatrix} a\varpi^e 1_2 & aT^\dagger \\ 0 & 1_2 \end{smallmatrix}\right]\right)\mu(a)|a|^{s-1}\d^\times a\\
&=q^{-4e+4}(q^4-1)^{-1}\int_{F^\times}B_\pi^0\left(\left[\begin{smallmatrix} a\varpi^e 1_2 & 0 \\ 0 & 1_2 \end{smallmatrix}\right]\right)\psi(a\cdot\tr(TT^\dagger))\mu(a)|a|^{s-1}\d^\times a\\
&=q^{-4e+4}(q^4-1)^{-1}\mu(-2^{-1}\sd)^{-1}\sum_{n\in\Z}B_\pi^0\left(\left[\begin{smallmatrix} \varpi^{e+n} 1_2 & 0 \\ 0 & 1_2 \end{smallmatrix}\right]\right)\mu(\varpi)^n q^{-n(s-1)}\int_{\cO^\times}\psi(\varpi^{n} a)\mu(a)\d^\times a\\
&=q^{e(s-\frac{11}{2})+5}(q^4-1)^{-1}(q-1)^{-1}\mu(-2^{-1}\sd)^{-1}W_F(\mu,\psi).
\end{align*}
Next, we prove the second equation. When we replace $\phi$ by $\widehat{\phi}$ in (\ref{zeta int2}) and consider the representative (\ref{representative}), we can ignore the contribution of $\xi$-terms in the summation from Lemma \ref{lem of f}. Hence, by noting the $K$-invariance of $B$, we have
\begin{align}
Z(\widehat{\phi},B_\pi^{0},s,\mu;b)&=q^{e(2s-5)+4}(q^4-1)^{-1}\Lambda(\varpi)^{-e}W_L(\mu_L,\psi_L)\notag\\
&\times \sum_{\eta\in\cO_L/\fp_L^e}\int_{F^\times}B_\pi^0(\sm(a1_2,a)\iota(\left[\begin{smallmatrix} 0 & -1 \\ 1 & \eta^\dagger \end{smallmatrix}\right])b)\mu(a)|a|^{s-1}\d^\times a.\label{zeta int phi hat}
\end{align}

By replacing $\eta$ by $\eta+\varpi^e u$ for some $u\in\cO_L$ if we need, we may assume that $\sa^6\sd+\nr_{L/F}(\eta)\in \varpi^j\cO_L^\times$ with $0\le j\le e$. Then, Lemmas \ref{Gauss sum} and \ref{B(b)} (iii) imply
{\allowdisplaybreaks\begin{align*}
&\int_{F^\times}B_\pi^0(\sm(a1_2,a)\iota(\left[\begin{smallmatrix} 0 & -1 \\ 1 & \eta^\dagger \end{smallmatrix}\right])b)\mu(a)|a|^{s-1}\d^\times a\\
&=\sum_{n\in\Z}B_\pi^{0}(h(e+n-2j,j))\mu(\varpi)^{n}q^{-n(s-1)}\int_{\cO^\times}\psi \left(-\varpi^{n} a\cdot \tfrac{\sa^4\sd}{2}\left(\tfrac{\sa^6\sd}{4}+\nr_{L/F}(\eta)\right)^{-1}\right)\mu(a) \d^\times a\\
&=\mu\left(-2\sa^{-4}\sd^{-1}\varpi^{-j}\left(\tfrac{\sa^6\sd}{4}+\nr_{L/F}(\eta)\right)\right)\sum_{n\in\Z}B_\pi^{0}(h(e+n-2j,j))\mu(\varpi)^{n}q^{-n(s-1)}\int_{\cO^\times}\psi(\varpi^{n-j}a)\mu(a) \d^\times a\\
&=q^{-\frac{e}{2}+1+(e-j)(s-1)}(q-1)^{-1}\mu(-2\sa^{-4}\sd^{-1})\mu\left(\tfrac{\sa^6\sd}{4}+\nr_{L/F}(\eta)\right)W_F(\mu,\psi)\cdot B_\pi^{0}(h(-j,j)).
\end{align*}}The last equation is equal to $0$ unless $j=0$ from (\ref{Support of B1}). By substituting this into (\ref{zeta int phi hat}) and Lemma \ref{sum of lambda}, we have that $Z(\phi,B_\pi^{0},s,\mu;b)$ equals
{\allowdisplaybreaks
\begin{align*}&q^{e(3s-\frac{13}{2})+5}(q^4-1)^{-1}(q-1)^{-1}\Lambda(\varpi)^{-e}\mu(-2\sa^{-4}\sd^{-1})\\
&\times W_L(\mu_L,\psi_L)W_F(\mu,\psi) \sum_{\substack{\eta\in\cO_L/\fp_L^e\\ \sa^6\sd+\nr_{L/F}(\eta)\in\cO^\times}}\mu \left(\tfrac{\sa^6\sd}{4}+\nr_{L/F}(\eta)\right)\\
&=(-1)^eq^{e(3s-\frac{11}{2})+5}(q^4-1)^{-1}(q-1)^{-1}\Lambda(\varpi)^{-e}\mu(-2^{-1}\sa^{2})W_L(\mu_L,\psi_L)W_F(\mu,\psi).
\qedhere
\end{align*}}
\end{proof}

\begin{cor}
Let $\pi$, $T$, $\mu$, and $\Lambda$ be as in Proposition \ref{zeta int ramified character}, then
\begin{align*}
\varepsilon(\pi,s,\mu,\psi)&=(-1)^e q^{4e(\frac{1}{2}-s)}\Lambda(\varpi)^{-e}\mu(-\sa^{-2}\sd)\overline{W_L(\mu_L,\psi_L)W_F(\mu,\psi)^2} \\
&=q^{4e(\frac{1}{2}-s)}\Lambda(\varpi)^{-e}\mu(-\sa^{-2}\sd)\overline{W_F(\mu,\psi)^4}.
\end{align*}
\end{cor}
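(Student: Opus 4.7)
The plan is to apply the local functional equation \eqref{localFE} at $g=b$ and substitute the two explicit evaluations from Proposition~\ref{zeta int ramified character}. The first task is to identify $B_\pi^{0\dagger}$ with the unramified $(S,\Lambda^\dagger)$-Bessel function normalized at the identity. The matrix $w$ defining $B\mapsto B^\dagger$ has entries in $\cO$ and determinant $1$, so it lies in $K$; since $\xi_\pi^0$ is $K$-fixed, $B_\pi^{0\dagger}$ is right $K$-invariant with $B_\pi^{0\dagger}(1_4)=1$, and therefore Proposition~\ref{zeta int ramified character} applies verbatim to the numerator of the functional equation with $(\mu,\Lambda,s)$ replaced by $(\mu^{-1},\Lambda^\dagger,-s+1/2)$. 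Note that $\Lambda^\dagger(\varpi)=\Lambda(\varpi)$ because $L/F$ being unramified forces $\varpi\in F$.

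Next, one argues that both $L(s+1/2,\pi,\mu)$ and $L(-s+1/2,\widehat\pi,\mu^{-1})$ are equal to $1$: for $\pi$ of type I or IIb, twisting by the ramified character $\mu\circ\nu$ makes all constituents of the Langlands parameter ramified, so the Euler factor becomes trivial. The functional equation then reduces to a ratio of explicit zeta integrals. Forming this ratio, the $q$-exponents combine to $q^{4e(1/2-s)}$, the $\Lambda$-factors give $\Lambda(\varpi)^{-e}$, the $\mu$-factors at $\sa,\sd$ collapse via $\mu^{-1}(-2^{-1}\sa^2)\mu(-2^{-1}\sd)=\mu(\sa^{-2}\sd)$, a sign $(-1)^e$ remains, and a leftover Gauss sum ratio $W_L(\mu_L^{-1},\psi_L)W_F(\mu^{-1},\psi)/W_F(\mu,\psi)$ must still be simplified.

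For the final step, I would use the standard identities $\overline{W_F(\mu,\psi)}=\mu(-1)^{-1}W_F(\mu^{-1},\psi)=W_F(\mu,\psi)^{-1}$ (unitarity plus the change of variable $a\mapsto -a$ in the definition), and likewise $\overline{W_L(\mu_L,\psi_L)}=W_L(\mu_L^{-1},\psi_L)$, with $\mu_L(-1)=\mu(\nr_{L/F}(-1))=1$. These convert the Gauss sum ratio to $\mu(-1)\,\overline{W_L(\mu_L,\psi_L)W_F(\mu,\psi)^2}$, and the extra $\mu(-1)$ promotes $\mu(\sa^{-2}\sd)$ to $\mu(-\sa^{-2}\sd)$, yielding the first expression. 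The second expression follows immediately from Lemma~\ref{Gauss sum split}, which absorbs both factors of $(-1)^e$ into $\overline{W_F(\mu,\psi)^4}$. The only genuinely non-mechanical step is the identification of $B_\pi^{0\dagger}$ with the $\Lambda^\dagger$-analogue of $B_\pi^0$; everything else is bookkeeping.
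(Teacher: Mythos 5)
Your proposal is correct and takes essentially the same route as the paper's terse one-sentence proof: substitute the two explicit zeta-integral evaluations of Proposition~\ref{zeta int ramified character} into the local functional equation \eqref{localFE} and simplify the resulting Gauss-sum ratio using Lemma~\ref{Gauss sum split}. The bookkeeping you make explicit (that $w\in K$ so $B_\pi^{0\dagger}$ is again the normalized $K$-spherical Bessel function, that $\Lambda^\dagger(\varpi)=\Lambda(\varpi)$, that both ramified-twist $L$-factors are trivial, and the identities $W_F(\mu^{-1},\psi)=\mu(-1)\overline{W_F(\mu,\psi)}$ and $\mu_L(-1)=1$) is exactly what the paper leaves implicit.
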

\begin{proof}
   The first equality follows from Proposition \ref{zeta int ramified character} and \eqref{localFE}; the second equality is due to Lemma \ref{Gauss sum split}.  
\end{proof}

\subsection{Unramified computation (case 4 and case 5)}\label{sec:Ur3}
Suppose that $f>0$ and $L/F$ is inert. The section $f_{\phi}^{(s,\Lambda,\mu)}$ restricted to $K^\#$ is explicitly determined as
\begin{align*}
f_\phi^{(s,\Lambda,\mu)}(k^\#)
=\begin{cases}
q^{-2f+2}(q^2-1)^{-1}\Lambda(d)\quad &(c\in\fp_L^e),
\\
0 & ({\rm otherwise}),
\end{cases}
\quad k^\#=\left[\begin{smallmatrix} a & b \\ c & d \end{smallmatrix}\right]\in K^\#
\end{align*}
by a direct calculation.
\begin{prop}
Suppose that $\pi$ is of type I or IIb. Let $\mu$, $\Lambda$, $T$ be as above. When $\phi=\cchi_{\fp_L^{f}\oplus 1+\fp_L^{f}}$ and $\Re(s)>1$, we have
\begin{align*}
Z(\phi,B_{\pi}^{0},s,\mu;b')=Z(\widehat{\phi},B_{\pi}^{0},s,\mu;b')=q^{-4f+4}(q^{4}-1)^{-1}L(s+1/2,\pi,\mu).
\end{align*}
\end{prop}
\begin{proof}
It suffices to calculate the integral $Z(\phi,B_{\pi}^{0},s,\mu;b')$ due to \eqref{localFE} and the fact that $\varepsilon(\pi,s,\mu,\psi)=1$ (\cite[Theorem 4.4]{PS97}).
For any $k^\#=\left[\begin{smallmatrix} a & b \\ c & d \end{smallmatrix}\right]\in K_0^{\#}(\fp^{f})$, it can be checked that $k^\#=\left[\begin{smallmatrix} \tau &  \\  & \lambda \tau^\dagger \end{smallmatrix}\right]\left[\begin{smallmatrix} 1 & \beta \\  & 1 \end{smallmatrix}\right]\left[\begin{smallmatrix} 1 &  \\ cd^{-1} & 1 \end{smallmatrix}\right]$, where we set $(\tau,\lambda,\beta)\in \fo_{L}^\times\times\fo^\times\times\fo_L$ as
\begin{align*}
\tau=(\det k^\#)^{-1}d,\quad \lambda=(\det k^\#)(dd^\dagger),\quad \beta=-b/d.
\end{align*}
This and a short calculation yield the proposition due to \cite[Theorem 2.1]{Sugano84}.
\end{proof}

\subsection{Computation for old forms of type ${\rm I}$ and ${\rm IIb}$ (case 6)}\label{sec:Iold}
Now we assume that $\pi$ has the trivial character. Then, $\pi$ has the $(T,\Lambda)$-Bessel model if and only if $\Lambda=1$. Recall that the dimension of $V_\pi^{K_0(\fp)}$ is equal to $4$ or $3$ according as $\pi$ is of type I or type IIb.

\begin{prop}\label{local zeta int I, IIb}
Let $\pi$ be a smooth admissible irreducible representation of type I or type IIb with trivial central character. Suppose that $\mu : F^\times\to \C^1$ is an unramified character. When $\phi=\cchi_{\cO_L\oplus\cO_L}$ and $\Re(s)>1$, we have
\begin{align*}
&Z(\phi,B,s,\mu;\eta)=\frac{1}{q^2+1}L(s+1/2,\pi,\mu)\\
&\times \left[\eta B+q^{-1}T_{1,0} B+\left\{\mu(\varpi)^{-1}q^{s+1}+\mu(\varpi)q^{-s+1}-\tr((q^{-1}T_{1,0}+\eta)|_{V_\pi^{K_0(\fp)}})\right\}B\right](1_4)
\end{align*}
for $B\in\cB(T,1)[\pi]^{K_0(\fp)}$.
\end{prop}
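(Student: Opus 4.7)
The overall strategy is to reduce the double integral defining $Z$ to a finite sum of one-variable Mellin transforms of $B$ along the $\sm(a1_2,a)$-torus, and then to identify the result with the claimed Hecke-algebraic formula by means of the Iwahori--Hecke algebra action on $V_\pi^{K_0(\fp)}$.

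\textbf{Step 1.}\ Since $\phi = \cchi_{\cO_L\oplus\cO_L}$ is $K^\#$-invariant on the right, a direct Tate-type evaluation using $\cO_L-\{0\} = \sqcup_{n\ge 0}\varpi^n\cO_L^\times$ (inert case) together with the triviality of $\Lambda$ and $\mu|_{\cO^\times}$ shows that $f_\phi^{(s,1,\mu)}(k^\#) = L(s+1,\mu_L)$ for every $k^\# \in K^\#$. Combining this constancy with Lemma \ref{OPSGP-L1}, the right $K_0(\fp)$-invariance of $B$, and the fact that $\eta$ normalizes $K_0(\fp)$, the zeta integral simplifies to
\begin{align*}
Z(\phi,B,s,\mu;\eta) = \frac{L(s+1,\mu_L)}{q^2+1}\sum_{\gamma\in K_0^\#(\fp\cO_L)\bsl K^\#}\int_{F^\times} B(\sm(a1_2,a)\iota(\gamma)\eta)\,\mu(a)|a|^{s-1}\,d^\times a,
\end{align*}
which exposes the prefactor $(q^2+1)^{-1}$ predicted by the formula.

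\textbf{Step 2.}\ Choose as representatives for the $q^2+1$ cosets the elements $\gamma_\xi := \left[\begin{smallmatrix}1 & \xi\\ 0 & 1\end{smallmatrix}\right]$ with $\xi \in \cO_L/\fp\cO_L$ and $\gamma_\infty := \left[\begin{smallmatrix}0 & -1\\ 1 & 0\end{smallmatrix}\right]$. By \eqref{iota}, $\iota(\gamma_\xi) = \sn(X_\xi)$ with $\tr(SX_\xi) = 0$ (a direct computation using the explicit shape of $X_\beta$). Commuting $\sn(X_\xi)$ past $\sm(a1_2,a)$ via $\sm(a1_2,a)\sn(X) = \sn(aX)\sm(a1_2,a)$ and applying the Bessel transformation law yields $B(\sm(a1_2,a)\iota(\gamma_\xi)\eta) = (\eta B)(\sm(a1_2,a))$, independent of $\xi$; the $q^2$ unipotent cosets therefore contribute $q^2$ identical integrals of $\eta B$. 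For $\gamma_\infty$, direct matrix computation with \eqref{iota} and the explicit shape of $\eta$ identifies $\iota(\gamma_\infty)\eta$ modulo right $K_0(\fp)$ and modulo the Bessel subgroup on the left with a representative of the Hecke double coset $K_0(\fp)\,h(1,0)\,K_0(\fp)$. Writing out the coset decomposition of this double coset and using $\tr(SX) = 0$ for the accompanying unipotent translates rewrites the $\gamma_\infty$-contribution as a combination of $q^{-1}(T_{1,0}B)(\sm(a1_2,a))$ and $(\eta B)(\sm(a1_2,a))$, leaving only one-variable Mellin integrals of vectors in $V_\pi^{K_0(\fp)}$.

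\textbf{Step 3.}\ Evaluate each Mellin integral $\int_{F^\times}\varphi(\sm(a1_2,a))\mu(a)|a|^{s-1}\,d^\times a$ for $\varphi \in V_\pi^{K_0(\fp)}$ using the support constraints \eqref{Support of B1}--\eqref{Support of B2} and the explicit values of Iwahori-fixed Bessel functions on $h(\ell,m)$ from \cite[\S5]{PitaleSchmidt}. The resulting geometric series in $q^{-s}$ sums to $L(s+1/2,\pi,\mu)\cdot L(s+1,\mu_L)^{-1}$ times $\varphi(1_4)$, with finite $\ell = 0$ boundary corrections of the form $\mu(\varpi)^{\pm 1}q^{\mp s+1}\,\varphi(1_4)$. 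Assembling Steps 1--2 and cancelling the factor $L(s+1,\mu_L)$ produces the $L$-function on the right-hand side; the boundary corrections, together with the Iwahori--Hecke quadratic relation satisfied by $q^{-1}T_{1,0}+\eta$ on the $2$-dimensional space $V_\pi^{K_0(\fp)}/(\text{new-form line})$ for types I and IIb, collect into the advertised scalar $\mu(\varpi)^{-1}q^{s+1}+\mu(\varpi)q^{-s+1}-\tr((q^{-1}T_{1,0}+\eta)|V_\pi^{K_0(\fp)})$ applied to $B(1_4)$.

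The principal obstacle is the bookkeeping in Step 2: decomposing $\iota(\gamma_\infty)\eta$ modulo $K_0(\fp)$ in terms of representatives of $T_{1,0}$ and $\eta$, so that the Mellin boundary terms reassemble precisely as the trace correction in Step 3. Both steps make essential use of the explicit Iwahori--Hecke algebra structure on $V_\pi^{K_0(\fp)}$ recorded in \cite[Table 3]{Schmidt} and \cite{RobertsSchmidt} for representations of type I (dimension $4$) and IIb (dimension $3$).
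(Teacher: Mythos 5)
Your proof plan has several concrete errors that prevent it from working, and it is missing the central trick of the paper's argument.

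\textbf{Step 2 is broken.}\ The elements $\gamma_\xi=\left[\begin{smallmatrix}1 & \xi\\ 0 & 1\end{smallmatrix}\right]$ all lie in $K_0^\#(\fp\cO_L)$ (their lower-left entry is $0\in\fp\cO_L$), so for varying $\xi$ they represent a \emph{single} coset, not $q^2$ distinct ones. The paper's decomposition (equation \eqref{left coset decomp}) is $K^\#=K_0^\#(\fp)\sqcup\bigsqcup_{\xi}\left[\begin{smallmatrix}1 & \xi\\ 0 & 1\end{smallmatrix}\right]\left[\begin{smallmatrix}0 & 1\\ 1 & 0\end{smallmatrix}\right]K_0^\#(\fp)$: one identity coset and $q^2$ big-cell cosets. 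Consequently the identity coset contributes exactly one Mellin integral of $\eta B$, while the $q^2$ big-cell cosets each contribute $\eta B(\sm(a1_2,a)\iota(w_0))$. Your claim that "$q^2$ unipotent cosets contribute $q^2$ identical integrals of $\eta B$" and that $\gamma_\infty$ separately produces the Hecke-operator term misreads the decomposition. Moreover, a single evaluation $B(\sm(a1_2,a)\iota(\gamma_\infty)\eta)$ cannot, by itself, equal $q^{-1}(T_{1,0}B)(\sm(a1_2,a))$: $T_{1,0}$ is an integral over the \emph{whole} double coset $K_0(\fp)h(1,0)K_0(\fp)$, and Bessel functions are not constant on that double coset modulo the Bessel character. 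In the paper, $T_{1,0}$ only enters through the recursion $T_{1,0}B(h(\ell,0))=q^3B(h(\ell+1,0))$ (the first relation in \eqref{relations of B}), applied to a $T_{1,0}$-eigenbasis $\{B_j^*\}$; it is never read off a single coset representative.

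\textbf{Step 3 is false as stated, and the functional-equation step is missing.}\ For an arbitrary $\varphi\in V_\pi^{K_0(\fp)}$ the Mellin transform $\int_{F^\times}\varphi(\sm(a1_2,a))\mu(a)|a|^{s-1}\,d^\times a$ does \emph{not} equal $L(s+1/2,\pi,\mu)L(s+1,\mu_L)^{-1}\varphi(1_4)$ up to boundary terms: for an eigenvector $B_j^*$ with $T_{1,0}$-eigenvalue $\mu_j$, the geometric series gives the single factor $B_j^*(1_4)/(1-\mu_j\mu(\varpi)q^{-(s+2)})$, i.e.\ only one of the four (resp.\ three) linear factors of $L(s+1/2,\pi,\mu)$. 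Obtaining the full product $L(s+1/2,\pi,\mu)$ out of the sum over cosets requires knowing a degree-4 polynomial numerator $A_0+\cdots+A_4X^4$, and the paper determines $A_2,A_3,A_4$ precisely by invoking the local functional equation \eqref{localFE} together with $\varepsilon(\pi,s,\mu,\psi)=1$, which forces the rational function to be invariant under $X\mapsto X^{-1}$. Your plan never invokes the functional equation, and the proposed substitute (an unspecified "Iwahori--Hecke quadratic relation on $V_\pi^{K_0(\fp)}/(\text{new-form line})$") is not available in the paper and is not obviously true. Without that step there is no way to close the computation, so the argument as outlined cannot produce the claimed formula.
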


\begin{proof} We will give a proof of the case that $\pi$ is of type I. In the case of type IIb, the proof is similar. Let $\{B_j^*\}_{1\le j\le4}\subset \cB(T,1)[\pi]^{K_0(\fp)}$ be an eigenbasis of $T_{1,0}$ with eigenvalues $\{\lambda_j\}_{1\le i\le 4}$. Then, we suffice to check the desired equation only for $B=B_j^*$ $(1\le j\le 4)$. Let $(\eta_{ij})_{1\le i,j\le 4}$ be the representation matrix of $\eta|_{K_0(\fp)}$ with respect to $\{B_j^*\}_{1\le i\le 4}$. Then, the first relation in (\ref{relations of B}) shows that
$$
B_j^*(h(l+1,0))=\lambda_j q^{-3} B_j^*(h(l,0)),\quad l\in\Z_{\ge 0}.
$$

It is easy to check the equality $
f_\phi^{(s,\Lambda,\mu)}(k^\#)=L(s+1,\Lambda\mu_L)$ for all $k^\#\in K^\#$. By considering the left $K_0^\#(\fp)$-coset decomposition of $K^\#$;
\begin{align}
K^\#=K_0^\#(\fp)\sqcup\left(\sqcup_{\xi\in\cO_L/\fp_L}\left[\begin{smallmatrix} 1 & \xi \\ 0 & 1 \end{smallmatrix}\right]\left[\begin{smallmatrix} 0 & 1 \\ 1 & 0 \end{smallmatrix}\right]K_0^\#(\fp^e)\right),\label{left coset decomp}
\end{align}
the zeta integral $Z(\phi,B,s,\mu;\eta)$ becomes $[K^\#:K_0^\#(\fp)]^{-1}L(s+1,\Lambda\mu_L)$ times
{\allowdisplaybreaks\begin{align}
&\int_{F^\times}\left\{\eta B(\sm(a1_2,a))+\sum_{\xi\in\cO_L/\fp_L}\eta B\left(\sm(a1_2,a)\iota\left(\left[\begin{smallmatrix} 1 & \xi \\ 0 & 1 \end{smallmatrix}\right]\left[\begin{smallmatrix} 0 & 1 \\ 1 & 0 \end{smallmatrix}\right]\right)\right)\right\}\mu^{-1}(a)|a|^{s-1}\d^\times a\notag\\
&=\sum_{l\in\Z}\left\{\eta B(h(l,0))+q^2 B(h(l+1,0))\right\}\mu (\varpi)^{l}q^{-l(s-1)}\notag\\
&=\sum_{l=0}^\infty \eta B(h(l,0))\mu(\varpi)^lq^{-l(s-1)}+q^{s+1}\mu(\varpi)^{-1}\sum_{l=0}^\infty  B(h(l,0))\mu(\varpi)^lq^{-l(s-1)}\label{zeta int for general B}
\end{align}}

because $B(h(l,0))=0$ if $l<0$  by (\ref{Support of B1}). By substituting $B=B_j^*$ in (\ref{zeta int for general B}) and $\Lambda=1$, we have
\begin{align*}
Z(\phi,B_j^*,s,\mu;\eta)=\frac{L(s+1,\Lambda\mu_L)}{q^2+1}\left\{\sum_{i=1}^4 \frac{\eta_{ij}B_i^*(1_4)}{1-\lambda_j\mu(\varpi)q^{-(s+2)}}+\frac{\mu(\varpi)^{-1}q^{s+1} B_j^*(1_4)}{1-\lambda_i\mu(\varpi) q^{-(s+2)}}\right\}.\notag\\
\end{align*}
Thus, we may put
\begin{align*}
Z(\phi,B_j^*,s,\mu;\eta)&=\frac{1}{q^2+1}\prod_{j=1}^4(1-\lambda_jq^{-2}X)^{-1}\frac{A_0+A_1X+A_2X^2+A_3X^3+A_4X^4}{X(1-q^{-2}X^2)}.
\end{align*}
with $X=\mu(\varpi)q^{-s}$ for some $A_i\in\C\,(1\le i\le 4)$. \\
By noting that $\{\lambda_1,\lambda_2,\lambda_3,\lambda_4\}=\{\alpha\beta\gamma q^{3/2}, \beta\gamma q^{3/2}, \alpha\gamma q^{3/2}, \gamma q^{3/2}\}$ from \cite[Table A.8]{RobertsSchmidt}, we have
\begin{align}
(q^2+1)\cdot \frac{Z(\phi,B_j^*,s,\mu;1_4)}{L(s+1/2,\pi,\mu)}=\frac{A_0+A_1X+A_2X^2+A_3X^3+A_4X^4}{X(1-q^{-2}X^2)}.
\label{zeta int old form}
\end{align}
A direct computation shows that
{\allowdisplaybreaks\begin{gather}
A_0=q B_j^*(1_4),\label{A_0}\\
A_1=\eta B_j^*(1_4)+q^{-1}\tr(T_{1,0}|_{V_\pi^{K_0(\fp)}})\cdot B_j^*(1_4)-q^{-1}\cdot T_{1,0} B_j^*(1_4).\label{A_1}
\end{gather}}The functional equation (\ref{localFE}) and the relation $\varepsilon(\pi,s,\mu,\psi)=1$ (\cite[Theorem 4.4]{PS97}) show that the right-hand side of (\ref{zeta int old form}) is invariant under the transformation $X\to X^{-1}$. By comparing the coefficients, we have
\begin{gather}
A_2=q^{-2}(q^2-1)A_0,\quad A_3=-q^{-2}A_1,\quad A_4=-q^{-2}A_0.
\label{A_2-4}
\end{gather} 
Substituting (\ref{A_0}), (\ref{A_1}), and (\ref{A_2-4}) into (\ref{zeta int old form}), the right-hand side of \eqref{zeta int old form} equals
{\allowdisplaybreaks\begin{align*}
\left[\eta B_j^*+q^{-1}T_{1,0} B_j^*+\left\{\mu(\varpi)^{-1}q^{s+1}+\mu(\varpi)q^{-s+1}-\tr(q^{-1}T_{1,0}|_{V_\pi^{K_0(\fp)}})\right\}B_j^*\right](1_4).
\end{align*}}
\end{proof}

\subsection{Computation for newforms of type ${\rm III}a$ (case 7)}\label{sec:IIIa}
By \cite[\S 9.1]{PitaleSchmidt}, there exists a basis $\{B_1,B_2\}$ of $\cB(T,\Lambda)[\pi]^{K_0(\fp)}$ such that
\begin{gather}
T_{1,0}B_1=\alpha\gamma q B_1,\quad T_{0,1}B_2=\gamma q B_2,\notag\\
T_{0,1}B_1=\alpha\gamma^2 (\alpha q+1) q B_1,\quad T_{0,1}B_2=\alpha\gamma^2 (\alpha^{-1}q+1) q B_2,\label{basis1}\\
\eta B_1=\alpha\gamma B_2,\quad \eta B_2=\gamma B_1.\notag
\end{gather}

We now consider the values of $B_1$ and $B_2$ at the identity element. We remark that \cite[Lemma 9.2]{PitaleSchmidt} ensures its non-vanishing and $\Lambda(\varpi)=\alpha\gamma^2$ by the condition of central character of $\pi$.

\begin{lem}  \label{IIIa B(1)}
Let $T\in\vV(F)$ such that $L/F$ is an unramified field extension. Suppose that $\mu:F^\times\to\C^1$ and $\Lambda:L^\times\to\C^1$  are unramified characters. Then we have
$$
B_2(1_4)=\alpha^{-1} B_1(1_4)\not=0. 
$$
\end{lem}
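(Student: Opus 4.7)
The plan is to evaluate $B_1(\eta)$ in two different ways and compare.

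First, the Atkin-Lehner identity $\eta B_1 = \alpha\gamma B_2$ from \eqref{basis1}, evaluated at $g=1_4$, gives immediately
\[
B_1(\eta) \;=\; \alpha\gamma\, B_2(1_4).
\]

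Second, I plan to use the decomposition
\[
\eta \;=\; w_0\cdot h(1,0)\cdot \sm(J_2, 1),
\qquad
w_0 := \left[\begin{smallmatrix} 0 & 1_2 \\ -1_2 & 0\end{smallmatrix}\right],
\quad
J_2 := \left[\begin{smallmatrix} 0 & -1 \\ 1 & 0\end{smallmatrix}\right],
\]
which is verified by direct matrix multiplication. Since the lower-left $2\times 2$ block of $\sm(J_2,1)$ vanishes, $\sm(J_2,1) \in K_0(\fp)$, so by right $K_0(\fp)$-invariance of $B_1$ one has $B_1(\eta) = B_1(w_0\, h(1,0))$. Using the conjugation relation $w_0\, h(1,0)\, w_0^{-1}=\sm(1_2,\varpi)=\varpi 1_4\cdot h(-1,0)$, we rewrite $w_0\, h(1,0) = \varpi 1_4 \cdot h(-1,0)\, w_0$. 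Since $\varpi 1_4 = \iota\bigl(\diag(\varpi,\varpi)\bigr) \in R$ acts on $B_1$ by the Bessel character value $\Lambda(\varpi) = \alpha\gamma^2$ (the latter identity being the central-character constraint recorded just before the statement of the lemma), we obtain
\[
B_1(\eta) \;=\; \alpha\gamma^2\cdot B_1\bigl(h(-1,0)\,w_0\bigr).
\]

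The crux is then the evaluation of $B_1(h(-1,0)\,w_0)$, for which my plan is to invoke the explicit description of Bessel functions on the double cosets $R\backslash \sG(F)/K_0(\fp)$ for type-IIIa representations developed in \cite[\S6--9]{PitaleSchmidt}. The Hecke-eigenvalue data \eqref{basis1}, the Bessel recurrences of \cite[Lemma 4.4]{PitaleSchmidt}, and the support conditions \eqref{Support of B1}--\eqref{Support of B2} together should pin down $B_1(h(-1,0)\,w_0)$ as $\alpha^{-1}\gamma^{-1}\,B_1(1_4)$. Substituting back gives $B_1(\eta) = \gamma\, B_1(1_4)$, and comparison with the first expression yields $\alpha\gamma B_2(1_4) = \gamma B_1(1_4)$, whence $B_2(1_4) = \alpha^{-1} B_1(1_4)$. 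The nonvanishing $B_1(1_4)\ne 0$ is supplied by \cite[Lemma 9.2]{PitaleSchmidt} as already noted in the excerpt.

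The main obstacle lies in the last step: although the strategy is to exploit the fact that $h(-1,0)\,w_0$ lies in the long Weyl cell and to track its double-coset position in $R\backslash\sG(F)/K_0(\fp)$ carefully, the bookkeeping with the $\Lambda$-character, Hecke eigenvalues, and the support conditions is delicate and constitutes the technical heart of the argument; the assumption that $L/F$ is an unramified field extension is what guarantees that $\varpi$ remains prime in $\cO_L$ and that the formulas of \cite{PitaleSchmidt} apply in the form needed.
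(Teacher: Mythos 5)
Your two ``different'' ways of evaluating $B_1(\eta)$ are actually the same identity in disguise, so the comparison gives no information. Concretely: from $\eta = w_0 h(1,0)\,\sm(J_2,1)$ and $w_0 h(1,0) = \varpi 1_4\,h(-1,0)\,w_0$, one reads off $h(-1,0)w_0 = \varpi^{-1}1_4\cdot\eta\cdot\sm(J_2,1)^{-1}$, whence $B_1(h(-1,0)w_0)=\Lambda(\varpi)^{-1}B_1(\eta)$ by the Bessel equivariance and the right $K_0(\fp)$-invariance. Substituting this back into your ``second way'' $B_1(\eta)=\alpha\gamma^2\,B_1(h(-1,0)w_0)$ yields $B_1(\eta)=B_1(\eta)$, a tautology. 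Comparing with the ``first way'' $B_1(\eta)=\alpha\gamma\,B_2(1_4)$ then merely restates the Atkin--Lehner relation $B_2(1_4)=\gamma\,B_1(h(-1,0)w_0)$; it does not fix either unknown. In particular, the assertion you are hoping to establish, $B_1(h(-1,0)w_0)=\alpha^{-1}\gamma^{-1}B_1(1_4)$, is literally equivalent to the lemma, so your reduction has zero net content.

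The step you defer --- ``the Hecke-eigenvalue data, the Bessel recurrences of \cite[Lemma 4.4]{PitaleSchmidt}, and the support conditions should pin down $B_1(h(-1,0)w_0)$'' --- is the entire proof, and it does not follow from those ingredients alone applied to a single point. Note also that since $w_0 = s_2 s_1 s_2 s_1$ and $s_1\in K_0(\fp)$, your target value $B_1(h(-1,0)w_0)=B_1(h(-1,0)s_2s_1s_2)$ sits precisely on the boundary $\ell=-1$ of the support condition \eqref{Support of B2}, so that condition gives no vanishing; one must genuinely solve for it. The paper's proof does this by applying the $T_{1,0}$- and $T_{0,1}$-recursions (the system \eqref{relations of B}) at $l=0$ and $l=1$, which produces the four coupled equations \eqref{Eq1}--\eqref{Eq4} involving the auxiliary unknowns $B_1(s_2)$, $B_1(h(-1,1)s_1s_2)$, $B_1(h(0,1)s_1s_2)$ together with $B_1(1_4)$ and $B_2(1_4)$, and then eliminates the auxiliaries. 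That linear-algebra step is what makes the lemma nontrivial, and it is entirely absent from your proposal.
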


\begin{proof}
For $B\in \cB(T,\Lambda)[\pi]^{K_0(\fp)}$ and any non-negative integer $l$, by Lemmas 5.1 and 5.3 in \cite{PitaleSchmidt}, we get the following relations;
{\allowdisplaybreaks\begin{gather}
T_{1,0}B(h(l,0))=q^3 B(h(l+1,0)),\notag\\
T_{1,0}B(h(l,0)s_2)=q^2(q-1)B(h(l+1,0))+q^2B(h(l-1,1)s_1s_2),\notag\\
T_{1,0}B(h(l,0)s_2s_1s_2)=q^2(q-1)B(h(l+1,0))+\mu(\varpi)B(h(l-1,0)s_2s_1s_2)\notag\\
+(q^2-1)B(h(l-1,1)s_1s_2),\label{relations of B}\\
T_{0,1}B(h(l,0))=q^3(q+1)B(h(l,1)),\notag\\
T_{0,1}B(h(l,0)s_2)=q^4B(h(l,1)s_1s_2)+q\mu(\varpi)B(h(l-2,1)s_1s_2)\notag\\
+q^2(q-1)B(h(l,1))-q\mu(\varpi)B(1_4).\notag
\end{gather}}Set $B=B_1$. By putting $l=0$ in the above relations, applying the relation (\ref{basis1}), and using the equation that $B_2(h(l,0))=\gamma B_1(h(l-1,0)s_2s_1s_2)$ for $l\ge0$, we have
\begin{gather}
\alpha\gamma qB_1(s_2)=\alpha\gamma(q-1)B_1(1_4)+q^2B_1(h(-1,1)s_1s_2),\label{Eq1}\\
\mu(\varpi)\gamma^{-1}(q^{-1}-1)B_2(1_4)=\alpha\gamma(q-1)B_1(1_4)+(q^2-1)B_1(h(-1,1)s_1s_2),\label{Eq2}\\
\mu(\varpi)(\alpha q+1)B_1(s_2)=q^4B_1(h(0,1)s_1s_2)+\frac{\mu(\varpi)(\alpha q^2-\alpha q-q^2-1)}{q+1}B_1(1_4).\label{Eq3}
\end{gather}
Similarly, by putting $l=1$ in the third relation in (\ref{relations of B}), we have
\begin{gather}
\mu(\varpi)q^{-3}(1-q)B_2(1_4)=\mu(\varpi)\alpha q^{-2}(q-1)B_1(1_4)+(q^2-1)B_1(h(0,1)s_1s_2).\label{Eq4}
\end{gather}
From (\ref{Eq1}), (\ref{Eq2}), (\ref{Eq3}), and (\ref{Eq4}), we obtain $B_2(1_4)=\alpha^{-1}B_1(1_4)$.
 The non-vanishing of $B_1$ at $1_4$ is a consequence of \cite[Theorem 9.3]{PitaleSchmidt}. 
\end{proof}

\begin{prop}\label{zeta int IIIa}
Suppose that $\pi$ is of type IIIa. Let $T\in\vV(F)$ be such that $L/F$ is an unramified field extension. Suppose that $\mu:F^\times\to\C^1$ and $\Lambda:L^\times\to\C^1$  are unramified characters. When $\phi=1_{\cO_L\oplus\cO_L}$ and $\Re(s)>1$, we have
\begin{gather}
Z(\phi,B,s,\mu;\eta)=\frac{\Lambda(\varpi)^{-1}\mu(\varpi)^{-1}q^{s+1}}{q^2+1}L(s+1/2,\pi,\mu)\cdot B(1_4),\quad B\in\cB(s,\Lambda)[\pi]^{K_0(\fp)}.\notag
\end{gather}
\end{prop}

\begin{proof}
By substituting $B=B_i$ in (\ref{zeta int for general B}) for $i=1,2$, the statement is immediate for $B=B_i$ by (\ref{basis1}), the first relation in (\ref{relations of B}), and Lemma \ref{IIIa B(1)}.
Since $B_1$ and $B_2$ span $\cB(T,\Lambda)[\pi]^{K_0(\fp)}$, we complete the proof.
\end{proof}

\begin{cor}
Let $\pi$, $T$, $\mu$, and $\Lambda$ be as in Proposition \ref{zeta int IIIa}, then
$$\varepsilon(\pi,s,\mu,\psi)=\mu(\varpi)^{2}q^{2(\frac{1}{2}-s)}.$$
\end{cor}

\subsection{Computation for newforms of type ${\rm VI}b$ (case 8)} \label{sec:IVb}
Suppose that $\pi$ is of type VIb. By \cite[Table 3]{PitaleSchmidt}, any element $B\in\cB(T,\Lambda)[\pi]^{K_0(\fp)}$ satisfies
\begin{gather}
T_{1,0}B=\gamma q B,\quad \eta B=\gamma B.\label{basis2}
\end{gather}
By \cite[Theorem 9.3]{PitaleSchmidt}, the value $B(1_4)\not=0$ if and only if $B=0$.

\begin{prop}\label{zeta int VIb}
Suppose that $\pi$ is of type VIb. Let $T\in\vV(F)$ be such that $L/F$ is an unramified field extension. Suppose that $\mu:F^\times\to\C^1$ and $\Lambda:L^\times\to\C^1$  are unramified characters. When $\phi=1_{\cO_L\oplus\cO_L}$ and $\Re(s)>1$, we have
\begin{gather}
Z(\phi,B,s,\mu;\eta)=\frac{\Lambda(\varpi)^{-1}\mu(\varpi)^{-1}q^{s+1}}{q^2+1}L(s+1/2,\pi,\mu)\cdot B(1_4),\quad B\in\cB(T,\Lambda)[\pi]^{K_0(\fp)}.\notag
\end{gather}
\end{prop}
\begin{proof}
The proof is similar to that of Proposition $\ref{zeta int IIIa}$ by using relations in (\ref{relations of B}). 
\end{proof}

\begin{cor}
Let $\pi$, $T$, $\mu$, and $\Lambda$ be as in Proposition \ref{zeta int VIb}, then
$$\varepsilon(\pi,s,\mu,\psi)=\mu(\varpi)^{2}q^{2(\frac{1}{2}-s)}.$$
\end{cor}

\subsection{The values of Bessel models at $1_4$ for old forms}\label{sec:B(1) for old forms}
For quasi-characters $\chi$, $\chi'$, and $\sigma$ on $F^\times$ which are trivial on $\cO^\times$, the representation $(\chi \times \chi' \rtimes \sigma,V)$ can be realized as the right regular representation of the space of all smooth functions $f$ on $\sG(F)$ satisfying
$$
f\left(\left[\begin{smallmatrix} a & \ast & \ast & \ast \\ 0 & b & \ast & \ast \\ 0 & 0 & ca^{-1} & 0 \\ 0 & 0 & \ast & cb^{-1} \end{smallmatrix}\right]g\right)=|a^2b||c|^{-\frac{3}{2}}\chi(a)\chi'(b)\sigma(c)f(g),\quad a,b,c\in F^\times, g\in\sG(F).
$$
The space $V$ has a unique $K$-invariant element $f_K\in V$ such that $f_K(1_4)=1$. 
Let $I=K\cap\left[\begin{smallmatrix} \cO & \fp & \cO & \cO \\ \cO & \cO & \cO & \cO \\ \fp & \fp & \cO & \cO \\ \fp & \fp & \fp & \cO 
\end{smallmatrix}\right]$ be the Iwahori subgroup of $K$. For $w\in W$, let $f_w\in V^I$ be the unique element such that $f_w|_K=\cchi_{IwI}$.

By \cite[Table 3]{PitaleSchmidt2}, the space $V^{I}$ is 8-dimensional space spanned by
$$
f_{1_4}, \quad f_{s_1}, \quad f_{s_2},\quad f_{s_2s_1}, \quad f_{s_1s_2s_1}, \quad f_{s_1s_2}, \quad f_{s_1s_2s_1s_2}, \quad f_{s_2s_1s_2}. 
$$
A basis of $4$-dimensional space $V^{K_0(\fp)}$ is given by 
$$
f_1^{{\rm I}}:=f_{1_4}+f_{s_1}, \quad f_2^{{\rm I}}:=f_{s_2}+f_{s_2s_1}, \quad f_3^{{\rm I}}:=f_{s_1s_2s_1}+f_{s_1s_2},\quad f_4^{{\rm I}}:=f_{s_1s_2s_1s_2}+f_{s_2s_1s_2}.
$$

Recall that $\pi$ is isomorphic to $\chi \times \chi' \rtimes \sigma$ or $\chi1_{\GL_2}\rtimes\sigma$ for some characters $\chi$, $\chi'$, and $\sigma$ on $F^\times$ according as $\pi$ is of type I or IIb. The representation $(\chi1_{\GL_2}\rtimes\sigma,V)$ can be realized as a subrepresentation of $|\cdot|^{1/2}\chi\times |\cdot|^{-1/2}\chi \rtimes \sigma$, and a basis of $3$-dimensional space $V^{K_0(\fp)}$ is given by 
$$
f_1^{{\rm II}b}:=f_{1_4}+f_{s_1},\quad f_2^{{\rm IIb}}:=f_{s_2}+f_{s_2s_1}+f_{s_1s_2s_1}+f_{s_1s_2},\quad f_3^{{\rm IIb}}:=f_{s_1s_2s_1s_2}+f_{s_2s_1s_2}.
$$ 
The representation matrices of operators $T_{1,0}$ and $\eta$ on $V_\pi^{K_0(\fp)}$ are given in \cite[Table 3]{PitaleSchmidt2}. 
\begin{comment}
as follows;
\begin{align}
\label{Table4}
\begin{array}{|c|c|c|}
\hline
 {} & {{\rm type\ I}} & {{\rm type\ IIb}} \\
\hline
 {T_{1,0}} & {\left[\begin{smallmatrix} \alpha\beta\gamma q^{3/2} & 0 & 0 & 0 \\ \alpha\beta\gamma(q-1)q^{1/2} & \beta\gamma q^{3/2} & 0 & 0 \\ \alpha\beta\gamma(q-1)q^{1/2} & \beta\gamma(q-1)q^{1/2} & \alpha\gamma q^{3/2} & 0 \\ \alpha\beta\gamma(q-1)q^{1/2} & \beta\gamma(q-1)q^{1/2} & \alpha\gamma(q-1)q^{1/2} & \gamma q^{3/2} \end{smallmatrix}\right]} & {\left[\begin{smallmatrix} \alpha^2\gamma q^{3/2} & 0 & 0 \\ \alpha^2\gamma(q-1)q^{1/2} & \alpha\gamma q^{2} & 0 \\ \alpha^2\gamma(q-1)q^{1/2} & \alpha\gamma(q^2-1) & \gamma q^{3/2} \end{smallmatrix}\right]} \\
\hline
 {\eta} & {\left[\begin{smallmatrix} 0 & 0 & 0 & \gamma q^{3/2} \\ 0 & 0 & \alpha\gamma q^{1/2} & 0 \\ 0 & \beta\gamma q^{-1/2} & 0 & 0 \\ \alpha\beta\gamma q^{-3/2} & 0 & 0 & 0 \end{smallmatrix}\right]} & {\left[\begin{smallmatrix} 0 & 0 & \gamma q^{3/2} \\ 0 & \alpha\gamma & 0 \\ \alpha^2\gamma q^{-3/2} & 0 & 0 \end{smallmatrix}\right]} \\
\hline
\end{array}
\end{align}
\end{comment}
Now we assume that $\pi$ has the trivial character. Let $(\ell_\pi^0,f_K)$ be the Bessel data for $\pi$ defined above. For $\bullet\in\{{\rm I}, {\rm IIb}\}$, define a basis $\{B_i^{\bullet}\}_{1\le i\le \dim V_\pi^{K_0(\fp)}}$ of $V_\pi^{K_0(\fp)}$ by
$$
B_i^{\bullet}(g):=\ell_\pi^{0}(\pi(g)f_i^{\bullet}),\quad g\in\sG(F).
$$
We need the values of these functions at $g=1_4$, which are given as follows. 
\begin{prop}\label{values of B(1_4)}
Let $\pi$ be a smooth admissible irreducible $K$-spherical representation of type I or IIb, and $\{B_i^{\bullet}\}$ be as above for $\bullet\in\{{\rm I}, {\rm IIb}\}$. Then
\begin{itemize}
\item[(i)] If $\pi$ is of type I,
{\allowdisplaybreaks
\begin{gather*}
B_1^{{\rm I}}(1_4)=\frac{\alpha\beta}{(q-\alpha)(q-\beta)},\quad B_2^{{\rm I}}(1_4)=\frac{-q \beta}{(q-\alpha)(q-\beta)},\\
B_3^{{\rm I}}(1_4)=\frac{-q \alpha}{(q-\alpha)(q-\beta)},\quad B_4^{{\rm I}}(1_4)=\frac{q^2}{(q-\alpha)(q-\beta)}.
\end{gather*}}
\item[(ii)] If $\pi$ is of type IIb,
{\allowdisplaybreaks\begin{gather*}
B_1^{{\rm IIb}}(1_4)=\frac{\alpha^2}{(q^{1/2}-\alpha)(q^{3/2}-\alpha)},\quad B_2^{{\rm IIb}}(1_4)=\frac{-q^{1/2}(1+q)\alpha}{(q^{1/2}-\alpha)(q^{3/2}-\alpha)},\\
B_3^{{\rm IIb}}(1_4)=\frac{q^2}{(q^{1/2}-\alpha)(q^{3/2}-\alpha)}.
\end{gather*}}
\end{itemize}
\end{prop}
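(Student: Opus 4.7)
The plan is to exploit the normalization $\ell_\pi^0(f_K) = B_\pi^0(1_4) = 1$, combined with the intertwining of the Bessel map $v \mapsto B_v$ with the Iwahori-level Hecke action and the Atkin--Lehner involution, to set up a small linear system for the four (respectively three) unknowns $B_i^\bullet(1_4)$.

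The first step is to observe that the spherical vector decomposes as $f_K = \sum_i f_i^\bullet$ in $V_\pi^{K_0(\fp)}$. This holds because on $K$ the function $f_K|_K$ is the constant $1 = \sum_{w \in W}\cchi_{IwI}$ via the Bruhat decomposition $K = \sqcup_w IwI$, and grouping the characteristic functions $\cchi_{IwI} = f_w|_K$ by $K_0(\fp)$-double cosets matches the definitions of $f_i^\bullet$; the identity then extends from $K$ to $\sG(F)$ by the Iwasawa decomposition, since both sides obey the same Borel transformation law. Applying $\ell_\pi^0$ yields the initial relation $\sum_i B_i^\bullet(1_4) = 1$, which is consistent with summing the proposed formulas upon simplification.

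To obtain the remaining equations, I would apply $T_{1,0}$, $T_{0,1}$, and $\pi(\eta)$ to $f_K$. Their matrices on $V_\pi^{K_0(\fp)}$ in the basis $\{f_i^\bullet\}$ appear in Table \ref{Table4}, so each application expresses $T_{1,0} f_K$, $T_{0,1} f_K$, and $\pi(\eta) f_K$ as explicit $\C$-linear combinations of the $f_i^\bullet$ (the coefficients being the column sums of the respective matrices, since $f_K$ has coordinates $(1, \dots, 1)$). Pairing with $\ell_\pi^0$ converts each such identity into a fresh linear relation in the unknowns $B_i^\bullet(1_4)$. On the Bessel side, one computes $\ell_\pi^0(T_{1,0} f_K) = (T_{1,0} B_\pi^0)(1_4) = q^3 B_\pi^0(h(1,0))$ from the first relation in \eqref{relations of B}, and similarly $\ell_\pi^0(T_{0,1} f_K) = q^3(q+1) B_\pi^0(h(0,1))$ from the fourth relation; the value $\ell_\pi^0(\pi(\eta) f_K) = B_\pi^0(\eta)$ comes from direct evaluation at the Atkin--Lehner element. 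Since $B_\pi^0$ is the spherical Bessel function for the unramified quadratic $L/F$, these values are available in closed form from Sugano's explicit formula (\cite{Sugano84}; see also the adaptation in \cite[Theorem 5.9]{Lemma}) as rational functions in $\alpha, \beta, \gamma, q$.

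Collecting four independent linear relations in the type I case (respectively three in type IIb) and solving the resulting linear system should then yield the claimed values, with the trivial-central-character identity $\alpha\beta\gamma^2 = 1$ (respectively $\alpha^2 \gamma = 1$) used repeatedly to simplify. The hard part will be the algebraic bookkeeping: the intermediate translated Bessel values and the column sums produce cumbersome expressions, and collapsing the solution of the $4\times 4$ (or $3\times 3$) system to the compact closed forms in (i) and (ii) will require careful manipulation. For type IIb, where $\pi$ sits inside $|\cdot|^{1/2} \chi \times |\cdot|^{-1/2} \chi \rtimes \sigma$, the Satake parameters specialize to $(\alpha q^{1/2}, \alpha q^{-1/2}, \gamma)$ and the argument proceeds by the same route, with the denominators $(q^{1/2} - \alpha)(q^{3/2} - \alpha)$ emerging naturally from this specialization.
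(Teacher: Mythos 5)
Your proposal follows essentially the same blueprint as the paper's proof: expand the spherical vector as $f_K=\sum_i f_i^\bullet$, push Hecke operators through the Bessel map to obtain linear relations among the $B_i^\bullet(1_4)$, and solve. The paper, however, applies only the powers $T_{1,0}^\ell$ for $\ell=0,\dots,d-1$ and uses repeatedly the first relation of \eqref{relations of B} to reduce to the single family of values $B_\pi^0(h(\ell,0))$, all of which are read off from \cite[Corollary 1.8]{BFF}, with the needed matrix $T_{1,0}|_{V_\pi^{K_0(\fp)}}$ already in table \eqref{Table4}. Since that matrix is lower triangular with (generically) distinct diagonal entries, the vectors $\mathbf 1, T_{1,0}\mathbf 1,\dots, T_{1,0}^{d-1}\mathbf 1$ span, and the system is nondegenerate.

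Your variant -- applying $T_{1,0}$, $T_{0,1}$ and $\eta$ once each -- is not wrong in principle but has two concrete gaps you would need to fill. First, the matrix of $T_{0,1}$ on $V_\pi^{K_0(\fp)}$ does not appear in table \eqref{Table4}, which records only $T_{1,0}$ and $\eta$; you would have to import it from the long version of \cite{PitaleSchmidt2} or recompute it, which is exactly the work the paper's choice of using only $T_{1,0}^\ell$ avoids. Second, the value $B_\pi^0(\eta)$ is not ``available in closed form from Sugano's explicit formula'': Sugano/BFF give $B_\pi^0$ only on the double-coset representatives $h(\ell,m)$ with $\ell,m\ge 0$, whereas $\eta$ has similitude $\varpi$ and must first be rewritten as $r\,h(\ell,m)\,k$ with $r$ in the Bessel subgroup and $k\in K$ before the formula applies. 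This decomposition is not trivial and you do not supply it. Both gaps are repairable, but they show why the paper deliberately restricts to powers of a single operator: it keeps all the needed inputs (the BFF values $B_\pi^0(h(\ell,0))$ and the $T_{1,0}$-matrix) self-contained within the references and tables already on hand.
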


\begin{proof} Set $d:=\dim V_\pi^{K_0(\fp)}$. By the construction of the elements $\{f_i^{\bullet}\}_{1\le i\le d}$, we have $B_\pi^{0}=\sum_{i=1}^{d} B_i^{\bullet}$. For any $\ell\in\Z_{\ge0}$, we let $T_{1,0}^l$ act on the both sides of this equation and then evaluate at $g=1_4$ using the first relation in (\ref{relations of B}); thus, 
$$
B_\pi^{0}(h(\ell,0))=q^{-3\ell}\sum_{i=1}^{d} (T_{1,0}^\ell B_i^{\bullet})(1_4).
$$
The values of the left hand side is given in \cite[Corollary 1.8]{BFF}; by \cite[Table 3]{PitaleSchmidt2}, we have a system of linear equations among $B_i^\bullet(1_4)$, which is solved easily.
\end{proof}

\subsection{Local periods}\label{sec:Local period}
For an irreducible admissible unitalizable $K_0(\fp)$-spherical representation $(\pi,V_\pi)$ on $\sG(F)$ having the $(T,\Lambda)$-Bessel model, we fix a pair $(\ell^\pi,\xi^\pi)\in(V_{\pi}^*)^{T,\Lambda} \times V_\pi^{K_0(\fp)}$ satisfying $\ell_\pi(\xi_\pi)=1$. When $\pi$ is $K$-spherical, we choose $(\ell^\pi,\xi^\pi)$ as the unramified Bessel datum $(\ell_0^\pi,\xi_0^\pi)$. Then, there exists a unique $\sG(F)$-invariant inner product $\langle \cdot | \cdot \rangle$ on $V_\pi$ such that $\langle \xi^\pi | \xi^\pi\rangle=1$. For $s\in\C$, $\phi\in\mathcal{S}(L^2)$, $g\in\sG(F)$, and a character $\mu$ on $F^\times$, we define the local period of $\pi$ as
$$
\mathbb{I}_{\pi,K_0(\fp)}^{(s)}(\xi^\pi,\phi,\Lambda,\mu;g)=\sum_{B\in\mathcal{B}((V_\pi^*)^{T,\Lambda},K_0(\fp))} Z^*(\phi,B,s,\mu;g)\overline{B(1_4)},
$$
where 
$$
Z^*(\phi,B,s,\mu;g):=L(s+1/2,\pi,\mu)^{-1}Z(\phi,B,s,\mu;g)
$$
is the normalized zeta integral and $\mathcal{B}((V_\pi^*)^{T,\Lambda},K_0(\fp))$ is an orthonormal basis of $\cB(T,\Lambda)[\pi]^{K_0(\fp)}$ with respect to the inner product on $\cB(T,\Lambda)[\pi]$ via the $\sG(F)$-isomorphism
\begin{align}
V_\pi\owns\xi \mapsto (g \mapsto \ell_\pi(\pi(g)\xi))\in\cB(T,\Lambda)[\pi].\label{Isomorphism}
\end{align}

\subsubsection{Local periods for type I and IIb}\label{sec:Local period type I and IIb}
When $\pi$ is of type I or IIb, we recall that the representation space can be realized as in \S \ref{sec:B(1) for old forms}. We choose $\xi^\pi=\xi_0^\pi=f_K$. Then, since we are dealing with induced representations from unitary characters, the standard inner product $
\langle f | g \rangle:=\int_{K}f(k)\overline{g(k)}dk$ ($f,g\in V_\pi
$) becomes $\sG(F)$-invariant and satisfies $\langle \xi_0^\pi | \xi_0^\pi\rangle=1$. \\For $\bullet\in\{ {\rm I} , {\rm IIb} \}$, the set $\{\langle B_i^{\bullet} | B_i^{\bullet} \rangle^{-\frac{1}{2}} B_i^{\bullet}\}_{1\le i \le \dim{V_\pi^{K_0(\fp)}}}$ forms an orthonormal basis of $\cB(T,\Lambda)[\pi]^{K_0(\fp)}$. By \cite[]{Schmidt}, the values $\langle B_i^{\bullet} | B_i^{\bullet} \rangle$ are given as follows:
{\allowdisplaybreaks\begin{gather*}
\langle B_i^{\rm{I}} | B_i^{\rm{I}} \rangle=q^{i-1}(q+1)\quad (1 \le i \le 4),\\
\langle B_1^{\rm{IIb}} | B_1^{\rm{IIb}} \rangle=(q+1),\quad \langle B_2^{\rm{IIb}} | B_2^{\rm{IIb}} \rangle=q(q+1)^{2},\quad \langle B_3^{\rm{IIb}} | B_3^{\rm{IIb}} \rangle=q^{3}(q+1).
\end{gather*}}By Propositions \ref{local zeta int I, IIb} and \ref{values of B(1_4)}, and the matrix representation of $T_{1,0}$ and $\eta$ given in \cite[Table 3]{PitaleSchmidt2}, we can calculate the local period by a direct calculation. \footnote{We have used \textsc{Mathematica 13}.}

\begin{prop}\label{local period I, IIb}
Let $\pi$ be a smooth admissible irreducible unitary representation of type I or type IIb with trivial central character. Suppose that $\mu : F^\times\to \C^1$ is an unramified character. When $\phi=\cchi_{\cO_L\oplus\cO_L}$, we have
{\allowdisplaybreaks\begin{align*}
\mathbb{I}_{\pi,K_0(\fp)}^{(s)}(\xi_0^\pi,\phi,\Lambda,\mu;\eta)&=\frac{2(q-1)}{q^5(q^2+1)}L(1,\pi;{\rm Std})\notag\\
&\times \left\{\mu(\varpi)^{-1}q^{s+1}+\mu(\varpi) q^{-s+1}-\frac{1}{q+1}\tr(T_{1,0}+q\eta|_{V_\pi^{K_0(\fp)}})\right\},
\end{align*}}where $L(s,\pi;{\rm Std})$ is the standard L-function of $\pi$ defined by
$$
L(s,\pi;{\rm Std})=(1-\alpha q^{-s})^{-1}(1-\beta q^{-s})^{-1}(1-\alpha^{-1} q^{-s})^{-1}(1-\beta^{-1} q^{-s})^{-1}(1-q^{-s})^{-1}.
$$
or
$$
L(s,\pi;{\rm Std})=(1-\alpha q^{-s+\frac{1}{2}})^{-1}(1-\alpha q^{-s-\frac{1}{2}})^{-1}(1-\alpha^{-1} q^{-s+\frac{1}{2}})^{-1}(1-\alpha^{-1} q^{-s-\frac{1}{2}})^{-1}(1-q^{-s})^{-1}.
$$
according as $\pi$ is of type I or IIb.
\end{prop}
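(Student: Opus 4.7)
The plan is to apply Proposition \ref{local zeta int I, IIb} to rewrite, for each $B\in\cB(S,1)[\pi]^{K_0(\fp)}$,
\[
Z^{*}(\phi,B,s,\mu;\eta)=\frac{1}{q^{2}+1}\bigl[(\eta B)(1_{4})+q^{-1}(T_{1,0}B)(1_{4})+C\cdot B(1_{4})\bigr],
\]
where $C:=\mu(\varpi)^{-1}q^{s+1}+\mu(\varpi)q^{-s+1}-\tr\bigl((q^{-1}T_{1,0}+\eta)|_{V_\pi^{K_0(\fp)}}\bigr)$, and then to evaluate $\mathbb{I}_{\pi,K_0(\fp)}^{(s)}(\xi_{0}^{\pi},\phi,\Lambda,\mu;\eta)$ by summing over the orthonormal basis. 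The key observation is that, using the orthogonal basis $\{B_i^{\bullet}\}$ of $\cB(S,1)[\pi]^{K_0(\fp)}$ with norms $\|B_i^{\bullet}\|^{2}:=\langle B_i^{\bullet}\mid B_i^{\bullet}\rangle$, the sum $\sum_i [(AB_i^{\bullet})(1_{4})]\overline{B_i^{\bullet}(1_{4})}/\|B_i^{\bullet}\|^{2}$ for any endomorphism $A$ equals the Hermitian pairing $\langle AB_{0}\mid B_{0}\rangle$, where
\[
B_{0}:=\sum_i \overline{B_i^{\bullet}(1_{4})}\,\|B_i^{\bullet}\|^{-2}B_i^{\bullet}\in\cB(S,1)[\pi]^{K_0(\fp)}
\]
is the Riesz representative of the evaluation functional $B\mapsto B(1_{4})$. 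Thus
\[
\mathbb{I}_{\pi,K_0(\fp)}^{(s)}=\frac{1}{q^{2}+1}\bigl\{\langle\eta B_{0}\mid B_{0}\rangle+q^{-1}\langle T_{1,0}B_{0}\mid B_{0}\rangle+C\,\|B_{0}\|^{2}\bigr\}.
\]

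First I would compute $\|B_{0}\|^{2}=\sum_i |B_i^{\bullet}(1_{4})|^{2}/\|B_i^{\bullet}\|^{2}$ directly from the values in Proposition \ref{values of B(1_4)} together with the inner-product formulas recalled at the beginning of \S\ref{sec:Local period type I and IIb}. Using $|q-\alpha|^{2}=(q-\alpha)(q-\alpha^{-1})$ (since $|\alpha|=|\beta|=1$ for our unitary inducing data), the sum collapses in both type I and type IIb to
\[
\|B_{0}\|^{2}=\frac{2(q-1)L(1,\pi;{\rm Std})}{q^{5}},
\]
which produces exactly the prefactor in the target formula.

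The heart of the proof is the algebraic identity
\[
q\,\langle\eta B_{0}\mid B_{0}\rangle+\langle T_{1,0}B_{0}\mid B_{0}\rangle=\frac{\tr\bigl((T_{1,0}+q\eta)|_{V_\pi^{K_0(\fp)}}\bigr)}{q+1}\,\|B_{0}\|^{2},
\]
which I would prove by direct computation using the matrix representations of $T_{1,0}$ and $\eta$ in Table \eqref{Table4} together with the values from Proposition \ref{values of B(1_4)}. I expect this step to be the main obstacle: notably, neither $\langle\eta B_{0}\mid B_{0}\rangle$ nor $\langle T_{1,0}B_{0}\mid B_{0}\rangle$ individually reduces to a clean multiple of the corresponding trace---only the specific combination $q\,\langle\eta\cdot\rangle+\langle T_{1,0}\cdot\rangle$ telescopes in this way---and the identity must be verified case by case (type I, with a four-dimensional Iwahori space, and type IIb, with a three-dimensional one), with careful bookkeeping of the roughly a dozen non-zero matrix entries in each case.

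With these ingredients in place, the final assembly is routine. Combining the identity with the constant-term contribution gives $\mathbb{I}_{\pi,K_0(\fp)}^{(s)}=\|B_{0}\|^{2}(q^{2}+1)^{-1}\bigl\{C+\tr(T_{1,0}+q\eta)/[q(q+1)]\bigr\}$, and the elementary rearrangement
\[
\tr(q^{-1}T_{1,0}+\eta)-\frac{\tr(T_{1,0}+q\eta)}{q(q+1)}=\frac{\tr(T_{1,0}+q\eta)}{q+1}
\]
reduces the quantity in braces to $\mu(\varpi)^{-1}q^{s+1}+\mu(\varpi)q^{-s+1}-(q+1)^{-1}\tr(T_{1,0}+q\eta)$, yielding the claimed formula after substituting the value of $\|B_{0}\|^{2}$.
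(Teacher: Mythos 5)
Your proposal is correct, and it is essentially the same as the paper's proof, which reads ``By Propositions~\ref{local zeta int I, IIb} and~\ref{values of B(1_4)}, and the matrix representation of $T_{1,0}$ and $\eta$ given in~\eqref{Table4}, we can calculate the local period by a direct calculation'' (with a footnote citing Mathematica); you use exactly these ingredients together with the norms $\langle B_i^{\bullet}\mid B_i^{\bullet}\rangle$ recalled in \S\ref{sec:Local period type I and IIb}. Your Riesz-representative vector $B_0$ and the trace identity $q\langle\eta B_0\mid B_0\rangle+\langle T_{1,0}B_0\mid B_0\rangle=\tfrac{1}{q+1}\tr(T_{1,0}+q\eta)\,\|B_0\|^2$ are a cleaner bookkeeping of the same brute-force computation (and I verified they check out in type I: each of the two terms on the left equals $\gamma q^{3/2}(1+\alpha)(1+\beta)/(P(q+1))$, matching the right-hand side via $\|B_0\|^2=2/P$).
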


\subsubsection{Local period for type IIIa}\label{sec:Local period type IIIa}
Suppose that $\pi$ is of type IIIa. Then, as remarked before, $|\alpha|=|\gamma|=1$ and $\alpha\not=1$. By Lemma \ref{IIIa B(1)}, there exists a basis $\{B_i\}_{i=1,2}$ of $\cB(T,\Lambda)[\pi]^{K_(\fp)}$, unique up to constant, satisfying the relations \eqref{basis1} and $B_2(1_4)=\alpha^{-1}B_1(1_4)\not=0$. Thus, we can uniquely fix $\{B_i^{{\rm IIIa}}\}_{i=1,2}$ by requiring $B_{1}^{{\rm IIIa}}(1_4)=1$ and the datum $(\ell_\pi,\xi^\pi)$ by  $\ell_\pi(\xi^\pi)=B^{{\rm IIIa}}_{1}$ to form \eqref{Isomorphism} and induce the 
 $\sG(F)$-invariant inner product on $\cB(T,\Lambda)[\pi]$ such that $\langle B_{1}^{{\rm IIIa}}|B_{1}^{{\rm IIIa}} \rangle=1$. Since $\langle B_1|B_2\rangle=0$ by \cite[Lemma 2.11]{DPSS}, we then have $\langle B_2^{{\rm IIIa}}|B_2^{{\rm IIIa}}\rangle=1$ and $\langle B_1^{{\rm IIIa}}|B_2^{{\rm IIIa}}\rangle=0$ due to $|\alpha|=1$. The following proposition is immediate from Proposition \ref{zeta int IIIa}. 

\begin{prop}\label{local period IIIa}
Let $\pi$ be a smooth admissible irreducible unitary representation of type IIIa. Suppose that $ : F^\times\to \C^1$ is an unramified character. When $\phi=\cchi_{\cO_L\oplus\cO_L}$, 
\begin{align*}
&\mathbb{I}_{\pi,K_0(\fp)}^{(s)}(\xi^\pi,\phi,\Lambda,\mu;\eta)=\frac{2\Lambda(\varpi)^{-1}\mu(\varpi)^{-1}q^{s+1}}{q^2+1}.
\end{align*}
\end{prop}
%\begin{proof}
%By Proposition \ref{zeta int IIIa} and noting that %$|\alpha|=1$, we have
%\begin{align*}
%\mathbb{I}_{\pi,K_0(\fp)}^{(s)}
%(\xi^\pi,\phi,\Lambda,\mu;\eta)&=\frac{\Lambda(\varpi)^{-1}\mu(\varpi)^{-1}q^{s+1}}{q^2+1}\left\{|B_{1}^{{\rm IIIa}}(1_4)|^2+|B_{2}^{{\rm IIIa}}(1_4)|^2\right\}\\
%&=\frac{2\Lambda(\varpi)^{-1}\mu(\varpi)^{-1}q^{s+1}}{q^2+1}.
%\qedhere
%\end{align*}
%\end{proof}

\subsubsection{Local period for type VIb}\label{sec:Local period type VIb}
Suppose that $\pi$ is of type VIb. We recall that $\dim (V_\pi^{K_0(\fp)})=\dim (\cB(T,\Lambda)^{K_0(\fp)})=1$; thus, by \cite[Theorem 9.3]{PitaleSchmidt}, there exists a unique element $B^{{\rm VIb}}\in\cB(T,\Lambda)^{K_0(\fp)}$ satisfying $B^{{\rm VIb}}(1_4)=1$. We fix the datum $(\ell_\pi,\xi^\pi)$ by $\ell_\pi(\xi^\pi)=B^{{\rm VIb}}$. The following result is immediate from Proposition \ref{zeta int VIb}.

\begin{prop}\label{local period VIb}
Let $\pi$ be a smooth admissible irreducible unitary representation of type VIb. Suppose that $ : F^\times\to \C^1$ is an unramified character. When $\phi=\cchi_{\cO_L\oplus\cO_L}$, 
\begin{align*}
&\mathbb{I}_{\pi,K_0(\fp)}^{(s)}(\xi^\pi,\phi,\Lambda,\mu;\eta)=\frac{\Lambda(\varpi)^{-1}\mu(\varpi)^{-1}q^{s+1}}{q^2+1}.
\end{align*}
\end{prop}

\section{Computation of Gross-Prasad period for ramified characters} \label{sec:CompGP}
We shall add a new case to the local computations in \cite[\S2]{DPSS}. Let $F$, $\fo$, $T$, $L$ and $\Lambda$ be as in \S \ref{sec:CompLZ}. Suppose that $T=\left[\begin{smallmatrix} \sa & \sb/2 \\ \sb/2 & \sc \end{smallmatrix}\right]\in{\rm Sym}_2(F)$, the character $\Lambda$ of $L^\times$ and an irreducible unitary representation $(\pi,V_\pi)$ of $\sG(F)/\sZ(F)$ satisfy
\begin{align}
&\text{
$\sa,\sc\in\fo^\times, \sb\in\fo, \sd:= \sb^2-4\sa\sc\in\fo^\times$,\quad $L/F$ is an unramified field extension
}\label{Assumption} \\
&\text{$\Lambda$ is of conductor $1+\fp_L$,}
\label{AuumptionL}\\
&\text{$(\pi,V_\pi)$ is $K$-spherical.}
 \label{AssumptionPi}  \end{align}
Let $(\cdot\mid \cdot)_\pi$ be the $\sG(F)$-invariant hermitian inner product on $V_\pi$, and $\xi_0\in V_\pi$ be a $K$-fixed unit vector; define a $\sZ(F)K$-bi-invariant function $\Phi_\pi$ on $\sG(F)$ as 
$$
\Phi_\pi(g):={(\pi(g)\xi_0\mid \xi_0)_\pi}
,\quad g\in\sG(F).
$$
Our main concern is the limit 
\begin{align}
J_0^{T}(g):&=\lim_{k\to\infty}\int_{\fp^{-k}\sN(\fo)}\Phi_\pi(b^{-1}ngb)\psi_T(n)\d n\notag\\
&=\lim_{k\to\infty}\int_{(\fp^{-k})^3}\Phi_{\pi}\left(b^{-1}\left[\begin{smallmatrix} 1 &  & x & y \\  & 1 & y & z \\  &  & 1 &  \\  &  &  & 1 \end{smallmatrix}\right]g b\right)\psi(\sa x+\sb y+\sc z) \d x \d y \d z,\quad g\in\sG(F),\label{LocalPeriod1}
\end{align}
where $b:=h(0,1)=\left[\begin{smallmatrix} \varpi^2 &  &  &  \\  & \varpi &  &  \\  &  & 1 &  \\  &  &  & \varpi \end{smallmatrix}\right]\in \sG(F)$. The limit \eqref{LocalPeriod1} makes sense; Indeed, by \cite{Liu}, %when we view the integral as a sequence indexed by $k$,% 
the values of integrals are independent of large enough $k$. The local period $J_{0}^{T,\Lambda}$ and its normalization $J^{T,\Lambda}$ are defined by
$$
J_0^{T,\Lambda}:=\int_{F^\times\backslash L^\times}J_0^{T}(\sm(\tau,\det \tau))\Lambda(\tau)\d \tau, \quad J^{T,\Lambda}:=\frac{L(1,\pi;{\rm Ad})L(1,\chi_{L/F})}{\zeta_F(2)\zeta_F(4)L(\frac{1}{2},\pi\times\mathcal{AI}(\Lambda^{-1}))}J_0^{T,\Lambda}. 
$$
Note that, in our setting that $\pi$ is unramified and $\Lambda$ is ramified, the local $L$-factor $L(s,\pi\times \mathcal{AI}(\Lambda^{-1}))$ is identically $1$. 
\begin{thm}\label{localGPPeriod} Under condition \eqref{Assumption}, we have $J^{T,\Lambda}=q^{-4}$. 
\end{thm}
For the rest of this section, we sketch a proof of this theorem, introducing key lemmas that are used in the proof. Let $v:F\to\Z\cup\{\infty\}$ be the normalized valuation defined by $|x|=q^{-v(x)}$. Recall the double coset decomposition $\sG(F)=\sqcup_{\ell,m\ge 0}\sZ(F)K\cdot h(\ell,m)\cdot K$.
\begin{lem}\label{CartanDecomp}
Let $x,y,z\in F$.  
\begin{itemize}
\item[(i)]
If $\left[\begin{smallmatrix} 1 &  & \varpi^{-2}x & \varpi^{-1}y \\  & 1 & \varpi^{-1}y & z \\  &  & 1 &  \\  &  &  & 1 \end{smallmatrix}\right]\in \sZ(F)K\cdot h(l,m)\cdot K$, then
\begin{align*}
\ell&=2\{\min(0,-2+v(x),-1+v(y),v(z))\\
&\qquad-\min(0,-2+v(xz-y^2),-2+v(x),-1+v(y),v(z))\},\\
m&=\min(0,-2+v(xz-y^2),-2+v(x),-1+v(y),v(z))\\
&\qquad-2\min(0,-2+v(x),-1+v(y),v(z)).
\end{align*}
\item[(ii)]
If $\left[\begin{smallmatrix} \varpi &  & \varpi^{-1}x & y \\  & \varpi^{-1} & \varpi^{-2}y & \varpi^{-1}z \\  &  & \varpi^{-1} &  \\  &  &  & \varpi \end{smallmatrix}\right]\in \sZ(F)K\cdot h(l,m)\cdot K$, then
\end{itemize}
\begin{align*}
\ell&=2\{\min(0,v(x),-1+v(y),v(z))\\
&\qquad-\min(0,v(xz-y^2),v(x),1+v(y),v(z))\}+2,\\
m&=\min(0,v(xz-y^2),v(x),1+v(y),v(z))\\
&\qquad-2\min(0,v(x),-1+v(y),v(z)).
\end{align*}
\end{lem}
\begin{proof}
These formulae are special cases in (\cite[Proposition 3.2]{KL}). (See also \cite[Lemma 2.7]{DPSS}.)
\end{proof}
\begin{lem}\label{LemmaIcal}
For positive integers $m,k\in\Z_{>0}$,
\begin{align}
\int_{\substack{x,y,z\in\fo^\times\\xz-y^2\in \varpi^{k}\fo^\times}}\psi(\varpi^{-m}(\sa x+\sb y+\sc z))\d x\d y\d z=
\begin{cases}
-q^{-k-1}(1-q^{-1})^2\quad &(m=1)\\
0 &(m>1)
\end{cases}.\label{Ical}
\end{align}
\end{lem}

\begin{proof}
By the change of variables $x\mapsto z^{-1}y^2+u$ $(u\in \varpi^{k}\fo^\times)$ and $y\mapsto zy$, the integral \eqref{Ical} equals
\begin{align}
&\int_{u\in\varpi^k\fo^\times}\psi(\varpi^{-m}\sa u)\d u\times\int_{y,z\in\fo^\times}\psi (\varpi^{-m}(\sa y^2+\sb y+\sc)z)\sd y\sd z.\label{Ical1}
\\
&=q^{-k}\begin{cases}
0 \quad&(k\le m-2),\\
-q^{-1} &(k=m-1),\\
1-q^{-1} &(k\ge m),
\end{cases}\quad 
\times\quad  (1-q^{-1})
\begin{cases}
-q^{-1} \quad&(m=1),\\
0 &(m\ge 2).\\
\end{cases}
\notag
\end{align}
To have the evaluation of the second integral in \eqref{Ical}, we use $\sa y^2+\sb y+ \sc \in \fo^\times(\,y\in\fo)$, which is proved by $L/F$ being an unramified field extension. 
\end{proof}
For $\ell,m\ge 0$, we set $H(\ell,m):=\Phi_\pi(h(\ell,m))$. 
%The following lemma explicitly describes special values of $J_0^{T}(g)$.
\begin{lem}\label{J_0(g)}
\begin{align*}
J_0^{T}(1_4)=&q^{-3}H(0,0)-q^{-3}H(0,1)+(1-q^{-1})H(0,2)-(q+q^{-1}-1)H(0,3)\\
&\qquad-(1-q^{-1})H(2,1)-(q+q^{-1}-1)H(2,2),\\
J_0^{T}(s_1)=&(1-q^{-1})H(0,2)-q(1-q^{-1})^2H(0,3)+q^{-1}H(2,0)-2H(2,1)\\
&\qquad +q(1-q^{-1})^2H(2,2)+H(4,0).
\end{align*}
\end{lem}

\begin{proof}
Since $\Phi_\pi$ is bi-$\sZ(F)K$-invariant, the value $J_0^{T}(g)$ is represented by a sum of $H(\ell,m)$. For $r,s,t,k\in\Z$, define the subset of $F^3$ as $\Pi(r,s,t):=\varpi^{r}\fo^\times\times\varpi^{s}\fo^\times\times\varpi^{t}\fo^\times$. Then, we consider the following subsets of $F^3$:
{\allowdisplaybreaks\begin{align*}
&D_{0}:=\fp^{-k}\setminus\fp^{-1}, 
&D_0'&:=\fp^{-k}\setminus\fp^{-1},\\
&D_{1}:=\Pi(-1,-1,-1),
&D'_{1}&:=\Pi(-1,-1,-1),\\
&D_{2}:=\Pi(1,0,-1), 
&D'_{2}&:=\cup_{\substack{r\geq 0 \\ t\geq 0}} \Pi(r,0,t),\\
&D_3:=\cup_{\substack{r\ge 2\\s \ge 1\\t\ge 0}}\Pi(r,s,t), 
&D'_{3}&:=\cup_{\substack{r\ge 0\\s \ge 1\\t\ge 0}}\Pi(r,s,t),\\
&D_4:=\cup_{\substack{s \ge 1\\t\ge 0}}\Pi(1,s,t)\sqcup\cup_{\substack{r \ge 2\\s\ge 1}}\Pi(r,s,-1), 
&D'_{4}&:=\cup_{\substack{s\ge 0\\t\ge 1}}\Pi(-1,s,t)\sqcup\cup_{\substack{s\ge 0\\ t\ge 0}}\Pi(r,s,-1),\\
&D_5:=\cup_{\substack{s \ge 0\\t\ge 0}}\Pi(0,s,t), &D'_{5}&:=\cup_{t\ge 0}\Pi(-1,-1,t)\sqcup\cup_{\substack{r\ge 0\\t\ge -1}}\Pi(r,-1,t),\\
&D_6:=\cup_{\substack{s \ge 1\\t \ge 0}}\Pi(-1,s,t), &D'_{6}&:=\cup_{s\ge 0}\Pi(-1,s,-1),\\
&D_7:=\cup_{s \ge 1}\Pi(1,s,-1)\sqcup \cup_{t \ge 0}\Pi(1,0,t)\sqcup \cup_{\substack{r \ge 2\\t\ge -1}}\Pi(r,0,t),\hspace{-80pt}& & \\
&D_8:=\cup_{s \ge 0}\Pi(0,s,-1),\\
&D_9:=\cup_{s \ge 0}\Pi(-1,s,-1)\sqcup\cup_{t \ge 0}\Pi(-1,-1,t),\\
&D_{10}:=\cup_{\substack{r \ge 0\\t \ge -1}}\Pi(r,-1,t).
\end{align*}}
We have two expressions of $\fp^{-k}$ as disjoint unions: $\fp^{-k}=\sqcup_{0\le i\le10}D_i=\sqcup_{0\le j\le6}D'_j$; the $D_i$-decomposition is used for $J_0^T(1_4)$, while the $D_j'$-decomposition is for $J_{0}^{T}(s_1)$. By means of these, using Lemmas \ref{CartanDecomp}, \ref{LemmaIcal}, we calculate the integrals
\begin{align}
I(D):=\int_{D}\psi(\sa x+\sb y+\sc z)\,\d x\d y\d z,\quad (D\subset\fp^{-k})
\label{Int-psiabc}
\end{align}
as linear combinations of $H(\ell,m)$. It is rather easy to prove $I(D_0)=I(D_0')=0$; for other cases, the coefficients are listed in the following table: 
\begin{align*}
\begin{array}{|c|c|c|}
\hline
 {D} & {(\ell,m)} & \text{coefficient of $H(\ell,m)$} \\
\hline\hline
 {D_{1*}^{-2}} & {(2,2)} & {J+q(1-q^{-1})^2}\\
\hline
 {D_{1\flat}^{-2}} & {(0,3)} & {-q(1-q^{-1})^2}\\
\hline
 {D_{2*}^{0}} & {(2,0)} & {-q^{-1}(1-2q^{-1})(1-q^{-1})}\\
\hline
 {D_{2\flat}^{0}} & {(0,1)} & {-q^{-2}(1-q^{-1})}\\
\hline
 {D_{3}} & {(0,0)} & {q^{-3}}\\
\hline
 {D_{4}} &  {(0,1)} & {q^{-2}(1-2q^{-1})}\\
\hline
 {D_{5}} &  {(0,2)} & {1-q^{-1}}\\
\hline
 {D_{6}} &  {(0,3)} & {-1}\\
\hline
 {D_{7}} &  {(2,0)} & {q^{-1}(1-2q^{-1})(1-q^{-1})}\\
\hline
 {D_{8}} &  {(2,1)} & {-(1-q^{-1})}\\
\hline
 {D_{9}} &  {(2,2)} & {1-J}\\
\hline
 {D_{10}} &  {(4,0)} & {0}\\
\hline
\end{array}
\hspace{0.1cm}
\begin{array}{|c|c|c|}
\hline
 {D} & {(\ell,m)} & \text{coefficient of $H(\ell,m)$} \\
\hline\hline
 {D_{1*}'^{-2}} & {(2,2)} & {J+q(1-q^{-1})^2} \\
\hline
 {D_{1\flat}'^{-2}} & {(0,3)} & {-q(1-q^{-1})^2}\\
\hline
 {D'_{2}} & {(0,2)} & {1-q^{-1}}\\
\hline
 {D'_{3}} & {(2,0)} & {q^{-1}}\\
\hline
 {D'_{4}} & {(2,1)} & {-2}\\
\hline
 {D'_{5}} & {(2,2)} & {-J}\\
\hline
 {D'_{6}} & {(4,0)} & {1}\\
\hline
\end{array}
\end{align*}
where $J:=\int_{\varpi^{-1}\fo^\times}\psi(\sb y)\d y$, and we put $D_{i*}^{u}:=D_1\cap\{(x,y,z)\mid v(xz-y^2)=k\}$, $D_{i\flat}^{u}=D_{i}\setminus D_{i*}^{u}$ for $u\in\Z$ ($D_{j*}'^{u}$ and $D_{j*}'^{u}$ are defined in a similar manner). Having this table, we complete the proof.
\end{proof}

\begin{lem}\label{J_0(kappa)}
For $\kappa\in\GL_2(\fo)$, we have 
$
J_0^{T}(\kappa)=
\begin{cases}
J_0^{T}(1_4)\quad &(\kappa\in{\bf \Gamma}^{0}(\fp)),\\
J_0^{T}(s_1)\quad &(\kappa\notin{\bf \Gamma}^{0}(\fp))
.\end{cases}
$\end{lem}
\begin{proof}
For $g\in\sG(F), \gamma_1,\gamma_2\in {\bm \Gamma}^{0}(\fp):={\bf GL}_2(\fo)\cap\left[\begin{smallmatrix} \fo & \fp \\ \fo & \fo \end{smallmatrix}\right]$, it  holds that
\begin{align}
J_0^{T}(\sm(\gamma_1,\det \gamma_1) g\sm(\gamma_2,\det \gamma_2))=J_0^{T\ss(\gamma_1^{-1})}(g).\label{LocalPeriod2}
\end{align}
By the double coset decomposition: ${\bf GL}_2(\fo)={\bm \Gamma}^{0}(\fp)\sqcup\Gamma^{0}(\fp)\left[\begin{smallmatrix} 0 & 1 \\ 1 & 0 \end{smallmatrix}\right]\Gamma^{0}(\fp)$, and the relation \eqref{LocalPeriod2}, the case of $\kappa\in{\bf \Gamma}^{0}(\fp)$ follows immediately, while we have $J_0^{T}(\kappa)=J_0^{T\ss(\gamma)}(s_1)$ for some $\gamma\in{\bf \Gamma}^{0}(\fp)$ if $\kappa\notin{\bf \Gamma}^{0}(\fp)$. the equation $J_0^{T\ss(\gamma)}(s_1)=J_0^{T}(s_1)$ follows from Lemma \ref{J_0(g)}. Indeed, $J_0^{T}(s_1)$ is independent of $T$, and $T\ss(\gamma)$ also satisfies assumption \eqref{Assumption}.
\end{proof}

\begin{lem}\label{LemmaGP}
\begin{align}
J_0^{T,\Lambda}=
(q+1)^{-1}\Bigl\{q^{-3}H(0,0)&-q^{-3}H(0,1)-H(0,3)-q^{-1}(2,0)\label{GPperiod}\\
&\qquad +(1+q^{-1})H(2,1)+H(2,2)-H(4,0)\Bigr\}.\notag
\end{align}
\end{lem}

\begin{proof} Since $\fo^\times(1+\fp_L)\subset{\bf \Gamma}^{0}(\fp)$, the integral $J_0^{T,\Lambda}$ equals
\begin{align}
[\fo_L^\times:\fo^\times(1+\fp_L)]^{-1}\sum_{\tau}\Lambda(\tau)J_0^{T}(\sm(\tau,\det\tau)),\label{LocalPeriod3}
\end{align}
where $\tau$ runs through a set of complete representative of $\fo_L^\times/\fo^\times(1+\fp_L)$. By Lemma \ref{J_0(kappa)} and the fact $\fo_L^\times \cap{\bf \Gamma}^{0}(\fp)=\fo^\times(1+\fp_L)$, the quantity \eqref{LocalPeriod3} becomes
$$
(q+1)^{-1}\left\{J_0^{T}(1_4)+J_0^{T}(s_1)\sum_{\substack{\tau\\\tau\notin\fo^\times(1+\fp_L)}}\Lambda(\tau)\right\}=(q+1)^{-1}\left\{J_0^{T}(1_4)-J_0^{T}(s_1)\right\}.
$$
Then, Lemma \ref{J_0(g)} yields the equation \eqref{GPperiod}
\end{proof}

\begin{proof}[Proof of Theorem \ref{localGPPeriod}]
The integral $J^{T,\Lambda}$ can be calculated directly by means of Lemmas \ref{J_0(g)}, \ref{LemmaGP}, and (\cite[Proposition 2.10]{DPSS}).
\end{proof}

%%%%%%%%%%%%%%%%%%%%%%%%%%%%%%%%%%%%%%%%%

\end{document}